\numberwithin{equation}{section}
\tikzset{cross/.style={cross out, draw=black, minimum size=2*(#1-\pgflinewidth), inner sep=0pt, outer sep=0pt},
cross/.default={4.5pt}}
\DeclareMathOperator{\Ker}{Ker }
\DeclareMathOperator{\Imm}{Im }
\DeclareMathOperator{\lk}{\ell k}
\DeclareMathOperator{\writhe}{wr}
\DeclareMathOperator{\Hom}{Hom }
\DeclareMathOperator{\Ann}{Ann }
\DeclareMathOperator{\Id}{Id}
\DeclareMathOperator{\vv}{\emph{\textbf v}}
\renewcommand{\geq}{\geqslant}
\renewcommand{\leq}{\leqslant} 
\renewcommand{\epsilon}{\varepsilon}
\newcommand{\N}{\mathbb{N}}
\newcommand{\F}{\mathbb{F}}
\newcommand{\X}{\mathbb{X}}
\newcommand{\OO}{\mathbb{O}}
\newcommand{\T}{\mathbb T}
\newcommand{\lkhov}{\llbracket}
\newcommand{\rkhov}{\rrbracket}
\newcommand{\cC}{cCFL}
\newcommand{\cH}{c\mathcal{HFL}}
\DeclareFontFamily{U}{mathx}{\hyphenchar\font45}
\DeclareFontShape{U}{mathx}{m}{n}{
      <5> <6> <7> <8> <9> <10>
      <10.95> <12> <14.4> <17.28> <20.74> <24.88>
      mathx10
      }{}
\DeclareSymbolFont{mathx}{U}{mathx}{m}{n}
\DeclareMathAccent{\widecheck}{0}{mathx}{"71}
\DeclareMathAccent{\wideparen}{0}{mathx}{"75}
\newtheorem{teo}{Theorem}[section]
\newtheorem*{teo*}{Theorem}
\newtheorem{lemma}[teo]{Lemma}
\newtheorem{prop}[teo]{Proposition}
\newtheorem*{prop*}{Proposition}
\newtheorem{cor}[teo]{Corollary}
\xpatchcmd{\@thm}{\thm@headpunct{.}}{\thm@headpunct{}}{}{}
\theoremstyle{definition}
\newtheorem{remark}[teo]{Remark}
\newtheorem{ex}[teo]{Example}
\pgfplotsset{compat=1.13}
\begin{document}
\title{Locally equivalent Floer complexes and unoriented link cobordisms}

%
\author{Alberto Cavallo}
\givenname{Alberto}
\surname{Cavallo}
\address{Max Planck Institute for Mathematics, Bonn 53111, Germany}
\email{cavallo@mpim-bonn.mpg.de}
\urladdr{https://sites.google.com/view/albertocavallomath}

%
%
%
%
%

\keyword{link, concordance, slice genus, unoriented, link Floer homology}
\subject{primary}{msc2020}{57K10, 57K18}


\arxivreference{arXiv:1911.03659}

\begin{abstract}
 We show that the local equivalence class of the collapsed link Floer complex $\cC^\infty(L)$, together with many $\Upsilon$-type invariants extracted from this group, is a concordance invariant of links. In particular, we define a version of the invariants $\Upsilon_L(t)$ and $\nu^+(L)$ when $L$ is a link and we prove that they give a lower bound for the slice genus $g_4(L)$.
 
 Furthermore, in the last section of the paper we study the homology group $HFL'(L)$ and its behaviour under unoriented cobordisms. We obtain that a normalized version of the $\upsilon$-set, introduced by Ozsv\'ath, Stipsicz and Szab\'o, produces a lower bound for the 4-dimensional smooth crosscap number $\gamma_4(L)$. 
\end{abstract}
\maketitle

\section{Introduction}
In \cite{Hom} Hom introduced an equivalence relation on the knot Floer complex $CFK^\infty(K)$ called stable equivalence. Namely, we say that two knots are \emph{stably equivalent} if and only if their chain complexes become filtered chain homotopy equivalent after adding some acyclic complexes. A very important result in \cite{Hom} is that when $K_1$ is concordant to $K_2$ then the complex $CFK^\infty(K_1)$ is stably equivalent to $K_2$, which made possible to prove that many knot invariants coming from $CFK^\infty(K)$ are indeed concordance invariants, see \cite{Antonio,Allen,HomWu,Livingston} for some examples.

Another relation on knot Floer chain complexes was given by Zemke in \cite{Zemke}: two knots $K_1$ and $K_2$ are called \emph{locally equivalent} if there exist two maps $f:CFK^\infty(K_1)\rightarrow CFK^\infty(K_2)$ and $g:CFK^\infty(K_2)\rightarrow CFK^\infty(K_1)$ which preserve the filtrations (both the Alexander and algebraic filtration) and induce filtered isomorphisms in homology. Even though those two relations appear to be very different from their definition, we can actually show that they coincide. We recall that this theorem was proved in the involutive setting by Hendricks and Hom (\cite{HomHe}).
\begin{teo}
\label{teo:equivalence}
 Consider two knots $K_1$ and $K_2$ in $S^3$. Then $CFK^\infty(K_1)$ is locally equivalent to $CFK^\infty(K_2)$ if and only if such two chain complexes are stably equivalent.
\end{teo}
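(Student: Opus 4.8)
The statement is an equivalence, and the implication ``stable $\Rightarrow$ local'' is the easy one. Given acyclic complexes $A_1,A_2$ and a filtered chain homotopy equivalence $\Phi\colon CFK^\infty(K_1)\oplus A_1\to CFK^\infty(K_2)\oplus A_2$ with filtered homotopy inverse $\Psi$, I would take $f:=\pi_2\circ\Phi\circ\iota_1$ and $g:=\pi_1\circ\Psi\circ\iota_2$, where $\iota_i\colon CFK^\infty(K_i)\hookrightarrow CFK^\infty(K_i)\oplus A_i$ is the inclusion and $\pi_i$ the complementary projection. All of these maps preserve both filtrations and are $U$-equivariant, and since $A_1,A_2$ are acyclic the maps $\iota_i,\pi_i,\Phi,\Psi$ are quasi-isomorphisms; hence $f$ and $g$ are filtered quasi-isomorphisms, which is exactly local equivalence.

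For ``local $\Rightarrow$ stable'', write $C_i:=CFK^\infty(K_i)$ and let $f\colon C_1\to C_2$, $g\colon C_2\to C_1$ be the filtered quasi-isomorphisms provided by local equivalence. The composite $gf\colon C_1\to C_1$ is a grading-preserving, $U$-equivariant quasi-isomorphism, so $(gf)_*$ is an automorphism of $H_*(C_1)\cong\mathbb{F}_2[U,U^{-1}]$; as this module is free of rank one with generator in Maslov degree $0$, the automorphism must be the identity, and therefore $\delta:=\mathrm{id}_{C_1}+gf$ induces the zero map on homology. The key reduction is a lemma: a filtered chain map $\delta$ of $C_1$ with $\delta_*=0$ factors, up to filtered homotopy, as $C_1\xrightarrow{\iota}B\xrightarrow{\kappa}C_1$ through a finitely generated free filtered acyclic complex $B$, with $\iota$ and $\kappa$ filtered. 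Granting this, set $\widehat f:=(f,\iota)\colon C_1\to C_2\oplus B$ and $\widehat g:=(g,\kappa)\colon C_2\oplus B\to C_1$: these are filtered chain maps, $\widehat f$ is a quasi-isomorphism because $B$ is acyclic, and $\widehat g\widehat f=gf+\kappa\iota\simeq gf+\delta=\mathrm{id}_{C_1}$ through a filtered homotopy. Thus $\widehat f$ is a filtered quasi-isomorphism that admits a filtered left homotopy inverse.

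I would then finish via mapping cylinders. For any filtered chain map $\varphi\colon X\to Y$ there is a short exact sequence of filtered complexes $0\to X\to\mathrm{Cyl}(\varphi)\to\mathrm{Cone}(\varphi)\to 0$ whose module-level splitting may be chosen filtered, and the canonical projection $\mathrm{Cyl}(\varphi)\to Y$ is a filtered homotopy equivalence; writing out the possible filtered retractions $\mathrm{Cyl}(\varphi)\to X$ shows that the sequence splits as a sequence of filtered complexes exactly when $\varphi$ has a filtered left homotopy inverse. Applying this with $\varphi=\widehat f$ gives $C_2\oplus B\simeq\mathrm{Cyl}(\widehat f)\cong C_1\oplus\mathrm{Cone}(\widehat f)$ as filtered complexes, and $\mathrm{Cone}(\widehat f)$ is acyclic since $\widehat f$ is a quasi-isomorphism. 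Because $B$ and $\mathrm{Cone}(\widehat f)$ are acyclic, this says precisely that $C_1$ and $C_2$ are stably equivalent.

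I expect the main obstacle to be the factorization lemma for $\delta$. Forgetting the filtrations it is standard: over the principal ideal domain $\mathbb{F}_2[U,U^{-1}]$ a bounded complex of finitely generated free modules splits as its homology (with zero differential) plus an acyclic complex, and any chain map that vanishes on homology factors through the acyclic summand. To upgrade this to the filtered setting I would first replace $C_1$ by a reduced representative, so that its differential strictly lowers the $\mathbb{Z}\oplus\mathbb{Z}$-filtration; then use $\delta_*=0$ to homotope $\delta$, through a filtered homotopy, into a map that annihilates a chosen filtered complement of the cycle submodule and takes values in the boundaries; and finally extract $\iota$ and $\kappa$ from the acyclic part of the reduced complex via the filtered cancellation lemma. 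Keeping the two filtrations, the Maslov grading, and $U$-equivariance all under control during this reduction is the delicate point; the remainder of the argument is formal homological algebra.
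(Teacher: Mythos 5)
Your ``stable $\Rightarrow$ local'' direction is correct and coincides with the paper's; the point you leave implicit---that $\Phi_*$, together with the projections and inclusions, induces a genuine $\mathcal F$-filtered isomorphism rather than merely a filtration-respecting isomorphism---does hold, since $\Phi$ and $\Psi$ are both filtered with $\Psi_*\Phi_*=\mathrm{id}$, forcing $\Phi_*(\mathcal F^S)=\mathcal F^S$ for every $S$.

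For ``local $\Rightarrow$ stable'' you take a genuinely different route. The paper forms the dual complex $CFK^\infty(K_2)^*$, observes that $CFK^\infty(K_i)\otimes CFK^\infty(K_2)^*$ is locally equivalent to $\F[U,U^{-1}]_{(0)}$, applies Lemma~\ref{lemma:acyclic} to split off the unit summand filteredly, and concludes by regrouping the triple tensor product. You instead try to split $C_1$ off of $C_2\oplus B$ directly by a retract argument. The mapping-cylinder step is sound: since the inclusion $C_1\hookrightarrow\mathrm{Cyl}(\widehat f)$ is degreewise split by filtered module maps, the filtered homotopy from the composite $C_1\to\mathrm{Cyl}(\widehat f)\to C_2\oplus B\to C_1$ to the identity extends over the cylinder and can be used to correct the retraction-up-to-homotopy to a genuine filtered retraction, which splits the sequence. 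The gap is the factorization lemma, which you assert but do not prove, and whose sketch rests on an incorrect premise: the ``acyclic part'' of a reduced knot Floer complex is in general \emph{not} a filtered direct summand. For $CFK^\infty(T_{2,3})$ with filtered basis $\{a,b,c\}$ at $(j,A)$-levels $(1,1),(0,1),(1,0)$ and $\partial a=b+c$, the chain-complex splitting $\F[U,U^{-1}]\{b\}\oplus\F[U,U^{-1}]\{b+c,a\}$ is not a filtered splitting, since $c=b+(b+c)$ lies in level $(1,0)$ while neither $b$ nor $b+c$ does. This is exactly why Lemma~\ref{lemma:acyclic} carries the hypothesis ``locally equivalent to the unknot'': it guarantees a cycle generating homology at level $(0,0)$, which is what makes the filtered change of basis possible. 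Moreover, even given such a splitting $C_1=\F[U,U^{-1}]\{x\}\oplus A$, the off-diagonal components of $\delta$ into and out of $A$ need not be filtered nullhomotopic, since the bounding chain furnished by acyclicity of $A$ may sit at a strictly higher filtration level. So the factorization lemma needs a genuine argument at least as delicate as Lemma~\ref{lemma:acyclic}, and your sketch does not supply one; you may want to exploit the specific shape $\delta=\mathrm{id}+gf$ rather than quantify over all filtered $\delta$ with $\delta_*=0$.
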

For the purpose of this paper, the local equivalence relation has the advantage that it can be used in the same way for links. Let us consider the chain complex $\cC^\infty(L)$, defined from $CFL^-(L)$ by collapsing the variables $U_1,...,U_n$ to $U$ and taking the tensor product with $\F[U,U^{-1}]$, where here $\F$ always denotes the field with two elements, see \cite{Book,OSlinks}. We equip $\cC^\infty(L)$ with a filtration $\mathcal F$, obtained from the algebraic filtration and the (collapsed) Alexander filtration; such $\mathcal F$ descends to homology so that we can define the filtered group $\mathcal F\cH^\infty(L)$. 
Based on an intuition of Alfieri in \cite{Antonio}, we consider $\mathcal F$ as indexed by some particular subsets $S$ of the plane, which he calls south-west regions, satisfying the property that if $(\overline x,\overline y)\in S$ then each $(x,y)$ such that $x\leq\overline x$ and $y\leq\overline y$ is in $S$; a more precise definition is given later in Subsection \ref{subsection:Alexander}. 

We recall that two $n$-component links are \emph{concordant} if there is a cobordism between them consisting of $n$ disjoint annuli. Then the local equivalence class of $\cC^\infty(L)$ and the filtered homology group $\cH^\infty(L)$ are a concordance invariant in the following sense.
\begin{teo}
 \label{teo:concordance}
 Suppose that $L_1$ is concordant to $L_2$; then there are chain maps \\ $\begin{tikzcd}
 \cC^\infty(L_1) \ar[r,shift left=.75ex]
 &
  \ar[l,yshift=-1ex,shift right=.75ex]
 &
 \cC^\infty(L_2)
 \end{tikzcd}$, which preserve $\mathcal F$ and induce an $\mathcal F$-filtered isomorphism in homology. In particular, the restrictions of such isomorphisms give identifications
 \[\mathcal F^S\cH^\infty_d(L_1)\cong_\F\mathcal F^S\cH^\infty_d(L_2)\] for every $d\in\mathbb Z$ and $S$ south-west region of $\mathbb Z^2$.
\end{teo}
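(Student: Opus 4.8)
The plan is to reduce the statement to a functoriality property of the collapsed link Floer complex under link cobordisms, and then to analyze what a concordance annulus cobordism contributes. First I would recall the cobordism maps on $CFL^-$: to a connected, oriented cobordism $\Sigma\subset S^3\times[0,1]$ from $L_1$ to $L_2$, decorated appropriately, one associates a chain map on the minus-version link Floer complexes, and these maps are (up to filtered chain homotopy) compatible with the collapsing of the $U_i$ to a single $U$ and with tensoring by $\F[U,U^{-1}]$; this yields chain maps $F_\Sigma\colon\cC^\infty(L_1)\to\cC^\infty(L_2)$ and $F_{\overline\Sigma}$ in the reverse direction coming from the reversed cobordism $\overline\Sigma$. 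The key point is the behaviour of these maps with respect to the filtration $\mathcal F$ built from the algebraic and collapsed Alexander filtrations: I would show that $F_\Sigma$ shifts $\mathcal F$ by an amount controlled by the genus and the number of local maxima/minima of $\Sigma$, so that when $\Sigma$ is a disjoint union of $n$ annuli — genus $0$, and after tubing arranged with no extra critical points — the shift is trivial and $F_\Sigma$, $F_{\overline\Sigma}$ are genuinely $\mathcal F$-filtered.

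Next I would argue that the two composites $F_{\overline\Sigma}\circ F_\Sigma$ and $F_\Sigma\circ F_{\overline\Sigma}$ induce isomorphisms on homology. Here the standard mechanism is that $\overline\Sigma\circ\Sigma$ is the cobordism obtained by stacking $\Sigma$ with its mirror, which can be seen (by a neck-stretching / handle-cancellation argument) to be the product cobordism $L_1\times[0,1]$ with some number of disjoint ``tube'' handles attached; each such closed-off tube contributes multiplication by a power of $U$ on the collapsed complex, hence acts as an isomorphism on $\cC^\infty=\,\cdot\otimes\F[U,U^{-1}]$. Thus after inverting $U$ the composite is a filtered quasi-isomorphism, and the same for the other order, so $F_\Sigma$ is an $\mathcal F$-filtered isomorphism in homology with $\mathcal F$-filtered inverse $F_{\overline\Sigma}$ (up to the usual algebra, using that a filtered map which is an isomorphism on homology with filtered homotopy inverse descends to isomorphisms on every filtration level). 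In particular, passing to the filtered homology group, $F_\Sigma$ restricts to isomorphisms $\mathcal F^S\cH^\infty_d(L_1)\xrightarrow{\ \cong\ }\mathcal F^S\cH^\infty_d(L_2)$ for every $d$ and every south-west region $S$, because the $\mathcal F$-filtration is exactly indexed by such regions (Subsection \ref{subsection:Alexander}) and a filtered isomorphism in homology induces isomorphisms on the associated $S$-indexed subgroups.

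The main obstacle I expect is the careful bookkeeping of the filtration shift of the cobordism map, i.e.\ verifying that a concordance annulus induces a strictly $\mathcal F$-filtered map rather than one with a nontrivial grading or filtration shift. This requires keeping track simultaneously of the algebraic filtration (controlled by the $U$-power appearing) and the collapsed Alexander filtration (controlled by how the decoration on $\Sigma$ interacts with the Spin$^c$/Alexander data), and matching them against Alfieri's south-west-region indexing. A secondary technical point is ensuring that the cobordism maps are well-defined \emph{after} collapsing all $U_i$ to $U$ and are independent of the chosen decoration up to the filtered homotopies we need; I would handle this by choosing a specific simple decoration on each annulus (one arc of each color), checking invariance only up to the automorphisms that act by powers of $U$, which is harmless on $\cC^\infty$. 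Once these shift computations are in place, the concordance invariance of the local equivalence class — and of the filtered homology, hence of all $\Upsilon$-type invariants extracted from it — follows formally.
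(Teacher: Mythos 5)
Your proposal takes a genuinely different route from the paper. The paper decomposes the concordance (in canonical form, Figure \ref{Cobordism}) into birth moves, isotopies and death moves, builds explicit chain maps on grid diagrams piece by piece (the stabilization map $b_1=s^{\mathrm{NE}}$ for the $0$-handle, the identity map $b_2$ for the band, filtered chain homotopy equivalences for the isotopy parts, and the dual map $d_\Sigma=b_{\Sigma^*,*}$ for the $2$-handle via Theorem \ref{teo:mirror}), verifies the $\mathcal{A}$-shift behaviour via Sarkar's grading inequalities in \cite[Subsection~3.4]{Sarkar}, and finally obtains the filtered isomorphism on each $\mathcal F^S$ by the dimension-counting argument using the reverse concordance $\Sigma'$. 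You instead appeal to the Zemke-style functorial cobordism maps on $CFL^-$, which is the alternative route the paper itself mentions in Remark \ref{remark:remark}. Both approaches are viable in principle, and the Zemke route is shorter once one is willing to cite the heavy machinery of \cite{Zemke2,Zemke3}; the paper's grid-diagram route is more elementary, self-contained, and produces explicit maps that are also reused in Section \ref{subsection:slice} for the genus bound.

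However, there is a gap in your argument at precisely the step you flag as a ``standard mechanism.'' The claim that $\overline\Sigma\circ\Sigma$ is the product cobordism $L_1\times[0,1]$ with some disjoint tube handles attached is false for a general concordance $\Sigma$: stacking a concordance with its reverse yields another concordance from $L_1$ to $L_1$, but not one isotopic rel boundary to the product. Since $\Sigma$ is a union of annuli, there are no tubes to close off at all, so the ``each tube contributes a power of $U$'' mechanism has nothing to act on, yet the composite is still a nontrivial concordance whose induced map must be shown to be an isomorphism. This is exactly the content of \cite[Theorems~A and~C]{Zemke2} (see Remark \ref{remark:remark}), and it does not reduce to a neck-stretching or handle-cancellation argument in the way you describe. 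Concretely, what is missing is either (i) a direct proof that each elementary cobordism map is a graded isomorphism on $\cH^\infty$ (which is what the paper does: the identity map and the stabilization map are quasi-isomorphisms by construction), or (ii) a citation to Zemke's nontrivial theorem that concordances induce isomorphisms in link Floer homology, combined with \cite[Theorem~1.4]{Zemke3} for the $\mathcal F$-preservation. Your last sentence also implicitly assumes $F_{\overline\Sigma}$ is a filtered homotopy inverse; as the paper notes after Corollary \ref{cor:quasi_iso}, local equivalence is weaker than $\mathcal F$-filtered chain homotopy equivalence, and the correct conclusion from having $\mathcal F$-preserving maps in both directions that are graded isomorphisms on $\cH^\infty$ requires the dimension-counting step (Proposition \ref{prop:bounded} guarantees finite dimensionality in each $(d,S)$) rather than a homotopy-inverse argument.
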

The strategy of the proof of this result consists in decomposing a concordance into standard pieces and then a careful usage of the maps introduced by Sarkar, see \cite{Sarkar}, on grid diagrams. In fact, starting from Sarkar's work, we can construct maps induced by some specific cobordisms. Some of these maps were already used by the author in \cite{Cavallo}.  
\begin{remark}
 \label{remark:remark}
 In \cite{Zemke2} Zemke, using different techniques, also defined maps induced by (decorated) link cobordisms, which conjecturally coincide with the ones presented in this paper. We can use such maps to give another proof of Theorem \ref{teo:concordance}: namely, according to \cite[Theorems A and C]{Zemke2} every link concordance induces a graded isomorphism in link Floer homology; while the fact that the $\mathcal F$-filtration is preserved follows from \cite[Theorem 1.4]{Zemke3}. This argument is similar to the one in \cite{Zemke}, where Zemke proved a version of Theorem \ref{teo:concordance} for knots.
\end{remark}
Theorem \ref{teo:concordance} allows us to define some numerical concordance invariants for links; including a generalization of Alfieri's $\Upsilon_S$ \cite{Antonio}, the $\nu^+$-invariant of Hom and Wu \cite{HomWu} (see also \cite{Rasmussen}) and the secondary upsilons, defined by Kim and Livingston \cite{Livingston}. 
We briefly describe how to extract some of these invariants. 

Write $\cC^\infty_*(L)$ for the filtered chain homotopy type of the link Floer complex of $L$. Once we fix a filtered basis, we can represent such model complex in the plane $(j,A)$, where $j$ and $A$ represent the minimal algebraic and Alexander filtration level respectively and $*$ the Maslov grading of each generator. We use the fact that $\dim_{\F}\mathcal F^{\{j\leq0\}}\cH^\infty_0(L)=1$, see Theorem \ref{teo:dim}, and then compute how far we can shift the region $S$ while being able to find a generator for such homology class in $\cC^\infty_0(L)$. In this way, given a south-west region $S$, we associate a real number to it that we call $\Upsilon_S(L)$; the complete definition can be found in Subsection \ref{subsection:Alexander}.

In the case of knots $\Upsilon_S(L)$ is a normalization of the invariant of Alfieri in \cite{Antonio}. This choice was done because, say $A_t$ is the region of the plane consisting of the pairs $(j,A)$ with $At+j(2-t)\leq0$, we have that \[\Upsilon_{A_t}(K)=\Upsilon_K(t)\quad\text{ for }\quad t\in[0,2]\] and the latter is the $\Upsilon$-function of Ozsv\'ath, Stipsicz and Szab\'o, see \cite{OSSz}. 

Moreover, since we also have that there is a unique homology class in $\mathcal F^{\{j\leq0\}}\cH^\infty_{1-n}(L)\setminus\mathcal F^{\{j\leq-1\}}\cH^\infty_{1-n}(L)$, the same procedure allows us to define another family of invariants which we call $\Upsilon_S^*(L)$. Clearly, for knots we have that $\Upsilon_S(K)=\Upsilon_S^*(K)$ for every $S$.
The following proposition summarizes some of the main properties of $\Upsilon_S(L)$.
\begin{prop}
 \label{prop:properties}
 Suppose that $L,L_1,L_2$ are links in $S^3$ and $L$ has $n$ components. Then we have that $\Upsilon_S(L)$ and $\Upsilon_S^*(L)$ are concordance invariants and
 \begin{enumerate}
     \item \[\tau(L)=-\Upsilon_L'(0)\quad\text{ and }\quad\tau^*(L)=-(\Upsilon^*_L)'(0)\] where the invariants $\tau(L)$ and $\tau^*(L)$ are defined in \emph{\cite{Cavallo}};
     \item \[\Upsilon_S(L)=\Upsilon_{-S}(L)\quad\text{ and }\quad\Upsilon^*_S(L)=\Upsilon^*_{-S}(L)\quad\text{for any}\quad S\] where $-S$ is the region obtained by reflecting $S$ along the line $\{j-A=0\}$;
     \item \[\Upsilon_S(L)=\Upsilon_S(-L)\quad\text{ and }\quad\Upsilon^*_S(L)=\Upsilon^*_S(-L)\quad\text{for any}\quad S\] where $-L$ is the reverse of $L$;
     \item \[\Upsilon_S(L^*)=-\Upsilon_{\overline{\iota S}}^*(L)\] where $L^*$ is the mirror image of $L$ and $\overline{\iota S}$ is the topological closure of the complement of the region obtained by reflecting $S$ using the central symmetry of $\mathbb R^2$ at the origin;
     \item \[\Upsilon_{L_1\#L_2}(t)=\Upsilon_{L_1}(t)+\Upsilon_{L_2}(t)\quad\text{ and }\quad\Upsilon^*_{L_1\#L_2}(t)=\Upsilon^*_{L_1}(t)+\Upsilon^*_{L_2}(t)\quad\text{ for }t\in[0,2]\] where $L_1\#L_2$ is a connected sum of $L_1$ and $L_2$;
     \item \[\Upsilon_L(t)=\dfrac{1-n+\sigma(L)}{2}\cdot t\quad\text{ and }\quad\Upsilon_L^*(t)=\dfrac{n-1+\sigma(L)}{2}\cdot t\quad\text{ for }t\in[0,1]\] whenever $L$ is quasi-alternating and $\sigma(L)$ is the signature of the link as in \emph{\cite{GL}}.
 \end{enumerate}
\end{prop}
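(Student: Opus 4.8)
The plan is to deduce every item from the single observation that $\Upsilon_S(L)$ and $\Upsilon_S^*(L)$ depend only on the filtered local equivalence class of $\cC^\infty(L)$, together with the location of the two distinguished homology classes (the generator of $\mathcal F^{\{j\leq0\}}\cH^\infty_0(L)$ and the one of $\mathcal F^{\{j\leq0\}}\cH^\infty_{1-n}(L)\setminus\mathcal F^{\{j\leq-1\}}\cH^\infty_{1-n}(L)$, which exist by Theorem \ref{teo:dim}). Concordance invariance is then immediate from Theorem \ref{teo:concordance}: concordant links have the same filtered local equivalence class, and the recipe defining $\Upsilon_S$, $\Upsilon_S^*$ only sees this data. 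Each identity in the list will reduce to producing a filtered (local) equivalence between the relevant collapsed complexes and then tracking what it does to these two classes and to south-west regions.

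Items (2), (3) and (4) are the symmetry statements. For (3), the operation $L\mapsto -L$ induces a filtered isomorphism $\cC^\infty(-L)\cong\cC^\infty(L)$ respecting the Maslov grading, the algebraic filtration and the collapsed Alexander filtration, hence preserving $\mathcal F$ and fixing the two distinguished classes; so $\Upsilon_S$ and $\Upsilon_S^*$ are unchanged. For (2), the conjugation symmetry of link Floer homology, after collapsing $U_1,\dots,U_n$ to $U$, descends to a filtered chain homotopy equivalence of $\cC^\infty(L)$ with itself that interchanges the algebraic and the Alexander filtration, i.e.\ it is filtered with respect to the involution $(j,A)\mapsto(A,j)$ of the plane. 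This involution sends a south-west region $S$ to its reflection across the line $\{j-A=0\}$, namely $-S$, and it fixes both distinguished classes, giving $\Upsilon_S(L)=\Upsilon_{-S}(L)$ and likewise with stars. For (4) one uses the duality $\cC^\infty(L^*)\simeq\mathrm{Hom}_{\F[U,U^{-1}]}\bigl(\cC^\infty(L),\F[U,U^{-1}]\bigr)$: dualizing reverses the Maslov grading, turns the two filtrations into co-filtrations, and interchanges the generator of $\cH^\infty_0$ with that of $\cH^\infty_{1-n}$ --- which is exactly why $\Upsilon$ on $L^*$ is governed by $\Upsilon^*$ on $L$. Re-expressing the co-filtration indexing in terms of south-west regions produces the operation $S\mapsto\overline{\iota S}$ (central reflection followed by topological complement), and the grading reversal produces the overall minus sign, so $\Upsilon_S(L^*)=-\Upsilon^*_{\overline{\iota S}}(L)$.

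Item (5): by the Künneth formula for the link Floer complex under connected sum, $\cC^\infty(L_1\#L_2)$ is filtered chain homotopy equivalent to $\cC^\infty(L_1)\otimes_{\F[U,U^{-1}]}\cC^\infty(L_2)$, with $\mathcal F$ induced by the sum of the two $(j,A)$-bigradings, and the generator of $\mathcal F^{\{j\leq0\}}\cH^\infty_0(L_1\#L_2)$ is the tensor product of the corresponding generators of $L_1$ and $L_2$. Since $A_t$ is a half-plane, its defining linear functional $At+j(2-t)$ is additive on the tensor product, so the minimal translate of $A_t$ containing a cycle representing the distinguished class splits as a sum, giving $\Upsilon_{L_1\#L_2}(t)=\Upsilon_{L_1}(t)+\Upsilon_{L_2}(t)$ on $[0,2]$; the starred version is identical using the degree $1-n$ generators. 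For item (1), unwinding the definitions one finds that $-\Upsilon_L'(0)$ equals the Alexander filtration level at which the generator of $\mathcal F^{\{j\leq0\}}\cH^\infty_0(L)$ first appears inside $\{j\leq0\}$: indeed $A_0=\{j\leq0\}$ forces $\Upsilon_L(0)=0$, and for small $t>0$ the slope of $\Upsilon_L$ is controlled precisely by how far below $A=0$ one must go to realize that class. This quantity is the definition of $\tau(L)$ in \cite{Cavallo}, and the same computation with the degree $1-n$ generator yields $\tau^*(L)=-(\Upsilon^*_L)'(0)$.

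Finally, for (6) we use that quasi-alternating links have thin link Floer homology: $\cH^\infty_*(L)$ is supported on Maslov gradings depending affinely on the Alexander grading, so the model complex $\cC^\infty_*(L)$ decomposes, up to filtered homotopy, as a "staircase" part carrying the $\cH^\infty_0$ and $\cH^\infty_{1-n}$ classes plus acyclic square summands; thinness forces the staircase to be the standard one, whose two arms have lengths governed by $\tau(L)$ and $\tau^*(L)$. For such a complex $\Upsilon_L$ and $\Upsilon^*_L$ are piecewise linear and, on $[0,1]$, linear with slopes $-\tau(L)$ and $-\tau^*(L)$; substituting the values $\tau(L)=\tfrac{n-1-\sigma(L)}{2}$ and $\tau^*(L)=\tfrac{1-n-\sigma(L)}{2}$ for quasi-alternating links (read off from the thin structure and the known formula for $\widehat{HFL}(L)$, or quoted from \cite{Cavallo}) gives the stated expressions. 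The step I expect to be the main obstacle is item (4): one must pin down exactly how dualizing the collapsed complex acts on the combined filtration $\mathcal F$, so that the resulting co-filtration genuinely re-indexes by $S\mapsto\overline{\iota S}$, and verify that duality really does swap the two distinguished classes; item (2) needs similar care in checking that the conjugation map survives the collapse of $U_1,\dots,U_n$ to $U$ as a \emph{filtered} homotopy equivalence.
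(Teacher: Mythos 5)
Your overall strategy---derive each identity from the filtered local equivalence class of $\cC^\infty(L)$, tracking the two distinguished homology classes under the relevant chain-level symmetry---is exactly what the paper does, and items (3), (4), (6) are correct outlines of the corresponding arguments (Corollary \ref{cor:reflection1}, Theorem \ref{teo:mirror}/Proposition \ref{prop:mirror}, and Theorem \ref{teo:alternating}). But two steps you treat as routine are doing hidden work, and one is currently a gap.

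The main gap is in item (5). The K\"unneth formula gives you a representative $x_1\otimes x_2$ of the distinguished class achieving $\mathcal F$-level $\gamma_1+\gamma_2$, and this only establishes the \emph{super-additivity} $\Upsilon_{L_1\#L_2}(t)\geq\Upsilon_{L_1}(t)+\Upsilon_{L_2}(t)$. Your phrase ``the minimal translate $\dots$ splits as a sum'' asserts that no representative of the distinguished class in $\cC^\infty(L_1\#L_2)$ can do strictly better, but a general cycle in the tensor complex is not a pure tensor, so this does not follow from additivity of the defining linear functional. The paper closes this by proving super-additivity for all $S$ (Proposition \ref{prop:super-additive}) and then applying the mirror duality of Proposition \ref{prop:mirror} to get the reverse inequality when $\overline{\iota S}=S$ and $h(S)=0$ (Corollary \ref{cor:additive}); $A_t$ is exactly such a region. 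Item (1) has the same shape: Lemma \ref{lemma:generator} gives $\Upsilon_L(t)\leq-t\tau(L)$ for small $t$ (the restriction to $\widehat{CFL}$ of a representative of the distinguished class generates $\widehat{\mathcal{HFL}}_0$, and its minimal Alexander level is $\tau$), but the reverse $\Upsilon_L(t)\geq -t\tau(L)$ is again obtained by applying the same bound to $L^*$ and invoking Proposition \ref{prop:mirror}. Your ``unwinding the definitions'' skips both of these. Finally, in item (2) for links with $n\geq 2$ the group $\cH^\infty_0(L)\cong\F^{2^{n-2}}$ has dimension larger than one, so a filtered automorphism intertwining $(j,A)\mapsto(A,j)$ could \emph{a priori} move the distinguished generator of $\mathcal F^{\{j\leq0\}}\cH^\infty_0(L)$. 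The paper rules this out by citing \cite[Lemma 4.6]{Sarkar2}, which shows the swap is chain homotopic to the identity; calling the swap a ``conjugation symmetry'' does not by itself supply this.
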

We prove that each $\Upsilon_S(L)$ gives a lower bound for the slice genus $g_4(L)$, which as usual is defined as the minimum genus of a compact, oriented surface $\Sigma$ properly embedded in $D^4$ such that $\partial\Sigma=L$. We recall that, since we can add tubes between surfaces in $D^4$ without increasing the genus, we can suppose that such $\Sigma$ is also connected. Moreover, in Subsection \ref{subsection:slice} we define the notion of distance $h_S(m)$ from the point $(0,m)$ to the centered south-west region $S$, where centered means that $(0,0)\in\partial S$; therefore, we have the following result.
\begin{teo}
 \label{teo:slice_genus}
 If $L$ is a link in $S^3$ with $n$ components then \[-\Upsilon_S(L)\leq h_{\pm S}(g_4(L)+n-1)\quad\text{ and }\quad-\Upsilon^*_S(L)\leq h_{\pm S}(g_4(L))\] for every centered south-west region $S$ of $\mathbb R^2$.
 In particular, for the classic $\Upsilon$-functions one has \[-\Upsilon_L(t)\leq t(g_4(L)+n-1)\quad\text{ and }\quad-\Upsilon^*_L(t)\leq t\cdot g_4(L)\] for every $t\in[0,1]$.
\end{teo}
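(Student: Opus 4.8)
\emph{Proof strategy.} The plan is to detect the distinguished homology classes behind $\Upsilon_S(L)$ and $\Upsilon_S^*(L)$ by means of the maps induced by a genus‑minimising cobordism to the unknot, and then to bound the shift in the filtration $\mathcal F$ that these maps produce. First I would pick a connected, oriented surface $\Sigma$ properly embedded in $D^4$ with $\partial\Sigma=L$ and genus $g=g_4(L)$ (connectedness is harmless, as recalled in the introduction), and delete from $D^4$ a small open ball centred at an interior point of $\Sigma$: this turns $D^4$ into $S^3\times[0,1]$ and $\Sigma$ into a connected cobordism $C$ from $L$ to the unknot $U$, with $\chi(C)=\chi(\Sigma)-1=1-2g-n$. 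Since $C$ is connected with non‑empty ends, it admits a movie with no births and no deaths, hence with exactly $-\chi(C)=2g+n-1$ saddles; the identities (number of merges)$+$(number of splits)$=2g+n-1$ and (number of merges)$-$(number of splits)$=n-1$, the latter coming from the component count along the movie, force $g+n-1$ merges and $g$ splits. Pairing each split with a later merge, we may view $C$ as the composition of $g$ genus handles and $n-1$ fusion merges.

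\emph{Induced maps and distinguished classes.} By the construction underlying Theorem \ref{teo:concordance} — the maps of Sarkar \cite{Sarkar} on grid diagrams, already employed in \cite{Cavallo} — every elementary cobordism in this movie induces an $\F[U,U^{-1}]$‑equivariant chain map between the corresponding collapsed complexes $\cC^\infty$. By Theorem \ref{teo:dim} the group $\mathcal F^{\{j\le0\}}\cH_0^\infty(L)$ is one‑dimensional, and there is likewise a unique class in $\mathcal F^{\{j\le0\}}\cH^\infty_{1-n}(L)\setminus\mathcal F^{\{j\le-1\}}\cH^\infty_{1-n}(L)$; these are exactly the classes computing $\Upsilon_S(L)$ and $\Upsilon_S^*(L)$, while the analogous class for $U$ is the degree‑zero generator of $\cC^\infty(U)=\F[U,U^{-1}]$, sitting at filtration level $(0,0)$. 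Composing over the movie (running $C$ in whichever direction makes the Maslov‑grading bookkeeping work for the class in question), one gets a map that transports the distinguished class of $\cC^\infty(U)$ to the corresponding distinguished class of $\cC^\infty(L)$.

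\emph{Bounding the filtration shift and conclusion.} The key lemma is a bound on the $\mathcal F$‑shift introduced by each elementary piece: a genus handle shifts the filtration of each of the two distinguished classes by at most one elementary unit, a fusion merge shifts the $\Upsilon_S$‑class by at most one unit but shifts the $\Upsilon_S^*$‑class by nothing, and births/deaths (absent from our movie) are $\mathcal F$‑preserving. Adding these bounds over the decomposition above, the distinguished class of $\cC^\infty(L)$ underlying $\Upsilon_S$ is detected after enlarging the centred region $S$ by at most $g+n-1$ units, and the one underlying $\Upsilon_S^*$ after at most $g$ units. Since the $U$‑class lies on $\partial S$, this says $-\Upsilon_S(L)\le h_S(g_4(L)+n-1)$ and $-\Upsilon_S^*(L)\le h_S(g_4(L))$, by the very definition of the distance function $h_S$ of Subsection \ref{subsection:slice}, for which "$k$ units of enlargement" contributes $h_S(k)$. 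Applying this also to the reflected region $-S$ and invoking $\Upsilon_S=\Upsilon_{-S}$, $\Upsilon_S^*=\Upsilon_{-S}^*$ from Proposition \ref{prop:properties}(2) upgrades $h_S$ to $h_{\pm S}$. Finally, specialising to $S=A_t$ for $t\in[0,1]$, where $h_{\pm A_t}(m)=tm$ and $\Upsilon_{A_t}(L)=\Upsilon_L(t)$, $\Upsilon_{A_t}^*(L)=\Upsilon_L^*(t)$, yields the two stated bounds for the classical $\Upsilon$‑functions.

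\emph{Main obstacle.} The heart of the argument is the per‑piece estimate invoked in the last paragraph: pinning down the exact Maslov degree and the exact $\mathcal F$‑shift of the Sarkar‑type map attached to a single saddle, keeping careful track of whether it is a merge or a split and of which of the two distinguished classes is being transported. This asymmetry — merges being "expensive" for the grading‑$0$ class but "free" for the grading‑$(1-n)$ class, and vice versa for splits — is precisely what separates the bound $g_4(L)+n-1$ for $\Upsilon_S$ from the bound $g_4(L)$ for $\Upsilon_S^*$, and verifying it (as well as checking that the born and killed split unknots may genuinely be removed from the movie, so that only the $2g+n-1$ saddles contribute) is the one genuinely delicate point.
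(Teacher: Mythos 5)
Your proposal is essentially the paper's proof: the paper likewise removes a disk from a genus-minimising surface, arranges the resulting cobordism (there run from $\bigcirc_1$ to $L$) in normal form with $g$ torus moves, $n-1$ split moves and no births or deaths, and uses maps of Sarkar type to show each elementary piece shifts the Alexander filtration by at most one while preserving the $j$-filtration. The one point to make explicit in your write-up is that the discrepancy between $g+n-1$ and $g$ is not produced by running the same movie in opposite directions, but by attaching two different chain maps to the same split move: the map $s$ of Subsection \ref{subsection:slice} preserves the Maslov grading and shifts $\mathcal{A}$ by one (it tracks the grading-$0$ class), whereas the map $s'$ of Lemma \ref{lemma:split_special} drops the Maslov grading by one and preserves the full filtration $\mathcal{F}$ (it tracks the grading-$(1-n)$ class at no filtration cost).
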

Now let us consider the south-west regions $V_k$ for $k\geq0$, defined as the subset of the plane consisting of the pairs $(j,A)$ such that $j\leq0$ and $A\leq k$. We can now define the invariant $\nu^+(L)$ as the minimum $k$ such that $-2\cdot V_L(k):=\Upsilon_{V_k}(L)=0$. The author gave an equivalent definition of $\nu^+(L)$ in \cite[Section 4]{Cavallo}; although, in the latter paper the invariant was denoted by $\nu(L)$ and the concordance invariance was not proven.
\begin{prop}
 Suppose that $L,L_1,L_2$ are links in $S^3$ and $L$ has $n$ components. Then we have that $\nu^+(L)$ is a concordance invariant and \[0\leq\tau(L)\leq\nu^+(L)\leq g_4(L)+n-1\quad\text{ and }\quad\nu^+(L_1\#L_2)\leq\nu^+(L_1)+\nu^+(L_2)\:.\]
\end{prop}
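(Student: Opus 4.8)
The plan is to read everything off the behaviour of the family $\{\Upsilon_{V_k}(L)\}_{k\ge 0}$, since $\nu^+(L)$ is by construction a function of this family. Each $V_k$ is a south-west region of $\Z^2$, so Theorem~\ref{teo:concordance} (equivalently Proposition~\ref{prop:properties}) makes every $\Upsilon_{V_k}(L)$ a concordance invariant. Moreover $\Upsilon_{V_k}(L)\le 0$ for all $k$ — this is the link analogue of the nonnegativity of the knot invariants $V_k$ and follows from Theorem~\ref{teo:dim} — the sequence $\Upsilon_{V_k}(L)$ is non-decreasing in $k$ because $V_k\subseteq V_{k+1}$, and it equals $0$ once $k$ is large, since $\cC^\infty(L)$ is finitely generated over $\F[U,U^{-1}]$ and so the distinguished generator of $\mathcal F^{\{j\le 0\}}\cH^\infty_0(L)$ eventually lies in $\mathcal F^{V_k}$. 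Hence $\nu^+(L)=\min\{k\ge 0:\Upsilon_{V_k}(L)=0\}$ is well defined and non-negative, and being determined by a concordance-invariant family it is itself a concordance invariant.

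Next I would establish the remaining inequalities of the first chain. Nonnegativity $\nu^+(L)\ge 0$ is immediate from the definition. For $\tau(L)\le\nu^+(L)$, put $k_0=\nu^+(L)$: the equality $\Upsilon_{V_{k_0}}(L)=0$ says precisely that the distinguished generator of $\mathcal F^{\{j\le 0\}}\cH^\infty_0(L)$ is carried by a cycle lying in $\mathcal F^{V_{k_0}}=\mathcal F^{\{j\le 0,\,A\le k_0\}}$, and since $\tau(L)$ is characterised in \cite{Cavallo} as the least Alexander level at which this generator already appears inside the half-plane $\{j\le 0\}$, we get $\tau(L)\le k_0$. For $\nu^+(L)\le g_4(L)+n-1$, set $m=g_4(L)+n-1$ and apply Theorem~\ref{teo:slice_genus} to $S=V_m$, which is a centered south-west region since $(0,0)\in\partial V_m$: the point $(0,m)$ is the corner of $V_m$, hence lies on $\partial V_m$ (and on the boundary of its reflection), so the relevant distance $h_{\pm V_m}(m)$ vanishes and Theorem~\ref{teo:slice_genus} gives $-\Upsilon_{V_m}(L)\le 0$; with $\Upsilon_{V_m}(L)\le 0$ this forces $\Upsilon_{V_m}(L)=0$, i.e. $\nu^+(L)\le m$.

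For subadditivity I would use the connected-sum formula for the collapsed link Floer complex: up to an overall grading and filtration shift recording the change in the number of components, $\cC^\infty(L_1\#L_2)$ is $\mathcal F$-filtered locally equivalent to $\cC^\infty(L_1)\otimes_{\F[U,U^{-1}]}\cC^\infty(L_2)$, the filtration of the tensor product being the convolution of those of the factors. If $x_i$ represents the distinguished generator of $\mathcal F^{\{j\le 0\}}\cH^\infty_0(L_i)$ and lies in $\mathcal F^{V_{\nu^+(L_i)}}$, then by the K\"unneth theorem over $\F$ the class $x_1\otimes x_2$ represents the distinguished generator of $\mathcal F^{\{j\le 0\}}\cH^\infty_0(L_1\#L_2)$ and lies in $\mathcal F^{V_{\nu^+(L_1)}+V_{\nu^+(L_2)}}=\mathcal F^{V_{\nu^+(L_1)+\nu^+(L_2)}}$, the last equality holding because the Minkowski sum of the quarter-regions $V_{k_1}$ and $V_{k_2}$ is $V_{k_1+k_2}$. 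Hence $\Upsilon_{V_{\nu^+(L_1)+\nu^+(L_2)}}(L_1\#L_2)=0$, so $\nu^+(L_1\#L_2)\le\nu^+(L_1)+\nu^+(L_2)$.

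I expect the main obstacle to be this last step: pinning down the connected-sum/tensor-product identification at the level of the $\mathcal F$-filtered local equivalence class, and in particular getting the grading and filtration shifts right in the presence of the collapsed variables $U_1,\dots,U_n\mapsto U$ and of the dependence on the number of components. This is the link-theoretic analogue of the knot tensor-product formula, and it is where all the bookkeeping lives; once the resulting inequality $\Upsilon_{V_{k_1+k_2}}(L_1\#L_2)\ge\Upsilon_{V_{k_1}}(L_1)+\Upsilon_{V_{k_2}}(L_2)$ is in hand — in the spirit of Proposition~\ref{prop:properties}(5) but for the regions $V_k$ — the deduction above is immediate.
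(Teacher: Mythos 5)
Your proposal follows essentially the same route as the paper: concordance invariance, nonnegativity, and the slice-genus bound are all read off the family $\{\Upsilon_{V_k}(L)\}$ via Theorems~\ref{teo:concordance}, \ref{teo:dim} and \ref{teo:slice_genus}, and subadditivity comes from the tensor-product description of $\cC^\infty(L_1\#L_2)$ together with the Minkowski-sum identity $V_{k_1}+V_{k_2}=V_{k_1+k_2}$. Your anticipated ``main obstacle'' in the connected-sum step is not one: Theorem~\ref{teo:connected_sum} identifies $\cC^\infty(L_1\#L_2)$ with $\cC^\infty(L_1)\otimes_{\F[U,U^{-1}]}\cC^\infty(L_2)$ with no grading or filtration shift, and the K\"unneth isomorphism should be taken over the PID $\F[U,U^{-1}]$ rather than over $\F$.

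One step does need tightening: your justification of $\tau(L)\le\nu^+(L)$. The invariant $\tau(L)$ is defined through $\widehat{\mathcal{HFL}}_0(L)$, the $\mathcal A$-filtered homology of the $U=0$ complex $\widehat{CFL}(L)$, whereas $\Upsilon_{V_{k_0}}(L)=0$ is a statement about a cycle in $\mathcal F^{V_{k_0}}\cC^\infty_0(L)$. Read literally, ``the least Alexander level at which the distinguished generator appears inside $\{j\le 0\}$'' is the definition of $\nu^+(L)$ itself, which would wrongly collapse the inequality to an equality (and this fails, see the example in Figure~\ref{Nu}). The missing bridge is Lemma~\ref{lemma:generator}: a cycle in $\mathcal F^{V_{k_0}}\cC^\infty_0(L)$ representing a class of minimal $j$-level zero restricts to a generator of $\widehat{\mathcal{HFL}}_0(L)$ lying in $\mathcal A$-level at most $k_0$, whence $\tau(L)\le k_0$. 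The paper's proof invokes exactly this lemma, and your argument should as well.
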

In \cite{Unoriented} Ozsv\'ath, Stipsicz and Szab\'o introduced the homology group $HFL'(L)$ that they called unoriented link Floer homology group. From $HFL'(L)$ they define the $\upsilon$-set of $L$ which is a set of $2^{n-1}$ integers and is an isotopy invariant of unoriented links after a suitable normalization.

Moreover, for knots they showed that $\upsilon(K)$, that coincides with $\Upsilon_K(1)$ and is the only element of the $\upsilon$-set in this case, gives a lower bound for the \emph{$4$-dimensional smooth crosscap number} $\gamma_4(K)$, which is the minimum first Betti number of a compact surface $F$ properly embedded in $D^4$ and such that $\partial F=L$. Note that this time $F$ is not necessarily orientable (and always non-oriented). 

Starting from these results, in this paper we consider a slightly different version of $HFL'(L)$ and we prove that it is an unoriented concordance invariant. 
Since it shares many information with the original group and we only use this new version, we denote it in the same way.

We say that a collection of $n$ disjoint annuli $\Sigma$ is an \emph{unoriented concordance} between $L_1$ and $L_2$, which are $n$-component links, if $\Sigma$ is a concordance between $L'_1$ and $L_2'$, obtained by changing the orientation of some components on $L_1$ and $L_2$ respectively.
\begin{teo}
 \label{teo:unoriented_concordance}
 The group $HFL'(L_1)\left\llbracket\frac{\sigma(L_1)}{2}\right\rrbracket$ is $j$-filtered isomorphic to $HFL'(L_2)\left\llbracket\frac{\sigma(L_2)}{2}\right\rrbracket$ whenever $L_1$ is unoriented concordant to $L_2$. 
\end{teo}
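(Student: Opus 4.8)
The argument follows the same blueprint as the proof of Theorem \ref{teo:concordance}: we decompose the unoriented concordance $C$ into elementary cobordisms and, for each piece, we build a $j$-filtered chain map on the complex computing $HFL'$, assembled from Sarkar's maps on grid diagrams as in \cite{Sarkar} and \cite{Cavallo}. The genuinely new feature, compared with the oriented case, is that the $n$ annuli making up $C$ need not be coherently oriented, so along $C$ we may have to pass through band moves that reverse the orientation of a component; the normalization by $\frac{\sigma(L)+h}{2}$ is introduced precisely to absorb this.

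The first step is to reduce to an oriented concordance. We orient $L_1$ arbitrarily and transport this orientation along the annuli of $C$, obtaining an orientation $o_C$ of $L_2$ for which $C$ becomes an honest oriented concordance. If $o_C$ differs from a prescribed orientation of $L_2$ by reversal of a sublink $J$, then two compatible facts are used: the signature changes by $-2\lk(J,L_2\setminus J)$, by Murasugi's formula, while $HFL'(L_2)$ picks up a $j$-filtered grading shift by $\lk(J,L_2\setminus J)$, by the standard behaviour of link Floer homology under orientation reversal (cf. \cite{OSlinks,Unoriented}). Hence $HFL'(L)\left\llbracket\frac{\sigma(L)+h}{2}\right\rrbracket$ does not depend on the chosen orientation of $L$, and it suffices to prove the statement for oriented concordances; along such a cobordism $\sigma$ is a concordance invariant, so the two normalizing shifts coincide and we only have to produce a plain $j$-filtered isomorphism.

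Given an oriented concordance, we pick a generic Morse function on $C$ and cut it into births of split unknots, deaths of split unknots, and merge/split saddles. A birth and a death induce $j$-filtered chain maps with explicit grading shifts — a birth being essentially a tensor with, and a death a contraction of, the complex of the split unknot — while each saddle induces a $j$-filtered chain map through the Sarkar maps on a grid diagram, whose Maslov shift is computed on the grid, exactly as in the proof of Theorem \ref{teo:concordance} and in \cite{Cavallo}. Composing these maps along $C$, and the analogous ones along the reversed concordance $\overline C$, we obtain $j$-filtered maps in both directions; tracking a distinguished non-torsion homology class (as in the invariance proofs for $\tau(L)$ and $\tau^*(L)$) then shows that they are mutually inverse isomorphisms on homology, which is exactly the claimed isomorphism.

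The main obstacle is the grading bookkeeping. One has to check that all the elementary maps are genuinely $j$-filtered on the model complex underlying $HFL'$ and to pin down their Maslov shifts exactly, so that they add up correctly over $C$; in particular the orientation-reversal shift of $HFL'$ invoked above must match Murasugi's signature formula on the nose. A further subtlety is that the intermediate links appearing along $C$ may have more or fewer than $n$ components, so their normalizations must be carried consistently through the composition, and the free parameter $h$ provides exactly the slack needed for this. All of these points are treated combinatorially on grid diagrams, relying on the computations of \cite{Sarkar} and \cite{Cavallo}.
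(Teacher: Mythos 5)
Your overall route coincides with the paper's: reduce an unoriented concordance to an orientation reversal followed by an oriented concordance, and then check that the normalization by $\frac{\sigma(L)+h}{2}$ exactly compensates the $\delta$-grading shift caused by reorientation. That is precisely what the paper does. But where the paper's proof is a two-line citation --- the preceding (unlabeled) theorem, which handles orientation reversal via \cite[Proposition~7.1]{Unoriented}, combined with Corollary~\ref{cor:concordance_prime}, which has already been established as a consequence of Theorem~\ref{teo:concordance} and Lemma~\ref{lemma:prime} --- you re-derive both ingredients from scratch, and one of those re-derivations has a genuine gap.

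The gap is in your third paragraph, where you propose to build $j$-filtered chain maps ``on the complex computing $HFL'$'' directly from Sarkar's grid maps. Sarkar's maps (e.g.\ the band move $b_2=\Id$ of Proposition~\ref{prop:b2}, or the stabilization $b_1$) are chain maps on $\cC^\infty$ precisely because $\partial^-$ counts only the $\OO$-markings and is therefore insensitive to moving the $\X$-markings. That fails for $CFL'$: the differential $\partial'$ carries the exponent $n_{\textbf w}(\phi)+n_{\textbf z}(\phi)$, so it \emph{does} depend on where the $\X$-markings sit, and the Identity map is not a chain map between $CFL'(D_3)$ and $CFL'(D_4)$. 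The paper circumvents this entirely: the concordance maps are constructed on $\cC^\infty$ (Theorem~\ref{teo:concordance}) where they are shown to preserve the \emph{full} $\mathcal F$-filtration (both $j$ and $\mathcal A$, not merely $j$), and Lemma~\ref{lemma:prime}, Point~3, together with the structural isomorphism $CFL'(L)\cong\cC^\infty(L)\oplus\cC^\infty(L)\lkhov-1\rkhov$ of Theorem~\ref{teo:prime}, transports them to $j$-filtered maps on $CFL'$. The hypothesis that the $\cC^\infty$-maps preserve $\mathcal A$ as well as $j$ is exactly what makes the transported maps $j$-filtered; this is not a bookkeeping detail but the heart of why $HFL'$ inherits the concordance invariance. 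Your draft also ascribes to $h$ the role of ``providing slack'' for intermediate links of varying component number; this is a misreading --- $h$ is a single overall constant appearing symmetrically on both sides and plays no role in absorbing anything along the cobordism. Finally, your Murasugi-plus-grading-shift argument for the orientation-reversal step is consistent with what the paper proves, but \cite[Proposition~7.1]{Unoriented} gives the $\delta$-grading shift directly in terms of $\frac{\sigma(\vec L_1)-\sigma(\vec L_2)}{2}$, avoiding the sign-matching you would otherwise have to verify.
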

From Theorem \ref{teo:unoriented_concordance} we obtain that the wideness of the $\upsilon$-set $|\upsilon_{\max}(L)-\upsilon_{\min}(L)|$ and the numbers $\upsilon_{\max}(L)-\frac{\sigma(L)}{2}$ and $\upsilon_{\min}(L)-\frac{\sigma(L)}{2}$ are unoriented concordance invariants of $L$. Using the same techniques in Subsection \ref{subsection:slice}, we show that such invariants give lower bounds for $\gamma_4^{(k)}(L)$, a version of the $4$-dimensional smooth crosscap number for links. In fact, we say that $\gamma_4^{(k)}(L)$ is defined as the minimum first Betti number of a compact surface, properly embedded in $D^4$, which has $k$ connected components and is bounded by $L$.
\begin{teo}
 \label{teo:wideness}
 Say the $n$-component link $L$ in $S^3$ bounds a compact, unoriented surface $F$ with $k$ connected components and properly embedded in $D^4$. Then we have that \[\left|k-1-\upsilon_{\max}(L)+\upsilon_{\min}(L)\right|\leq\gamma_4^{(k)}(L)\:.\]
\end{teo}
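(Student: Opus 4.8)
The strategy is to fix a properly embedded surface $F\subset D^4$ with $\partial F=L$, $k$ connected components and $b_1(F)=\gamma_4^{(k)}(L)$, and to run over a movie presentation of $F$ the same telescoping estimate used in Subsection~\ref{subsection:slice} for the slice genus, now tracking the group $HFL'$ together with its $j$-filtration rather than the filtered group $\cH^\infty$.

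First I would put $F$ in standard form. By cancelling local maxima and superfluous local minima of the radial function on $D^4$ (which has no interior critical points), $F$ can be isotoped rel boundary to a surface built from $k$ disjoint trivial disks near the centre of $D^4$ by attaching bands; since $b_1(F)=k-\chi(F)$ for a surface with boundary and a configuration of $k$ disks with $m$ bands has Euler characteristic $k-m$, the number of bands is exactly $m=b_1(F)$. Reading the radial direction as a movie from $\partial D^4$ inward, this exhibits $L$ as obtained from the $k$-component unlink $U_k$ by a sequence of $m=\gamma_4^{(k)}(L)$ band moves, some of which may be orientation-reversing.

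Next I would attach to each band move a map on the $HFL'$ groups of the two links by the Sarkar grid-map construction, exactly as the elementary cobordism maps are built for $\cC^\infty$ elsewhere in the paper; since $HFL'$ does not see orientations, the orientation-reversing bands present no difficulty for the groups themselves, only for the filtration. By (the proof of) Theorem~\ref{teo:unoriented_concordance} these maps are $j$-filtered after the normalization by $\left\llbracket\frac{\sigma+h}{2}\right\rrbracket$; in particular they are compatible with the $\upsilon$-set, and the composite of a band map with the map of the reversed band differs from the identity only by terms that do not affect the $j$-filtration. The crucial quantitative input is then that a single band move, orientable or not, changes both $\upsilon_{\max}$ and $\upsilon_{\min}$ by at most $1$, hence changes the wideness $\upsilon_{\max}-\upsilon_{\min}$ by at most $1$. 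Granting this and applying it $m$ times along the movie gives
\[\bigl|(\upsilon_{\max}(L)-\upsilon_{\min}(L))-(\upsilon_{\max}(U_k)-\upsilon_{\min}(U_k))\bigr|\le m.\]
Finally, from the disjoint union formula for $HFL'$ and the $\upsilon$-set one computes $\upsilon_{\max}(U_k)-\upsilon_{\min}(U_k)=k-1$, since each split unknot summand widens the $\upsilon$-set by exactly $1$; the displayed inequality then becomes $|k-1-\upsilon_{\max}(L)+\upsilon_{\min}(L)|\le m=\gamma_4^{(k)}(L)$, which is the claim.

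The main obstacle is the per-band estimate on the extremes of the $\upsilon$-set: this is the unoriented-link analogue of the Ozsv\'ath, Stipsicz and Szab\'o bound on the change of $\upsilon$ under a band move, and it requires a careful analysis of the Sarkar map induced by an elementary saddle on a grid diagram and of its effect on the $j$-filtration of $HFL'$ after the $\sigma$-normalization, the orientation-reversing case --- where the two sides of the band carry incompatible orientations --- being the genuinely new point. The standardization of the movie and the identity $b_1(F)=k-\chi(F)$ are routine, and the computation $\upsilon_{\max}(U_k)-\upsilon_{\min}(U_k)=k-1$ follows directly from the stabilization behaviour of $HFL'$ under adding a split unknot.
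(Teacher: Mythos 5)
Your overall strategy — put $F$ into a movie of $m=b_1(F)$ band moves from $\bigcirc_k$ to $L$, bound the per-band change of $\upsilon_{\max}-\upsilon_{\min}$ by $1$, and compare with $\upsilon_{\max}(\bigcirc_k)-\upsilon_{\min}(\bigcirc_k)=k-1$ — is the same underlying mechanism that the paper runs through Lemma \ref{lemma:ill}, Lemma \ref{lemma:new} and Proposition \ref{prop:unoriented_bound}; the paper just packages the per-band control into a global bound on $\upsilon_{\max}$ and $\upsilon_{\min}$ in terms of $g$, $\vv$, $k$, $n$ and $\overline e_{\vec L}(F)$, and then reads off $|k-1-\upsilon_{\max}+\upsilon_{\min}|\leq 2g+\vv+n-k=b_1(F)$. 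So the route is not genuinely different; what differs is how you state and justify the per-band input, and there you have a real gap.

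The statement you isolate as ``the crucial quantitative input'' — that a single non-orientable band move changes each of $\upsilon_{\max}$ and $\upsilon_{\min}$ by at most $1$ — is false as written, and the implication you draw from it (that the wideness then changes by at most $1$) does not follow even if it were true, since without a common direction of change the two individual bounds allow a wideness jump of $2$. For the falsity, consider a torus knot $T_{2,2k+1}$: it is obtained from the unknot by a single non-orientable band (it bounds a M\"obius strip in $S^3$), yet $\upsilon(T_{2,2k+1})=-k$ is unbounded. What actually holds, and what Lemma \ref{lemma:ill} and Proposition \ref{prop:unoriented_saddle} establish, is subtler: for a non-orientable saddle $F$ between $L_1$ and $L_2$, the changes in $\upsilon_{\max}$ and in $\upsilon_{\min}$ each lie in a \emph{common} interval of length exactly $1$ whose center involves $e(F)$ and the relevant linking numbers, namely
\[
\left[-\tfrac{2-e(F)}{4}+c,\ \tfrac{2+e(F)}{4}+c\right],\qquad
c=\tfrac12\bigl(\lk(K_1,L_1\setminus K_1)-\lk(K_2,L_2\setminus K_2)\bigr).
\]
This interval can sit arbitrarily far from $0$, so the individual $\upsilon$'s can jump a lot; it is only because $\upsilon_{\max}$ and $\upsilon_{\min}$ are confined to the \emph{same} interval that their difference is controlled by $1$. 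The $\sigma$-normalization you invoke does not rescue the individual bound either — it makes $\upsilon$ an unoriented invariant but does not absorb the $e(F)/4$ and linking-number shifts band by band. In short, the conclusion of your per-band estimate (wideness changes by $\leq1$) is correct, but your reasoning for it is wrong, and proving it correctly is exactly the content of the paper's Lemma \ref{lemma:ill} with its Euler-number and linking-number bookkeeping, which your sketch elides. Once that lemma is in hand, the rest of your telescoping argument goes through, including the computations $b_1(F)=k-\chi(F)$ and $\upsilon_{\max}(\bigcirc_k)-\upsilon_{\min}(\bigcirc_k)=k-1$, which are both correct.
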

A corollary of this theorem is the following result, which was already proved in a different way by Donald and Owens in \cite{DO}.
\begin{cor}
 \label{cor:DO}
 Every quasi-alternating link $L$ can bound an unoriented, compact surface $F$, properly embedded in $D^4$, only when the Euler characteristic $\chi(F)$ is at most equal to one.
\end{cor}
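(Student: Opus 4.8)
The plan is to read off Corollary \ref{cor:DO} from Theorem \ref{teo:wideness}, the only additional input being the value of the normalized $\upsilon$-set on a quasi-alternating link; the decisive point is that its wideness vanishes.

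So the first thing I would establish is that, for a quasi-alternating link $L$, one has $\upsilon_{\max}(L)=\upsilon_{\min}(L)$. This is the link analogue of the fact that, for an alternating (or quasi-alternating) knot, $\upsilon(K)$ is a single number pinned down by $\sigma(K)$. The mechanism is thinness: the homology $HFL'(L)$ of a quasi-alternating link is supported on a single diagonal for the relevant grading, the diagonal being determined by $\sigma(L)$, by the quasi-alternating extension (Manolescu--Ozsv\'ath) of the Ozsv\'ath--Szab\'o computation of $\widehat{HFL}$ for alternating links. Each entry of the $\upsilon$-set is a filtration level attached to a distinguished class of $HFL'(L)$, and since all such classes lie on the one diagonal these levels must agree; with the normalization in force their common value is $\tfrac{\sigma(L)}{2}$, which fits with Proposition \ref{prop:properties}(6), where $\Upsilon_L(1)$ and $\Upsilon_L^*(1)$ are symmetric about $\tfrac{\sigma(L)}{2}$ and the normalized $\upsilon$-set collapses to that centre. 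For the corollary only the equality $\upsilon_{\max}(L)=\upsilon_{\min}(L)$ is needed, not the exact value.

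Granting this, suppose the $n$-component quasi-alternating link $L$ bounds a compact, unoriented surface $F$ properly embedded in $D^4$; following the standing convention I assume $F$ has no closed components, so every component of $F$ meets $L$, and I write $k$ for the number of components of $F$. Then $H_2(F)=0$, so $\chi(F)=b_0(F)-b_1(F)=k-b_1(F)$, and since $F$ is among the surfaces over which $\gamma_4^{(k)}(L)$ is a minimum we have $b_1(F)\ge\gamma_4^{(k)}(L)$. Feeding $\upsilon_{\max}(L)=\upsilon_{\min}(L)$ into Theorem \ref{teo:wideness} gives
\[k-1=\bigl|\,k-1-\upsilon_{\max}(L)+\upsilon_{\min}(L)\,\bigr|\le\gamma_4^{(k)}(L)\le b_1(F),\]
hence $\chi(F)=k-b_1(F)\le 1$, which is the assertion of the corollary.

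The one genuinely non-formal step is the vanishing of the wideness: I need it to be thinness of $HFL'(L)$ in particular — rather than of a refinement keeping all $n$ variables — that forces the $2^{n-1}$ members of the $\upsilon$-set together, and I need the $\sigma(L)$-normalization to be the one under which this happens uniformly across the $2^{n-1}$ summands. If thinness of $HFL'$ for quasi-alternating links is not already on record, I would prove it first by the usual skein-exact-sequence induction (exactly as in the alternating knot case) and only then extract the $\upsilon$-set; everything after that is bookkeeping.
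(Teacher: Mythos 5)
Your proof is correct and follows essentially the same route as the paper: deduce $\upsilon_{\max}(L)=\upsilon_{\min}(L)$ from thinness of the Floer homology of a quasi-alternating link, then feed this into Theorem \ref{teo:wideness} and translate the bound on $\gamma_4^{(k)}$ into $\chi(F)\leq 1$. The thinness fact you flag as the "one genuinely non-formal step" is already on record in the paper — it is Petkova's Theorem \ref{teo:alternating} together with the remark immediately following it that quasi-alternating links are $\widehat{HFL}$-thin — so no skein-exact-sequence induction is needed; otherwise your bookkeeping ($\chi(F)=k-b_1(F)$ and $b_1(F)\geq\gamma_4^{(k)}(L)$) matches the paper's explicit computation of $b_1(F)=2g+\vv+n-k$.
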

Theorem \ref{teo:wideness} gives a bound that involves the wideness of $\upsilon(L)$. We give other inequalities in the following theorem.
\begin{teo}
 \label{teo:unoriented_bound}
 Consider an $n$-component link $L$ in $S^3$ which bounds a compact, unoriented surface $F$ with $k$ connected components and properly embedded in $D^4$. Then we have that \[\left|\upsilon_{\max}(L)-\dfrac{\sigma(L)}{2}-\dfrac{k-n}{2}\right|\leq\gamma_4^{(k)}(L)\quad\text{ and }\quad\left|\upsilon_{\min}(L)-\dfrac{\sigma(L)}{2}-\dfrac{2-k-n}{2}\right|\leq\gamma_4^{(k)}(L)\:.\] In particular, when $k=n$ one has \[\left|\upsilon_{\max}(L)-\dfrac{\sigma(L)}{2}\right|\leq\gamma_4^{(n)}(L)\quad\text{ and }\quad\left|\upsilon_{\min}(L)-\dfrac{\sigma(L)}{2}+n-1\right|\leq\gamma_4^{(n)}(L)\] and when $k=1$ one has \[\left|\upsilon_i(L)-\dfrac{\sigma(L)}{2}-\dfrac{1-n}{2}\right|\leq\gamma_4^{(1)}(L)\] for every $\upsilon_i(L)$ in the $\upsilon$-set of $L$.
\end{teo}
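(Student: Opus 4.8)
The plan is to run the cobordism-map machinery behind Theorem~\ref{teo:slice_genus} (Subsection~\ref{subsection:slice}), now over the group $HFL'$ and allowing non-orientable band attachments, while bookkeeping the signature exactly as dictated by Theorem~\ref{teo:unoriented_concordance}. Concretely, fix a compact unoriented surface $F\subset D^4$ with $k$ connected components, $\partial F=L$, and $b_1(F)=\gamma_4^{(k)}(L)$. Removing a small open disk from the interior of each component turns $F$ into a cobordism $W\subset S^3\times[0,1]$ from $L$ to the $k$-component unlink $U_k$ with $\chi(W)=\chi(F)-k=-\gamma_4^{(k)}(L)$. Since $W$ is a disjoint union of connected cobordisms between non-empty links, up to isotopy it admits a movie with no births and no deaths, hence consisting of exactly $\gamma_4^{(k)}(L)$ saddle moves, some of which raise and some of which lower the number of components.

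Each saddle in the movie induces a map on $HFL'$: the Sarkar-type map already used in Subsection~\ref{subsection:slice} and in \cite{Cavallo}. The point of working with the \emph{unoriented} complex is precisely that a band attachment need not be compatible with any orientation of the link, yet the map is still defined. As in the proof of Theorem~\ref{teo:slice_genus}, such a map is $j$-filtered after a shift of at most one and interacts correctly with the distinguished homology classes detecting $\upsilon_{\max}$ and $\upsilon_{\min}$. Simultaneously, a single band changes $\sigma$ by at most one and the number of components by exactly one, and these two changes are correlated in the standard way; this is the extra piece of data that makes the normalization $\llbracket\tfrac{\sigma+h}{2}\rrbracket$ of Theorem~\ref{teo:unoriented_concordance} propagate along a surface and not merely along a concordance. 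Putting these together, along the movie the quantity $\upsilon_{\max}(L')-\tfrac{\sigma(L')+k-c(L')}{2}$ — with $c(L')$ the number of components of the intermediate link $L'$ — changes by at most one at every saddle, and likewise for $\upsilon_{\min}(L')-1-\tfrac{\sigma(L')-k-c(L')}{2}$; note that both expressions vanish at $L'=U_k$.

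Feeding these estimates into the composition of the $\gamma_4^{(k)}(L)$ elementary maps of $W$, and of its reverse cobordism $\overline W$ to obtain the inequality in the other direction — exactly as the two maps of Theorem~\ref{teo:unoriented_concordance} yield a two-sided statement — together with the base values $\upsilon_{\max}(U_k)=0$, $\upsilon_{\min}(U_k)=1-k$, $\sigma(U_k)=0$ obtained from a direct computation of $HFL'(U_k)$, we get $\bigl|\upsilon_{\max}(L)-\tfrac{\sigma(L)+k-n}{2}\bigr|\le\gamma_4^{(k)}(L)$ and $\bigl|\upsilon_{\min}(L)-1-\tfrac{\sigma(L)-k-n}{2}\bigr|\le\gamma_4^{(k)}(L)$, since $c(L)=n$. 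The case $k=n$ is the first displayed pair of inequalities. For $k=1$ one checks that $1+\tfrac{\sigma(L)-1-n}{2}=\tfrac{\sigma(L)+1-n}{2}$, so $\upsilon_{\max}(L)$ and $\upsilon_{\min}(L)$ both lie within $\gamma_4^{(1)}(L)$ of $\tfrac{\sigma(L)+1-n}{2}$; as every $\upsilon_i(L)$ satisfies $\upsilon_{\min}(L)\le\upsilon_i(L)\le\upsilon_{\max}(L)$, it lies in the same interval, which is the last inequality.

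The main obstacle is the first half of the second paragraph: pinning down the $j$-filtration shift of the band maps on $HFL'$ \emph{together with} the matching change of the signature, so that the combined normalized quantity moves by at most one per band. This is the unoriented analogue of the most delicate step in the proof of Theorem~\ref{teo:slice_genus}, with the added subtlety that orientation-incompatible bands must be allowed; I expect it to require a careful case analysis of merging versus splitting bands and of their effect on the Gordon--Litherland-type signature, which is where the normalization constants $\tfrac{k-n}{2}$ and the $\pm1$'s ultimately come from.
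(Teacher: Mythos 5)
Your approach is genuinely different from the paper's and, as written, has a real gap at exactly the step you flag as ``the main obstacle.'' The paper's proof of this theorem is one line: apply the Gordon--Litherland inequality $\bigl|\sigma(\vec L)-\tfrac{\overline e_{\vec L}(F)}{2}\bigr|\le\gamma_4^{(k)}(L)$ (Equation~\eqref{eq:GL}) to Proposition~\ref{prop:unoriented_bound}, which already controls $\upsilon_{\max}(\vec L)-\tfrac{\overline e_{\vec L}(F)}{4}$ and $\upsilon_{\min}(\vec L)-\tfrac{\overline e_{\vec L}(F)}{4}$ by $g+\tfrac{\vv}{2}$ plus the appropriate $(k-n)$-shift. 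The crucial device is the Euler number $\overline e_{\vec L}(F)$ as a go-between: one inequality (Proposition~\ref{prop:unoriented_bound}, built from Lemmas~\ref{lemma:ill} and~\ref{lemma:new}) pins $\upsilon$ to $\overline e/4$, another (Gordon--Litherland) pins $\sigma/2$ to $\overline e/4$, and the triangle inequality does the rest. At no point does one need to control, saddle by saddle, a quantity involving $\sigma$.

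Your plan instead tries to prove directly that $\upsilon_{\max}(L')-\tfrac{\sigma(L')+k-c(L')}{2}$ moves by at most $1$ at each saddle of the movie. This is where the argument breaks. First, the change in $\upsilon_{\max}$ under a non-orientable saddle is not controlled by $\pm 1$ alone: Proposition~\ref{prop:unoriented_saddle} shows it involves $\tfrac{2\pm e(F)}{4}$ \emph{and} a linking-number term $\tfrac{1}{2}\bigl[\lk(K_1,L_1\setminus K_1)-\lk(K_2,L_2\setminus K_2)\bigr]$ which can be arbitrarily large, because a non-orientable band reverses the orientation on part of the component and hence changes its linking with the rest of the link. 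Second, your assertion that ``a single band changes $\sigma$ by at most one'' is false for non-orientable bands; the signature of an oriented link can jump by more than one under an unoriented resolution, again by an amount tied to linking data. In the paper these two uncontrolled contributions are never bounded locally; they cancel only \emph{globally}, as the telescoping sum in the proof of Lemma~\ref{lemma:new} (Equation~\eqref{eq:final}) and the identity $\overline e_{\vec L}(F)=\overline e_{\widehat L}(\widehat F)$ in the proof of Proposition~\ref{prop:unoriented_bound} make precise, and the signature is only brought in at the very end via Gordon--Litherland, a statement about the whole surface rather than a per-band estimate. To repair your argument you would essentially have to re-derive these cancellations, at which point you would be reconstructing the Euler-number bookkeeping the paper already sets up. I would recommend reading Subsection~\ref{subsection:unoriented} through Proposition~\ref{prop:unoriented_bound} first and then recognizing that Theorem~\ref{teo:unoriented_bound} is a formal consequence once Equation~\eqref{eq:GL} is invoked.
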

We apply this result to the family of links $L_n=T_{2,4}^*\#T_{3,4}^{\#n}$; namely, the connected sum of the mirror of the torus link $T_{2,4}$ and $n$ torus knots $T_{3,4}$. In particular, we show that $\{L_n\}$ for $n\geq0$ is a family of $2$-component links such that $\gamma_4^{(1)}$ is arbitrarily large.
\begin{cor}
 \label{cor:app}
 Given the link $L_n=T_{2,4}^*\#T_{3,4}^{\#n}$; we have that $\gamma_4^{(2)}(L_n)=n+1$ and $\gamma_4^{(1)}(L_n)\geq n$ for every $n\geq0$.
\end{cor}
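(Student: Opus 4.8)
The plan is to read the two lower bounds off Theorem~\ref{teo:unoriented_bound} and to match the value $n+1$ by an explicit spanning surface; the first step is to compute the data of $L_n$ that feeds those bounds. The signature is additive under connected sum and changes sign under mirroring, so $\sigma(L_n)=-\sigma(T_{2,4})+n\,\sigma(T_{3,4})=3-6n$, using the classical values $\sigma(T_{2,4})=-3$ and $\sigma(T_{3,4})=-6$; recall also that $L_n$ has two components. For the $\upsilon$-set, $T_{2,4}$ is quasi-alternating, so Proposition~\ref{prop:properties}(6) gives $\Upsilon_{T_{2,4}}$ and $\Upsilon^*_{T_{2,4}}$ as explicit linear functions on $[0,1]$; Proposition~\ref{prop:properties}(4) converts this into the corresponding data for the mirror $T_{2,4}^*$, and evaluation at $t=1$ yields its $\upsilon$-set. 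Combining this with $\upsilon(T_{3,4})=\Upsilon_{T_{3,4}}(1)=-2$ (the Ozsv\'ath--Stipsicz--Szab\'o computation for torus knots) and the additivity under connected sum of Proposition~\ref{prop:properties}(5) at $t=1$ (and its $\Upsilon^*$-version), each element of the $\upsilon$-set of $L_n$ is the corresponding element of the $\upsilon$-set of $T_{2,4}^*$ shifted down by $2n$; in particular $\upsilon_{\max}(L_n)$, $\upsilon_{\min}(L_n)$ and $\sigma(L_n)$ become explicit affine functions of $n$.

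For the lower bounds I would substitute these values into Theorem~\ref{teo:unoriented_bound} with $k$ equal to the number $2$ of components of $L_n$. The inequality $\bigl|\upsilon_{\max}(L_n)-\tfrac{1}{2}\sigma(L_n)\bigr|\leq\gamma_4^{(2)}(L_n)$ then has left-hand side equal to $n+\tfrac{1}{2}$, so, since $\gamma_4^{(2)}(L_n)$ is an integer, $\gamma_4^{(2)}(L_n)\geq n+1$. The case $k=1$ of the theorem, applied to the elements of the $\upsilon$-set, reads $\bigl|\upsilon_i(L_n)-\tfrac{1}{2}(\sigma(L_n)-1)\bigr|\leq\gamma_4^{(1)}(L_n)$; one of the two left-hand sides equals $n$, hence $\gamma_4^{(1)}(L_n)\geq n$.

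For the reverse inequality I would construct a surface realizing $\gamma_4^{(2)}(L_n)\leq n+1$. The mirror torus link $T_{2,4}^*$ bounds an annulus in $S^3$, namely the boundary of a twisted band; pushed into $D^4$ it is a connected surface of first Betti number $1$ whose two boundary circles are the components of $T_{2,4}^*$. Each $T_{3,4}$-summand contributes only a M\"obius band's worth of genus, since $T_{3,4}$ bounds a M\"obius band in $D^4$, i.e.\ $\gamma_4(T_{3,4})=1$; this is consistent with the bound $|\upsilon(T_{3,4})-\tfrac{1}{2}\sigma(T_{3,4})|=1$ coming from Theorem~\ref{teo:unoriented_bound}, and such a band can be exhibited concretely, for instance from the pretzel description $T_{3,4}=P(-2,3,3)$. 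Taking the boundary connected sum of the twisted annulus with $n$ such M\"obius bands along the component of $L_n$ carrying the $T_{3,4}$-summands produces a surface properly embedded in $D^4$ with boundary $L_n$ and first Betti number $1+n$; adjoining a disjoint $2$-sphere supplies the second connected component without changing the first Betti number, so $\gamma_4^{(2)}(L_n)\leq n+1$, which together with the previous step gives $\gamma_4^{(2)}(L_n)=n+1$.

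The main obstacle is this last construction, and in particular the input $\gamma_4(T_{3,4})=1$ together with the check that the M\"obius bands and the twisted annulus fit together disjointly into a surface with exactly the stated number of components and first Betti number; once these are in place, the remaining steps are routine bookkeeping with the additivity and mirror formulas of Proposition~\ref{prop:properties} and with Theorem~\ref{teo:unoriented_bound}.
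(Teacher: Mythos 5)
Your strategy for the lower bounds is the same as the paper's: compute $\sigma(L_n)=3-6n$ and the $\upsilon$-set of $L_n$ via the quasi-alternating formula, the mirror symmetry, and additivity under connected sum, then feed these into Theorem~\ref{teo:unoriented_bound}. However, there is an arithmetic slip in the $\gamma_4^{(2)}$ step. Both elements of $\upsilon(L_n)$ equal $1-2n$ (recall from Proposition~\ref{prop:upsilon} that $\upsilon_{2^{n-1}}(L)=\Upsilon^*_L(1)+1-n$, so the $\Upsilon^*$-value gets shifted by $1-n$). Plugging into the $\upsilon_{\max}$ inequality with $k=n=2$ gives $\bigl|\,1-2n-\tfrac{3-6n}{2}\,\bigr|=\bigl|n-\tfrac12\bigr|$, which only yields $\gamma_4^{(2)}(L_n)\geq n$. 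To get $n+\tfrac12$ (and hence $n+1$ by integrality) you must use the $\upsilon_{\min}$ version, $\bigl|\upsilon_{\min}(L_n)-\tfrac{\sigma(L_n)}{2}+1\bigr|$; the extra $+n-1=+1$ term is exactly what makes the bound sharp, and is what the paper invokes. Your $\gamma_4^{(1)}$ computation is correct.

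The more serious gap is the upper bound $\gamma_4^{(2)}(L_n)\leq n+1$. Your boundary-connected-sum of the twisted annulus with $n$ M\"obius bands is a \emph{connected} surface, so it only shows $\gamma_4^{(1)}(L_n)\leq n+1$. Adjoining a disjoint $2$-sphere does not produce a valid competitor for $\gamma_4^{(2)}$: each connected component of a spanning surface must have nonempty boundary on $L$ (this is built into the normal form in Figure~\ref{Unoriented_surface} and used throughout Subsection~\ref{subsection:unoriented}; if closed components were permitted, $\gamma_4^{(k)}$ would be monotone non-increasing in $k$, and combining this with the bound $k-1\leq\gamma_4^{(k)}(L)$ from Theorem~\ref{teo:wideness} for any quasi-alternating $L$ would force $\gamma_4^{(1)}(L)=\infty$). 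What you need is a genuinely $2$-component surface, e.g.\ the one the paper uses: start from the two disks bounding $\bigcirc_2$ and attach $n+1$ non-orientable bands to one of them to reach $L_n$; this has $2$ components, each with boundary, and first Betti number $n+1$.
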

The paper is organized as follows: in Section \ref{section:two} we summarize the construction of the link Floer complex $\cC^\infty(L)$ and we describe how to define the filtered homology group $\mathcal F^S\cH^\infty(L)$ and the invariant $\Upsilon_S(L)$. Moreover, we prove the equivalence between stable and local equivalence of knot Floer chain complexes stated in Theorem \ref{teo:equivalence}. In Section \ref{section:concordance} we prove the concordance invariance of $\cC^\infty(L)$. In Section \ref{section:four} we define the other $\Upsilon$-type invariants and we prove some of their properties, including Proposition \ref{prop:properties}. We also give the proof of Theorem \ref{teo:slice_genus}, which describes our bound for the slice genus. Finally, in Section \ref{section:five} we introduce the group $HFL'(L)$ and the $\upsilon$-set of $L$, showing that they give unoriented concordance invariants. Moreover, we study their behaviour under unoriented cobordisms and we prove the lower bounds for $\gamma_4^{(k)}(L)$. 

\paragraph*{Acknowledgements:}
The author would like to thank Antonio Alfieri and Andr\'as Stipsicz for their many conversations about the $\Upsilon$-invariant; and Kouki Sato for his observations. The alternative argument to prove Theorem \ref{teo:concordance}, appearing in Remark \ref{remark:remark}, was communicated by Ian Zemke, to whom many thanks are due for his interest and help.
We also thank the referee for his many corrections.

The author has a post-doctoral fellowship at the Max Planck Institute for Mathematics in Bonn.

\section{Link Floer homology}
\label{section:two}
\subsection{Chain complex and homology}
\label{subsection:complex}
Throughout the paper we assume that the reader is familiar with the construction of the link Floer homology chain complexes, both when links are represented with multi-pointed Heegaard diagrams \cite{OS,OSknots,OSlinks} or grids \cite{MOST,Book}.  
We only recall the main features.

Let us consider $\mathcal D=(\Sigma,\alpha,\beta,\textbf w,\textbf z)$ a multi-pointed Heegaard diagram for an oriented $n$-component link $L$ in $S^3$. The chain complex $\cC^\infty(\mathcal D)$ is the free $\F[U,U^{-1}]$-module over the intersection points $\T=\T_\alpha\cap\T_\beta$ in the symmetric power $\text{Sym}(\Sigma,\alpha,\beta)$, see \cite{OSknots,OSlinks}, where $\F $ is the field with two elements and $\textbf w$ and $\textbf z$ are two $n$-tuples of basepoints in $\Sigma$, see \cite{OSlinks}. 
The differential $\partial^-$ is defined by counting pseudo-holomorphic curves on some special (\cite{OSlinks}) domains in $\text{Sym}(\Sigma,\alpha,\beta)$ with Maslov index $\mu$ equal to one (\cite{Lipshitz,OS}); denote the set of such domains with $\pi_2$, then for every intersection point $x$ we can write
\[\partial^-x=\sum_{y\in\T}\sum_{\substack{\phi\in\pi_2(x,y) \\ \mu(\phi)=1}}m(\phi)\cdot U^{n_{\textbf w}(\phi)}y\:,\] where $m(\phi)\in\F$ depends also on the choice of an almost-complex structure on $\text{Sym}(\Sigma,\alpha,\beta)$ and $0\leq n_{\textbf w}(\phi)=n_{w_1}(\phi)+...+n_{w_n}(\phi)$ is the multiplicity of the basepoints $\textbf w$ in $\phi$. Moreover, we say that \[\partial^-(U^{\pm 1}p)=U^{\pm 1}\cdot\partial^-p\] for any $x\in\T$ and $p\in\cC^\infty(\mathcal D)$.

For every $x\in\T$ we can assign an absolute $\mathbb Z$-grading, called \emph{Maslov grading} \cite{OSlinks}, which is denoted by $M(x)$ and can be extended to the whole complex by taking \[M(U^{\pm}p)=M(p)\mp 2\] for any $p$ homogeneous.
We then have that \[\cC^\infty(\mathcal D)=\bigoplus_{d\in\mathbb Z}\cC^\infty_d(\mathcal D)\] as $\F$-vector spaces; moreover, there is a map \[\partial^-_d:\cC^\infty_d(\mathcal D)\longrightarrow\cC^\infty_{d-1}(\mathcal D)\] for any $d\in\mathbb Z$.

The chain complex $\cC^\infty(\mathcal D)$ comes with a natural increasing filtration, usually denoted as the \emph{algebraic filtration} $j$, defined as follows 
\[j^t\cC^\infty(\mathcal D)=U^{-t}\cdot\cC^-(\mathcal D),\]  where $\cC^-(\mathcal D)$ is the free $\F[U]$-module over $\T$. It is easy to check that the differential $\partial^-$ respects $j$.

We define the homology group \[\cH^\infty(L)=\bigoplus_{d\in\mathbb Z}\cH_d^\infty(L)=\bigoplus_{d\in\mathbb Z}\dfrac{\Ker \partial^-_d}{\Imm\partial^-_{d+1}}\:.\]
Since the Maslov grading and the differential only depend on $\textbf w$, we have that such a group, together with the algebraic filtration, is isomorphic to $HF^\infty(S^3,n)\cong\F[U,U^{-1}]^{2^{n-1}}$, where the $n$ denotes the number of basepoints in the Heegaard diagram. The filtration $j$ descends to homology in the following way.
Say $\pi_d:\Ker\partial^-_d\rightarrow\cH^\infty_d(L)$ is the quotient map; then \[j^t\cH^\infty_d(L)=\pi_d\left(\Ker\partial^-_d\cap j^t\cC^\infty(\mathcal D)\right)\:,\] which is an $\F$-subspace of $\cH^\infty_d(L)$. More specifically we have the following theorem.
\begin{teo}
 \label{teo:dim}
 Say the link $L$ has $n$ components. Then we have that
 \[\dfrac{j^{\frac{d+k}{2}}\cH^\infty_d(L)}{j^{\frac{d+k}{2}-1}\cH^\infty_d(L)}\cong_\F\F^{\binom{n-1}{k}}\] whenever $d\equiv k$ \emph{mod} $2$ and $0\leq k\leq n-1$. It is zero otherwise.
\end{teo}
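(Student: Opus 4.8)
The plan is to reduce the statement to a computation on $S^3$ with $n$ basepoints and then read off the filtered structure from the $U$-action. Since the differential $\partial^-$ and the Maslov grading of $\cC^\infty(\mathcal D)$ only involve the basepoints $\textbf w$, as already observed above the pair $\bigl(\cC^\infty(\mathcal D),j\bigr)$ realizes $HF^\infty(S^3,n)$ together with its own algebraic filtration; in the same way $H_*\bigl(\cC^-(\mathcal D)\bigr)\cong HF^-(S^3,n)$. So it is enough to prove the claim for the $\F[U]$-module $HF^-(S^3,n)$ sitting inside $HF^\infty(S^3,n)$, equipped with the filtration by powers of $U$.

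The one genuine input is the graded structure of $HF^-(S^3,n)$. I would recall (by induction on $n$ from the one-basepoint case $HF^-(S^3)\cong\F[U]$, using the behaviour of $HF^-$ under adjoining a free basepoint, or directly from the size-$n$ grid diagram of the unknot, cf. \cite{MOST,Book}) that, as graded $\F[U]$-modules with $U$ of Maslov degree $-2$,
\[HF^-(S^3,n)\cong\F[U]\otimes_\F\left(\F_{(0)}\oplus\F_{(-1)}\right)^{\otimes(n-1)}\cong\bigoplus_{k=0}^{n-1}\bigl(\F[U]_{(-k)}\bigr)^{\binom{n-1}{k}},\]
where $\F[U]_{(-k)}$ is a copy of $\F[U]$ whose generator lies in Maslov degree $-k$. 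Inverting $U$ gives $HF^\infty(S^3,n)\cong\bigoplus_{k=0}^{n-1}\bigl(\F[U,U^{-1}]_{(-k)}\bigr)^{\binom{n-1}{k}}$, which in particular recovers the rank $2^{n-1}$ quoted in the text.

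Finally I would translate the algebraic filtration on homology into the $U$-action. The subcomplex $j^t\cC^\infty(\mathcal D)=U^{-t}\cC^-(\mathcal D)$ has homology $U^{-t}HF^-(S^3,n)$, and since $HF^-(S^3,n)$ has no $U$-torsion the natural map $U^{-t}HF^-(S^3,n)\to HF^\infty(S^3,n)$ is injective, so $j^t\cH^\infty(L)$ is identified with $U^{-t}HF^-(S^3,n)$ inside $HF^\infty(S^3,n)$. Fixing a Maslov degree $d$, a summand $\F[U]_{(-k)}$ contributes a one-dimensional space to the degree-$d$ part of $U^{-t}HF^-(S^3,n)$ exactly when there is $m\geq0$ with $2t-2m-k=d$, i.e. when $k\equiv d$ mod $2$ and $t\geq\frac{d+k}{2}$; summing over the $\binom{n-1}{k}$ copies and over $k$ gives
\[\dim_\F j^t\cH^\infty_d(L)=\sum_{\substack{0\leq k\leq n-1,\ k\equiv d\ (2)\\ (d+k)/2\,\leq\, t}}\binom{n-1}{k}.\]
Subtracting the values at the consecutive integer levels $t$ and $t-1$ leaves only the term with $k=2t-d$, so the only jumps of this $\Z$-filtration on $\cH^\infty_d(L)$ occur at the levels $t=\frac{d+k}{2}$ with $d\equiv k$ mod $2$ and $0\leq k\leq n-1$, where the associated graded has dimension $\binom{n-1}{k}$, and it is zero at every other level — which is the meaning of ``zero otherwise''. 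The only place I expect to need care is the bookkeeping with the normalizations (the Maslov degree of $U$ and the direction of $j$) and the parity/integrality of $\frac{d+k}{2}$; once the structure of $HF^-(S^3,n)$ is in hand, the rest is this direct count.
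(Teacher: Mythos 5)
Your proof is correct and takes essentially the same route as the paper's: identify the algebraic filtration on $\cH^\infty_d(L)$ with the image of $U^{-t}HF^-(S^3,n)$ inside $HF^\infty(S^3,n)$ (using, as you carefully note, that $HF^-(S^3,n)$ is $U$-torsion-free), read off the Maslov distribution of the free generators of $HF^-(S^3,n)$ over $\F[U]$, and let the $U$-action propagate the answer across Maslov degrees. The paper phrases the last step more compactly — establish the statement at $j^0$ (i.e.\ $d=-k$) and observe that $U^{\pm1}$ shifts the minimal $j$-level by $\mp1$ and the Maslov grading by $\mp2$ — but this is exactly your dimension-count-and-difference computation in disguise.
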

\begin{proof}
 From \cite{OSlinks} we know that $HF^-(S^3,n)$ has $2^{n-1}$ generators such that exactly $\binom{n-1}{k}$ of them have Maslov grading $-k$. Since \[HF_*^\infty(S^3,n)\cong_{\F[U,U^{-1}]}HF^-_*(S^3,n)\otimes_{\F[U]}\F[U,U^{-1}]\:,\] then one has 
 \[\cH^\infty_d(L)\cong_\F\left\{\begin{aligned}
   &\F^{2^{n-2}}\quad\text{if }\quad n\geq 2 \\
   &\F\hspace{0.7cm}\quad\text{if }\quad n=1\quad\text{ and }\quad d\quad\text{is even} \\
   &\{0\}\hspace{0.35cm}\quad\text{if }\quad n=1\quad\text{ and }\quad d\quad\text{is odd}
  \end{aligned}\right.\] and this determines the Maslov gradings.
  
 Now we want to compute the filtration $j$. We note that all the generators of $HF^-(S^3,n)$ have minimal $j$-filtration level zero. Hence, the statement is true for $j^0$; in fact if we substitute $d=-k$ in then we obtain the right distribution of the Maslov gradings. 
 \begin{figure}[t]
 \centering
 \begin{tikzpicture}[scale =.65]
        \coordinate (Origin)   at (0,0);
        \coordinate (XAxisMin) at (-5,0);
        \coordinate (XAxisMax) at (5,0);
        \coordinate (YAxisMin) at (0,-5);
        \coordinate (YAxisMax) at (0,5);
        \draw [thick, black,-latex] (XAxisMin) -- (XAxisMax);
        \draw [thick, black,-latex] (YAxisMin) -- (YAxisMax);
        \draw [thick, black,-latex] (-5,1) -- (5,1);
        \draw [thick, black,-latex] (1,-5) -- (1,5);
        \clip (-4.9,-4.9) rectangle (5.99,5.99);
        \draw[style=help lines] (-5,-5) grid[step=1cm] (4.95,4.95);
        
        \foreach \x in {-5,...,4}
                    \node[draw=none,fill=none] at (\x+0.5,-4.5) {\x};
    	\foreach \y in {-5,...,4}     		
        			\node[draw=none,fill=none] at (-4.5,\y+0.5) {\y};
        
        \node[] at (-0.5,5.5) {$M$};
        \node[] at (5.5,-0.5) {$j$};
        
        \node[draw,circle,fill,scale=0.4] at (0.5,0.5) {};
        \node[draw,circle,fill,scale=0.4] at (0.5,-0.5) {};
        \node[draw,circle,fill,scale=0.4] at (1.5,2.5) {};
        \node[draw,circle,fill,scale=0.4] at (1.5,1.5) {};
        \node[draw,circle,fill,scale=0.4] at (2.5,4.5) {};
        \node[draw,circle,fill,scale=0.4] at (2.5,3.5) {};
        \node[draw,circle,fill,scale=0.4] at (-0.5,-1.5) {};
        \node[draw,circle,fill,scale=0.4] at (-0.5,-2.5) {};
        \node[draw,circle,fill,scale=0.4] at (-1.5,-3.5) {};
 \end{tikzpicture}
 \hspace{.5cm}
 \begin{tikzpicture}[scale =.65]
        \coordinate (Origin)   at (0,0);
        \coordinate (XAxisMin) at (-5,0);
        \coordinate (XAxisMax) at (5,0);
        \coordinate (YAxisMin) at (0,-5);
        \coordinate (YAxisMax) at (0,5);
        \draw [thick, black,-latex] (XAxisMin) -- (XAxisMax);
        \draw [thick, black,-latex] (YAxisMin) -- (YAxisMax);
        \draw [thick, black,-latex] (-5,1) -- (5,1);
        \draw [thick, black,-latex] (1,-5) -- (1,5);
        \clip (-4.9,-4.9) rectangle (5.99,5.99);
        \draw[style=help lines] (-5,-5) grid[step=1cm] (4.95,4.95);
        
        \foreach \x in {-5,...,4}
                    \node[draw=none,fill=none] at (\x+0.5,-4.5) {\x};
    	\foreach \y in {-5,...,4}     		
        			\node[draw=none,fill=none] at (-4.5,\y+0.5) {\y};
        
        \node[] at (-0.5,5.5) {$M$};
        \node[] at (5.5,-0.5) {$j$};
        
        \node[draw,circle,fill,scale=0.4] at (0.5,0.5) {};
        \node[draw,circle,fill,scale=0.4] at (0.5-1/6,-0.5-1/6) {};
         \node[draw,circle,fill,scale=0.4] at (0.5+1/6,-0.5+1/6) {};
        \node[draw,circle,fill,scale=0.4] at (0.5,-1.5) {};
        \node[draw,circle,fill,scale=0.4] at (1.5,2.5) {};
        \node[draw,circle,fill,scale=0.4] at (1.5-1/6,1.5-1/6) {};
         \node[draw,circle,fill,scale=0.4] at (1.5+1/6,1.5+1/6) {}; 
        \node[draw,circle,fill,scale=0.4] at (1.5,0.5) {};
        \node[draw,circle,fill,scale=0.4] at (2.5,4.5) {};
        \node[draw,circle,fill,scale=0.4] at (2.5-1/6,3.5-1/6) {};
         \node[draw,circle,fill,scale=0.4] at (2.5+1/6,3.5+1/6) {}; 
        \node[draw,circle,fill,scale=0.4] at (2.5,2.5) {};
        \node[draw,circle,fill,scale=0.4] at (3.5,4.5) {};
        \node[draw,circle,fill,scale=0.4] at (-0.5,-1.5) {};
        \node[draw,circle,fill,scale=0.4] at (-0.5-1/6,-2.5-1/6) {};
         \node[draw,circle,fill,scale=0.4] at (-0.5+1/6,-2.5+1/6) {};
        \node[draw,circle,fill,scale=0.4] at (-0.5,-3.5) {};
        \node[draw,circle,fill,scale=0.4] at (-1.5,-3.5) {};
 \end{tikzpicture}
 \caption{Maslov gradings and algebraic filtration for $2$- (left) and $3$-component links (right). The algebraic level $j$ is on the $x$-axis and the Maslov grading on the $y$-axis.}
 \label{Homology2-3}
 \end{figure}
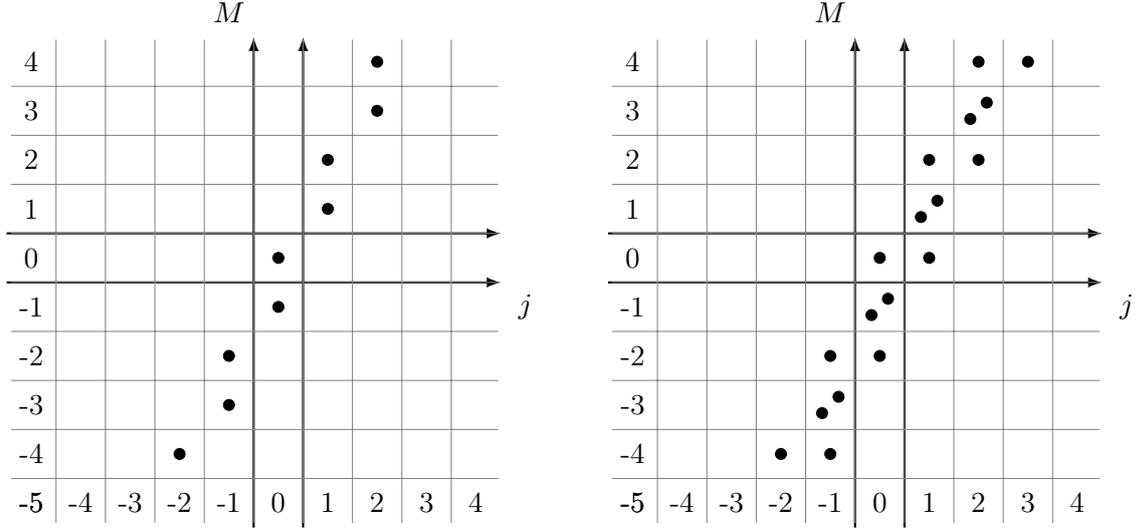
 At this point, in order to prove the theorem, we only need to observe that the multiplication by $U^{\pm 1}$ drops the minimal level of the algebraic filtration by $\pm 1$ and the Maslov grading by $\pm 2$. 
\end{proof}
Figure \ref{Homology2-3} shows the distribution of the Maslov grading and the minimal $j$-level for two- and three-component links.
 
\subsection{The Alexander and the \texorpdfstring{$\mathcal F$}{F} filtrations}
\label{subsection:Alexander}
In the same way as the Maslov grading, we can assign to every $x\in\T$ another absolute $\mathbb Z$-grading: the \emph{Alexander grading} $A(x)$, which also is extended to $\cC^\infty(\mathcal D)$ by taking \[A(U^{\pm 1}p)=A(p)\mp 1\] for any $p$ homogeneous.
We recall that in \cite{OSlinks} the Alexander grading is introduced as a multi-grading $\textbf A(x)=(A_1(x),...,A_n(x))$; in this paper we denote $A(x):=A_1(x)+...+A_n(x)$. 

In this case, the differential $\partial^-$ does not preserve $A(x)$; for this reason we introduce the \emph{Alexander filtration}. Let us consider the $\F[U]$-subspace $\mathcal A^s\cC^\infty(\mathcal D)$ generated by all the elements $p$ in $\cC^\infty(\mathcal D)$ such that $A(p)\leq s$. 
The $\mathcal A$-filtration is an increasing filtration like $j$ and it is such that
\begin{equation}\{0\}
\label{eq:filtration}
\cong\mathcal A^{\underline s}\cC^\infty_d(\mathcal D)\subset...\subset\mathcal A^{\overline s}\cC^\infty_d(\mathcal D)=\cC^\infty_d(\mathcal D)\:,\end{equation} which follows from \cite{OSlinks}; moreover, it is again easy to show that it is preserved by $\partial^-$. Note that $\overline s$ and $\underline s$ depend on $d$.

We define $\mathcal F$ for now as a double-increasing filtration. More specifically, we say that \[\mathcal F^{t,s}\cC^\infty(\mathcal D)=j^t\cC^\infty(\mathcal D)\cap\mathcal A^s\cC^\infty(\mathcal D)\] and clearly $\partial^-$ also respects $\mathcal F$.
We now extend the $\mathcal F$-filtration on the homology group, in the way that it is indexed by south-west regions of the lattice $\mathbb Z^2$ (resp. the plane $\mathbb R^2$), using an idea of Alfieri in \cite{Antonio}.
A \emph{south-west region} $S\subset\mathbb Z^2$ (resp. $\mathbb R^2$) is a subset of $\mathbb Z^2$ (resp. a topological submanifold of $\mathbb R^2$) such that if $(\overline t,\overline s)\in S$ then $s\leq\overline s$ and $t\leq\overline t$ imply $(t,s)\in S$. Moreover, we require $S$ to differ from $\emptyset$ and $\mathbb Z^2$ (resp. $\mathbb R^2$). 

Consider again the map $\pi_d:\Ker\partial^-_d\rightarrow\cH^\infty_d(L)$. Define \[\begin{aligned}\Ker\partial^-_{d,S}=&\Ker\partial^-_d\cap\text{Span}\left\{\mathcal F^{t,s}\cC^\infty_d(\mathcal D)\:|\:(t,s)\in S\right\}:=\\:=&\Ker\partial^-_d\cap\mathcal F^S\cC^\infty_d(\mathcal D)\:.\end{aligned}\] Then we say that \[\mathcal F^S\cH^\infty_d(L)=\pi_d(\Ker\partial^-_{d,S})\subset\cH^\infty_d(L)\] for any $d\in\mathbb Z$. 
Note that the level $\mathcal F^{t,s}$ corresponds to the south-west region $V_{t,s}=\{(j,A)\:|\:j\leq t,A\leq s\}$, while $j^t:=\mathcal F^{\{j\leq t\}}$ and $\mathcal A^s:=\mathcal F^{\{A\leq s\}}$ correspond to $\{(j,A)\:|\:j\leq t\}$ and $\{(j,A)\:|\:A\leq s\}$ respectively.

The filtration $\mathcal F$ is increasing in the sense that if $S_1\subset S_2$ are two south-west regions then $\mathcal F^{S_1}\cH^\infty_*(L)\subset\mathcal F^{S_2}\cH^\infty_*(L)$;
moreover, it has the following property.
\begin{prop}
 \label{prop:bounded}
 Fix an integer $d$, denote with $W_{t,s}$ the south-west region in Figure \ref{Wsouthwestregion} and take $V_{t,s}$ as before. 
 Then there exists a pair $(t,s)$ such that $\mathcal F^S\cH^\infty_d(L)\cong\{0\}$ for every south-west region $S\subset W_{t,s}$.
 
 Furthermore, there is another pair $(t',s')$ such that $\mathcal F^T\cH^\infty_d(L)\cong\cH^\infty_d(L)$ for every south-west region $T\supset V_{t',s'}$.
\end{prop}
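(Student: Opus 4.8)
The plan is to reduce both halves of the statement to the observation that, in each fixed Maslov degree, $\cC^\infty_d(\mathcal D)$ is a \emph{finite-dimensional} $\F$-vector space on which both the algebraic filtration $j$ and the Alexander filtration $\mathcal A$ are bounded. Indeed $\cC^\infty(\mathcal D)$ is free over $\F[U,U^{-1}]$ on the finite set $\T$ and multiplication by $U$ lowers the Maslov grading by $2$, so $\cC^\infty_d(\mathcal D)$ is spanned over $\F$ by the finitely many monomials $U^{k_x}x$ with $x\in\T$, $M(x)\equiv d\bmod 2$ and $k_x=(M(x)-d)/2$. Such a monomial has minimal $j$-level $-k_x$ and minimal $\mathcal A$-level $A(x)-k_x$, hence sits at the single lattice point $p(x)=(-k_x,\,A(x)-k_x)$ of the $(j,A)$-plane; unwinding the definitions, $\mathcal F^S\cC^\infty_d(\mathcal D)=\mathrm{Span}_\F\{U^{k_x}x:p(x)\in S\}$ for every south-west region $S$, so $\mathcal F^S\cC^\infty_d(\mathcal D)$ only sees which of the finitely many points $p(x)$ lie in $S$. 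In particular the finitely many minimal $j$-levels (resp.\ $\mathcal A$-levels) attained in degree $d$ are bounded above and below.

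For the first claim, since only finitely many levels occur, I would pick integers $t,s$ small enough that $j^t\cC^\infty_d(\mathcal D)=0$ and $\mathcal A^s\cC^\infty_d(\mathcal D)=0$. The region $W_{t,s}$ of Figure \ref{Wsouthwestregion} is contained in $\{(j,A):j\leq t\}\cup\{(j,A):A\leq s\}$, so every point of $W_{t,s}$ lies in the half-plane $j\leq t$ or in the half-plane $A\leq s$; therefore $\mathcal F^{W_{t,s}}\cC^\infty_d(\mathcal D)\subseteq j^t\cC^\infty_d(\mathcal D)+\mathcal A^s\cC^\infty_d(\mathcal D)=0$. Consequently $\mathcal F^{W_{t,s}}\cH^\infty_d(L)=\pi_d\bigl(\Ker\partial^-_d\cap\mathcal F^{W_{t,s}}\cC^\infty_d(\mathcal D)\bigr)=0$, and since $\mathcal F^{S_1}\subseteq\mathcal F^{S_2}$ whenever $S_1\subseteq S_2$, the same vanishing holds for every south-west region $S\subseteq W_{t,s}$.

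For the second claim I would argue dually. By \eqref{eq:filtration} and its evident analogue for $j$ (both filtrations are exhaustive on the finite-dimensional space $\cC^\infty_d(\mathcal D)$), there are integers $t',s'$ so large that $j^{t'}\cC^\infty_d(\mathcal D)=\cC^\infty_d(\mathcal D)=\mathcal A^{s'}\cC^\infty_d(\mathcal D)$. Since $V_{t',s'}$ corresponds to the level $\mathcal F^{t',s'}$, this gives $\mathcal F^{V_{t',s'}}\cC^\infty_d(\mathcal D)=j^{t'}\cC^\infty_d(\mathcal D)\cap\mathcal A^{s'}\cC^\infty_d(\mathcal D)=\cC^\infty_d(\mathcal D)$, whence $\mathcal F^{V_{t',s'}}\cH^\infty_d(L)=\pi_d(\Ker\partial^-_d)=\cH^\infty_d(L)$; monotonicity of $\mathcal F$ in the indexing region then yields $\mathcal F^T\cH^\infty_d(L)=\cH^\infty_d(L)$ for every south-west region $T\supseteq V_{t',s'}$.

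I do not expect a real obstacle: the one point worth stating carefully is that the combined filtration $\mathcal F$ indexed by an L-shaped region (rather than a quadrant) splits as $\mathcal F^{\{j\le t\}}+\mathcal F^{\{A\le s\}}$, which reduces the first claim to the boundedness of the two single filtrations. Everything else is the finite-dimensionality of $\cC^\infty_d(\mathcal D)$ — which makes both filtrations bounded and exhaustive — together with monotonicity of $S\mapsto\mathcal F^S$ and the definition of $\mathcal F^S\cH^\infty_d(L)$ as the $\pi_d$-image of the cycles lying in $\mathcal F^S\cC^\infty_d(\mathcal D)$.
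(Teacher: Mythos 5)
Your proof is correct and follows essentially the same route as the paper's: exploit the finite-dimensionality of $\cC^\infty_d(\mathcal D)$ (coming from $\cC^\infty(\mathcal D)$ being finitely generated over $\F[U,U^{-1}]$ and $U$ shifting $M$ by $2$) to conclude that only finitely many $j$- and $\mathcal A$-levels occur in degree $d$, then pick $(t,s)$ (resp.\ $(t',s')$) to lie strictly below (resp.\ above) all of them, and finish by monotonicity of $S\mapsto\mathcal F^S$. The one place you are more explicit than the paper — spelling out that $\mathcal F^{W_{t,s}}\cC^\infty_d\subseteq j^t\cC^\infty_d+\mathcal A^s\cC^\infty_d$, because every $(a,b)\in W_{t,s}$ has $a\le t$ or $b\le s$ so $\mathcal F^{a,b}=j^a\cap\mathcal A^b$ lands in one of the two summands — is a detail the paper leaves implicit, and it is worth stating since $W_{t,s}$ is not a quadrant.
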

\begin{proof}
 Since $\cC^\infty(\mathcal D)$ is finitely generated as $\F[U,U^{-1}]$-module, we have that  $\cC^\infty_d(\mathcal D)$ is a finite dimensional $\F$-vector space. 
 \begin{figure}[t]
 \centering
 \begin{tikzpicture}[scale =.5]
        \coordinate (Origin)   at (0,0);
        \coordinate (XAxisMin) at (-5,0);
        \coordinate (XAxisMax) at (5,0);
        \coordinate (YAxisMin) at (0,-5);
        \coordinate (YAxisMax) at (0,5);
        \draw [thick, black,-latex] (XAxisMin) -- (XAxisMax);
        \draw [thick, black,-latex] (YAxisMin) -- (YAxisMax);
        \clip (-4.9,-4.9) rectangle (5.99,5.99);
        \draw[style=help lines] (-5,-5) grid[step=1cm] (4.95,4.95);
        
        \node[] at (-0.5,5.5) {$A$};
        \node[] at (5.5,-0.5) {$j$};
        
        \node[draw,circle,fill,scale=0.4] at (2,2) {};
        \node[above left] at (2,2) {$(t,s)$};
        \draw [very thick, black] (2,2) -- (2,5);
        \draw [very thick, black] (2,2) -- (5,2);
        
        \draw[fill, opacity=.5, gray] (2,5) rectangle (-5,-5);
        \draw[fill, opacity=.5, gray] (2,-5) rectangle (5,2);
 \end{tikzpicture}
 \caption{The south-west region $W_{t,s}$ is the subset $\{(j,A)\:|\:j\leq t\text{ or }A\leq s\}$ of $\mathbb R^2$.}
 \label{Wsouthwestregion}
\end{figure}
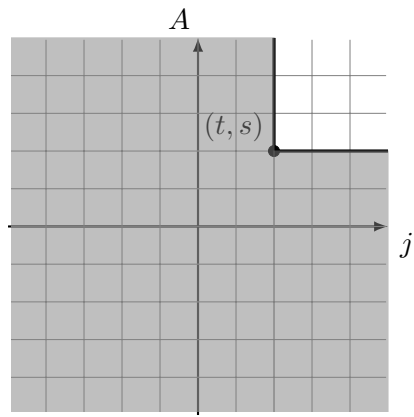
 Then there are integers $A$, the minimal Alexander level containing a generator of $\cC^\infty_d(\mathcal D)$, and $B$, the same considering algebraic levels, because of Equation \eqref{eq:filtration}. If we choose $t< B$ and $s< A$ then $\mathcal F^{W_{t,s}}\cC^\infty_d(\mathcal D)\cong\{0\}$ and so $\mathcal F^{W_{t,s}}\cH^\infty_d(L)$ is also zero. The first claim now follows from the fact that $\mathcal F$ is an increasing filtration; for the second one we reason exactly in the same way.
\end{proof}
From \cite{OSlinks} we have the following important theorem.
\begin{teo}[Ozsv\'ath and Szab\'o] 
 \label{teo:quasi_iso}
 The $\mathcal F$-filtered chain homotopy type of $\cC^\infty(\mathcal D)$, together with the Maslov grading, is a link invariant of $L$, where $\mathcal D$ is a Heegaard diagram for $L$.
\end{teo}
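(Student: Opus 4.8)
The plan is to deduce this from the invariance of the full multi-pointed link Floer complex proved by Ozsv\'ath and Szab\'o in \cite{OSlinks}, so that no new holomorphic-curve analysis is needed. First I would recall that two admissible multi-pointed Heegaard diagrams $\mathcal D_1,\mathcal D_2$ for the same oriented link $L$ are connected by a finite sequence of elementary moves: isotopies and handleslides of the attaching curves avoiding the basepoints, index one/two stabilizations, and basepoint moves (index zero/three stabilizations); see \cite{OSlinks,MOST}. For each such move \cite{OSlinks} produces, via continuation maps and holomorphic-triangle counts, mutually inverse chain homotopy equivalences between the associated link Floer complexes that are $\F[U_1,\dots,U_n]$-equivariant, preserve the Maslov grading, and respect each Alexander filtration. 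The only feature of these maps we really need is that they count domains $\phi$ with $n_{\textbf w}(\phi),n_{\textbf z}(\phi)\geq0$: then, in the notation of Subsection \ref{subsection:complex}, a contribution $U^{n_{\textbf w}(\phi)}y$ sits in Maslov grading $M(x)-\mu(\phi)$ and Alexander level $A(x)-n_{\textbf z}(\phi)\leq A(x)$, and no negative power of $U$ appears; so such a map is automatically Maslov-graded and simultaneously $j$- and $\mathcal A$-filtered.

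Granting this, the remaining step is to transport the invariance through the two operations that build $\cC^\infty(L)$ out of $CFL^-(L)$: collapsing $U_1,\dots,U_n$ to a single variable $U$, i.e.\ base change along $\F[U_1,\dots,U_n]\to\F[U]$ with $U_i\mapsto U$, and then tensoring with $\F[U,U^{-1}]$ over $\F[U]$. Both are additive functors on chain complexes of modules, hence they send a chain homotopy equivalence to a chain homotopy equivalence and a chain homotopy to a chain homotopy, with no flatness hypothesis required. Choosing a Maslov- and Alexander-homogeneous $\F[U_1,\dots,U_n]$-basis for $CFL^-(\mathcal D)$, collapsing carries the intersection of the Alexander filtrations $\mathcal A_1^{s_1}\cap\dots\cap\mathcal A_n^{s_n}$ onto the collapsed level $\mathcal A^{s_1+\dots+s_n}$ and the $U_i$-power filtrations onto the single algebraic filtration $j$, while localizing preserves both; therefore the induced equivalences $\cC^\infty(\mathcal D_1)\simeq\cC^\infty(\mathcal D_2)$ preserve the Maslov grading and the filtrations $j$ and $\mathcal A$ at once, hence the combined filtration $\mathcal F^{t,s}=j^t\cap\mathcal A^s$ and every south-west region level $\mathcal F^S$ of Subsection \ref{subsection:Alexander}. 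This shows the $\mathcal F$-filtered chain homotopy type of $\cC^\infty(\mathcal D)$, together with its Maslov grading, depends only on $L$.

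The genuine difficulty is entirely inherited from \cite{OSlinks}: arranging the handleslide and stabilization invariance maps so that they are Maslov-graded and respect every Alexander filtration at the same time is exactly Ozsv\'ath and Szab\'o's holomorphic-triangle bookkeeping — admissibility of the triple diagrams, associativity of the triangle maps, and the nearest-point model for stabilization — which I would cite rather than reprove. Everything added on top, namely that ``collapse, then localize'' is a composition of additive functors and so preserves homotopy equivalences, homotopies, and the relevant filtered subcomplexes, is formal.
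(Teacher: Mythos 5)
The paper offers no proof of this theorem: it is stated with the attribution ``Ozsv\'ath and Szab\'o'' and is cited directly from \cite{OSlinks}. Your proposal is a correct filling-in of the implicit argument, and it follows the standard route: invoke the invariance of the multi-variable complex $CFL^-$ under the elementary Heegaard moves established in \cite{OSlinks} --- whose continuation and triangle maps are $\F[U_1,\dots,U_n]$-equivariant, Maslov-graded (using $M(x)-M(y)=\mu(\phi)-2n_{\textbf w}(\phi)$ and $\mu=0$ for chain maps) and non-increase each Alexander filtration (using $A_i(x)-A_i(y)=n_{z_i}(\phi)-n_{w_i}(\phi)$ and $n_{\textbf z}\geq0$) --- and then observe that ``collapse $U_i\mapsto U$'' followed by ``$\otimes_{\F[U]}\F[U,U^{-1}]$'' is a composite of additive functors, hence carries homotopy equivalences and homotopies along, while sending the $n$ Alexander filtrations to the collapsed $\mathcal A$ and the $U_i$-power filtration to the algebraic $j$. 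One small imprecision: the collapsed level $\mathcal A^s$ is not literally the image of an intersection $\mathcal A_1^{s_1}\cap\dots\cap\mathcal A_n^{s_n}$; what is actually needed, and does follow from your premises, is that if $f$ preserves each $\mathcal A_i^{s_i}$ then for any contribution $U_1^{k_1}\cdots U_n^{k_n}y$ in $f(x)$ one has $A(y)-\sum_i k_i\leq A(x)$, so $f$ preserves $\mathcal A^s$ after collapsing. With that phrasing repaired, the argument is complete and consistent with the citation the paper relies on.
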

For simplicity from now on we may denote our chain complex with $\cC^\infty_*(L)$, implicitly referring to any of the representatives of the filtered chain homotopy type.
This result guarantees that also the $\mathcal F$-filtration on $\cH^\infty_*$ is a link invariant, justifying our notation. 

We call a graded isomorphism $F$ between the homology groups of two links $L_1$ and $L_2$ a \emph{filtered isomorphism} if $F$ and its inverse $F^{-1}$ both preserve the filtration $\mathcal F$. This is equivalent to say that $F$ restricts to isomorphisms
\[\mathcal F^S\cH^\infty_d(L_1)\cong_\F\mathcal F^S\cH^\infty_d(L_2)\] for every $d\in\mathbb Z$ and south-west region $S$ of $\mathbb Z^2$.

When $L_1$ and $L_2$ are isotopic links Theorem \ref{teo:quasi_iso} implies that $\cC^\infty(L_1)$ is locally equivalent to $\cC^\infty(L_2)$, following the notation of Zemke in \cite{Zemke}. This means we can find chain maps $f:\cC^\infty(L_1)\rightarrow\cC^\infty(L_2)$ and $g:\cC^\infty(L_2)\rightarrow\cC^\infty(L_1)$ which both preserve $\mathcal F$ and induce $\mathcal F$-filtered isomorphisms between $\cH^\infty(L_1)$ and $\cH^\infty(L_2)$.
\begin{cor}
 \label{cor:quasi_iso}
  Suppose that the link $L_1$ is isotopic to the link $L_2$ in $S^3$; then there is a local equivalence between $\cC^\infty(L_1)$ and $\cC^\infty(L_2)$. 
\end{cor}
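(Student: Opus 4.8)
The plan is to deduce the corollary directly from the invariance statement of Ozsv\'ath and Szab\'o, Theorem \ref{teo:quasi_iso}, together with the discussion of local equivalence preceding it. Since $L_1$ and $L_2$ are isotopic in $S^3$, we may pick multi-pointed Heegaard diagrams $\mathcal D_1$ and $\mathcal D_2$ representing $L_1$ and $L_2$ respectively, and by Theorem \ref{teo:quasi_iso} the complexes $\cC^\infty(\mathcal D_1)$ and $\cC^\infty(\mathcal D_2)$ have the same $\mathcal F$-filtered chain homotopy type. Concretely, this gives chain maps $f\colon\cC^\infty(\mathcal D_1)\to\cC^\infty(\mathcal D_2)$ and $g\colon\cC^\infty(\mathcal D_2)\to\cC^\infty(\mathcal D_1)$ which preserve the Maslov grading and the filtration $\mathcal F$, and which satisfy $g\circ f\simeq\Id$ and $f\circ g\simeq\Id$ via $\mathcal F$-filtered chain homotopies.

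The remaining step is to verify that such a pair $(f,g)$ is in particular a local equivalence, which is purely formal. Being chain maps, $f$ and $g$ descend to graded maps $f_*,g_*$ on $\cH^\infty$; since $g\circ f\simeq\Id$ and $f\circ g\simeq\Id$, the maps $f_*$ and $g_*$ are mutually inverse graded isomorphisms. To see that $f_*$ respects $\mathcal F$ on homology, let $S$ be a south-west region and let $x\in\mathcal F^S\cC^\infty_d(\mathcal D_1)$ be a cycle; then $f(x)$ is again a cycle lying in $\mathcal F^S\cC^\infty_d(\mathcal D_2)$, so the class of $f(x)$, which is $f_*$ of the class of $x$, belongs to $\mathcal F^S\cH^\infty_d(L_2)$. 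Thus $f_*(\mathcal F^S\cH^\infty_d(L_1))\subseteq\mathcal F^S\cH^\infty_d(L_2)$, and the symmetric argument for $g_*=f_*^{-1}$ gives the reverse inclusion. Hence $f_*$ restricts to isomorphisms $\mathcal F^S\cH^\infty_d(L_1)\cong_\F\mathcal F^S\cH^\infty_d(L_2)$ for every $d\in\Z$ and every south-west region $S$; that is, $f_*$ is an $\mathcal F$-filtered isomorphism, and $(f,g)$ is a local equivalence between $\cC^\infty(L_1)$ and $\cC^\infty(L_2)$.

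There is essentially no genuine obstacle here: all the analytic content is already packaged in Theorem \ref{teo:quasi_iso}, and what is left is the unwinding of the definition of local equivalence. The only point deserving a word of care is that a filtered chain homotopy equivalence is a priori stronger than a local equivalence — the latter asks only that $f$ and $g$ each induce a filtered isomorphism on homology, not that they be mutually homotopy inverse — so beyond the bookkeeping above nothing further is required.
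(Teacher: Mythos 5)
Your argument is correct and matches the paper's: the corollary is stated in the paper as a direct consequence of Theorem \ref{teo:quasi_iso} together with the observation, made in the paragraph immediately preceding it, that an $\mathcal F$-filtered chain homotopy equivalence is in particular a local equivalence. Your unwinding of why the induced maps $f_*$ and $g_*=f_*^{-1}$ restrict to isomorphisms on each $\mathcal F^S\cH^\infty_d$ is exactly the bookkeeping the paper leaves implicit, and your closing remark about local equivalence being a priori weaker than filtered homotopy equivalence is the same caveat the paper records right after the corollary.
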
 
Note that we can assume $f$ to be a chain homotopy equivalence, but in general a local equivalence is not necessarily an $\mathcal F$-filtered chain homotopy equivalence. This would happen if the chain homotopies between $f$ and its homotopy inverse also preserve $\mathcal F$.

We call a south-west region $S$ of $\mathbb R^2$ \textbf{centered} if $(0,0)$ belongs to the boundary $\partial S$ of $S$. Consider \[S_k=\left\{(t,s)\in\mathbb R^2\:|\:\left(t+\dfrac{k}{2},s+\dfrac{k}{2}\right)\in S\right\}\:,\] where $k\in\mathbb R$. We define the invariant $\Upsilon_S(L)$ as follows. Given a centered south-west region $S$ of $\mathbb R^2$, we say that
\[\Upsilon_S(L):=\max_{k\in\mathbb R}\left\{k\:|\:\mathcal F^{S_k}\cH^\infty_0(L)\supset\mathcal F^{\{j\leq 0\}}\cH^\infty_0(L)\right\}\:.\] We recall that the $\mathcal F$-level $\{j\leq 0\}$ coincides with the level $j^0$ of the algebraic filtration. Note also that Theorem \ref{teo:dim} implies $\dim_{\F}\cH^\infty_0(L)>1$ for links with three or more components, but $\dim_{\F}\mathcal F^{\{j\leq 0\}}\cH^\infty_0(L)$ is always equal to one. For this reason, in the definition of $\Upsilon_S$, we need the region $S_k$ not only to contain a generator of the total homology in Maslov grading zero, but also that such element is homologous to one which lives in the algebraic level $j^0$.
\begin{cor} 
 The real number $\Upsilon_S(L)$ is a link invariant for every south-west region $S$ of $\mathbb R^2$.
\end{cor}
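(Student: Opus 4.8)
The plan is to reduce this to Theorem \ref{teo:quasi_iso} (equivalently Corollary \ref{cor:quasi_iso}), noting that the number $\Upsilon_S(L)$ depends on the $\mathcal F$-filtered chain homotopy type of $\cC^\infty(L)$ only through the filtered homology group $\cH^\infty_0(L)$, equipped with the two families of subspaces $\mathcal F^{S_k}\cH^\infty_0(L)$ and $\mathcal F^{\{j\leq 0\}}\cH^\infty_0(L)$. So all that is really needed is that an $\mathcal F$-filtered isomorphism of homology groups preserves these subspaces and their inclusion relations.

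First I would invoke Corollary \ref{cor:quasi_iso}: if $L_1$ is isotopic to $L_2$, there is a local equivalence between $\cC^\infty(L_1)$ and $\cC^\infty(L_2)$, hence in particular a graded filtered isomorphism $F\colon\cH^\infty_*(L_1)\to\cH^\infty_*(L_2)$. By definition of filtered isomorphism, $F$ restricts to $\F$-isomorphisms $\mathcal F^S\cH^\infty_d(L_1)\cong_\F\mathcal F^S\cH^\infty_d(L_2)$ for every $d\in\Z$ and every south-west region $S$ of $\Z^2$. For each $k\in\R$ the set $S_k\cap\Z^2$ is again a south-west region of $\Z^2$ — this is immediate from the definition, since translating $S$ by $(-k/2,-k/2)$ and intersecting with the lattice preserves the down-closedness condition — and $\mathcal F^{S_k}$ is by convention $\mathcal F^{S_k\cap\Z^2}$. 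Hence $F$ carries $\mathcal F^{S_k}\cH^\infty_0(L_1)$ isomorphically onto $\mathcal F^{S_k}\cH^\infty_0(L_2)$, and likewise $\mathcal F^{\{j\leq 0\}}\cH^\infty_0(L_1)$ onto $\mathcal F^{\{j\leq 0\}}\cH^\infty_0(L_2)$ (taking $S=\{j\leq 0\}$).

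Finally, since $F$ is a vector-space isomorphism carrying both subspaces of $\cH^\infty_0(L_1)$ to the corresponding subspaces of $\cH^\infty_0(L_2)$, it preserves inclusions between them: $\mathcal F^{S_k}\cH^\infty_0(L_1)\supset\mathcal F^{\{j\leq 0\}}\cH^\infty_0(L_1)$ holds if and only if $\mathcal F^{S_k}\cH^\infty_0(L_2)\supset\mathcal F^{\{j\leq 0\}}\cH^\infty_0(L_2)$. Therefore the set of admissible values of $k$ in the definition of $\Upsilon_S$ is the same for $L_1$ and $L_2$, and the two maxima coincide, i.e.\ $\Upsilon_S(L_1)=\Upsilon_S(L_2)$. (The maximum is attained: Proposition \ref{prop:bounded} guarantees that for $k$ sufficiently negative $\mathcal F^{S_k}\cH^\infty_0(L)$ is all of $\cH^\infty_0(L)$, so the admissible set is nonempty, and for $k$ sufficiently large it does not contain the one–dimensional space $\mathcal F^{\{j\leq 0\}}\cH^\infty_0(L)$, so it is bounded above.) I do not expect any genuine obstacle here: the only point requiring a little care is the bookkeeping identification of $\mathcal F^{S_k}$ with the lattice region $\mathcal F^{S_k\cap\Z^2}$ and the verification that this is a south-west region; the substantive content is entirely contained in Theorem \ref{teo:quasi_iso}.
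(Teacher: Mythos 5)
Your argument is correct and matches the paper's: the paper's proof is the one-line observation that the claim follows from Proposition \ref{prop:bounded} and Corollary \ref{cor:quasi_iso}, and your write-up simply unwinds those two references (the filtered isomorphism from Corollary \ref{cor:quasi_iso}, the boundedness from Proposition \ref{prop:bounded}) with the correct bookkeeping about $S_k$. Nothing to change.
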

\begin{proof}
 It follows immediately from Proposition \ref{prop:bounded} and Corollary \ref{cor:quasi_iso}.
\end{proof}
 
\subsection{Duality and mirror images}
\label{subsection:duality}
Let us start this subsection from a Heegaard diagram $\mathcal D$ for an oriented link $L$ in $S^3$. As we recalled in Subsection \ref{subsection:complex}, from $\mathcal D$ we obtain the chain complex $(\cC^\infty(\mathcal D),\partial^-)$. We now define the corresponding dual complex $(\cC^\infty(\mathcal D)^*,\partial^-_*)$ as follows.

The space $\cC^\infty(\mathcal D)^*$ as an $\F[U,U^{-1}]$-module is isomorphic to 
\begin{equation}
 \Hom_{\F[U,U^{-1}]}\left(\cC^\infty(\mathcal D),\F[U,U^{-1}]\right)\:.
 \label{Dual}
\end{equation}
If $x$ is an intersection point then its dual $x^*$ is the functional which sends $x$ into 1 and the other intersection points into 0; and this implies $p^*\in\cC^\infty(\mathcal D)^*$ can be defined by $\F[U,U^{-1}]$-linearization of the dual of the intersection points.
More specifically, we say that \[\cC^\infty_d(\mathcal D)^*:=\left(\cC^\infty_{-d}(\mathcal D)\right)^*=\Big\{p^*\in\cC^\infty(\mathcal D)^*\:\big|\:1\in p^*(\cC^\infty_{m}(\mathcal D))\text{ implies }m=-d\Big\}\:.\]
Notice that \[\cC^\infty(\mathcal D)^*\cong_\F\bigoplus_{d\in\mathbb Z}\cC^\infty_d(\mathcal D)^*\:,\] but $\cC^\infty(\mathcal D)^*\not\cong_\F\Hom_\F\left(\cC^\infty(\mathcal D),\F\right)$.  
In particular, we have $U^{\pm 1}p^*:=\left(U^{\mp 1}p\right)^*$ and thus \[M(U^{\pm 1}p^*)=M\left((U^{\mp 1}p)^*\right)=-M(U^{\mp 1}p)=-M(p)\mp 2=M(p^*)\mp 2\] as expected.

We can also define the dual filtration $\mathcal F^*$. 
\begin{figure}[t]
 \centering
 \begin{tikzpicture}[scale =.5]
        \coordinate (Origin)   at (0,0);
        \coordinate (XAxisMin) at (-5,0);
        \coordinate (XAxisMax) at (5,0);
        \coordinate (YAxisMin) at (0,-5);
        \coordinate (YAxisMax) at (0,5);
        \draw [thick, black,-latex] (XAxisMin) -- (XAxisMax);
        \draw [thick, black,-latex] (YAxisMin) -- (YAxisMax);
        \clip (-4.9,-4.9) rectangle (5.99,5.99);
        \draw[style=help lines] (-5,-5) grid[step=1cm] (4.95,4.95);
        
        \node[] at (-0.5,5.5) {$A$};
        \node[] at (5.5,-0.5) {$j$};
        
        \node[above left] at (-2,-2) {$S$};
        \draw [very thick, black] (-5,1) -- (-1,1);
        \draw [very thick, black] (-1,-1) -- (-1,1);
        \draw [very thick, black] (-1,-1) -- (2,-1);
        \draw [very thick, black] (2,-1) -- (2,-5);
        
        \draw[fill, opacity=.5, gray] (-1,1) rectangle (-5,-5);
        \draw[fill, opacity=.5, gray] (-1,-5) rectangle (2,-1);
 \end{tikzpicture}
 \hspace{0.5cm}
 \begin{tikzpicture}[scale =.5]
        \coordinate (Origin)   at (0,0);
        \coordinate (XAxisMin) at (-5,0);
        \coordinate (XAxisMax) at (5,0);
        \coordinate (YAxisMin) at (0,-5);
        \coordinate (YAxisMax) at (0,5);
        \draw [thick, black,-latex] (XAxisMin) -- (XAxisMax);
        \draw [thick, black,-latex] (YAxisMin) -- (YAxisMax);
        \clip (-4.9,-4.9) rectangle (5.99,5.99);
        \draw[style=help lines] (-5,-5) grid[step=1cm] (4.95,4.95);
        
        \node[] at (-0.5,5.5) {$A$};
        \node[] at (5.5,-0.5) {$j$};
        
        \node[above left] at (-2,-2) {$\iota S$};
        \draw [very thick, dotted ,black] (-2,5) -- (-2,1);
        \draw [very thick, dotted ,black] (-2,1) -- (1,1);
        \draw [very thick, dotted, black] (1,1) -- (1,-1);
        \draw [very thick,dotted, black] (1,-1) -- (5,-1);
        
        \draw[fill, opacity=.5, gray] (-2,5) rectangle (-5,-5);
        \draw[fill, opacity=.5, gray] (-2,-5) rectangle (1,1);
        \draw[fill, opacity=.5, gray] (1,-5) rectangle (5,-1);
 \end{tikzpicture}
 \caption{The dotted boundary in the picture on the right is not part of $\iota S$.}
 \label{Inverseregion}
\end{figure}
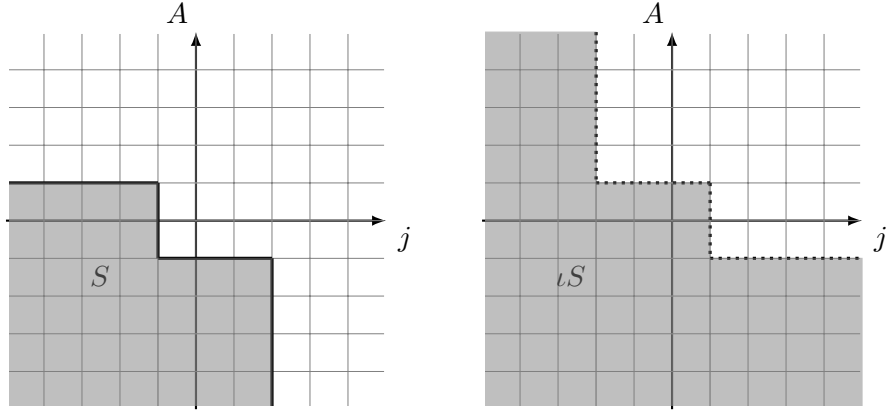
In order to do this, we introduce the concept of inverse $\iota S$ of a south-west region $S$ in $\mathbb Z^2$ (resp. $\mathbb R^2$). We take $\iota S$ as the complement of the image of $S$ under the symmetry centered in the origin of the plane, see Figure \ref{Inverseregion} for an example.
\begin{lemma}
 If $S$ is a south-west region then $\iota S$ is also a south-west region.
\end{lemma}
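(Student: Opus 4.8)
The plan is to verify directly that $\iota S$ satisfies the defining property of a south-west region: if $(\overline t,\overline s)\in\iota S$ and $t\le\overline t$, $s\le\overline s$, then $(t,s)\in\iota S$. Recall that $\iota S=\R^2\setminus(-S)$ (resp.\ $\Z^2\setminus(-S)$), where $-S$ denotes the image of $S$ under the central symmetry $\rho(x,y)=(-x,-y)$. So $(\overline t,\overline s)\in\iota S$ means exactly that $(-\overline t,-\overline s)\notin S$, and I must show that $(-t,-s)\notin S$ whenever $(-\overline t,-\overline s)\notin S$ and $t\le\overline t$, $s\le\overline s$.

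First I would argue by contraposition: suppose instead that $(-t,-s)\in S$. Since $t\le\overline t$ and $s\le\overline s$, we have $-\overline t\le -t$ and $-\overline s\le -s$, i.e.\ the point $(-\overline t,-\overline s)$ lies weakly below and to the left of $(-t,-s)$. Because $S$ is a south-west region and $(-t,-s)\in S$, the defining closure property of $S$ forces $(-\overline t,-\overline s)\in S$ as well — contradicting $(\overline t,\overline s)\in\iota S$. Hence $(-t,-s)\notin S$, so $(t,s)\in\iota S$, which is what we wanted.

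It remains to check the two non-degeneracy conditions, namely that $\iota S\ne\emptyset$ and $\iota S\ne\R^2$ (resp.\ $\Z^2$). Since $S\ne\R^2$, there is a point not in $S$; its image under $\rho$ lies in $\iota S$, so $\iota S\ne\emptyset$. Since $S\ne\emptyset$, pick $p\in S$; then $\rho(p)\notin\iota S$, so $\iota S$ is a proper subset. In the $\R^2$ case one should also note that $\iota S$ is again a topological submanifold of $\R^2$: this is immediate because $\rho$ is a homeomorphism and the complement of a south-west region is (the closure of) a south-west region of the ``opposite'' type, so $\partial(\iota S)=\rho(\partial S)$ is a properly embedded curve of the same kind; no real obstacle arises here.

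I do not expect any genuinely hard step: the content of the lemma is the order-reversing behaviour of $\rho$ interacting with the monotone closure condition, and the contraposition argument above handles it cleanly. The only point requiring a little care is bookkeeping the direction of the inequalities under the sign change (ensuring one quantifies over points south-west of a given point, not north-east), and, in the continuous case, making sure the word ``complement'' is interpreted so that $\iota S$ is genuinely a south-west region rather than its closure — the figure in the excerpt already fixes this convention, with the dotted boundary excluded.
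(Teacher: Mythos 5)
Your proof is correct and is essentially the paper's own argument: you translate membership in $\iota S$ into non-membership of the centrally reflected point in $S$ and derive a contradiction from the order-reversing effect of the central symmetry on the monotone closure property, which is exactly what the paper does (phrased through the intermediate notion of a ``north-east region''). You additionally check the non-degeneracy and submanifold conditions, which the paper leaves implicit; that is a reasonable bit of extra care but not a different method.
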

\begin{proof}
 The mirror image of $S$ is a north-east region. The complement of a north-east region is a south-west region; in fact, if $(x,y)\in\iota S$ and $(\overline x,\overline y)\not\in\iota S$ with $\overline x\leq x$ and $\overline y\leq y$ then $(\overline x,\overline y)$ belongs to the north-east region $(\iota S)^{\mathsf c}$, which means that $(x,y)$ is also in $(\iota S)^{\mathsf c}$. This is a contradiction.
\end{proof}
The dual filtration is defined in the following way: 
\[\begin{aligned}(\mathcal F^*)^S\cC^\infty_d(\mathcal D)^*:=&\Ann\mathcal F^{\iota S}\cC^\infty_{-d}(\mathcal D)= \\
=&\left\{p^*\in\cC^\infty_d(\mathcal D)^*\:|\:p^*\left(\mathcal F^{\iota S}\cC^\infty_{-d}(\mathcal D)\right)=0\right\}\end{aligned}\] for any south-west region $S$. We observe that if $S'\subset S$ then $\iota S\subset\iota S'$ and so $\Ann\mathcal F^{\iota S'}\subset\Ann\mathcal F^{\iota S}$. This means that $\mathcal F^*$ is still an increasing filtration.

The only missing part in the dual complex is the differential. We introduce $\partial^-_*$ as follows. 
For every $x^*\in\cC^\infty(\mathcal D)^*$ and $y\in\cC^\infty(\mathcal D)$ we have \[(\partial^-_*x^*)(y)=x^*(\partial^-y)\:;\] moreover, we take $\partial^-_*(Up^*)=U\cdot\partial^-_*p^*$.
\begin{lemma}
 The map $\partial^-_*$ is a differential, drops the Maslov grading by $1$ and preserves the filtration $\mathcal F^*$.
\end{lemma}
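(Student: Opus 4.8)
The plan is to deduce each of the three assertions from the corresponding property of $\partial^-$ by transposing. First I would dispose of well-definedness: for $x\in\T$ the assignment $y\mapsto x^*(\partial^-y)$ is $\F[U,U^{-1}]$-linear because both $x^*$ and $\partial^-$ are, and, since $\T$ is finite, it equals a finite $\F[U,U^{-1}]$-combination of the $y^*$ (the coefficient of $y^*$ being read off from the coefficient of $x$ in $\partial^-y$, with the appropriate power of $U$), so it indeed lies in $\cC^\infty(\mathcal D)^*$; extending by $\partial^-_*(Up^*)=U\cdot\partial^-_*p^*$ then gives an $\F[U,U^{-1}]$-linear map, and one checks that the identity $(\partial^-_*p^*)(y)=p^*(\partial^-y)$ continues to hold for every $p^*$. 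Granting this, $\partial^-_*$ squares to zero: for each generator $x^*$ and each $y$ one has $(\partial^-_*\partial^-_*x^*)(y)=(\partial^-_*x^*)(\partial^-y)=x^*((\partial^-)^2y)=0$, and since a functional is determined by its values this gives $\partial^-_*\circ\partial^-_*=0$.

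For the Maslov grading, recall that $\cC^\infty_d(\mathcal D)^*=(\cC^\infty_{-d}(\mathcal D))^*$ and that $\partial^-$ sends $\cC^\infty_{d}(\mathcal D)$ to $\cC^\infty_{d-1}(\mathcal D)$. If $x^*\in\cC^\infty_d(\mathcal D)^*$, then $x^*$ vanishes off Maslov degree $-d$, so $x^*(\partial^-y)\neq 0$ forces $\partial^-y$ to have a component in degree $-d$, hence $y$ to have a component in $\cC^\infty_{-d+1}(\mathcal D)$; therefore $\partial^-_*x^*$ is supported in degree $-(d-1)$, i.e. $\partial^-_*x^*\in\cC^\infty_{d-1}(\mathcal D)^*$, so $\partial^-_*$ lowers the Maslov grading by one.

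For the filtration, I would first note that $\partial^-$ respects $\mathcal F$, hence respects $\mathcal F^{\iota S}\cC^\infty(\mathcal D)$ for every south-west region $S$, because $\mathcal F^{\iota S}$ is spanned by the $\partial^-$-invariant subspaces $\mathcal F^{t,s}$ with $(t,s)\in\iota S$. Now let $x^*\in(\mathcal F^*)^S\cC^\infty_d(\mathcal D)^*=\Ann\mathcal F^{\iota S}\cC^\infty_{-d}(\mathcal D)$ and let $y\in\mathcal F^{\iota S}\cC^\infty_{-d+1}(\mathcal D)$. Since $\partial^-$ preserves $\mathcal F^{\iota S}$ and drops the Maslov grading by one, $\partial^-y\in\mathcal F^{\iota S}\cC^\infty_{-d}(\mathcal D)$, so $(\partial^-_*x^*)(y)=x^*(\partial^-y)=0$. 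Thus $\partial^-_*x^*$ annihilates $\mathcal F^{\iota S}\cC^\infty_{-(d-1)}(\mathcal D)$, i.e. $\partial^-_*x^*\in(\mathcal F^*)^S\cC^\infty_{d-1}(\mathcal D)^*$, which is exactly what it means for $\mathcal F^*$ to be preserved. The only point requiring a little care is the $U$-equivariance bookkeeping in the first step; the rest is a formal dualisation of facts already established for $\partial^-$, so I do not expect a real obstacle.
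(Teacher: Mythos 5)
Your proof is correct and follows essentially the same route as the paper: each of the three properties is deduced by transposing the corresponding property of $\partial^-$, using $(\partial^-_*p^*)(q)=p^*(\partial^-q)$ and the identity $(\mathcal F^*)^S=\Ann\mathcal F^{\iota S}$. The only addition is your preliminary well-definedness paragraph, which the paper takes for granted but which is a reasonable thing to check explicitly.
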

\begin{proof}
 First we have \[\partial^-_*(\partial^-_*x^*(y))=(\partial^-_*x^*)(\partial^-y)=x^*(\partial^-\circ\partial^-y)=0=0(y)\] for any $y\in\cC^\infty(\mathcal D)$.
 For the second claim, suppose that $p^*\in\cC^\infty_d(\mathcal D)^*$. Then one has $\partial^-_*p^*(q)=p^*(\partial^-q)$ so if $q\in\cC^\infty_{-d+1}(\mathcal D)$ then we have that $\partial^-q\in\cC^\infty_{-d}(\mathcal D)$; 
 in addition, if $r\notin\cC^\infty_{-d+1}(\mathcal D)$ is homogeneous then $\partial^-_*p^*(r)=0$ 
 and this implies
 \[\partial^-_*p^*=\partial^-_*p^*\lvert_{\cC^\infty_{-d+1}(\mathcal D)}\in\left(\cC^\infty_{-d+1}(\mathcal D)\right)^*=\cC^\infty_{d-1}(\mathcal D)^*\:.\]
 Finally, suppose that $p^*\in(\mathcal F^*)^S\cC^\infty_d(\mathcal D)^*$ for a south-west region $S$. Then one has $p^*\left(\mathcal F^{\iota S}\cC^\infty_{-d}(\mathcal D)\right)=0$. If $q\in\mathcal F^{\iota S}\cC^\infty_{-d+1}(\mathcal D)$ then $(\partial^-_*p^*)(q)=0$, since $\partial^-q\in\mathcal F^{\iota S}\cC^\infty_{-d}(\mathcal D)$; this implies that \[\partial^-_*p^*\in\Ann\mathcal F^{\iota S}\cC^\infty_{-d+1}(\mathcal D)=(\mathcal F^*)^S\cC^\infty_{d-1}(\mathcal D)^*\:.\] This completes the proof.
\end{proof}
We can now prove that the dual complex that we have just defined is related to the complex obtained from a Heegaard diagram of the mirror image of $L$. We denote with $\mathcal C_d\lkhov a\rkhov$ the graded complex given by $\mathcal C_{d-a}$. 
\begin{teo}
 \label{teo:mirror}
 If $(\cC^\infty(\mathcal D),\partial^-)$ is the chain complex associated to a Heegaard diagram $\mathcal D$ for $L$ then there is a diagram $\mathcal D^*$, representing the mirror image $L^*$ of $L$, such that \[(\cC^\infty(\mathcal D^*),\partial^-_{\mathcal D^*})=(\cC^\infty(\mathcal D)^*,\partial^-_*)\lkhov1-n\rkhov\] as $\mathcal F$-filtered, graded chain complexes.
\end{teo}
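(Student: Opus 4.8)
The plan is to realise $\mathcal D^*$ concretely and then to check, one ingredient at a time, that the resulting complex is the dual complex with the asserted grading shift. First I would take $\mathcal D^*=(-\Sigma,\alpha,\beta,\textbf w,\textbf z)$, the diagram obtained from $\mathcal D=(\Sigma,\alpha,\beta,\textbf w,\textbf z)$ by reversing the orientation of the Heegaard surface. Gluing the two handlebodies along $-\Sigma$ yields $-S^3\cong S^3$, and under this orientation-reversing identification the link cut out by $\textbf w,\textbf z$ becomes the mirror $L^*$ with its orientation unchanged (see \cite{OS,OSknots,OSlinks}); hence $\mathcal D^*$ is a Heegaard diagram for $L^*$. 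Since $\text{Sym}(\Sigma,\alpha,\beta)$ and $\text{Sym}(-\Sigma,\alpha,\beta)$ carry the same tori $\T_\alpha,\T_\beta$, the intersection point set $\T$ is unchanged, so I would define the candidate identification of $\F[U,U^{-1}]$-modules $\cC^\infty(\mathcal D^*)\to\cC^\infty(\mathcal D)^*$ by sending each generator $x\in\T$ to its dual $x^*$ and extending $\F[U,U^{-1}]$-linearly; this is compatible with the two module structures precisely because on the dual one has $U^{\pm1}p^*=(U^{\mp1}p)^*$, and $\cC^\infty(\mathcal D)^*$ is free with basis $\{x^*:x\in\T\}$.

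The core of the argument is the comparison of differentials. I would equip $\text{Sym}(-\Sigma,\alpha,\beta)$ with the conjugate almost-complex structure $\bar J$ and use the standard device of precomposing a map $\mathbb D\to\text{Sym}(-\Sigma)$ with complex conjugation of the disk. This produces a bijection between $\bar J$-holomorphic Whitney disks in $\mathcal D^*$ representing a class $\phi^*\in\pi_2^{\mathcal D^*}(x,y)$ and $J$-holomorphic Whitney disks in $\mathcal D$ representing the class $\phi\in\pi_2^{\mathcal D}(y,x)$ with the \emph{same underlying domain} (reversing the orientation of $\Sigma$ flips $\partial D$, hence exchanges the two endpoints of the disk, while leaving all multiplicities unchanged). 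Being a reparametrisation, it identifies the unparametrised moduli spaces, so $\mu^{\mathcal D^*}(\phi^*)=\mu^{\mathcal D}(\phi)$ and the mod $2$ counts satisfy $m(\phi^*)=m(\phi)$; equality of domains gives $n_{\textbf w}(\phi^*)=n_{\textbf w}(\phi)$ as well as $n_{w_i}(\phi^*)=n_{w_i}(\phi)$ and $n_{z_i}(\phi^*)=n_{z_i}(\phi)$. Reading off coefficients, this says exactly that the coefficient of $y$ in $\partial^-_{\mathcal D^*}x$ equals the coefficient of $x$ in $\partial^-_{\mathcal D}y$, with the same power of $U$; comparing with the defining relation $(\partial^-_*x^*)(y)=x^*(\partial^-y)$, we conclude that under the identification above $\partial^-_{\mathcal D^*}=\partial^-_*$.

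It remains to match the Maslov grading and the $\mathcal F$-filtration. From the disk correspondence and the formula $M(x)-M(y)=\mu(\psi)-2n_{\textbf w}(\psi)$ for $\psi\in\pi_2(x,y)$, reversing the direction of disks forces $M_{\mathcal D^*}(x)+M_{\mathcal D}(x)$ to be independent of $x$, i.e. $M_{\mathcal D^*}(x)=-M_{\mathcal D}(x)+c_n$ for a constant $c_n$. Likewise, since $n_{z_i}(\phi)-n_{w_i}(\phi)$ is preserved while the disk direction is reversed, $\textbf A_{\mathcal D^*}(x)=-\textbf A_{\mathcal D}(x)+\textbf c$, and $\textbf c=0$ by the standard normalisation of the Alexander multi-grading (equivalently, because the Alexander polynomial of $L^*$ agrees, up to units, with that of $L$); in particular $A_{\mathcal D^*}(x)=-A_{\mathcal D}(x)$, so the Alexander filtration of $\mathcal D^*$ is the "negated" Alexander filtration of $\mathcal D$. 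Combined with the fact that $U^{\pm1}$ acts inversely on $\cC^\infty(\mathcal D)^*$, which negates the algebraic filtration, this shows that a south-west region $S$ indexing $\mathcal F$ on $\cC^\infty(\mathcal D^*)$ corresponds precisely to $\Ann\mathcal F^{\iota S}$ on $\cC^\infty(\mathcal D)^*$, i.e. to $(\mathcal F^*)^S$. Finally I would pin down $c_n$ using Theorem \ref{teo:dim}: by the definition of $\mathcal F^*$, the algebraic-level-$0$ part of the homology of $\cC^\infty(\mathcal D)^*$ carries (the duals of) the generators of Theorem \ref{teo:dim}, now in Maslov gradings $0,1,\dots,n-1$ at algebraic level $0$, while $\mathcal D^*$ represents the $n$-component link $L^*$ and so, again by Theorem \ref{teo:dim}, must carry these generators in Maslov gradings $0,-1,\dots,-(n-1)$ at algebraic level $0$; matching the two distributions (together with the binomial multiplicities) forces $c_n=1-n$. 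This is exactly the shift $\lkhov1-n\rkhov$ in the statement, specialising to the familiar "mirroring negates the Maslov grading" when $n=1$. Assembling the module identification, $\partial^-_{\mathcal D^*}=\partial^-_*$, the grading shift $c_n=1-n$, and the filtration identification yields the claimed equality of $\mathcal F$-filtered graded chain complexes.

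I expect the genuine difficulty to lie entirely in the second step: making the orientation-reversal/conjugation correspondence between moduli spaces airtight and checking that it transports \emph{all} of the relevant data correctly — the multiplicities at $\textbf w$ (hence the powers of $U$), the Maslov index, and the mod $2$ point counts — as well as fixing the additive constants $c_n$ and $\textbf c$ in the two grading shifts. These are standard but delicate points in the analytic foundations (genericity of $\bar J$, which boundary arc of $\mathbb D$ is sent to $\T_\alpha$ before and after conjugation, and the normalisation conventions for the absolute gradings), so in a full write-up I would state precisely the form of the correspondence being invoked and keep careful track of these conventions throughout.
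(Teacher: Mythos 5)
Your proposal follows essentially the same route as the paper: take $\mathcal D^*=(-\Sigma,\alpha,\beta,\textbf w,\textbf z)$, observe that orientation reversal identifies $\pi_2(x,y)$ with $\pi_2(y,x)$ preserving domains, multiplicities and Maslov index, verify that $x\mapsto x^*$ is then a chain map by matching coefficients against $(\partial^-_*x^*)(y)=x^*(\partial^-y)$, and use the relative-grading formulas plus symmetry/normalisation to negate $M$ and $A$ and fix the overall shift. The one place you differ is in pinning down the Maslov constant $c_n$: the paper invokes the normalisation that the top grading carrying total homology is $0$, whereas you match the full distribution of algebraic-level-$0$ generators from Theorem \ref{teo:dim}; these are equivalent, with your version being slightly more explicit about the bookkeeping.
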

\begin{proof}
 If $\mathcal D=(\Sigma,\alpha,\beta,\textbf w,\textbf z)$ then $\mathcal D^*=(-\Sigma,\alpha,\beta,\textbf w,\textbf z)$. This identifies the domain $\phi\in\pi_2(x,y)$ with $-\phi\in\pi_2(y,x)$, see \cite{OSSz,OSlinks}; moreover, using the formula in \cite{Lipshitz} it is easy to check that $\phi$ and $-\phi$ have the same Maslov index. The identification that proves the theorem is $x\rightarrow x^*$, extended $U$-equivariantly to the whole complexes, where $x^*$ denotes the dual of $x$ as before. We first show that such a map is indeed a chain map:
 \[(\partial^-_{\mathcal D^*}(x))^*(t)=\sum_{y\in\T}\sum_{\substack{\phi\in\pi_2(y,x)\\ \mu(\phi)=1}}m(\phi)\cdot U^{n_{\textbf w}(\phi)}y^*(t)=
 \sum_{\substack{\phi\in\pi_2(t,x)\\ \mu(\phi)=1}}m(\phi)\cdot U^{n_{\textbf w}(\phi)}\]
 \[\partial^-_*(x^*(t))=x^*(\partial^-t)=x^*\left(\sum_{y\in\T}\sum_{\substack{\phi\in\pi_2(t,y)\\ \mu(\phi)=1}}m(\phi)\cdot U^{n_{\textbf w}(\phi)}y\right)=\sum_{\substack{\phi\in\pi_2(t,x)\\ \mu(\phi)=1}}m(\phi)\cdot U^{n_{\textbf w}(\phi)}\:,\] which holds for every generator $t$ of $\cC^\infty(\mathcal D)$ and so the claim is proved.
 
 Now we argue that our identification correctly shifts the Maslov and Alexander gradings. Suppose that $M(x)=d$, then by definition it is $M_*(x^*)=-d$. We observe that from \cite{OSlinks} we have \[M(x)-M(y)=\mu(\phi)-2n_{\textbf w}(\phi)=\mu(-\phi)-2n_{\textbf w}(-\phi)=M_{\mathcal D*}(y^*)-M_{\mathcal D*}(x^*)\] with $\phi\in\pi_2(x,y)$ and then $M_{\mathcal D*}$ is reversed as a relative grading, which means $M_{\mathcal D*}(x)=-d+c$ with $c\in\mathbb Z$. Now we use the fact that the Maslov grading is always normalized in the way that the top grading, where the total homology is non-trivial, is zero (\cite{OSlinks}). This gives $c=1-n$ as wanted.
 
 Finally, consider $x$ such that $A(x)=s$. As before, using the relation \[A(x)-A(y)=n_\textbf z(\phi)-n_\textbf w(\phi)\] whenever $\phi\in\pi_2(x,y)$ and the fact that the Alexander grading is always symmetric, we obtain $A_{\mathcal D^*}(x)=-s$. We use the definition of $\mathcal F^*$ to recover \[\begin{aligned}A_*(x^*)=&\min_{a\in\mathbb Z}\left\{a\:|\:x^*\in(\mathcal A^*)^a\cC^\infty(\mathcal D)^*\right\}= \\ =&\min_{a\in\mathbb Z}\left\{a\:|\:x^*\left(\mathcal A^{-a-1}\cC^\infty(\mathcal D)\right)=0\right\}=-\max_{a\in\mathbb Z}\left\{a\:|\:x\notin\mathcal A^{a-1}\cC^\infty(\mathcal D)\right\}=\\ =&-\min_{a\in\mathbb Z}\left\{a\:|\:x\in\mathcal A^{a}\cC^\infty(\mathcal D)\right\}=-s\end{aligned}\]
 and the proof is completed.
\end{proof}
Note that the identification in Theorem \ref{teo:mirror} also gives that the homology group of the mirror image of $L$ is the dual of the homology group of $L$, where the latter group is defined exactly as in Equation \eqref{Dual}.
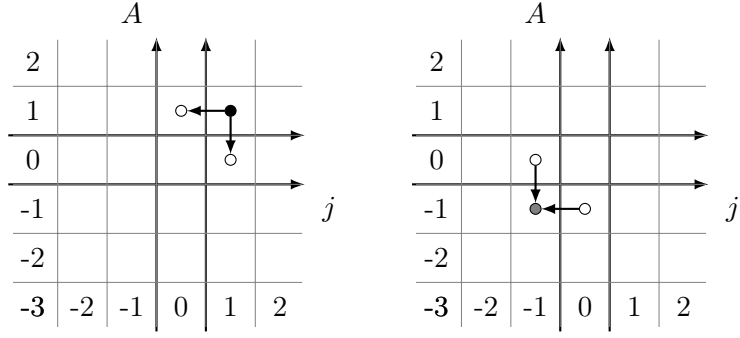
\begin{figure}[t]
 \centering
 \begin{tikzpicture}[scale =.65]
        \coordinate (Origin)   at (0,0);
        \coordinate (XAxisMin) at (-3,0);
        \coordinate (XAxisMax) at (3,0);
        \coordinate (YAxisMin) at (0,-3);
        \coordinate (YAxisMax) at (0,3);
        \draw [thick, black,-latex] (XAxisMin) -- (XAxisMax);
        \draw [thick, black,-latex] (YAxisMin) -- (YAxisMax);
        \draw [thick, black,-latex] (-3,1) -- (3,1);
        \draw [thick, black,-latex] (1,-3) -- (1,3);
        \clip (-2.9,-2.9) rectangle (3.99,3.99);
        \draw[style=help lines] (-3,-3) grid[step=1cm] (2.95,2.95);
        
        \foreach \x in {-3,...,2}
                    \node[draw=none,fill=none] at (\x+0.5,-2.5) {\x};
    	\foreach \y in {-3,...,2}     		
        			\node[draw=none,fill=none] at (-2.5,\y+0.5) {\y};
        
        \node[] at (-0.5,3.5) {$A$};
        \node[] at (3.5,-0.5) {$j$};
        
        \node[draw,circle,fill,scale=0.4]  (A) at (1.5,1.5) {};
        \node[draw,circle,fill=white,scale=0.4] (B) at (0.5,1.5) {};
        \node[draw,circle,fill=white,scale=0.4] (C) at (1.5,0.5) {};
        
        \path [thick,-latex] (A) edge node[left] {} (B);
        \path [thick,-latex] (A) edge node[left] {} (C);
 \end{tikzpicture}
 \hspace{.5cm}
 \begin{tikzpicture}[scale =.65]
        \coordinate (Origin)   at (0,0);
        \coordinate (XAxisMin) at (-3,0);
        \coordinate (XAxisMax) at (3,0);
        \coordinate (YAxisMin) at (0,-3);
        \coordinate (YAxisMax) at (0,3);
        \draw [thick, black,-latex] (XAxisMin) -- (XAxisMax);
        \draw [thick, black,-latex] (YAxisMin) -- (YAxisMax);
        \draw [thick, black,-latex] (-3,1) -- (3,1);
        \draw [thick, black,-latex] (1,-3) -- (1,3);
        \clip (-2.9,-2.9) rectangle (3.99,3.99);
        \draw[style=help lines] (-3,-3) grid[step=1cm] (2.95,2.95);
        
        \foreach \x in {-3,...,2}
                    \node[draw=none,fill=none] at (\x+0.5,-2.5) {\x};
    	\foreach \y in {-3,...,2}     		
        			\node[draw=none,fill=none] at (-2.5,\y+0.5) {\y};
        
        \node[] at (-0.5,3.5) {$A$};
        \node[] at (3.5,-0.5) {$j$};
        
        \node[draw,circle,fill=white,scale=0.4] (A) at (-0.5,0.5) {};
        \node[draw,circle,fill=white,scale=0.4] (B) at (0.5,-0.5) {};
        \node[draw,circle,fill=gray,scale=0.4] (C) at (-0.5,-0.5) {};
        
        \path [thick,-latex] (A) edge node[left] {} (C);
        \path [thick,-latex] (B) edge node[left] {} (C);
 \end{tikzpicture}
 \caption{The complex $\cC^\infty(T_{2,3})$ is on the left and $\cC^\infty(T_{2,3}^*)$ on the right, both chain complexes are pictured ignoring the $U$-action. Black, white and gray dots represent Maslov gradings $1,0$ and $-1$ respectively.}
 \label{Homologytrefoils}
 \end{figure}
Furthermore, as an example in Figure \ref{Homologytrefoils} we show the filtered chain complexes for the positive and the negative trefoil. 

\subsection{Local and stable equivalences of knot Floer chain complexes}
In \cite{Hom} Hom introduced a different equivalence relation for the complexes $CFK^\infty(K)=\cC^\infty(K)$, when $K$ is a knot. More specifically, we say that the Floer complexes associated to the knots $K_1$ and $K_2$ are stably equivalent if we have an $\mathcal F$-filtered chain homotopy equivalence \[CFK^\infty(K_1)\oplus A\simeq CFK^\infty(K_2)\oplus B\:,\] where $A$ and $B$ are acyclic chain complexes; in other words, it is $H_*(A)=H_*(B)=\{0\}$. 
Here we recall
\cite[Corollary 3.2]{HomHe}, which 
shows that the notion of stable equivalence coincides with the one of local equivalence, in the case of knots. 
\begin{lemma}[Hendricks and Hom]
 \label{lemma:acyclic}
 If $CFK^\infty(K)$ is locally equivalent to $CFK^\infty(\bigcirc)=\F[U,U^{-1}]_{(0)}$ then \[CFK^\infty(K)\simeq\F[U,U^{-1}]_{(0)}\oplus A\:,\] where $A$ is acyclic.
\end{lemma}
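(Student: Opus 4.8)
The plan is to unwind the definition of local equivalence and promote the two local maps $f:CFK^\infty(K)\to\F[U,U^{-1}]_{(0)}$ and $g:\F[U,U^{-1}]_{(0)}\to CFK^\infty(K)$ into a splitting of $CFK^\infty(K)$. First I would record what local equivalence gives us: $f$ and $g$ are $\mathcal F$-filtered chain maps inducing $\mathcal F$-filtered isomorphisms on homology. Since $H_*(\F[U,U^{-1}]_{(0)})\cong\F[U,U^{-1}]$ is supported (as an $\F[U,U^{-1}]$-module) in a single generator in Maslov grading $0$, the composite $g\circ f:CFK^\infty(K)\to CFK^\infty(K)$ induces an isomorphism on homology which, moreover, preserves $\mathcal F$ together with its inverse; that is, $g\circ f$ is an $\mathcal F$-filtered quasi-isomorphism from $CFK^\infty(K)$ to itself. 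The key point I would invoke here is the standard structure theorem for finitely generated filtered complexes over $\F[U,U^{-1}]$: such a complex admits a (non-canonical) direct-sum decomposition into a "tower" summand $\F[U,U^{-1}]_{(0)}$ — the image of generators surviving to $H_*$ — and an acyclic complement $A$. Concretely, one can reduce $CFK^\infty(K)$ by cancelling differentials to a reduced model, then split off the single surviving tower.

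The careful part is making the splitting $\mathcal F$-filtered, not merely $\F[U,U^{-1}]$-linear. Here I would argue as follows. Choose a filtered basis for (a reduced model of) $CFK^\infty(K)$; since $g$ induces an $\mathcal F$-filtered iso on homology and the class generating $H_0(CFK^\infty(K))$ lies in $\mathcal F^{\{j\le 0\}}\cap\mathcal A^{\{A\le 0\}}$ (this is the content of the normalization in Theorem \ref{teo:dim} applied to $n=1$, together with the fact that a local equivalence preserves these filtration levels), the image $g(1)$ is a cycle whose homology class generates $H_0$ and whose $\mathcal F$-filtration level is $(0,0)$. Dually, $f$ kills a filtered complement. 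The plan is to let $\iota:\F[U,U^{-1}]_{(0)}\to CFK^\infty(K)$ be $g$ itself, and to produce a filtered chain map $\pi:CFK^\infty(K)\to\F[U,U^{-1}]_{(0)}$ with $\pi\circ\iota=\Id$; one can take $\pi=f$ after possibly correcting by a filtered chain homotopy so that $f\circ g=\Id$ rather than just chain homotopic to it (this correction is possible because $f\circ g$ is a filtered self-quasi-isomorphism of the tower $\F[U,U^{-1}]_{(0)}$, hence is $\mathcal F$-filtered homotopic to the identity, and $\F[U,U^{-1}]_{(0)}$ has no filtration subtleties being one-dimensional over $\F[U,U^{-1}]$). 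Then $A:=\Ker\pi$ is an $\mathcal F$-filtered subcomplex, the short exact sequence $0\to A\to CFK^\infty(K)\to\F[U,U^{-1}]_{(0)}\to 0$ is $\mathcal F$-filtered split by $\iota$, and $A$ is acyclic since $\iota$ and $\pi$ are quasi-isomorphisms. This yields the desired $\mathcal F$-filtered chain homotopy equivalence $CFK^\infty(K)\simeq\F[U,U^{-1}]_{(0)}\oplus A$.

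The main obstacle I anticipate is precisely the filtered-splitting step: over $\F[U,U^{-1}]$ one must ensure that cancelling the acyclic part can be done without raising filtration levels, i.e.\ that the reduction to a filtered-reduced model respects $\mathcal F$. The safe way to handle this is to work with a filtered basis and cancel differential arrows one at a time, each cancellation being a filtered change of basis (an arrow $x\to y$ with $\mathcal F(y)\le\mathcal F(x)$ can be cancelled keeping the filtration, which is automatic since $\partial^-$ respects $\mathcal F$); this is the $\F[U,U^{-1}]$-analogue of the "clean-up lemma" and is exactly where the argument of \cite[Corollary 3.2]{HomHe} is being adapted. Everything else — identifying $H_0$, checking acyclicity of $A$, and assembling the direct sum — is routine once this filtered cancellation is in place.
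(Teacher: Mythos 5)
Your argument is correct, but it follows a noticeably different route from the paper's. The paper uses only the map $g$: it sets $y=g(1)$, observes that $y$ sits at minimal $j$- and $\mathcal A$-level $0$, promotes $y$ to a filtered basis element, and then argues by a filtered change of basis that no differential can hit $y$; this splits off the tower and the acyclicity of the complement follows from the dimension count in Theorem~\ref{teo:dim}. You instead use \emph{both} maps $f$ and $g$ to produce an honest filtered retraction. One small simplification you could make: since $\F[U,U^{-1}]_{(0)}$ has trivial differential and its Maslov-grading-$0$ part is just $\F\cdot 1$, any grading-preserving $U$-equivariant endomorphism of it is a scalar, and over $\F$ the only such scalar inducing an isomorphism is $1$. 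Hence $f\circ g=\Id$ on the nose, not merely up to filtered homotopy, so the ``correction'' step you allow for is unnecessary. With $f\circ g=\Id$ in hand, the decomposition $CFK^\infty(K)=\operatorname{Im}g\oplus\ker f$ is a genuine $\mathcal F$-filtered direct sum (write $z=g(f(z))+(z-g(f(z)))$ and check both pieces stay in $\mathcal F^S$), and $\ker f$ is acyclic because $g$ and $f$ are quasi-isomorphisms. This is cleaner than the paper's basis manipulation and sidesteps the cancellation bookkeeping you were worried about in your last paragraph; the ``clean-up lemma'' discussion is not actually needed for your route.
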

Thanks to the following result, we can see the local equivalence relation, that we defined for link Floer complexes in Subsection \ref{subsection:Alexander}, as a natural generalization to links of the stable equivalences introduced by Hom.
\begin{proof}[Proof of Theorem \ref{teo:equivalence}]
 If our chain complexes are stably equivalent then, in order to define the local equivalence, we just have to take the restriction of the filtered chain homotopy equivalence and its inverse. Conversely, let us suppose that $f:CFK^\infty(K_1)\rightarrow CFK^\infty(K_2)$ and $g:CFK^\infty(K_2)\rightarrow CFK^\infty(K_1)$ define a local equivalence. Then we have that  \[
   \begin{tikzcd}
 CFK^\infty(K_1)\otimes CFK^\infty(K_2)^* \ar[r,xshift=-.5cm,shift left=.75ex,"f\otimes\Id"]
 &
 \ar[l,xshift=-.85cm,yshift=-1ex,shift right=.75ex,"g\otimes\Id"]
 CFK^\infty(K_2)\otimes CFK^\infty(K_2)^* 
 \ar[r,xshift=-1cm,shift left=.75ex,"f'"]
 &
 \ar[l,xshift=-1.3cm,yshift=-1ex,shift right=.75ex,"g'"]
 &
 \F[U,U^{-1}]_{(0)}
 \end{tikzcd}\] where both the pairs of chain maps give local equivalences. The existence of $f'$ and $g'$ can be proved in the same way as in \cite[Lemma 2.18]{Zemke}. 
 
 From Lemma \ref{lemma:acyclic} one has \[CFK^\infty(K_2)\otimes CFK^\infty(K_2)^*\simeq\F[U,U^{-1}]_{(0)}\oplus A\] and \[CFK^\infty(K_1)\otimes CFK^\infty(K_2)^*\simeq\F[U,U^{-1}]_{(0)}\oplus B\:,\] where $A$ and $B$ are acyclic. Therefore, one has \[\begin{aligned}CFK^\infty(K_1)\oplus A\simeq& CFK^\infty(K_1)\otimes\left(CFK^\infty(K_2)\otimes CFK^\infty(K_2)^*\right)\simeq\\ \simeq& CFK^\infty(K_2)\otimes\left(CFK^\infty(K_1)\otimes CFK^\infty(K_2)^*\right)\simeq CFK^\infty(K_2)\oplus B\end{aligned}\] and the theorem is proved. 
\end{proof}
It is important to observe that, when $L$ is a link with $n$ components and $n$ is at least two, the chain complex $\cC^\infty(L)\otimes\cC^\infty(L)^*$ is not locally equivalent to the complex $\cC^\infty(\bigcirc_n)$ representing the unlink. In fact, these groups have different dimensions as $\F[U,U^{-1}]$-modules. Furthermore, in Subsection \ref{subsection:connected} we give an example of a link $L$ for which such a chain complex is not locally equivalent to any $\cC^\infty(\bigcirc_m)$ for $m\in\N$.

\section{Concordance} 
\label{section:concordance}
\subsection{Canonical form of oriented link cobordisms}
\label{subsection:canonical}
The definition of link cobordism is standard in literature; in particular, for this paper the reader might find helpful to look at \cite{Cavallo,Sarkar}. We only recall that we always assume the connected components of a smooth cobordism $\Sigma\hookrightarrow S^3\times I$, from $L_1$ to $L_2$, to have boundary 
on both the links.

Given a surface $\Sigma$ as before, we assume for now that $\Sigma$ is oriented; we study unoriented cobordisms only in the last section of the paper.  
\begin{figure}[t]
 \centering
 \def\svgwidth{15cm}
\begingroup%
  \makeatletter%
  \providecommand\color[2][]{%
    \errmessage{(Inkscape) Color is used for the text in Inkscape, but the package 'color.sty' is not loaded}%
    \renewcommand\color[2][]{}%
  }%
  \providecommand\transparent[1]{%
    \errmessage{(Inkscape) Transparency is used (non-zero) for the text in Inkscape, but the package 'transparent.sty' is not loaded}%
    \renewcommand\transparent[1]{}%
  }%
  \providecommand\rotatebox[2]{#2}%
  \newcommand*\fsize{\dimexpr\f@size pt\relax}%
  \newcommand*\lineheight[1]{\fontsize{\fsize}{#1\fsize}\selectfont}%
  \ifx\svgwidth\undefined%
    \setlength{\unitlength}{3310.02927982bp}%
    \ifx\svgscale\undefined%
      \relax%
    \else%
      \setlength{\unitlength}{\unitlength * \real{\svgscale}}%
    \fi%
  \else%
    \setlength{\unitlength}{\svgwidth}%
  \fi%
  \global\let\svgwidth\undefined%
  \global\let\svgscale\undefined%
  \makeatother%
  \begin{picture}(1,0.3847731)%
    \lineheight{1}%
    \setlength\tabcolsep{0pt}%
    \put(0,0){\includegraphics[width=\unitlength,page=1]{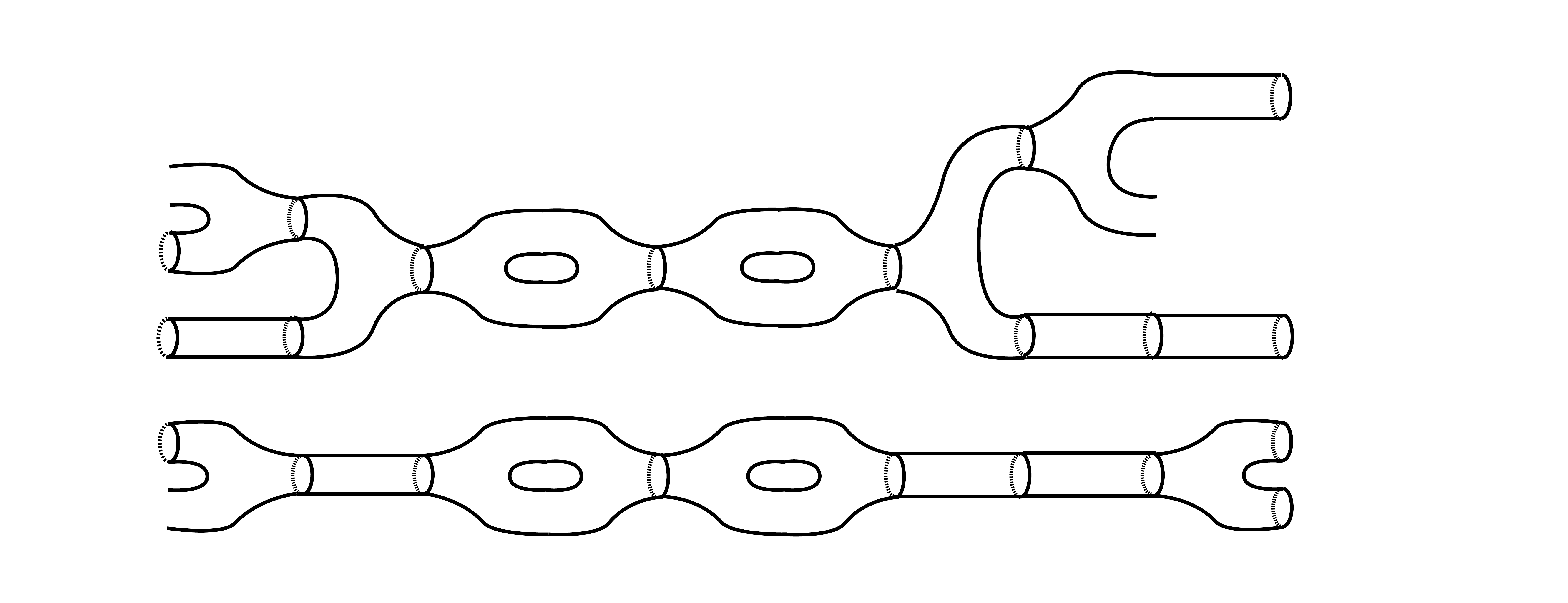}}%
    \put(-0.00250777,0.3416134){\color[rgb]{0,0,0}\makebox(0,0)[lt]{\lineheight{0}\smash{\begin{tabular}[t]{l}$L_1$\end{tabular}}}}%
    \put(0.91289199,0.34873461){\color[rgb]{0,0,0}\makebox(0,0)[lt]{\lineheight{0}\smash{\begin{tabular}[t]{l}$L_2$\end{tabular}}}}%
    \put(0,0){\includegraphics[width=\unitlength,page=2]{Cobordism.pdf}}%
  \end{picture}%
\endgroup%
   
 \caption{Canonical form of oriented cobordisms between two links.}
 \label{Cobordism}
\end{figure}
Then $\Sigma$ consists of four elementary pieces, three of them corresponding to a critical point in the cobordism: birth, band and death moves; while the fourth is a link isotopy, which represents a piece with no critical point. Band moves come in two types: \emph{split}, if the move turns one component into two, and \emph{merge moves}, when two components are joint into one.

For the purpose of this paper, it is more useful to consider what we call \textbf{extended birth} and \textbf{death moves}. These are the composition of a birth and a merge move and of a split and a death move respectively, see Figures \ref{Birth} and \ref{Death}. In addition, we call a \emph{torus move} the composition of a split move with a merge move which rejoins together the newly created components, see Figure \ref{Torus}.
Hence, if $L_i$ has $n_i$ for $i=1,2$ components, while $\Sigma$ has $k$ connected components and genus $g(\Sigma)$, then the \emph{canonical form} of $\Sigma$ is the composition (up to isotopies) of $b$ extended birth moves, $n_1-k$ merge moves, $g(\Sigma)$ torus moves, $n_2-k$ split moves and $d$ death moves in this specific order.
This implies that $\Sigma$ can be arranged as shown in Figure \ref{Cobordism}, see \cite{Cavallo}.

When $L_1$ and $L_2$ both have $n$ components $\Sigma$ is a concordance if it is the union of $n$ disjoint annuli $\Sigma_i$, which means that each $\Sigma_i$ is a knot concordance between the $i$-th components of the two links. From Figure \ref{Cobordism} we immediately see that in the case of a concordance there are no torus moves ($g(\Sigma)=0)$.
\begin{figure}[t]
 \centering
 \def\svgwidth{8cm}
\begingroup%
  \makeatletter%
  \providecommand\color[2][]{%
    \errmessage{(Inkscape) Color is used for the text in Inkscape, but the package 'color.sty' is not loaded}%
    \renewcommand\color[2][]{}%
  }%
  \providecommand\transparent[1]{%
    \errmessage{(Inkscape) Transparency is used (non-zero) for the text in Inkscape, but the package 'transparent.sty' is not loaded}%
    \renewcommand\transparent[1]{}%
  }%
  \providecommand\rotatebox[2]{#2}%
  \newcommand*\fsize{\dimexpr\f@size pt\relax}%
  \newcommand*\lineheight[1]{\fontsize{\fsize}{#1\fsize}\selectfont}%
  \ifx\svgwidth\undefined%
    \setlength{\unitlength}{1827.87908936bp}%
    \ifx\svgscale\undefined%
      \relax%
    \else%
      \setlength{\unitlength}{\unitlength * \real{\svgscale}}%
    \fi%
  \else%
    \setlength{\unitlength}{\svgwidth}%
  \fi%
  \global\let\svgwidth\undefined%
  \global\let\svgscale\undefined%
  \makeatother%
  \begin{picture}(1,0.54421717)%
    \lineheight{1}%
    \setlength\tabcolsep{0pt}%
    \put(0,0){\includegraphics[width=\unitlength,page=1]{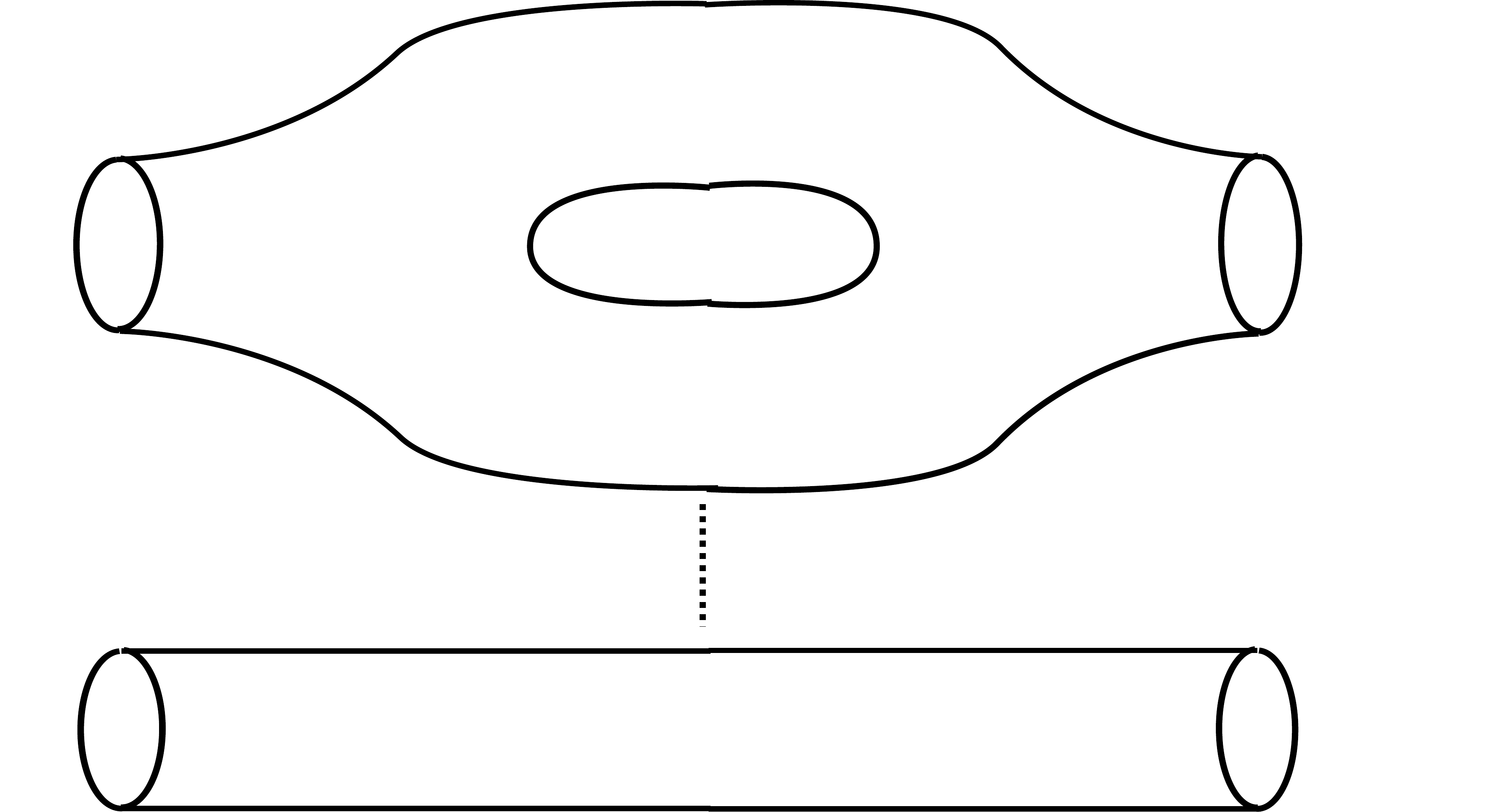}}%
    \put(-0.00340591,0.3688515){\color[rgb]{0,0,0}\makebox(0,0)[lt]{\lineheight{0}\smash{\begin{tabular}[t]{l}$L_1$\end{tabular}}}}%
    \put(0.88169482,0.36533454){\color[rgb]{0,0,0}\makebox(0,0)[lt]{\lineheight{0}\smash{\begin{tabular}[t]{l}$L_2$\end{tabular}}}}%
    \put(0,0){\includegraphics[width=\unitlength,page=2]{Torus.pdf}}%
  \end{picture}%
\endgroup%
   
 \caption{A torus move corresponds to two consecutive band moves on the same component.}
 \label{Torus}
\end{figure}
This means that the canonical form of a concordance can be decomposed into three standard pieces: extended birth moves, isotopies and extended death moves. 

In this section we define maps which relate the chain complexes of the links, this time constructed using grid diagrams, before and after each of these moves. Of course we do not need to study the isotopy cobordism; in fact, in this case Theorem \ref{teo:quasi_iso} and Corollary \ref{cor:quasi_iso} already tell us that the complexes are filtered chain homotopy equivalent and that in particular there exists a local equivalence. The strategy we follow is the same one that the author used in \cite{Cavallo}.

\subsection{Overview on grid diagrams}
\label{subsection:overview}
A grid diagram $D$ of an oriented $n$-component link $L$ in $S^3$ is a grid of $\text{grd}(D)\times\text{grd}(D)$ squares, representing the fundamental domain of a torus, together with a set of $\OO$-markings $\OO=\{O_1,...,O_{\text{grd}(D)}\}$ and one of $\X$-markings $\X=\{X_1,...,X_{\text{grd}(D)}\}$, such that there are exactly one $O$ and one $X$ in every column and every row. Moreover, we choose a non-empty subset $s\OO$ which consists of at most one $\OO$-marking for each component of $L$. We call these $\OO$-markings \emph{special} and the others \emph{normal}.

The link $L$ can be drawn in $D$ by connecting the $O$'s with the $X$'s on a row and the $X$'s with the $O$'s on a column, specifing an orientation on $L$. Vertical lines always overpass the horizontal lines.

The chain complex $\cC^\infty(D)$ is freely generated by the grid states $S(D)$ and it is an \\ $\F[V_1,V_1^{-1},...,V_m,V_m^{-1},U,U^{-1}]$-module, where $\text{grd}(D)-1\geq m\geq\text{grd}(D)-n$ is the number of normal $\OO$-markings. The differential is given by \[\partial^-x=\sum_{y\in S(D)}\sum_{r\in\text{Rect}^\circ(x,y)}V_1^{O_1(r)}\cdot...\cdot V_m^{O_m(r)}\cdot U^{O(r)}y\] for any $y\in S(D)$, where $O_i(r)$ is equal to one if the normal $\OO$-marking $O_i\in r$ and zero otherwise and $O(r)$ is the number of special $\OO$-markings in $r$. The set $\text{Rect}^\circ$ denotes some special rectangles in $D$, see \cite{Book} for details. As in Subsection \ref{subsection:complex} we extend the differential to $\cC^\infty(D)$ by taking $\partial^-(V_i^{\pm 1}p)=V_i^{\pm 1}\cdot\partial^-p$ for every $i=1,...,m$ and $p$ in the complex.
The variables $V_i$ are all homotopic to $U$ so our homology group still has a natural structure of $\F[U,U^{-1}]$-module.

The Maslov and Alexander gradings are also combinatorially defined from $D$ (\cite{Book}) and each variable drops them by $2$ and $1$ respectively; while to define the $j$-filtration we need to specify that the level $t$ is generated by the elements in \[V_1^{i_1}\cdot...\cdot V_m^{i_m}U^i\cdot\cC^-(D)\:,\] where $i_1+...+i_m+i=-t$ and $\cC^-(D)$ is the free $\F[V_1,...,V_m,U]$-module over $S(D)$. 
With this definitions in place we have the following theorem \cite{MOST}, see also \cite{Sarkar}.
\begin{teo}[Manolescu, Ozsv\'ath, Szab\'o and Thurston]
 \label{teo:quasi_iso_grids}
 The $\mathcal A$-filtered chain homotopy type as $\F[U,U^{-1}]$-module of $\cC^\infty_*(D)$ coincides with the one of $\cC^\infty_*(\mathcal D)$ together with the Maslov grading and the algebraic filtration, where $D$ is a grid and $\mathcal D$ is a Heegaard diagram for $L$. 
 
 In particular, if $D_1$ and $D_2$ represent isotopic links then $\cC^\infty_*(D_1)$ is locally equivalent to $\cC^\infty_*(D_2)$.
\end{teo}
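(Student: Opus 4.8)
The plan is to realise a grid diagram as a particular multi-pointed Heegaard diagram and then to quote Manolescu, Ozsv\'ath, Szab\'o and Thurston \cite{MOST} (see also \cite{Book,Sarkar}) together with the invariance already recorded in Theorem \ref{teo:quasi_iso}. First I would attach to $D$ the Heegaard diagram $\mathcal D_D=(T^2,\alpha,\beta,\OO,\X)$, with $\alpha$ the $\text{grd}(D)$ horizontal circles, $\beta$ the vertical ones, and the two tuples of basepoints the $O$- and $X$-markings; the grid states $S(D)$ are then literally the intersection points $\T_\alpha\cap\T_\beta$. The analytic input from \cite{MOST} is that, for this diagram, a domain of Maslov index one carries a holomorphic representative precisely when it is an empty rectangle in $\text{Rect}^\circ(x,y)$, so the combinatorial differential of $\cC^\infty(D)$ is the holomorphic differential of the link Floer complex built from $\mathcal D_D$; the same source shows that the combinatorially defined Maslov grading and Alexander multi-grading of the grid states coincide with the ones of Heegaard Floer theory. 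Summing the Alexander components then identifies the $\mathcal A$-filtration of $\cC^\infty(D)$ with the Alexander filtration of $\mathcal D_D$, and the algebraic filtration $j$ needs no comment since in both models it is read off from the total exponent of the formal variables.

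The one point that requires real work is to reconcile the coefficient rings. The grid complex carries the variables $V_1,\dots,V_m$ attached to the $m=\text{grd}(D)-n$ non-special $O$-markings, together with the single $U$ attached to the $n$ special ones, while $\cC^\infty(\mathcal D)$ lives over one invertible $U$. Here I would use that each non-special $O_i$ lies on the same component of $L$ as some special $O_j$, and that multiplication by $V_i$ is chain homotopic to multiplication by $U$ on $\cC^\infty(D)$ via the basepoint-moving homotopy supported on the rectangles sweeping the arc of $L$ from $O_i$ to $O_j$. The essential verification is that this homotopy is a \emph{filtered} map of the correct bidegree, so that collapsing the $V_i$ to $U$ does not destroy any Alexander or algebraic information; granting this, the substitutions $V_i\mapsto U$ give an $\mathcal F$-filtered quasi-isomorphism from $\cC^\infty(D)$ onto the single-variable $\mathcal F$-filtered complex of $\mathcal D_D$.

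Finally I would close the loop with Theorem \ref{teo:quasi_iso}: the $\mathcal F$-filtered chain homotopy type of the complex attached to $\mathcal D_D$, together with the Maslov grading, equals that of $\cC^\infty(\mathcal D)$ for an arbitrary Heegaard diagram $\mathcal D$ for $L$. Composing the identifications produces an $\mathcal F$-filtered, hence in particular $\mathcal A$-filtered, chain homotopy equivalence between $\cC^\infty(D)$ and $\cC^\infty(\mathcal D)$ as $\F[U,U^{-1}]$-modules; and, as noted just after Theorem \ref{teo:quasi_iso}, any such pair of filtration-preserving maps inducing filtered isomorphisms in homology is in particular a local equivalence, which gives the last assertion. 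I expect the coefficient reconciliation of the second paragraph to be the only genuine obstacle --- everything else is a translation of definitions or a direct appeal to \cite{MOST,Book}.
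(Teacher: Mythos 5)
The paper does not supply a proof of this theorem --- it states it as a result of Manolescu, Ozsv\'ath, Szab\'o and Thurston and defers to \cite{MOST}. Your outline (realizing $D$ as a toroidal Heegaard diagram, invoking \cite{MOST} for the identification of the holomorphic differential with the rectangle count and of the combinatorial gradings with the Floer-theoretic ones, reconciling the ring $\F[V_1^{\pm 1},\dots,V_m^{\pm 1},U^{\pm 1}]$ with $\F[U^{\pm 1}]$ by basepoint-moving homotopies, and then composing with Theorem~\ref{teo:quasi_iso}) is the right shape of argument, and you correctly flag the coefficient reconciliation as the crux.

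However your final conclusion overclaims. You assert that, after the $V_i\mapsto U$ collapse, the composition yields ``an $\mathcal F$-filtered, hence in particular $\mathcal A$-filtered, chain homotopy equivalence'' between $\cC^\infty(D)$ and $\cC^\infty(\mathcal D)$. This is exactly what the paper says is false: the remark immediately following Theorem~\ref{teo:quasi_iso_grids} states that $\cC^\infty_*(D)$ is \emph{not} $\mathcal F$-filtered chain homotopy equivalent to $\cC^\infty_*(\mathcal D)$. The subtlety lies in the step you yourself singled out: the chain homotopies $H$ witnessing $V_i\simeq U$ (and, more generally, the homotopies arising in the chain homotopy equivalence after collapsing the extra variables) need not respect the algebraic filtration $j$, even though the maps $f$ and $g$ of the equivalence do. One therefore only obtains: an $\mathcal A$-filtered chain homotopy equivalence over $\F[U^{\pm 1}]$ whose constituent chain maps additionally preserve $j$ (in the precise sense spelled out in the remark). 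That is strictly weaker than an $\mathcal F$-filtered chain homotopy equivalence. It is nonetheless exactly what is needed for local equivalence, since local equivalence only asks for $\mathcal F$-preserving chain maps in both directions inducing $\mathcal F$-filtered isomorphisms on homology, with no condition on the homotopies. So your last sentence should be rephrased to conclude local equivalence directly from the filtration-preserving properties of the maps, rather than via an $\mathcal F$-filtered chain homotopy equivalence that does not exist.
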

The way the filtered homology group $\cH^\infty(L)$ is defined and how the filtration $\mathcal F$ descends into homology are the same as in the previous section.
\begin{remark}
 \label{remark:almost}
 More precisely, Theorem \ref{teo:quasi_iso_grids} tells us that $\cC^\infty_*(D_1)$ is $\mathcal A$-filtered, but not necessarily $\mathcal F$-filtered chain homotopy equivalent to $\cC^\infty_*(D_2)$, while all the maps induced
 by link isotopies preserve the algebraic filtration; in the sense
 that, say $m_1$ and $m_2$ are the numbers of normal $\OO$-markings in $D_1$ and $D_2$, the image of $V_1^{i_1}\cdots V_{m_1}^{i_{m_1}}U^i\cdot\cC^\infty_*(D_1)$ is contained into $V_1^{i_1}\cdots V_{m_1}^{i_{m_1}}U^i\cdot\cC^\infty_*(D_2)$ when $m_1\leq m_2$ or $V_1^{i_1}\cdots V_{m_2}^{i_{m_2}}U^{i+i_{m_2+1}+...+i_{m_1}}\cdot\cC^\infty_*(D_2)$ when $m_1>m_2$ for every $(m_1+1)$-tuple of integers $(i_1,...,i_{m_1},i)$.
 We call this relation between $\cC^\infty_*(D_1)$ and $\cC^\infty_*(D_2)$ (or between the complexes given by a grid and a Heegaard diagram for the same link) an \textbf{almost filtered chain homotopy equivalence} and it implies that the complexes are locally equivalent, as stated in Theorem \ref{teo:quasi_iso_grids}.
\end{remark}
We conclude this subsection with Figure \ref{Gridunlink}, which shows a grid diagram for the two-component unlink $\bigcirc_2$, and a lemma that we need for later.
\begin{figure}[t]
 \centering
   \begin{tikzpicture}[scale=0.5]
    \draw[help lines] (0,0) grid (3,3); 
    \draw[red] (0.5,2.5) circle [radius=0.3]; 
    \draw (0.5,2.5) node [cross] {};
    \draw[red] (1.5,1.5) circle [radius=0.3]; 
    \draw (2.5,0.5) circle [radius=0.3]; 
    \draw (2.5,1.5) node [cross] {};
    \draw (1.5,0.5) node [cross] {};
   \end{tikzpicture}
 \caption{A grid diagram representing the unlink $\bigcirc_2$. The red circles denote the special $\OO$-markings.}
 \label{Gridunlink}
\end{figure}
\begin{lemma}
 \label{lemma:algorithm}
 Given a grid diagram $D$ for a link, we can always change the $\X$-markings to obtain another diagram $D'$ which represents an unlink.
\end{lemma}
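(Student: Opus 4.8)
The plan is to give an explicit combinatorial procedure that moves the $\X$-markings while keeping the grid abstract structure intact, and to verify that after the procedure the embedded curve is a trivial unlink. First I would recall that a grid diagram of grid number $N=\text{grd}(D)$ is nothing but a pair of permutations: the $\OO$-markings occupy positions $(i,\sigma_\OO(i))$ and the $\X$-markings positions $(i,\sigma_\X(i))$ for permutations $\sigma_\OO,\sigma_\X\in S_N$, subject only to $\sigma_\OO(i)\neq\sigma_\X(i)$ for all $i$ (one $O$ and one $X$ in each row and column, and none coinciding). The link $L=L(D)$ is read off by joining, in each column, its $O$ to its $X$ by a vertical segment, and in each row its $X$ to its $O$ by a horizontal segment, with vertical over horizontal at crossings. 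Changing the $\X$-markings means replacing $\sigma_\X$ by another permutation $\sigma_\X'$ with $\sigma_\X'(i)\neq\sigma_\OO(i)$ for all $i$; the $\OO$-markings, and in particular the partition of columns into the $n$ components recorded by $s\OO$, are left untouched, so $D'$ automatically still has $n$ components.

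Next I would exhibit the target configuration. The cleanest choice is to set $\sigma_\X' = \sigma_\OO\circ c$ where $c$ is a permutation chosen so that the resulting curve is a disjoint union of $n$ small ``staircase'' loops, one around each component's set of $O$'s, with no two loops linked and each loop bounding an obvious disc. Concretely, order the components $1,\dots,n$ and the columns so that component $\ell$ occupies a contiguous block $B_\ell$ of columns; within $B_\ell$ put the $O$'s so that their rows are also a contiguous block, and then place the $X$ in each column of $B_\ell$ one step (cyclically within $B_\ell$) to the right of the corresponding $O$ in the row ordering — this is exactly the grid presentation of an unknot on the sub-grid $B_\ell$, sitting in its own band of rows and columns, hence unlinked from the others. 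Of course an arbitrary starting $D$ will not have the $O$'s arranged in such blocks, so I would first note that one may additionally use the fact — standard for grids, cf. \cite{MOST,Book} — that the abstract positions are irrelevant up to cyclic permutation of rows/columns; but since the statement only asks to change $\X$-markings, I would instead argue directly: given $\sigma_\OO$ arbitrary, define $\sigma_\X'$ by $\sigma_\X'(i)=\sigma_\OO(\rho(i))$ where $\rho$ is the product of the disjoint cyclic permutations of the index sets $\{i : O_i \text{ belongs to component } \ell\}$ for $\ell=1,\dots,n$. This is a permutation, it satisfies $\sigma_\X'(i)\neq\sigma_\OO(i)$ because each cycle has length $\geq 2$ (every component has at least two $O$-markings once we have, if necessary, stabilized — and this mild point I would dispose of by allowing a stabilization at the start, which does not change the link), and the link $L(D')$ it defines is visibly the connected-sum-free disjoint union of $n$ unknots: within the columns and rows indexed by component $\ell$ the markings trace out a single cyclic staircase, i.e. an unknot, and distinct components use disjoint sets of rows only after reordering — here I would simply observe that linking between the staircases of two components is zero because one can isotope each staircase into a small neighbourhood of its own $O$-markings, which form an antichain/chain pattern admitting an embedded spanning disc disjoint from the others.

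The main obstacle I anticipate is precisely the last point: showing the $n$ sub-curves are genuinely \emph{unlinked} and each \emph{unknotted}, not merely that each column/row count is correct. The count conditions only guarantee $D'$ is a legal grid diagram of \emph{some} $n$-component link. To pin down that it is the unlink I would invoke the combinatorial unknot/unlink recognition for grids: a grid diagram represents $\bigcirc_n$ iff it can be reduced by grid destabilizations and commutations to the $n\times n$ diagonal diagram (the one in Figure \ref{Gridunlink} for $n=2$). So the real content of the proof is to check that the specific $D'$ constructed above admits such a reduction — equivalently, that each cyclic staircase destabilizes down to a single $O$-$X$ pair and the resulting $n$ pairs sit in disjoint $1\times 1$ blocks that commute apart. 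I would carry this out by induction on the grid number of each component's block: a length-$k$ cyclic staircase always contains an ``$\sqcap$'' or ``$\sqcup$'' configuration permitting a destabilization, reducing $k$ to $k-1$, until $k=1$; then commutation moves (which are allowed since they are link isotopies, and we are free to present the final unlink however we like) separate the $n$ surviving markings. I would present the staircase-destabilization step carefully with a small picture, and treat the rest as routine, concluding that $D'$ represents $\bigcirc_n$.

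\begin{remark}
Alternatively, and more slickly, one can avoid describing $\sigma_\X'$ explicitly: it suffices to choose \emph{any} permutation $\sigma_\X'$ whose cycle decomposition refines the component partition of the $O$-indices and has all cycles of length $\geq 2$, and then observe that the resulting diagram, by the argument above, is a disjoint union of grid unknots lying in disjoint row--column blocks, hence $\bigcirc_n$. The freedom in the choice of $\sigma_\X'$ will be convenient in later sections.
\end{remark}
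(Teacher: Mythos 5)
There is a genuine gap in your argument. The crux of your ``direct'' construction is the claim that the $n$ cyclic staircases are pairwise unlinked, which you try to justify by isotoping each one into a small neighbourhood of its own $O$-markings, and then again via destabilizations. Both arguments fail when the $O$-markings of two components interleave in the grid. Concretely, take the $4\times 4$ grid with $\sigma_\OO=\mathrm{id}$ (so the $O$'s sit on the diagonal) and component partition $\{1,3\},\{2,4\}$ of the column indices; then the only $\rho$ whose cycles refine this partition is $(1\,3)(2\,4)$, so $\sigma_\X'=\rho$ places the $X$'s at $(1,3),(2,4),(3,1),(4,2)$. The two rectangles this traces out, with corners $(1,1),(1,3),(3,3),(3,1)$ and $(2,2),(2,4),(4,4),(4,2)$, cross each other exactly twice — once with each strand on top — and both crossings have the same sign, so the linking number is $\pm 1$: this is a Hopf link, not $\bigcirc_2$. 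In particular neither the ``shrink into a neighbourhood'' argument nor the destabilization reduction can be carried out (this $4\times 4$ grid admits no destabilization at all, since no $2\times2$ block of it contains three markings). The same phenomenon persists on larger grids with diagonally interleaved $O$'s, for any choice of cycle within each index block.

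The structural reason the approach cannot work is that you insist the new component partition of the $O$-indices coincide with the old one (i.e.\ that the cycle type of $\sigma_\OO^{-1}\sigma_\X'$ refine the original partition). The lemma does not require this, and in general it is impossible while also producing $\bigcirc_n$: which cycle of column-indices can bound a disjoint disc is dictated by the geometry of the $O$-positions, not by the original component assignment. The paper's proof sidesteps this precisely by \emph{re-assigning} the $O$'s to components. After a cyclic shift of rows it builds a single descending staircase of $X$'s that turns whenever it meets a special $\OO$-marking, so that the $i$-th new component's $O$-markings lie in a contiguous band of rows, and distinct bands are disjoint — which forces the resulting unknots to be separated by horizontal slabs, hence unlinked. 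Your first sketch (arrange the $O$'s in row/column blocks) is essentially this idea, but you abandoned it because it seemed to require reordering rows and columns; the paper's observation is that one can get the block structure by choosing the cycle decomposition of $\sigma_\OO^{-1}\sigma_\X'$ to follow contiguous row-bands, without moving any $O$-markings. A minor convention point in the same spirit: the paper's grids allow doubly-marked squares (an $O$ and an $X$ in the same cell, encoding a one-cell unknot, visible in Figure \ref{Gridunlink}), so the constraint $\sigma_\OO(i)\neq\sigma_\X(i)$ you impose is not assumed; this also removes the need for the stabilization you invoke, which would in any case violate the lemma's requirement that only the $\X$-markings of the given grid be altered.
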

\begin{proof}
 We apply the following algorithm. Let us shift the rows of $D$ until there is a special $\OO$-marking in the top row (remember that $D$ is the fundamental domain of a torus); then, starting from this $\OO$-marking denoted by $O_1$, we put an $\X$-marking just below $O_1$. We keep doing this procedure with the $\OO$-markings in the row below, unless we reach an $O_i$ such that $O_{i+1}$ (in the row below) is special. 
 Note that this can happen also when $i=1$. In this case we put the new $\X$-marking in the same column of $O_i$, but not in the row below while in the row where the previous special $\OO$-marking appeared.
 
 When it happens that two consecutive rows $j,j+1$ have both special $\OO$-markings on them, we put the $\X$-marking in the same square of $O_j$ and we continue the algorithm from $O_{j+1}$. At some point we reach the lowest row; in this case, we assume the next row is the very top row (which contains a special $\OO$-marking) and we put $X$ accordingly.
 
 The reader may shift the rows back to the original ordering; in any case, it is easy to check that the new diagram $D'$ represents an unlink and the number of components coincides with the number of the special $\OO$-markings in $D$.
\end{proof}

\subsection{Extended birth moves}
\label{subsection:birth}
Let us study the concordance $\Sigma$ given as in Figure \ref{Birth}: we first need a suitable choice of grid diagrams $D_i$ for $i=1,...,4$, representing the links that appear in the extended birth move at the times shown in the picture.
\begin{figure}[t]
 \centering
 \def\svgwidth{15cm}
\begingroup%
  \makeatletter%
  \providecommand\color[2][]{%
    \errmessage{(Inkscape) Color is used for the text in Inkscape, but the package 'color.sty' is not loaded}%
    \renewcommand\color[2][]{}%
  }%
  \providecommand\transparent[1]{%
    \errmessage{(Inkscape) Transparency is used (non-zero) for the text in Inkscape, but the package 'transparent.sty' is not loaded}%
    \renewcommand\transparent[1]{}%
  }%
  \providecommand\rotatebox[2]{#2}%
  \newcommand*\fsize{\dimexpr\f@size pt\relax}%
  \newcommand*\lineheight[1]{\fontsize{\fsize}{#1\fsize}\selectfont}%
  \ifx\svgwidth\undefined%
    \setlength{\unitlength}{3435.25951122bp}%
    \ifx\svgscale\undefined%
      \relax%
    \else%
      \setlength{\unitlength}{\unitlength * \real{\svgscale}}%
    \fi%
  \else%
    \setlength{\unitlength}{\svgwidth}%
  \fi%
  \global\let\svgwidth\undefined%
  \global\let\svgscale\undefined%
  \makeatother%
  \begin{picture}(1,0.29818417)%
    \lineheight{1}%
    \setlength\tabcolsep{0pt}%
    \put(0,0){\includegraphics[width=\unitlength,page=1]{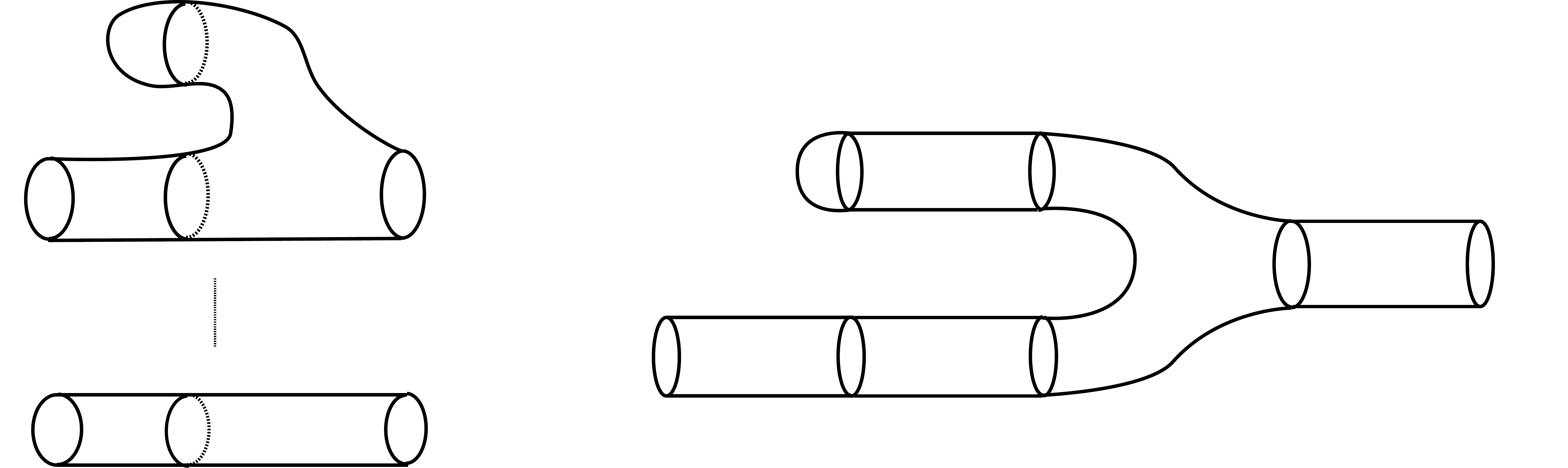}}%
    \put(-0.00193308,0.06275377){\color[rgb]{0,0,0}\makebox(0,0)[lt]{\lineheight{1.25}\smash{\begin{tabular}[t]{l}$L_1$\end{tabular}}}}%
    \put(0.25109143,0.1196553){\color[rgb]{0,0,0}\makebox(0,0)[lt]{\lineheight{1.25}\smash{\begin{tabular}[t]{l}$L_2$\end{tabular}}}}%
    \put(0.41327506,0.00986948){\color[rgb]{0,0,0}\makebox(0,0)[lt]{\lineheight{1.25}\smash{\begin{tabular}[t]{l}$D_1$\end{tabular}}}}%
    \put(0.53304143,0.01111704){\color[rgb]{0,0,0}\makebox(0,0)[lt]{\lineheight{1.25}\smash{\begin{tabular}[t]{l}$D_2$\end{tabular}}}}%
    \put(0.65530295,0.01174082){\color[rgb]{0,0,0}\makebox(0,0)[lt]{\lineheight{1.25}\smash{\begin{tabular}[t]{l}$D_3$\end{tabular}}}}%
    \put(0.92789619,0.07162398){\color[rgb]{0,0,0}\makebox(0,0)[lt]{\lineheight{1.25}\smash{\begin{tabular}[t]{l}$D_4$\end{tabular}}}}%
  \end{picture}%
\endgroup%
   
 \caption{An extended birth move, corresponding to a $0$-handle attachment followed by a $1$-handle (left). The picture on the right shows only the component of $\Sigma$ where the $0$-handle appears.}
 \label{Birth}
\end{figure}
Second, we define maps $b_1:D_1\rightarrow D_2$ and $b_2:D_3\rightarrow D_4$; the first map represents the disjoint union of $L_1$ with an unknot, while the second one the merge move that we need in order to join the new component to $L_1$. Note that the diagrams $D_2$ and $D_3$ present isotopic links; then the corresponding chain complexes are related by an almost filtered chain homotopy equivalence, as in Theorem \ref{teo:quasi_iso_grids}, and thus they are locally equivalent. 

Let us start with $b_2$: the merge move is described by the diagram fragments in Figure \ref{Bandmove}, where we assumed that no special $\OO$-markings were on the new unknotted component. 
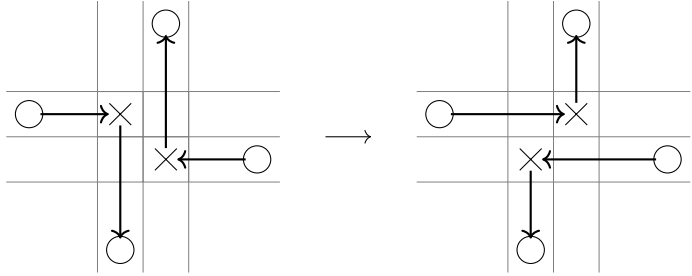
\begin{figure}[t]      
  \begin{center}
   \begin{tikzpicture}[scale=0.6]
    \draw[help lines] (0,0) grid (2,2);
    \draw[help lines] (-2,0)--(0,0); \draw[help lines] (2,0)--(4,0);
    \draw[help lines] (-2,1)--(0,1); \draw[help lines] (0,1)--(4,1);
    \draw[help lines] (-2,2)--(0,2); \draw[help lines] (0,2)--(4,2);
    \draw[help lines] (0,-2)--(0,0); \draw[help lines] (0,0)--(0,4);
    \draw[help lines] (1,-2)--(1,0); \draw[help lines] (1,0)--(1,4);
    \draw[help lines] (2,-2)--(2,0); \draw[help lines] (2,0)--(2,4);
    \draw (1.5,0.5) node [cross] {}; \draw (0.5,1.5) node [cross] {};
    \draw (-1.5,1.5) circle [radius=0.3]; \draw (3.5,0.5) circle [radius=0.3];
    \draw (1.5,3.5) circle [radius=0.3];  \draw (0.5,-1.5) circle [radius=0.3];   
    \draw [thick][<-] (0.5,-1.25) -- (0.5,1.25); \draw [thick][<-] (0.25,1.5) -- (-1.25,1.5);
    \draw [thick][<-] (1.5,3.25) -- (1.5,0.75); \draw [thick][<-] (1.75,0.5) -- (3.25,0.5);  
    \draw [thin][->] (5,1) -- (6,1);
    \draw[help lines] (7,0)--(9,0);     \draw[help lines] (9,0)--(13,0);
    \draw[help lines] (7,1)--(9,1);     \draw[help lines] (9,1)--(13,1);
    \draw[help lines] (7,2)--(9,2);     \draw[help lines] (9,2)--(13,2);
    \draw[help lines] (9,-2)--(9,0);    \draw[help lines] (9,0)--(9,4);
    \draw[help lines] (10,-2)--(10,0);  \draw[help lines] (10,0)--(10,4);
    \draw[help lines] (11,-2)--(11,0);  \draw[help lines] (11,0)--(11,4);
    \draw (9.5,0.5) node [cross] {}; \draw (10.5,1.5) node [cross] {};
    \draw (7.5,1.5) circle [radius=0.3]; \draw (12.5,0.5) circle [radius=0.3];
    \draw (10.5,3.5) circle [radius=0.3];  \draw (9.5,-1.5) circle [radius=0.3];   
    \draw [thick][<-] (9.5,-1.25) -- (9.5,0.25); \draw [thick][<-] (10.25,1.5) -- (7.75,1.5);
    \draw [thick][<-] (10.5,3.25) -- (10.5,1.75); \draw [thick][<-] (9.75,0.5) -- (12.25,0.5);
   \end{tikzpicture}
  \end{center}
  \caption{Band move in a grid diagram}
  \label{Bandmove}
\end{figure}
More explicitly, we are picking the diagram $D_3$ in the way that it contains the fragment on the left in Figure \ref{Bandmove}, while $D_4$ is the resulting diagram after applying the move.
At this point we define $D_3'$ and $D_4'$ as the grid diagrams obtained by applying the algorithm in Lemma \ref{lemma:algorithm} to $D_3$ and $D_4$. This means that they are both diagrams (the same ones!) for $\bigcirc_n$, where $n$ is the number of components of $L_1$ and $L_2$, with the $\OO$-markings in the same position as in $D_3$ and $D_4$.

Since the differential and the $j$-filtration do not depend on the position of the $\X$-markings as we see from their definition in Subsection \ref{subsection:overview}, and this holds also for the Maslov grading \cite{Book,OSlinks}, then the Identity map \[\Id:\cC^\infty(D_3')\longrightarrow\cC^\infty(D_4')\] is a chain map, which clearly induces a graded isomorphism in homology and preserves the algebraic filtration.  
\begin{prop}
 \label{prop:b2}
 The map $b_2:=\Id:\cC^\infty(D_3)\rightarrow\cC^\infty(D_4)$ preserves the Maslov grading and the $\mathcal F$-filtration and induces an isomorphism in homology.
\end{prop}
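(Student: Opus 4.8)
The plan is to reduce the whole statement to a single inequality between Alexander gradings. The first observation is that $D_3$ and $D_4$ have the same grid number, the same set of $\OO$-markings and the same set of special $\OO$-markings: the band of Figure \ref{Bandmove} is performed by relocating only $\X$-markings, and by assumption the new unknotted component carries no special marking. Since $\partial^-$, the Maslov grading $M$ and the algebraic filtration $j$ are all built out of the grid states and the $\OO$-markings alone — the $\X$-markings never enter their definitions — the complexes $(\cC^\infty(D_3),\partial^-)$ and $(\cC^\infty(D_4),\partial^-)$ coincide as $M$-graded, $j$-filtered chain complexes over the same ground ring. Hence $b_2=\Id$ is automatically a chain map, it preserves $M$, it respects $j$, and it induces the identity — in particular an isomorphism — on homology. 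Everything therefore reduces to showing that $b_2$ respects the Alexander filtration $\mathcal A$ as well; together with the preservation of $j$ this gives preservation of $\mathcal F$.

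To this end I would show that $A_{D_4}(x)\leq A_{D_3}(x)$ for every grid state $x$, which is precisely the inclusion $\mathcal A^s\cC^\infty(D_3)\subseteq\mathcal A^s\cC^\infty(D_4)$ for every $s$. Using the combinatorial formula for the Alexander grading from \cite{MOST,Book} together with the fact that $D_3$ and $D_4$ share the same generators and the same $\OO$-markings, the difference $A_{D_4}(x)-A_{D_3}(x)$ depends only on the two $\X$-markings that get moved, and can be evaluated purely in the local $2\times2$ model of Figure \ref{Bandmove}. Moving these two $\X$'s from the anti-diagonal to the diagonal of that block shifts the relevant winding numbers of the link around the points of $x$ inward, so the bookkeeping yields $A_{D_4}(x)-A_{D_3}(x)\in\{-1,0\}$ according to the position of the point of $x$ in the two rows involved. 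In particular the $\mathcal A$-level of each generator drops by at most one, which is enough.

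It then only remains to collect the pieces: $b_2$ preserves $M$, $j$ and $\mathcal A$, hence the filtration $\mathcal F$, and it is an isomorphism on homology since it is literally the identity on the underlying complex. The only genuine computation is the Alexander-grading estimate of the second paragraph, and the one delicate point there is the sign: one must check that the $\X$-markings are relocated in the direction that does \emph{not} increase the winding numbers, so that $\Id$ itself — and not merely its inverse, nor some grading shift of it — is the filtered map. I expect this verification to absorb essentially all of the effort; the rest is formal.
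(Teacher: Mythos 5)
Your reduction to the Alexander inequality $A_{D_4}(x)\leq A_{D_3}(x)$ is exactly the paper's, as are the preliminary observations: since $\partial^-$, $M$, and $j$ are computed from grid states and $\OO$-markings alone, and $D_3$, $D_4$ share those (including the set of special $\OO$-markings, as none lie on the new unknotted component), the map $\Id$ is automatically a $j$-filtered, $M$-graded chain isomorphism and the whole content of the proposition is that one inequality. Where you part ways with the paper is in how the inequality gets established. The paper cites Sarkar [Subsection~3.4 of \cite{Sarkar}], where the effect of the grid saddle move on the collapsed Alexander grading is computed; you instead sketch a direct local computation via the combinatorial formula and a winding-number count in the $2\times2$ block. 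That route is legitimate in principle — the difference $A_{D_4}(x)-A_{D_3}(x)$ really is controlled by the two relocated $\X$-markings once the formula is expanded — but you do not actually carry it out: you neither expand the formula nor verify the claimed membership of the difference in $\{-1,0\}$, and you yourself flag the decisive sign check (that going from $D_3$ to $D_4$ is the merge direction, under which $A$ cannot increase, rather than the split direction, under which it can) as an expectation. Since everything else in the statement is an immediate consequence of the $\X$-independence of $\partial^-$, $M$, and $j$, the unverified inequality is the proposition, and the proposal as written stops precisely where the paper invokes Sarkar. To close the gap you should either perform the advertised local computation explicitly (taking care to account for the global normalization term in the collapsed Alexander grading, which may change because the number of link components changes across the band), or do what the paper does and quote Sarkar's lemma.
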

\begin{proof}
 In order to prove the claim we have to show that the map induces a graded isomorphism in homology and that preserves the two filtrations $j$ and $\mathcal A$. The first two properties only depend on the $\OO$-markings so they hold because $b_2$ is defined as the Identity map; then we only need to show that \[b_2(\mathcal A^s\cC^\infty(D_3))\subset\mathcal A^s\cC^\infty(D_4)\:.\] This can be checked by proving that $A_{D_4}(x)\leq A_{D_3}(x)$ for every $x\in S(D_3)$.
 Note that this is not obvious, even if $b_2$ is the Identity; in fact, this time we need to consider the $\X$-markings in their original position, not like in $D_3'$ and $D_4'$, and the Alexander grading depend on the $X$'s.
 Hence, we need to use a result of Sarkar (\cite[Subsection 3.4]{Sarkar}), which gives us exactly what we need.
\end{proof}
We now want to define $b_1$. We suppose $D_1$ has an $X$ in top-right corner; then we use the move in Figure \ref{Birthmove}.
\begin{figure}[t]      
  \begin{center}
   \begin{tikzpicture}[scale=0.6]
    \draw[help lines] (0,0.5)--(3,0.5); \draw[help lines] (0,0.5)--(0,3.5);
    \draw[help lines] (0,3.5)--(3,3.5); \draw[help lines] (3,0.5)--(3,3.5);
    \node at (1.5,2) {$D_1$}; 
    \draw [thin][->] (4,2) -- (5,2);
    \draw[help lines] (6,0)--(10,0); \draw[help lines] (6,0)--(6,4);
    \draw[help lines] (6,3)--(10,3); \draw[help lines] (9,0)--(9,4);
    \draw[help lines] (6,4)--(10,4); \draw[help lines] (10,0)--(10,4);
    \node at (7.5,1.5) {$D_1$}; 
    \draw (9.5,3.5) node [cross] {}; 
    \draw (9.5,3.5) circle [radius=0.3]; 
    \draw[fill] (9,3) circle [radius=0.1];
    \node at (11,2) {$D_2$};
   \end{tikzpicture}
   \hspace{1.5cm}
   \begin{tikzpicture}[scale=0.6]
    \node at (5,2.1) {$\widetilde{D_2}$}; 
    \draw[help lines] (6,0)--(10,0); \draw[help lines] (6,0)--(6,4);
    \draw[help lines] (6,3)--(10,3); \draw[help lines] (9,0)--(9,4);
    \draw[help lines] (6,4)--(10,4); \draw[help lines] (10,0)--(10,4);
    \node at (7.5,1.5) {$D_1\setminus X_{\text{tr}}$}; 
    \draw (8.5,3.5) node [cross] {}; \draw (9.5,2.5) node [cross] {};
    \draw (9.5,3.5) circle [radius=0.3]; 
    \draw[fill] (9,3) circle [radius=0.1];
   \end{tikzpicture}
  \end{center}
  \caption{Birth move in a grid diagram. In the diagram $\widetilde{D_2}$ the top-right $\X$-marking $X_{\text{tr}}$ in $D_1$ does not appear.}
  \label{Birthmove}
\end{figure}
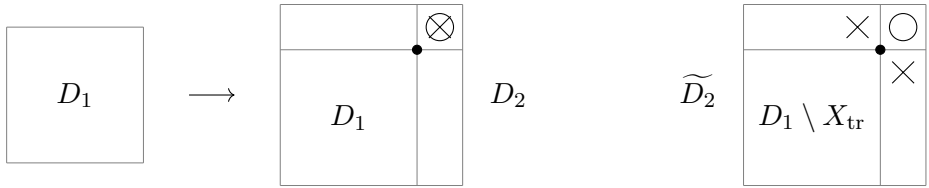
Of course the new doubly-marked square is not a special $\OO$-marking. We consider the filtered NE-stabilization map \[s^{\text{NE}}:\cC^\infty(D_1)\longrightarrow\cC^\infty(\widetilde{D_2})\] defined in \cite{MOST,Book,Sarkar}. Stabilizations relate isotopic links; therefore, such a map is an almost filtered chain homotopy equivalence for Theorem \ref{teo:quasi_iso_grids} and thus a local equivalence. 

We say that $b_1:=s^{\text{NE}}:\cC^\infty(D_1)\rightarrow\cC^\infty(D_2)$. This makes sense because the stabilization maps, in the filtered setting, are independent of the position of the $\X$-markings. Hence, we have the following proposition.
\begin{prop}
 \label{prop:b1}
 The map $b_1:\cC^\infty(D_1)\longrightarrow\cC^\infty(D_2)$ preserves the Maslov grading and the $\mathcal F$-filtration and induces an isomorphism in homology.
\end{prop}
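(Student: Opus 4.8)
The proof follows the pattern of Proposition \ref{prop:b2}. First I recall that the NE-stabilization $s^{\text{NE}}\colon\cC^\infty(D_1)\to\cC^\infty(\widetilde{D_2})$ relates two diagrams of isotopic links — indeed $\widetilde{D_2}$ is obtained from $D_1$ by a single stabilization — so by Theorem \ref{teo:quasi_iso_grids} it is an ``almost'' filtered chain homotopy equivalence. In particular it is a chain map, induces a graded isomorphism on homology, and respects the algebraic filtration $j$ (in the sense of the remark following Theorem \ref{teo:quasi_iso_grids}, accounting for the extra variable introduced by the stabilization). Since the differential, the Maslov grading, and the $j$-filtration of a grid complex do not depend on the positions of the $\X$-markings, the very same formula defines a chain map $b_1:=s^{\text{NE}}\colon\cC^\infty(D_1)\to\cC^\infty(D_2)$ with the $\X$'s in their original position, and it still induces a graded isomorphism on homology and preserves $j$.

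Thus the only point left to verify is that $b_1$ preserves the Alexander filtration $\mathcal A$, i.e.
\[
b_1\bigl(\mathcal A^s\cC^\infty(D_1)\bigr)\subset\mathcal A^s\cC^\infty(D_2)\qquad\text{for every }s\in\Z,
\]
and this is where the $\X$-markings matter, because the Alexander grading does depend on them and the $\X$'s of $D_2$ differ from those of $\widetilde{D_2}$. Concretely, it suffices to check that $A_{D_2}(b_1(x))\le A_{D_1}(x)$ for every grid state $x\in S(D_1)$, where $A_{D_2}(b_1(x))$ denotes the maximal Alexander level among the finitely many terms appearing in $b_1(x)$. I would obtain this exactly as in Proposition \ref{prop:b2}, by appealing to Sarkar's analysis of how (de)stabilization maps interact with the Alexander filtration in \cite[Subsection 3.4]{Sarkar}: the stabilization map is assembled from rectangles emanating from the newly created corner marking, and Sarkar's computation controls the variation of the Alexander grading along such rectangles, yielding the required inequality. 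The assumption that the new doubly-marked square is not a special $\OO$-marking enters here, so that the corner variable behaves as one of the $V_i$'s rather than as $U$.

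The main obstacle is precisely this last step: one must reconcile the bookkeeping of the corner marking of $D_2$ with the modified markings of $\widetilde{D_2}$ and confirm that no term of $b_1(x)$ can raise the Alexander level — a fact that is \emph{not} visible from the ``almost filtered chain homotopy equivalence'' statement of Theorem \ref{teo:quasi_iso_grids} alone, and which genuinely requires Sarkar's refined description of the stabilization map. Everything else — the isomorphism on homology, the preservation of the Maslov grading, and the preservation of $j$ — is immediate from Theorem \ref{teo:quasi_iso_grids} together with the independence of these structures from the positions of the $\X$-markings.
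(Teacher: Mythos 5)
Your proof is correct and follows essentially the same line of reasoning as the paper: observe that $s^{\text{NE}}$ is a chain homotopy equivalence preserving $j$ since these structures ignore the $\X$-markings, and then reduce the preservation of $\mathcal A$ to the pointwise inequality $A_{D_2}(s^{\text{NE}}(x))\le A_{D_1}(x)$, which is supplied by Sarkar's analysis in \cite[Subsection 3.4]{Sarkar}. Your added commentary on why the ``almost filtered chain homotopy equivalence'' of Theorem \ref{teo:quasi_iso_grids} is by itself insufficient and on the role of the new corner marking being non-special is a useful gloss, but the underlying argument is the one in the paper.
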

\begin{proof}
 We cannot argue that $b_1$ is an $\mathcal A$-filtered chain homotopy equivalence, because the $\X$-markings in $D_2$ are different with respect to the ones in $\widetilde{D_2}$. On the other hand, we still have that it is a chain homotopy equivalence and preserves the $j$-filtration; in fact, as in Proposition \ref{prop:b2} these two properties ignore the $X$'s. 
 Therefore, we just need to show that $A_{D_2}(s^{\text{NE}}(x))\leq A_{D_1}(x)$ for every $x\in S(D_1)$. This follows from another result of Sarkar (\cite[Subsection 3.4]{Sarkar}).
\end{proof}
Going back to the concordance $\Sigma$, we obtain the following theorem.
\begin{teo}
 \label{teo:birth}
 There is a map $b_{\Sigma}:\cC^\infty(D_1)\rightarrow\cC^\infty(D_4)$, which preserves the Maslov grading and the $\mathcal F$-filtration and induces an isomorphism \[b_{\Sigma}^*:\cH^\infty(L_1)\longrightarrow\cH^\infty(L_2)\:.\]
 In particular, this means that \[b^*_{\Sigma}\left(\mathcal F^S\cH^\infty_d(L_1)\right)\subset\mathcal F^S\cH^\infty_d(L_2)\] for every $d\in\mathbb Z$ and $S$ south-west region of $\mathbb Z^2$.
\end{teo}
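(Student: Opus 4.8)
The plan is to build $b_\Sigma$ as the composition of the three maps already attached to the pieces of the canonical birth move. Recall from the discussion around Figure \ref{Birth} that $\Sigma$ decomposes as $D_1\leadsto D_2\leadsto D_3\leadsto D_4$, where $D_1\leadsto D_2$ is the $0$-handle, $D_2\leadsto D_3$ is a link isotopy (both grids represent $L_1$ split off a small unknot), and $D_3\leadsto D_4$ is the band. So I would take $\Psi\colon\cC^\infty(D_2)\to\cC^\infty(D_3)$ to be the chain homotopy equivalence induced by the isotopy, provided by Theorem \ref{teo:quasi_iso_grids} (see also \cite{MOST,Sarkar}), let $b_1\colon\cC^\infty(D_1)\to\cC^\infty(D_2)$ be the map of Proposition \ref{prop:b1} and $b_2\colon\cC^\infty(D_3)\to\cC^\infty(D_4)$ the map of Proposition \ref{prop:b2}, and set $b_\Sigma:=b_2\circ\Psi\circ b_1$.

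Then each of the three asserted properties is inherited by the composition. For the Maslov grading: $b_1$ and $b_2$ are graded by Propositions \ref{prop:b1} and \ref{prop:b2}, and $\Psi$ is graded since maps induced by link isotopies preserve the Maslov grading (Theorem \ref{teo:quasi_iso_grids}). For the homology statement: $b_1$ and $b_2$ induce isomorphisms, $\Psi$ is a chain homotopy equivalence, so the composite induces an isomorphism; combining this with the invariance facts that the homology of $\cC^\infty(D_1)$ is $\cH^\infty(L_1)$ and that of $\cC^\infty(D_4)$ is $\cH^\infty(L_2)$ (Theorem \ref{teo:quasi_iso_grids}, $D_1$ and $D_4$ being grids for $L_1$ and $L_2$) yields $b_\Sigma^*\colon\cH^\infty(L_1)\to\cH^\infty(L_2)$. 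For the filtration $\mathcal F$, built from $j$ and $\mathcal A$: $b_1$ and $b_2$ preserve $\mathcal F$ by Propositions \ref{prop:b1} and \ref{prop:b2}, while for $\Psi$ the $\mathcal A$-filtration is preserved because the isotopy map is an $\mathcal A$-filtered chain homotopy equivalence, and the $j$-filtration is preserved by the Remark following Theorem \ref{teo:quasi_iso_grids}; hence $\Psi$, and therefore $b_\Sigma$, preserves $\mathcal F$.

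The "in particular" clause is then formal: since $b_\Sigma$ is a chain map with $b_\Sigma\bigl(\mathcal F^S\cC^\infty_d(D_1)\bigr)\subset\mathcal F^S\cC^\infty_d(D_4)$, it carries $\Ker\partial^-_{d,S}$ for $D_1$ into $\Ker\partial^-_{d,S}$ for $D_4$, and passing to the quotient maps $\pi_d$ defining $\mathcal F^S\cH^\infty_d$ gives $b_\Sigma^*\bigl(\mathcal F^S\cH^\infty_d(L_1)\bigr)\subset\mathcal F^S\cH^\infty_d(L_2)$ for every $d\in\Z$ and every south-west region $S$. I expect the only delicate point to be the behaviour of $\Psi$ on the algebraic filtration — confirming that the "almost" filtered chain homotopy equivalence between $D_2$ and $D_3$ respects the $j$-levels exactly, so that membership in a south-west region is genuinely preserved under the composite — but this is precisely what the Remark after Theorem \ref{teo:quasi_iso_grids} supplies, and the substantive estimates (Sarkar's bounds on the Alexander grading under NE-stabilization and band moves) have already been invoked in Propositions \ref{prop:b1} and \ref{prop:b2}.
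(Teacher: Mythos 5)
Your proposal is correct and matches the paper's argument: the paper likewise sets $b_\Sigma = b_2\circ b\circ b_1$ with $b$ the ``almost'' filtered chain homotopy equivalence between $\cC^\infty(D_2)$ and $\cC^\infty(D_3)$, and deduces the stated properties from Theorem \ref{teo:quasi_iso_grids} and Propositions \ref{prop:b1}--\ref{prop:b2}. You simply spell out the piece-by-piece verifications that the paper leaves implicit, and your attention to the $j$-filtration behaviour of the isotopy map is exactly the point the Remark after Theorem \ref{teo:quasi_iso_grids} addresses.
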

\begin{proof}
 We have that $b_{\Sigma}=b_2\circ b\circ b_1$, where $b$ is the almost filtered chain homotopy equivalence between the complexes given by $D_2$ and $D_3$. Then the statement follows from Theorem \ref{teo:quasi_iso_grids} and Propositions \ref{prop:b2} and \ref{prop:b1}, because each piece of the map induces a graded isomorphism in homology and preserves the filtration $\mathcal F$.
\end{proof}

\subsection{Extended death moves and invariance}
\label{subsection:invariance}
An extended death cobordism is described in Figure \ref{Death}. If $\Sigma\hookrightarrow S^3\times I$ is such a cobordism between two $n$-component links $L_1$ and $L_2$ then $\Sigma^*$, the same cobordism seen in the ambient manifold $S^3\times I$ with reversed orientation, can be considered an extended birth cobordism from $L_2^*$ to $L_1^*$.  
\begin{figure}[t]
 \centering
 \def\svgwidth{6cm}
\begingroup%
  \makeatletter%
  \providecommand\color[2][]{%
    \errmessage{(Inkscape) Color is used for the text in Inkscape, but the package 'color.sty' is not loaded}%
    \renewcommand\color[2][]{}%
  }%
  \providecommand\transparent[1]{%
    \errmessage{(Inkscape) Transparency is used (non-zero) for the text in Inkscape, but the package 'transparent.sty' is not loaded}%
    \renewcommand\transparent[1]{}%
  }%
  \providecommand\rotatebox[2]{#2}%
  \newcommand*\fsize{\dimexpr\f@size pt\relax}%
  \newcommand*\lineheight[1]{\fontsize{\fsize}{#1\fsize}\selectfont}%
  \ifx\svgwidth\undefined%
    \setlength{\unitlength}{1647.87963867bp}%
    \ifx\svgscale\undefined%
      \relax%
    \else%
      \setlength{\unitlength}{\unitlength * \real{\svgscale}}%
    \fi%
  \else%
    \setlength{\unitlength}{\svgwidth}%
  \fi%
  \global\let\svgwidth\undefined%
  \global\let\svgscale\undefined%
  \makeatother%
  \begin{picture}(1,0.83391234)%
    \lineheight{1}%
    \setlength\tabcolsep{0pt}%
    \put(0,0){\includegraphics[width=\unitlength,page=1]{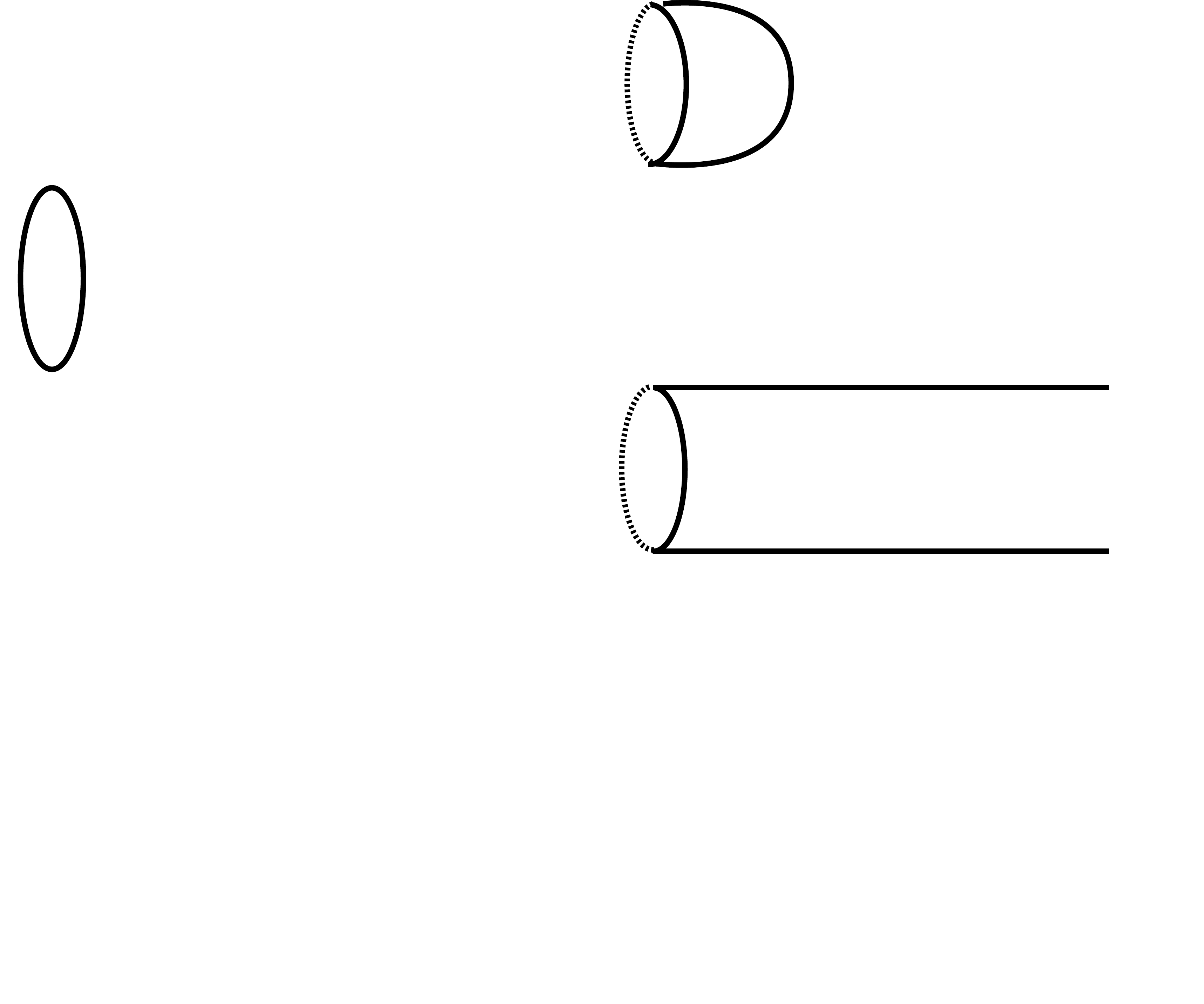}}%
    \put(0.86877224,0.30832688){\color[rgb]{0,0,0}\makebox(0,0)[lt]{\lineheight{0}\smash{\begin{tabular}[t]{l}$L_2$\end{tabular}}}}%
    \put(-0.00377794,0.45266823){\color[rgb]{0,0,0}\makebox(0,0)[lt]{\lineheight{0}\smash{\begin{tabular}[t]{l}$L_1$\end{tabular}}}}%
    \put(0,0){\includegraphics[width=\unitlength,page=2]{Death.pdf}}%
  \end{picture}%
\endgroup%
   
 \caption{An extended death cobordism, corresponding to a $2$-handle attachment together with a $1$-handle.}
 \label{Death}
\end{figure}
Then we can prove the following proposition.
\begin{teo}
 \label{teo:death}
 There is a map $d_{\Sigma}:\cC^\infty(L_1)\longrightarrow\cC^\infty(L_2)$ which preserves the Maslov grading and the $\mathcal F$-filtration and induces an isomorphism in homology.
\end{teo} 
Note that, since $\cC^\infty_d(D)$ is usually not a finite dimensional $\F$-vector space when $D$ is a grid diagram, we cannot directly apply Theorem \ref{teo:mirror} in this case, although this can be done after more work.
\begin{proof}
 Denote with $\cC^\infty(L_i)$ the filtered chain homotopy type of the complexes associated to $L_i$. From Theorems \ref{teo:mirror} and \ref{teo:quasi_iso_grids} we have that the dual complex $\cC^\infty(L_i)^*$ represents the almost filtered chain homotopy type of $\cC^\infty(D_i^*)$. 
 
 We use Theorem \ref{teo:birth} to say that, up to composing with some $j$-filtration preserving $\mathcal A$-filtered chain homotopy equivalences, we can suppose the existence of a map $b_{\Sigma^*}:\cC^\infty(L_2)^*\longrightarrow\cC^\infty(L_1)^*$ which has all the property we want. If we take $b_{\Sigma^*,*}$ as the dual of this map then \[b_{\Sigma^*,*}:\cC^\infty(L_1)\longrightarrow\cC^\infty(L_2)\] preserves the filtration $\mathcal F$ and induces precisely a graded isomorphism in homology; this because the definition of the dual complex in Subsection \ref{subsection:duality} implies that $\cC^\infty(L)^{**}$ has a natural identification with $\cC^\infty(L)$ for every link $L$.
 
 We conclude by saying that $d_{\Sigma}:=b_{\Sigma^*,*}$ again up to compose with some $j$-filtration preserving $\mathcal A$-filtered chain homotopy equivalences.
\end{proof}
Now with this theorem set we can prove one of the main results of the paper.
\begin{proof}[Proof of Theorem \ref{teo:concordance}]
 After applying Theorems \ref{teo:birth} and \ref{teo:death}, by considering the maps induced by a concordance $\Sigma$ from $L_1$ to $L_2$, we obtain a graded isomorphism $F$, between the homology groups, such that $F(\mathcal F^S\cH^\infty_d(L_1))\subset\mathcal F^S\cH^\infty_d(L_2)$, which gives 
 \begin{equation}
     \dim_\F\mathcal F^S\cH^\infty_d(L_1)\leq\dim_\F\mathcal F^S\cH^\infty_d(L_2)\:.
     \label{Filtered}
 \end{equation} 
 In order for $F$ to be a filtered isomorphism we also need that it restricts to an isomorphism on each level of the $\mathcal F$-filtration. To see this we take another concordance $\Sigma'$ from $L_2$ to $L_1$ and, in the same way as before, we get the opposite inequality with respect to Equation \eqref{Filtered}, which proves the claim.
\end{proof}
We now show that the $\Upsilon$-type invariants are indeed concordance invariants. In order to prove this fact, we only need that the $\mathcal F$-filtered isomorphism type of the homology group is a concordance invariant.
\begin{teo}
 \label{teo:upsilon_concordance}
 The real number $\Upsilon_S(L)$ is a concordance invariant for every centered south-west region $S$ in $\mathbb R^2$.
\end{teo}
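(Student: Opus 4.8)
The plan is to reduce this immediately to Theorem \ref{teo:concordance}, which is the only non‑formal input. Fix a concordance $\Sigma$ from $L_1$ to $L_2$. By Theorem \ref{teo:concordance} there are chain maps between $\cC^\infty(L_1)$ and $\cC^\infty(L_2)$ that preserve $\mathcal F$ and induce an $\mathcal F$‑filtered, graded isomorphism $F\colon\cH^\infty(L_1)\to\cH^\infty(L_2)$; since $F$ and $F^{-1}$ both preserve the Maslov grading and $\mathcal F$, the map $F$ restricts to an isomorphism $\cH^\infty_0(L_1)\cong\cH^\infty_0(L_2)$ with $F\bigl(\mathcal F^T\cH^\infty_0(L_1)\bigr)=\mathcal F^T\cH^\infty_0(L_2)$ for every south‑west region $T$ of $\Z^2$.

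Next I would rewrite the real regions $S_k$ and $\{j\leq0\}$ occurring in the definition of $\Upsilon_S$ as lattice regions. The bifiltration $\mathcal F^{t,s}=j^t\cap\mathcal A^s$ is defined only for $(t,s)\in\Z^2$, so directly from the definition of $\mathcal F^R$ one has $\mathcal F^R\cC^\infty(\mathcal D)=\mathcal F^{R\cap\Z^2}\cC^\infty(\mathcal D)$, hence $\mathcal F^R\cH^\infty_0(L)=\mathcal F^{R\cap\Z^2}\cH^\infty_0(L)$, for every south‑west region $R$ of $\R^2$. Here $\{j\leq0\}\cap\Z^2$ is a proper, nonempty south‑west region of $\Z^2$, and for each $k\in\R$ the set $S_k\cap\Z^2$ is nonempty, since the translate $S_k$ of the nonempty south‑west region $S$ still contains points arbitrarily far to the south‑west, hence lattice points. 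If $S_k\cap\Z^2\neq\Z^2$ it is a genuine south‑west region of $\Z^2$ and $F$ sends $\mathcal F^{S_k}\cH^\infty_0(L_1)$ onto $\mathcal F^{S_k}\cH^\infty_0(L_2)$; if instead $S_k\cap\Z^2=\Z^2$, then $\mathcal F^{S_k}\cH^\infty_0(L_i)=\cH^\infty_0(L_i)$ for $i=1,2$, which is again respected by the isomorphism $F$. (By Proposition \ref{prop:bounded} the latter happens only for $k$ below some finite threshold, but in any case it is harmless.)

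Finally I would push the definition of $\Upsilon_S$ through $F$: since $F$ maps $\mathcal F^{\{j\leq0\}}\cH^\infty_0(L_1)$ onto $\mathcal F^{\{j\leq0\}}\cH^\infty_0(L_2)$ and $\mathcal F^{S_k}\cH^\infty_0(L_1)$ onto $\mathcal F^{S_k}\cH^\infty_0(L_2)$, the containment $\mathcal F^{S_k}\cH^\infty_0(L_1)\supset\mathcal F^{\{j\leq0\}}\cH^\infty_0(L_1)$ holds if and only if $\mathcal F^{S_k}\cH^\infty_0(L_2)\supset\mathcal F^{\{j\leq0\}}\cH^\infty_0(L_2)$. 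Thus $L_1$ and $L_2$ admit exactly the same set of parameters $k$, and therefore $\Upsilon_S(L_1)=\Upsilon_S(L_2)$; running the identical argument with $\cH^\infty_0$ replaced by $\cH^\infty_{1-n}$ and $\mathcal F^{\{j\leq0\}}\cH^\infty_0$ by the distinguished line in $\mathcal F^{\{j\leq0\}}\cH^\infty_{1-n}(L)\setminus\mathcal F^{\{j\leq-1\}}\cH^\infty_{1-n}(L)$ gives the concordance invariance of $\Upsilon_S^*$ as well. I expect the only real obstacle to be the bookkeeping in the second step — translating between south‑west regions of $\R^2$ and of $\Z^2$ and disposing of the degenerate values of $k$ — while the substantive content is entirely carried by Theorem \ref{teo:concordance}.
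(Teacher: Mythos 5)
Your proposal is correct and follows essentially the same route as the paper: invoke Theorem~\ref{teo:concordance} to obtain an $\mathcal F$-filtered isomorphism between $\cH^\infty_0(L_1)$ and $\cH^\infty_0(L_2)$, and observe that this preserves the containment $\mathcal F^{S_k}\supset\mathcal F^{\{j\leq 0\}}$ for each $k$, hence the maximizing set of $k$'s is identical. The extra care you take in reconciling south-west regions of $\R^2$ with those of $\Z^2$ is bookkeeping the paper elides, but the substantive content is the same.
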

\begin{proof}
 From Theorem \ref{teo:concordance} we have that $\mathcal F^{S_k}\cH^\infty_0(L_1)\supset\mathcal F^{\{j\leq 0\}}\cH^\infty_0(L_1)$ if and only if 
 $\mathcal F^{S_k}\cH^\infty_0(L_2)\supset\mathcal F^{\{j\leq 0\}}\cH^\infty_0(L_2)$ for every $k\in\mathbb R$, since $L_1$ is concordant to $L_2$. 
 By definition this immediately implies that $\Upsilon_S(L_1)=\Upsilon_S(L_2)$ for every south-west region $S$.
\end{proof}

\section{Upsilon-type invariants}
\label{section:four}
\subsection{Definition of \texorpdfstring{$\Upsilon_S^*(L)$}{Upsilon*S(L)} and the   \texorpdfstring{$\Upsilon$}{Upsilon}-function for links}
In Subsection \ref{subsection:Alexander} we saw that $\mathcal F^{\{j\leq 0\}}\cH^\infty_0(L)$ is isomorphic to $\F$ for every link. Using Theorem \ref{teo:dim} we can also argue that \[\dfrac{\mathcal F^{\{j\leq 0\}}\cH^\infty_{1-n}(L)}{\mathcal F^{\{j\leq -1\}}\cH^\infty_{1-n}(L)}\cong_\F\F\:,\] where $n$ is the number of components of $L$.
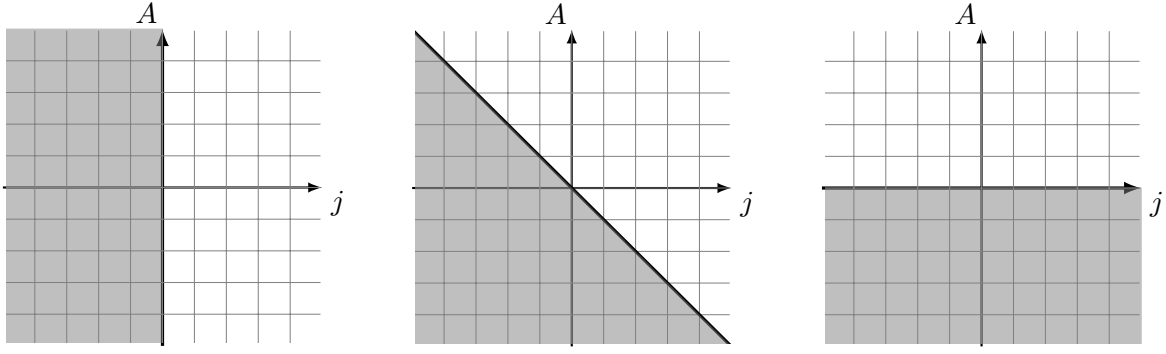
\begin{figure}[t]
 \centering
 \begin{tikzpicture}[scale =.42]
        \coordinate (Origin)   at (0,0);
        \coordinate (XAxisMin) at (-5,0);
        \coordinate (XAxisMax) at (5,0);
        \coordinate (YAxisMin) at (0,-5);
        \coordinate (YAxisMax) at (0,5);
        \draw [thick, black,-latex] (XAxisMin) -- (XAxisMax);
        \draw [very thick, black,-latex] (YAxisMin) -- (YAxisMax);
        \clip (-4.9,-4.9) rectangle (5.99,5.99);
        \draw[style=help lines] (-5,-5) grid[step=1cm] (4.95,4.95);
        
        \node[] at (-0.5,5.5) {$A$};
        \node[] at (5.5,-0.5) {$j$};
        
        \draw[fill, opacity=.5, gray] (0,5) rectangle (-5,-5);
 \end{tikzpicture}
 \hspace{.5cm}
 \begin{tikzpicture}[scale =.42]
        \coordinate (Origin)   at (0,0);
        \coordinate (XAxisMin) at (-5,0);
        \coordinate (XAxisMax) at (5,0);
        \coordinate (YAxisMin) at (0,-5);
        \coordinate (YAxisMax) at (0,5);
        \draw [thick, black,-latex] (XAxisMin) -- (XAxisMax);
        \draw [thick, black,-latex] (YAxisMin) -- (YAxisMax);
        \clip (-4.9,-4.9) rectangle (5.99,5.99);
        \draw[style=help lines] (-5,-5) grid[step=1cm] (4.95,4.95);
        
        \node[] at (-0.5,5.5) {$A$};
        \node[] at (5.5,-0.5) {$j$};
        
        \draw[very thick, black] (-5,5) -- (5,-5);
        \fill [gray, opacity=.5, domain=-5:5, variable=\x]
      (-5, -5)
      -- plot ({\x}, {-\x})
      -- (5, -5)
      -- cycle;
 \end{tikzpicture}
 \hspace{.5cm}
 \begin{tikzpicture}[scale =.42]
        \coordinate (Origin)   at (0,0);
        \coordinate (XAxisMin) at (-5,0);
        \coordinate (XAxisMax) at (5,0);
        \coordinate (YAxisMin) at (0,-5);
        \coordinate (YAxisMax) at (0,5);
        \draw [very thick, black,-latex] (XAxisMin) -- (XAxisMax);
        \draw [thick, black,-latex] (YAxisMin) -- (YAxisMax);
        \clip (-4.9,-4.9) rectangle (5.99,5.99);
        \draw[style=help lines] (-5,-5) grid[step=1cm] (4.95,4.95);
        
        \node[] at (-0.5,5.5) {$A$};
        \node[] at (5.5,-0.5) {$j$};
        
        \draw[fill, opacity=.5, gray] (5,0) rectangle (-5,-5);
 \end{tikzpicture}
 \caption{The centered south-west regions $A_0$ (left), $A_1$ (middle) and $A_2$ (right) of $\mathbb R^2$.}
 \label{Classicupsilon}
\end{figure}
Then, for a given centered south-west region $S\subset\mathbb R^2$, we can define \[\Upsilon_S^*(L):=\max_{k\in\mathbb R}\left\{k\:|\:\mathcal F^{S_k}\cH^\infty_{1-n}(L)\not\subset\mathcal F^{\{j\leq-1\}}\cH^\infty_{1-n}(L)\right\}\:.\] Theorem \ref{teo:concordance} implies that $\Upsilon_S^*(L)$ is also a concordance invariant. Moreover, we observe that for knots $\Upsilon^*$ coincides with $\Upsilon$.

In \cite{OSSz} the $\Upsilon$-invariant is described as a piece-wise linear function $\Upsilon_K(t):[0,2]\rightarrow\mathbb R$ such that $\Upsilon_K(2-t)=\Upsilon_K(t)$ for every knot $K$ and $t$. 
We call this function the \emph{classical $\Upsilon$-invariant}. In the case of links we give a similar definition, which can be seen as a particular case of $\Upsilon_S$.

Consider the centered south-west region \[A_t:=\left\{(j,A)\in\mathbb R^2\:|\:A\cdot\dfrac{t}{2}+j\left(1-\dfrac{t}{2}\right)\leq0\right\}\] for $t\in[0,2]$, see Figure \ref{Classicupsilon}.
It can be showed, see \cite{Antonio}, that $\Upsilon_{A_t}(K)=\Upsilon_K(t)$ for every knot $K$. Moreover, we define \[\Upsilon_L(t):=\Upsilon_{A_t}(L)\quad\text{ and }\quad\Upsilon_L^*(t):=\Upsilon_{A_t}^*(L)\] for every link. 
The reader can easily check that these $\mathbb R$-valued functions are piece-wise linear and $\Upsilon_L(0)=\Upsilon_L^*(0)=0$.
\begin{ex}
In Figure \ref{T33} we show the chain complexes for the $(3,3)$-torus link, which can be computed from the Heegaard diagram in Figure \ref{T33diagram}. 
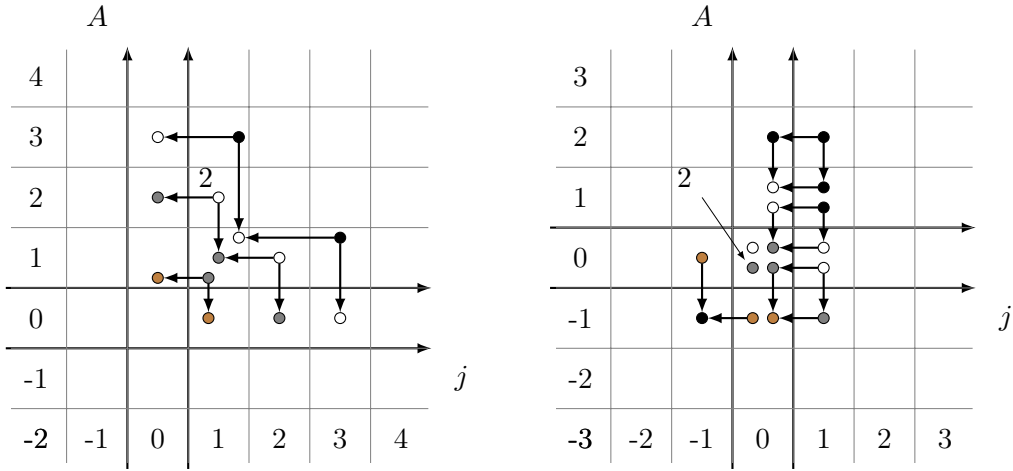
\begin{figure}[t]
 \centering
 \begin{tikzpicture}[scale =.8]
        \coordinate (Origin)   at (0,0);
        \coordinate (XAxisMin) at (-2,0);
        \coordinate (XAxisMax) at (5,0);
        \coordinate (YAxisMin) at (0,-2);
        \coordinate (YAxisMax) at (0,5);
        \draw [thick, black,-latex] (XAxisMin) -- (XAxisMax);
        \draw [thick, black,-latex] (YAxisMin) -- (YAxisMax);
        \draw [thick, black,-latex] (-2,1) -- (5,1);
        \draw [thick, black,-latex] (1,-2) -- (1,5);
        \clip (-1.9,-1.9) rectangle (5.99,5.99);
        \draw[style=help lines] (-2,-2) grid[step=1cm] (4.95,4.95);
        
        \foreach \x in {-2,...,4}
                    \node[draw=none,fill=none] at (\x+0.5,-1.5) {\x};
    	\foreach \y in {-2,...,4}     		
        			\node[draw=none,fill=none] at (-1.5,\y+0.5) {\y};
        
        \node[] at (-0.5,5.5) {$A$};
        \node[] at (5.5,-0.5) {$j$};
        
        \node[draw,circle,fill,scale=0.4] (D1) at (1.5+1/3,3.5) {}; 
        \node[draw,circle,fill,scale=0.4] (D2) at (3.5,1.5+1/3) {};
        \node[draw,circle,fill=white,scale=0.4] (E1) at (0.5,3.5) {};
        \node[draw,circle,fill=white,scale=0.4] (E2) at (1.5+1/3,1.5+1/3) {};
        \node[draw,circle,fill=white,scale=0.4] (E3) at (3.5,0.5) {};
        
        \path [thick,-latex] (D1) edge node[left] {} (E1);
        \path [thick,-latex] (D1) edge node[left] {} (E2);
        \path [thick,-latex] (D2) edge node[left] {} (E2);
        \path [thick,-latex] (D2) edge node[left] {} (E3);
        
        \node[draw,circle,fill=white,scale=0.4] (G1) at (1.5,2.5) {};
        \node[draw,circle,fill=white,scale=0.4] (G2) at (2.5,1.5) {};
        \node[draw,circle,fill=gray,scale=0.4] (F1) at (0.5,2.5) {};
        \node[draw,circle,fill=gray,scale=0.4] (F2) at (2.5,0.5) {};
        \node[draw,circle,fill=gray,scale=0.4] (F3) at (1.5,1.5) {};
        
        \path [thick,-latex] (G1) edge node[left] {} (F1);
        \path [thick,-latex] (G1) edge node[left] {} (F3);
        \path [thick,-latex] (G2) edge node[left] {} (F3);
        \path [thick,-latex] (G2) edge node[left] {} (F2);
        
        \node[draw,circle,fill=gray,scale=0.4]  (A) at (1.5-1/6,1.5-1/3) {};
        \node[draw,circle,fill=brown,scale=0.4] (B) at (0.5,1.5-1/3) {};
        \node[draw,circle,fill=brown,scale=0.4] (C) at (1.5-1/6,0.5) {};
        
        \path [thick,-latex] (A) edge node[left] {} (B);
        \path [thick,-latex] (A) edge node[left] {} (C);
        
        \node[above right] at (1,2.5) {$2$};
 \end{tikzpicture}
 \hspace{.5cm}
 \begin{tikzpicture}[scale =.8]
        \coordinate (Origin)   at (0,0);
        \coordinate (XAxisMin) at (-3,0);
        \coordinate (XAxisMax) at (4,0);
        \coordinate (YAxisMin) at (0,-3);
        \coordinate (YAxisMax) at (0,4);
        \draw [thick, black,-latex] (XAxisMin) -- (XAxisMax);
        \draw [thick, black,-latex] (YAxisMin) -- (YAxisMax);
        \draw [thick, black,-latex] (-3,1) -- (4,1);
        \draw [thick, black,-latex] (1,-3) -- (1,4);
        \clip (-2.9,-2.9) rectangle (4.99,4.99);
        \draw[style=help lines] (-3,-3) grid[step=1cm] (3.95,3.95);
        
        \foreach \x in {-3,...,3}
                    \node[draw=none,fill=none] at (\x+0.5,-2.5) {\x};
    	\foreach \y in {-3,...,3}     		
        			\node[draw=none,fill=none] at (-2.5,\y+0.5) {\y};
        
        \node[] at (-0.5,4.5) {$A$};
        \node[] at (4.5,-0.5) {$j$};
        \node[above left] at (-0.5,1.5) {$2$}; \draw[very thin, black,-latex] (-0.5,1.5) -- (1/3-1/8,1/3+1/8); 
        
        \node[draw,circle,fill=white,scale=0.4] at (1/3,0.5+1/6) {};
        \node[draw,circle,fill=gray,scale=0.4] at (1/3,1/3) {};
        
        \node[draw,circle,fill=brown,scale=0.4] (A1) at (-0.5,0.5) {};
        \node[draw,circle,fill=brown,scale=0.4] (A2) at (1/3,-0.5) {};
        \node[draw,circle,fill,scale=0.4] (B) at (-0.5,-0.5) {};
        
        \path [thick,-latex] (A1) edge node[left] {} (B);
        \path [thick,-latex] (A2) edge node[left] {} (B);
        
        \node[draw,circle,fill,scale=0.4] (C1) at (1.5,2.5) {};
        \node[draw,circle,fill,scale=0.4] (C2) at (0.5+1/6,2.5) {};
        \node[draw,circle,fill,scale=0.4] (C3) at (1.5,1.5+1/6) {};
        \node[draw,circle,fill=white,scale=0.4] (C4) at (0.5+1/6,1.5+1/6) {};
        
        \path [thick,-latex] (C1) edge node[left] {} (C2);
        \path [thick,-latex] (C1) edge node[left] {} (C3);
        \path [thick,-latex] (C2) edge node[left] {} (C4);
        \path [thick,-latex] (C3) edge node[left] {} (C4);
        
        \node[draw,circle,fill,scale=0.4] (D1) at (1.5,1.5-1/6) {};
        \node[draw,circle,fill=white,scale=0.4] (D2) at (0.5+1/6,1.5-1/6) {};
        \node[draw,circle,fill=white,scale=0.4] (D3) at (1.5,0.5+1/6) {};
        \node[draw,circle,fill=gray,scale=0.4] (D4) at (0.5+1/6,0.5+1/6) {};
        
        \path [thick,-latex] (D1) edge node[left] {} (D2);
        \path [thick,-latex] (D1) edge node[left] {} (D3);
        \path [thick,-latex] (D2) edge node[left] {} (D4);
        \path [thick,-latex] (D3) edge node[left] {} (D4);
        
        \node[draw,circle,fill=white,scale=0.4] (E1) at (1.5,0.5-1/6) {};
        \node[draw,circle,fill=gray,scale=0.4] (E2) at (0.5+1/6,0.5-1/6) {};
        \node[draw,circle,fill=gray,scale=0.4] (E3) at (1.5,-0.5) {};
        \node[draw,circle,fill=brown,scale=0.4] (E4) at (0.5+1/6,-0.5) {};
        
        \path [thick,-latex] (E1) edge node[left] {} (E2);
        \path [thick,-latex] (E1) edge node[left] {} (E3);
        \path [thick,-latex] (E2) edge node[left] {} (E4);
        \path [thick,-latex] (E3) edge node[left] {} (E4);
 \end{tikzpicture}
 \caption{The complex $\cC^\infty(T_{3,3})$ on the left and $\cC^\infty(T'_{3,3})$ on the right. The $2$ on the central staircase is the multiplicity of the subcomplex. White, gray and brown dots represent Maslov gradings $0,-1$ and $-2$ respectively, while black dots represent the others.}
 \label{T33}
\end{figure}
We write $T_{3,3}$ when we orient the three components in the same direction, while $T'_{3,3}$ denotes the same link with the orientation reversed on one component.
From this picture we can easy compute the $\Upsilon$-functions: 
\[\Upsilon_{T_{3,3}'}(t)=0\quad t\in\left[0,2\right]
  \quad\quad\quad
  \Upsilon^*_{T_{3,3}'}(t)=\left\{\begin{aligned}
                                 &t\quad\quad\hspace{0.25cm} t\in[0,1]\\
                                 &2-t\quad t\in\left[1,2\right]
                                \end{aligned}\right.\:.\]
\[\Upsilon_{T_{3,3}}(t)=\left\{\begin{aligned}
                                 &-3t\quad\quad\hspace{0.3cm} t\in\left[0,\frac{2}{3}\right]\\
                                 &-2\quad\quad\hspace{0.4cm} t\in\left[\frac{2}{3},\frac{4}{3}\right]\\
                                 &-6+3t\quad t\in\left[\frac{4}{3},2\right]
                                \end{aligned}\right.
  \quad\quad\quad
  \Upsilon^*_{T_{3,3}}(t)=\left\{\begin{aligned}
                                 &-t\quad\quad\hspace{0.25cm} t\in\left[0,1\right]\\
                                 &-2+t\quad t\in\left[1,2\right]
                                \end{aligned}\right.\] 
                                \end{ex}
Finally, we show that the classical $\Upsilon$-invariants do not determine the $\mathcal F$-filtered isomorphism type of $\cH^\infty(L)$.
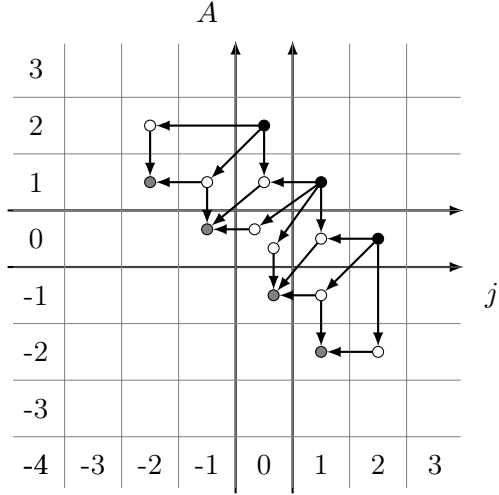
\begin{figure}[t]
 \centering
 \begin{tikzpicture}[scale =.75]
        \coordinate (Origin)   at (0,0);
        \coordinate (XAxisMin) at (-4,0);
        \coordinate (XAxisMax) at (4,0);
        \coordinate (YAxisMin) at (0,-4);
        \coordinate (YAxisMax) at (0,4);
        \draw [thick, black,-latex] (XAxisMin) -- (XAxisMax);
        \draw [thick, black,-latex] (YAxisMin) -- (YAxisMax);
        \draw [thick, black,-latex] (-4,1) -- (4,1);
        \draw [thick, black,-latex] (1,-4) -- (1,4);
        \clip (-3.9,-3.9) rectangle (4.99,4.99);
        \draw[style=help lines] (-4,-4) grid[step=1cm] (3.95,3.95);
        
        \foreach \x in {-4,...,3}
                    \node[draw=none,fill=none] at (\x+0.5,-3.5) {\x};
    	\foreach \y in {-4,...,3}     		
        			\node[draw=none,fill=none] at (-3.5,\y+0.5) {\y};
        
        \node[] at (-0.5,4.5) {$A$};
        \node[] at (4.5,-0.5) {$j$};
        
        \node[draw,circle,fill,scale=0.4]  (A1) at (0.5,2.5) {};
        \node[draw,circle,fill,scale=0.4]  (A2) at (1.5,1.5) {};
        \node[draw,circle,fill,scale=0.4]  (A3) at (2.5,0.5) {};
        \node[draw,circle,fill=white,scale=0.4]  (B1) at (-1.5,2.5) {};
        \node[draw,circle,fill=white,scale=0.4]  (B2) at (-0.5,1.5) {};
        \node[draw,circle,fill=white,scale=0.4]  (B3) at (0.5,1.5) {};
        \node[draw,circle,fill=white,scale=0.4]  (B4) at                 (0.5-1/6,0.5+1/6) {};
        \node[draw,circle,fill=white,scale=0.4]  (B5) at                 (0.5+1/6,0.5-1/6) {};
        \node[draw,circle,fill=white,scale=0.4]  (B6) at (1.5,0.5) {};
        \node[draw,circle,fill=white,scale=0.4]  (B7) at (1.5,-0.5) {};
        \node[draw,circle,fill=white,scale=0.4]  (B8) at (2.5,-1.5) {};
        \node[draw,circle,fill=gray,scale=0.4]  (C1) at (-1.5,1.5) {};
        \node[draw,circle,fill=gray,scale=0.4]  (C2) at (-0.5,0.5+1/6) {};
        \node[draw,circle,fill=gray,scale=0.4]  (C3) at (0.5+1/6,-0.5) {};
        \node[draw,circle,fill=gray,scale=0.4]  (C4) at (1.5,-1.5) {};
        
        \path [thick,-latex] (A1) edge node[left] {} (B1);
        \path [thick,-latex] (A1) edge node[left] {} (B2);
        \path [thick,-latex] (A1) edge node[left] {} (B3);
        \path [thick,-latex] (A2) edge node[left] {} (B3);
        \path [thick,-latex] (A2) edge node[left] {} (B4);
        \path [thick,-latex] (A2) edge node[left] {} (B5);
        \path [thick,-latex] (A2) edge node[left] {} (B6);
        \path [thick,-latex] (A3) edge node[left] {} (B6);
        \path [thick,-latex] (A3) edge node[left] {} (B7);
        \path [thick,-latex] (A3) edge node[left] {} (B8);
        \path [thick,-latex] (B1) edge node[left] {} (C1);
        \path [thick,-latex] (B2) edge node[left] {} (C1);
        \path [thick,-latex] (B2) edge node[left] {} (C2);
        \path [thick,-latex] (B3) edge node[left] {} (C2);
        \path [thick,-latex] (B4) edge node[left] {} (C2);
        \path [thick,-latex] (B5) edge node[left] {} (C3);
        \path [thick,-latex] (B6) edge node[left] {} (C3);
        \path [thick,-latex] (B7) edge node[left] {} (C3);
        \path [thick,-latex] (B7) edge node[left] {} (C4);
        \path [thick,-latex] (B8) edge node[left] {} (C4);
 \end{tikzpicture}
 \caption{The non-acyclic summand of the chain complex $CFK^\infty(K)$, where the knot $K$ is $T_{4,5}\#T^*_{2,3;2,5}\#T^*_{2,5}$.}
 \label{Figure9Livingston}
\end{figure}
In fact, take the knot $K=T_{4,5}\#T^*_{2,3;2,5}\#T^*_{2,5}$ whose homology is shown in Figure \ref{Figure9Livingston}, where $T_{2,3;2,5}$ is the $(2,5)$-cable of $T_{2,3}$. 
In \cite{Livingston} is proved that $\Upsilon_K(t)=\Upsilon^*_K(t)=0$ for every $t\in[0,2]$. 
On the other hand, it is easy to check that $\Upsilon_{V_0}(K)=-2$, where $V_{0}=\{(a,b)\:|\:a\leq 0,b\leq 0\}$, while $\Upsilon_{V_0}(\bigcirc)=0$.                       

\subsection{Symmetries}
\label{subsection:symmetries}
In this subsection we study some of the main properties of the $\Upsilon$-invariants. 
\begin{figure}[t]
 \centering
 \def\svgwidth{11cm}
\begingroup%
  \makeatletter%
  \providecommand\color[2][]{%
    \errmessage{(Inkscape) Color is used for the text in Inkscape, but the package 'color.sty' is not loaded}%
    \renewcommand\color[2][]{}%
  }%
  \providecommand\transparent[1]{%
    \errmessage{(Inkscape) Transparency is used (non-zero) for the text in Inkscape, but the package 'transparent.sty' is not loaded}%
    \renewcommand\transparent[1]{}%
  }%
  \providecommand\rotatebox[2]{#2}%
  \newcommand*\fsize{\dimexpr\f@size pt\relax}%
  \newcommand*\lineheight[1]{\fontsize{\fsize}{#1\fsize}\selectfont}%
  \ifx\svgwidth\undefined%
    \setlength{\unitlength}{2981.429019bp}%
    \ifx\svgscale\undefined%
      \relax%
    \else%
      \setlength{\unitlength}{\unitlength * \real{\svgscale}}%
    \fi%
  \else%
    \setlength{\unitlength}{\svgwidth}%
  \fi%
  \global\let\svgwidth\undefined%
  \global\let\svgscale\undefined%
  \makeatother%
  \begin{picture}(1,0.60819158)%
    \lineheight{1}%
    \setlength\tabcolsep{0pt}%
    \put(0,0){\includegraphics[width=\unitlength,page=1]{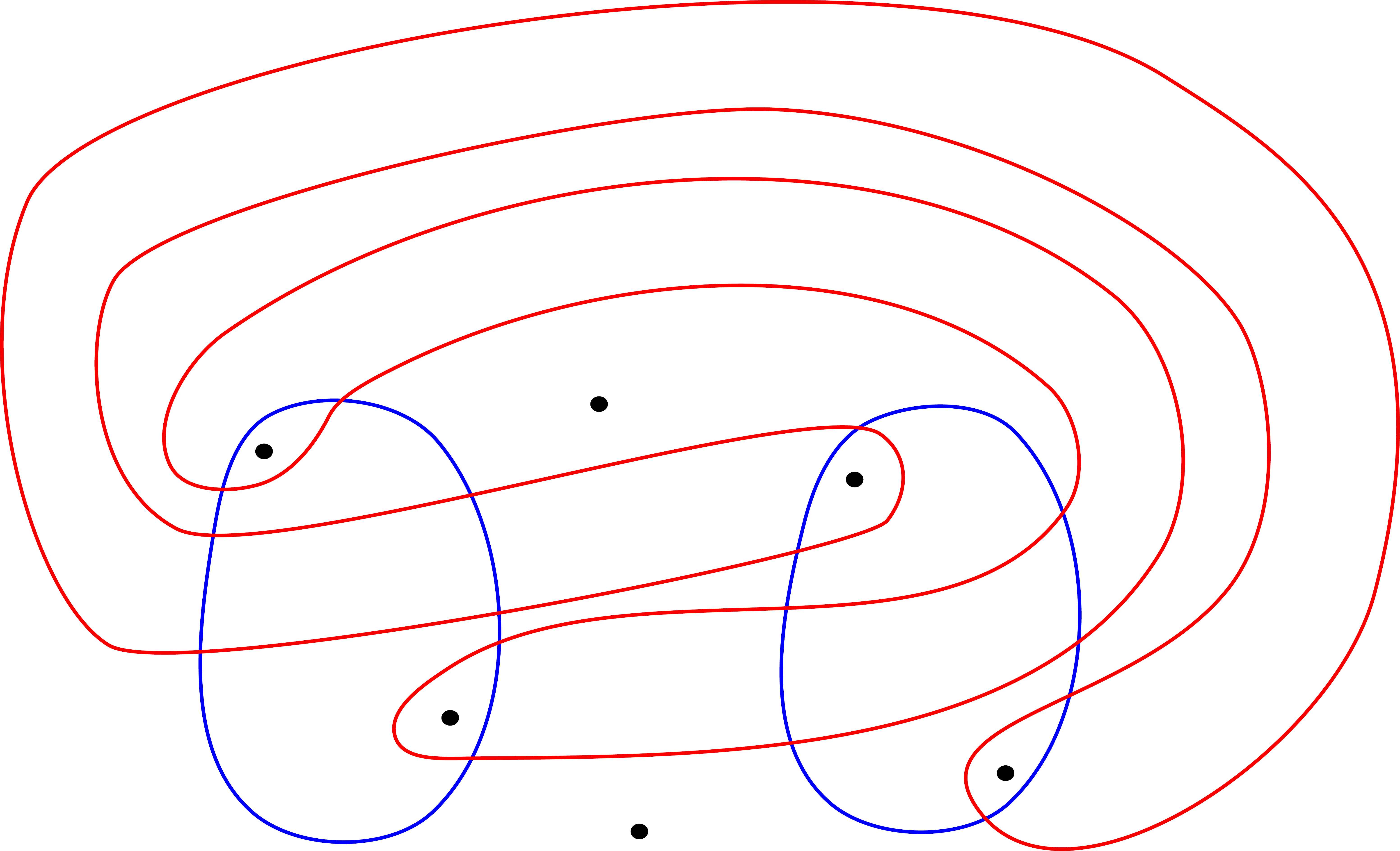}}%
    \put(0.21739384,0.26760667){\color[rgb]{0,0,0}\makebox(0,0)[lt]{\lineheight{1.25}\smash{\begin{tabular}[t]{l}$z_1$\end{tabular}}}}%
    \put(0.36042209,0.08792294){\color[rgb]{0,0,0}\makebox(0,0)[lt]{\lineheight{1.25}\smash{\begin{tabular}[t]{l}$w_1$\end{tabular}}}}%
    \put(0.44882648,0.32223052){\color[rgb]{0,0,0}\makebox(0,0)[lt]{\lineheight{1.25}\smash{\begin{tabular}[t]{l}$z_2$\end{tabular}}}}%
    \put(0.48116955,0.0131745){\color[rgb]{0,0,0}\makebox(0,0)[lt]{\lineheight{1.25}\smash{\begin{tabular}[t]{l}$w_2$\end{tabular}}}}%
    \put(0.59401092,0.23885727){\color[rgb]{0,0,0}\makebox(0,0)[lt]{\lineheight{1.25}\smash{\begin{tabular}[t]{l}$z_3$\end{tabular}}}}%
    \put(0.74853892,0.0376115){\color[rgb]{0,0,0}\makebox(0,0)[lt]{\lineheight{1.25}\smash{\begin{tabular}[t]{l}$w_3$\end{tabular}}}}%
  \end{picture}%
\endgroup%
   
 \caption{A Heegaard diagram for the link $T_{3,3}$. The $\alpha$-curves are red, while the $\beta$'s are blue.}
 \label{T33diagram}
\end{figure}
We start from this proposition from \cite{OSlinks}.
\begin{prop}[Ozsv\'ath and Szab\'o]
 The $\mathcal F$-filtered chain homotopy type $\cC^\infty(L)$ of a link Floer complex is independent of the (global) orientation of $L$.
\end{prop}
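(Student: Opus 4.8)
The plan is to work at the level of Heegaard diagrams: I will show that reversing the orientation of \emph{every} component of $L$ is, on a diagram, the exchange of the $\textbf{w}$- and $\textbf{z}$-basepoints, that this exchange can be absorbed into a global shift by powers of $U$, and that what remains is exactly the conjugation symmetry of the link Floer complex. Concretely, fix a multi-pointed diagram $\mathcal D=(\Sigma,\alpha,\beta,\textbf{w},\textbf{z})$ for the oriented link $L$. By the orientation conventions of \cite{OS,OSlinks}, the reverse $-L$ is presented by $\mathcal D'=(\Sigma,\alpha,\beta,\textbf{z},\textbf{w})$. The generators $\T=\T_\alpha\cap\T_\beta$, the domains $\phi\in\pi_2(x,y)$ and the holomorphic counts $m(\phi)$ are literally unchanged; only the combinatorial weights are. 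In $\cC^\infty(\mathcal D')$ the differential records $U$-powers through $n_{\textbf{z}}(\phi)$ instead of $n_{\textbf{w}}(\phi)$, the Maslov grading is read from $\textbf{z}$, and the collapsed Alexander grading is read with the roles of $\textbf{w}$ and $\textbf{z}$ swapped; comparing with $\mathcal D$ this gives $A'=-A+c$ on generators and $M'=M-2A+c'$ for constants $c,c'$.

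Next I would define $\Phi\colon\cC^\infty(\mathcal D')\to\cC^\infty(\mathcal D)$ by $\Phi(x)=U^{A(x)}x$ on generators, extended $\F[U,U^{-1}]$-linearly. The defining property of the Alexander grading, $A(x)-A(y)=n_{\textbf{z}}(\phi)-n_{\textbf{w}}(\phi)$ for every $\phi\in\pi_2(x,y)$, is precisely what makes $\Phi$ intertwine the two differentials, so $\Phi$ is a chain isomorphism. Chasing the grading and filtration shifts, $\Phi$ changes the Maslov grading by the constant $-c'$ and sends an element whose minimal algebraic and Alexander filtration levels in $\cC^\infty(\mathcal D')$ are $(p,q)$ to one with minimal levels $(q-c,p)$ in $\cC^\infty(\mathcal D)$. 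Granting $c=c'=0$ (see below), $\Phi$ is Maslov-preserving and identifies $\cC^\infty(-L)$ with $\cC^\infty(L)$ carrying the $\mathcal F$-filtration in which the algebraic and Alexander coordinates have been interchanged; this is a legitimate reindexing since the reflection $(p,q)\mapsto(q,p)$ carries south-west regions to south-west regions.

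It then remains to interchange the two filtrations back, i.e.\ to produce an $\mathcal F$-filtered chain homotopy self-equivalence of $\cC^\infty(L)$ that swaps the algebraic and Alexander filtrations. This is the conjugation symmetry of the link Floer complex from \cite{OSlinks}: passing from $(\Sigma,\alpha,\beta,\textbf{w},\textbf{z})$ to $(-\Sigma,\beta,\alpha,\textbf{w},\textbf{z})$ keeps $L$ with its orientation, and after collapsing to the total Alexander grading the comparison of the two filtered complexes exhibits $\cC^\infty(L)$ as filtered chain homotopy equivalent to itself with $j$ and $\mathcal A$ interchanged, via a Maslov-preserving equivalence — for knots this is the classical symmetry $(i,j)\leftrightarrow(j,i)$ of $CFK^\infty(K)$. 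Composing this self-equivalence with $\Phi$ yields an $\mathcal F$-filtered chain homotopy equivalence $\cC^\infty(-L)\simeq\cC^\infty(L)$; by Theorem~\ref{teo:quasi_iso} this is independent of the Heegaard diagram, which is the proposition.

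The main obstacle is the bookkeeping behind $c=c'=0$. A priori reversing all orientations shifts the total Alexander and Maslov gradings by constants that could depend on the pairwise linking numbers of $L$ and on the symmetrisation conventions, and a nonzero shift would make $\Phi$ match the filtrations only up to a fixed translation, which is not enough. What rescues the argument is that the normalisations recalled in Subsection~\ref{subsection:complex} — that $\cH^\infty(L)$ with its algebraic filtration is the fixed object $HF^\infty(S^3,n)$, depending only on the $n$ basepoints of $\textbf{w}$, together with the symmetry of the total homology under the Alexander reflection — hold verbatim with $\textbf{z}$ in place of $\textbf{w}$, since $|\textbf{w}|=|\textbf{z}|=n$; tracing these equalities through $\Phi$ forces both constants to vanish. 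It is exactly the collapse $U_1,\dots,U_n\mapsto U$ (the passage to the total Alexander grading) that makes this go through: reversing only a proper sublink would move the individual gradings $A_i$ by linking numbers and is genuinely not an invariance.
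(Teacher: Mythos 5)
The paper does not give a proof of this proposition: it is quoted verbatim as a result of Ozsv\'ath and Szab\'o from \cite{OSlinks}, so there is no argument in the text against which to compare yours, and I will assess your argument on its own.

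Your map $\Phi(x)=U^{A(x)}x$ is correct and does exactly what you claim: the identity $A(x)-A(y)=n_{\textbf z}(\phi)-n_{\textbf w}(\phi)$ makes it a chain isomorphism, and it preserves the Maslov grading while exchanging the algebraic and Alexander filtration levels. The problem is the second step. The diagram you write down for conjugation, $(-\Sigma,\beta,\alpha,\textbf{w},\textbf{z})$, does \emph{not} present $L$ with its original orientation: reversing $\Sigma$ together with exchanging the $\alpha$- and $\beta$-curves swaps the two handlebodies, which reverses the traversal direction of the embedded arcs, so with $\textbf w,\textbf z$ left unchanged this diagram presents $-L$. The conjugation diagram that presents $L$ is $(-\Sigma,\beta,\alpha,\textbf{z},\textbf{w})$, with the basepoints swapped as well so that the two orientation reversals cancel. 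As written, your ``conjugation'' step is just a second instance of orientation reversal, and composing it with $\Phi$ brings you back to where you started rather than closing the loop. There is also a structural caution: in this very paper the $j\leftrightarrow\mathcal A$ exchange symmetry of $\cC^\infty(L)$ is Theorem~\ref{teo:reflection}, whose proof \emph{deduces} it from the present proposition via Corollary~\ref{cor:reflection1}; so you must cite an independent proof in \cite{OSlinks} and verify that the multi-filtered statement there descends to the collapsed complex $\cC^\infty$. Once you have the right conjugate diagram you can in fact avoid the two-step scheme altogether: the diagram $(-\Sigma,\beta,\alpha,\textbf{z},\textbf{w})$ presents $L$ while $(\Sigma,\alpha,\beta,\textbf{z},\textbf{w})$ presents $-L$, and these two diagrams have the same generators, domains and disk counts, the same $\textbf w$-role basepoints, and the same symmetrically normalised Alexander and Maslov gradings; Theorem~\ref{teo:quasi_iso} then identifies $\cC^\infty(L)$ with $\cC^\infty(-L)$ as $\mathcal F$-filtered chain homotopy types directly, and your $\Phi$ becomes a way of extracting the $j\leftrightarrow\mathcal A$ symmetry from this, rather than a prerequisite for it.
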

In particular, we can identify the homology group of a link $L$ and its reverse.
\begin{cor}
 \label{cor:reflection1}
 There is an $\mathcal F$-filtered isomorphism $\cH^\infty(L)\leftarrow\joinrel\rightarrow\cH^\infty(-L)$.
 In particular, we have $\Upsilon_S(L)=\Upsilon_S(-L)$ and $\Upsilon_S^*(L)=\Upsilon_S^*(-L)$ for every centered south-west region $S$ of $\mathbb R^2$.
\end{cor}
We remind the reader that this result is not true if we reverse the orientation only on some of the components of $L$, as we saw in the previous subsection with the link $T_{3,3}$.

Say $-S$ is the south-west region obtained from $S$ after applying the reflection $r$ of the plane with respect to the line $\{A-j=0\}$.
We prove the following property.
\begin{teo}
 \label{teo:reflection}
 We have $\Upsilon_S(L)=\Upsilon_{-S}(L)$ and $\Upsilon_S^*(L)=\Upsilon_{-S}^*(L)$ for every centered south-west region $S$ of $\mathbb R^2$. In particular, one has $\Upsilon_L(t)=\Upsilon_L(2-t)$ and $\Upsilon_L^*(t)=\Upsilon_L^*(2-t)$ for every $t\in[0,2]$.
\end{teo}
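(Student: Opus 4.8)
The plan is to deduce the symmetry from the conjugation symmetry of link Floer homology. On the chain level this produces a self chain homotopy equivalence of $\cC^\infty(L)$ that preserves the Maslov grading and interchanges the algebraic filtration $j$ with the collapsed Alexander filtration $A$; on the $(j,A)$-plane this interchange is exactly the reflection along the line $\{j-A=0\}$, i.e.\ the operation $S\mapsto-S$ appearing in the statement. The rest of the proof should be a transport of structure along this map, the only genuinely new point being the identification, at the relevant filtration levels, of the algebraic and the Alexander filtrations induced on the homology.

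\textbf{Construction of the reflection map.} Starting from the conjugation isomorphism of \cite{OSlinks}, set $\Phi(x)=U^{-A(x)}\overline x$ on each generator $x$, where $\overline x$ is the conjugate generator, and extend $\F[U,U^{-1}]$-equivariantly. From the identities $M(\overline x)=M(x)-2A(x)$ and $A(\overline x)=-A(x)$ one gets $M(\Phi(x))=M(x)$, and $\Phi$ sends a homogeneous element whose minimal algebraic and Alexander levels are $(t,s)$ to one with minimal levels $(s,t)$; hence $\Phi$ preserves the Maslov grading and $\Phi\bigl(\mathcal F^{t,s}\cC^\infty(L)\bigr)=\mathcal F^{s,t}\cC^\infty(L)$, and the same holds for a homotopy inverse. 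Exactly as in the discussion of local equivalences in Subsection \ref{subsection:Alexander}, $\Phi$ then induces an automorphism $\Phi_*$ of $\cH^\infty_d(L)$ for each $d$ with $\Phi_*\bigl(\mathcal F^S\cH^\infty_d(L)\bigr)=\mathcal F^{-S}\cH^\infty_d(L)$ for every south-west region $S$; since the diagonal reflection commutes with the shift $S\mapsto S_k$, this gives $\Phi_*\bigl(\mathcal F^{S_k}\cH^\infty_d(L)\bigr)=\mathcal F^{(-S)_k}\cH^\infty_d(L)$, and in particular $\Phi_*\bigl(\mathcal F^{\{j\leq0\}}\cH^\infty_0(L)\bigr)=\mathcal F^{\{A\leq0\}}\cH^\infty_0(L)$ and $\Phi_*\bigl(\mathcal F^{\{j\leq-1\}}\cH^\infty_{1-n}(L)\bigr)=\mathcal F^{\{A\leq-1\}}\cH^\infty_{1-n}(L)$.

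\textbf{Transport and the main point.} Applying the bijection $\Phi_*$ to the condition defining $\Upsilon_S(L)$, the inclusion $\mathcal F^{S_k}\cH^\infty_0(L)\supseteq\mathcal F^{\{j\leq0\}}\cH^\infty_0(L)$ turns into $\mathcal F^{(-S)_k}\cH^\infty_0(L)\supseteq\mathcal F^{\{A\leq0\}}\cH^\infty_0(L)$; comparing with the definition of $\Upsilon_{-S}(L)$, the equality $\Upsilon_S(L)=\Upsilon_{-S}(L)$ follows once we know that $\mathcal F^{\{A\leq0\}}\cH^\infty_0(L)=\mathcal F^{\{j\leq0\}}\cH^\infty_0(L)$, and, running the same computation in Maslov grading $1-n$ with $\not\subseteq$ in place of $\supseteq$, the equality $\Upsilon^*_S(L)=\Upsilon^*_{-S}(L)$ follows from $\mathcal F^{\{A\leq-1\}}\cH^\infty_{1-n}(L)=\mathcal F^{\{j\leq-1\}}\cH^\infty_{1-n}(L)$. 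This identification is the crux of the argument. In grading $0$, Theorem \ref{teo:dim} shows that $\mathcal F^{\{j\leq0\}}\cH^\infty_0(L)$ is the line spanned by the image $g_0$ of the unique Maslov-$0$ generator of $HF^-(S^3,n)$; since conjugation is grading-preserving on $HF^-(S^3,n)$ it fixes $g_0$, so $\Phi_*$ maps this line to itself, and since $\Phi_*$ also maps it to $\mathcal F^{\{A\leq0\}}\cH^\infty_0(L)$ the two coincide. In grading $1-n$ one argues in the same way with the distinguished codimension-one subspace furnished by Theorem \ref{teo:dim}, reducing if necessary, via the $L$-independence of the induced filtrations coming from Theorem \ref{teo:dim}, to the conjugation-symmetric unlink $\bigcirc_n$, where the two filtrations visibly agree. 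I expect making this identification of the distinguished subspaces fully precise to be the main obstacle.

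\textbf{The classical $\Upsilon$-functions.} Finally, interchanging $j$ and $A$ in the inequality $A\cdot\frac{t}{2}+j\bigl(1-\frac{t}{2}\bigr)\leq0$ produces exactly the inequality defining $A_{2-t}$, so $-A_t=A_{2-t}$; as $A_t$ is centered, we conclude $\Upsilon_L(2-t)=\Upsilon_{A_{2-t}}(L)=\Upsilon_{-A_t}(L)=\Upsilon_{A_t}(L)=\Upsilon_L(t)$ for every $t\in[0,2]$, and likewise $\Upsilon^*_L(2-t)=\Upsilon^*_L(t)$.
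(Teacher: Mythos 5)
Your general plan — construct a self chain homotopy equivalence $\Phi$ of $\cC^\infty(L)$ that swaps the algebraic and Alexander filtrations, then transport the defining condition of $\Upsilon_S$ along $\Phi_*$ — is exactly the paper's strategy, and your chain-level formula $\Phi(x)=U^{-A(x)}\overline x$ together with the verification that $\Phi(\mathcal F^{t,s})=\mathcal F^{s,t}$ is correct. The final step ($-A_t=A_{2-t}$) is also right. The problem is the step you yourself flag as ``the crux'': showing that $\mathcal F^{\{A\leq0\}}\cH^\infty_0(L)=\mathcal F^{\{j\leq0\}}\cH^\infty_0(L)$ (and the analogous equality in Maslov grading $1-n$). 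The paper disposes of this in one stroke by invoking \cite[Lemma 4.6]{Sarkar2}, which shows that the diagonal symmetry $r$ (your $\Phi$) is chain homotopic to the identity; this forces $\Phi_*=\mathrm{id}$ on $\cH^\infty(L)$, so $\mathcal F^S\cH^\infty_d(L)=\Phi_*\bigl(\mathcal F^S\cH^\infty_d(L)\bigr)=\mathcal F^{-S}\cH^\infty_d(L)$ for every $S$ and $d$, and the whole theorem follows. You never establish $\Phi_*=\mathrm{id}$, and the two substitutes you offer do not work.

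In Maslov grading $0$, you argue that since $g_0$ is the unique Maslov-$0$ generator of $HF^-(S^3,n)$ and ``conjugation is grading-preserving on $HF^-(S^3,n)$,'' $\Phi_*$ fixes $g_0$. But $\Phi$ is a self-map of $\cC^\infty(L)$ that does \emph{not} preserve the $j$-filtration (it swaps $j$ with $\mathcal A$), so it does not induce an automorphism of $HF^-(S^3,n)\subset HF^\infty(S^3,n)$ at all; $\Phi_*$ is only a Maslov-graded automorphism of $\cH^\infty_0(L)\cong\F^{2^{n-2}}$, and for $n\geq 3$ such an automorphism has no reason to fix the line $\F g_0$. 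Your argument silently works only for $n\leq 2$, when $\cH^\infty_0(L)$ is one-dimensional. In Maslov grading $1-n$, the proposed ``reduction to the unlink via $L$-independence of the induced filtrations'' is worse: Theorem \ref{teo:dim} asserts that the $j$-filtration on homology is $L$-independent, but the $\mathcal A$-filtration on homology is precisely the $L$-dependent data from which all the $\Upsilon$-type invariants are extracted, so there is no such reduction. To repair the argument, the cleanest fix is to prove (or cite, as the paper does) that $\Phi$ is chain homotopic to $\mathrm{id}_{\cC^\infty(L)}$; your construction of $\Phi$ is then exactly what is needed and the rest of your proof goes through.
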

\begin{proof}
 Since a chain complex for $-L$ is obtained by switching the role of $\textbf w$ and $\textbf z$ in a Heegaard diagram for $L$, and then of the filtrations $\mathcal A$ and $j$, Corollary \ref{cor:reflection1} tells us that $\cC^\infty(L)$ is symmetric under $r$ up to homotopy. Moreover, this symmetry is chain homotopic to the identity by \cite[Lemma 4.6]{Sarkar2} and the claim follows.
 
 For the second part of the statement, we just need to observe that the reflected south-west region $-A_t$ corresponds to $A_{2-t}$.
\end{proof}
With this theorem set, from now on we consider the $\Upsilon$-functions as defined on $[0,1]$, since their values on $[1,2]$ are then determined automatically. 

Now we want to study the relation between the $\Upsilon$'s of $L$ and its mirror image.
We recall that, in Subsection \ref{subsection:duality}, we defined $\iota S$ as the complement of the region obtained from $S$ by applying a central symmetry. Then we say that $\overline{\iota S}$ is the topological closure of $\iota S$. 
\begin{prop}
 \label{prop:mirror}
 For an $n$-component link $L$ we have that \[\Upsilon_S(L^*)=-\Upsilon_{\overline{\iota S}}^*(L)\] for every centered south-west region $S$ of $\mathbb R^2$. In particular, we obtain  $\Upsilon_{L^*}(t)=-\Upsilon^*_L(t)$ for every $t\in[0,1]$ and for a knot $K$ one has $\Upsilon_S(K^*)=-\Upsilon_{\overline{\iota S}}(K)$. 
\end{prop}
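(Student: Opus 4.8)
The plan is to read the formula off Theorem~\ref{teo:mirror}, by transporting across the duality pairing the two distinguished one–dimensional subspaces that define $\Upsilon_S$ and $\Upsilon_S^*$: recall that $\Upsilon_S$ uses the generator of $\mathcal F^{\{j\leq0\}}\cH^\infty_0$, while $\Upsilon_S^*$ uses the generator of $\mathcal F^{\{j\leq0\}}\cH^\infty_{1-n}/\mathcal F^{\{j\leq-1\}}\cH^\infty_{1-n}$, and the grading shift $\lkhov1-n\rkhov$ in Theorem~\ref{teo:mirror} is precisely what swaps Maslov degree $0$ with Maslov degree $1-n$.

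First I would pass to homology in Theorem~\ref{teo:mirror}. If $\mathcal D$ is a Heegaard diagram for $L$, then $\mathcal D^*$ represents $L^*$ and $\cC^\infty(\mathcal D^*)=\cC^\infty(\mathcal D)^*\lkhov1-n\rkhov$ as $\mathcal F$–filtered graded complexes; since we work over $\F$ and each $\cC^\infty_d(\mathcal D)$ is finite dimensional (proof of Proposition~\ref{prop:bounded}), we obtain a graded isomorphism $\cH^\infty_d(L^*)\cong\big(\cH^\infty_{1-n-d}(L)\big)^*$ under which the $\mathcal F$–filtration of $L^*$ becomes the dual filtration of $L$. Concretely, a short chase with cycles and boundaries — using that a dual cycle may be modified by a coboundary, so that its restriction to a given filtration level can be prescribed — gives \[\mathcal F^S\cH^\infty_d(L^*)=\Ann\mathcal F^{\iota S}\cH^\infty_{1-n-d}(L)\] inside $\big(\cH^\infty_{1-n-d}(L)\big)^*$, for every south–west region $S$.

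Next comes the region bookkeeping. A direct computation with the central symmetry and with the translation defining $S_k$ gives $\iota(S_k)=(\iota S)_{-k}$, and likewise $\iota\{j\leq0\}=\{j<0\}$, which coincides with $\{j\leq-1\}$ on the integer lattice carrying the homology filtration. Specializing the previous display to $d=0$ and to these regions, and using that annihilation reverses inclusions of subspaces of a finite dimensional space, the condition $\mathcal F^{S_k}\cH^\infty_0(L^*)\supseteq\mathcal F^{\{j\leq0\}}\cH^\infty_0(L^*)$ becomes exactly \[\mathcal F^{(\iota S)_{-k}}\cH^\infty_{1-n}(L)\subseteq\mathcal F^{\{j\leq-1\}}\cH^\infty_{1-n}(L);\] here I use that both transported subspaces are one dimensional, which is Theorem~\ref{teo:dim}.

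Finally I would take the extremum over $k$. Since $(\iota S)_m$ shrinks as $m$ grows, the set of $m$ with $\mathcal F^{(\iota S)_m}\cH^\infty_{1-n}(L)\subseteq\mathcal F^{\{j\leq-1\}}\cH^\infty_{1-n}(L)$ is a half–line $[m_0,\infty)$ closed on the left, so, setting $m=-k$, $\Upsilon_S(L^*)=-m_0$. By definition $\Upsilon^*_T(L)$ is the largest $m$ with $\mathcal F^{T_m}\cH^\infty_{1-n}(L)\not\subseteq\mathcal F^{\{j\leq-1\}}\cH^\infty_{1-n}(L)$; for $T=\iota S$ the corresponding set is the \emph{open} half–line $(-\infty,m_0)$ and has no maximum, whereas passing to the closure $\overline{\iota S}$ enlarges each level set precisely by the generators on the boundary and turns it into $(-\infty,m_0]$, so $\Upsilon^*_{\overline{\iota S}}(L)=m_0=-\Upsilon_S(L^*)$. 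The special cases then follow: for $t\in[0,2]$ the central symmetry fixes the bounding line of $A_t$, whence $\overline{\iota A_t}=A_t$ and $\Upsilon_{L^*}(t)=\Upsilon_{A_t}(L^*)=-\Upsilon^*_{A_t}(L)=-\Upsilon^*_L(t)$; and for a knot $K$ we have $n=1$ and $\Upsilon^*_S(K)=\Upsilon_S(K)$, so $\Upsilon_S(K^*)=-\Upsilon_{\overline{\iota S}}(K)$. The step that needs real care is this last matching of extremum conventions — which is exactly why the closure $\overline{\iota S}$, and not $\iota S$ itself, occurs in the statement: the dual filtration is built from $\iota S$, which excludes its own boundary, so one must enlarge to $\overline{\iota S}$ for the supremum defining $\Upsilon^*$ to be attained at the value dual to $\Upsilon_S(L^*)$; the compatibility of the annihilator filtration with passage to homology (the first display) is the other point to be checked, but it is a formal consequence of finite dimensionality over $\F$.
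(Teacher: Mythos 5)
Your proposal is correct and follows essentially the same route as the paper's proof: both pass the defining containment for $\Upsilon_S(L^*)$ through the annihilator identification $\mathcal F^S\cH^\infty_0(L^*)\leftrightarrow\Ann\mathcal F^{\iota S}\cH^\infty_{1-n}(L)$ coming from Theorem~\ref{teo:mirror}, do the same region bookkeeping ($\iota(S_k)=(\iota S)_{-k}$, $\iota\{j\leq0\}=\{j\leq-1\}$), convert the resulting $-\min$ over $\iota S$ into a $\max$ over $\overline{\iota S}$, and dispatch the special cases via $\overline{\iota A_t}=A_t$ and $\Upsilon^*_S(K)=\Upsilon_S(K)$ for knots. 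Your exposition just makes explicit the closed-versus-open half-line point that the paper treats in a single equality.
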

\begin{proof}
 We apply Theorem \ref{teo:mirror} to argue that there is an identification \[\mathcal F^S\cH^\infty_0(L^*)\longleftarrow\joinrel\longrightarrow(\mathcal F^*)^S\cH^\infty_{n-1}(L)^*=\Ann\mathcal F^{\iota S}\cH^\infty_{1-n}(L)\] that preserves the containment relations. Hence, we only need to use the definition of $\Upsilon$:
 \[\begin{aligned}
    \Upsilon_S(L^*)=&\max_{k\in\mathbb R}\left\{k\:|\:\mathcal F^{S_k}\cH^\infty_0(L^*)\supset\mathcal F^{\{j\leq0\}}\cH^\infty_0(L^*)\right\}=\\ =&\max_{k\in\mathbb R}\left\{k\:|\:\Ann\mathcal F^{\iota S_k}\cH^\infty_{1-n}(L)\supset\Ann\mathcal F^{\{j\leq-1\}}\cH^\infty_{1-n}(L)\right\}=\\ =&\max_{k\in\mathbb R}\left\{k\:|\:\mathcal F^{(\iota S)_{-k}}\cH^\infty_{1-n}(L)\subset\mathcal F^{\{j\leq-1\}}\cH^\infty_{1-n}(L)\right\}=\\
    =&-\min_{k\in\mathbb R}\left\{k\:|\:\mathcal F^{(\iota S)_{k}}\cH^\infty_{1-n}(L)\subset\mathcal F^{\{j\leq-1\}}\cH^\infty_{1-n}(L)\right\}=\end{aligned}\]
    \[=-\max_{k\in\mathbb R}\left\{k\:|\:\mathcal F^{\overline{\iota S}_{k}}\cH^\infty_{1-n}(L)\not\subset\mathcal F^{\{j\leq-1\}}\cH^\infty_{1-n}(L)\right\}=-\Upsilon^*_{\overline{\iota S}}(L)\]
    for every centered south-west region $S$ in $\mathbb R^2$.
   
 The third claim is trivial, while for the second one we note that $\overline{\iota A_t}=A_t$ for every $t\in[0,1]$. 
\end{proof}
We observe that the south-west regions $A_t$ are not the only $S$ such that $\overline{\iota S}=S$ as we see from Figure \ref{SWregionssymmetric}.

Let us recall that the homology group $\widehat{HFL}(L)$ (resp. $\widehat{\mathcal{HFL}}(L)$) is defined as the bigraded homology of the associated graded object (resp. the $\mathcal A$-filtered graded homology) of the complex $\widehat{CFL}(L)$, given by setting $U=0$ in $\cC^\infty(L)$, see \cite{Cavallo,OSlinks} for details.
\begin{figure}[t]
 \centering
 \begin{tikzpicture}[scale =.4]
        \coordinate (Origin)   at (0,0);
        \coordinate (XAxisMin) at (-5,0);
        \coordinate (XAxisMax) at (5,0);
        \coordinate (YAxisMin) at (0,-5);
        \coordinate (YAxisMax) at (0,5);
        \draw [thick, black,-latex] (XAxisMin) -- (XAxisMax);
        \draw [thick, black,-latex] (YAxisMin) -- (YAxisMax);
        \clip (-4.9,-4.9) rectangle (5.99,5.99);
        \draw[style=help lines] (-5,-5) grid[step=1cm] (4.95,4.95);
        
        \node[] at (-0.5,5.5) {$A$};
        \node[] at (5.5,-0.5) {$j$};
        
        \draw [very thick, black] (-5,5) -- (-2,2);
        \draw [very thick, black] (-2,2) -- (-2,0);
        \draw [very thick, black] (-2,0) -- (2,0);
        \draw [very thick, black] (2,0) -- (2,-2);
        \draw [very thick, black] (2,-2) -- (5,-5);
        
        \draw[fill, opacity=.5, gray] (-2,2) rectangle (-5,-5);
        \draw[fill, opacity=.5, gray] (-2,-5) rectangle (2,0);
        \fill [gray, opacity=.5, domain=-5:5, variable=\x]
      (-5,2)
      -- plot ({\x}, {-\x})
      -- (-2,2)
      -- cycle;
      \fill [gray, opacity=.5, domain=2:5, variable=\x]
      (2,-5)
      -- plot ({\x}, {-\x})
      -- (5,-5)
      -- cycle;
 \end{tikzpicture}
 \caption{The centered south-west region $T$ in the picture is such that $T=\overline{\iota T}$.}
 \label{SWregionssymmetric}
 \end{figure}
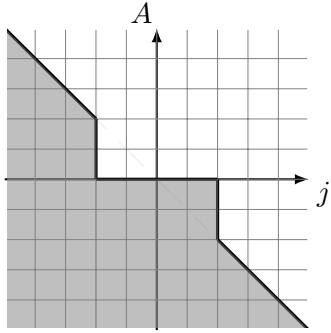
\begin{lemma}
 \label{lemma:generator}
 If a cycle in $\mathcal F^{\{j\leq0\}}\cC^\infty(L)$ is a generator of the homology group $\cH^\infty(L)$, and its homology class has minimal $j$-level zero, then its projection to $\widehat{CFL}(L)$ is a generator of $\widehat{\mathcal{HFL}}(L)$. 
\end{lemma}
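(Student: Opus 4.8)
The plan is to reduce both assertions to the single observation that $\widehat{CFL}(L)$ is literally the degree-zero part of the associated graded of the algebraic filtration. First I would note that setting $U=0$ in $\cC^-(L)=\mathcal F^{\{j\leq 0\}}\cC^\infty(L)$ amounts to quotienting by $U\cdot\cC^-(L)$, and that by the definition $j^t\cC^\infty(\mathcal D)=U^{-t}\cC^-(\mathcal D)$ one has $U\cdot\cC^-(L)=j^{-1}\cC^\infty(L)\subset j^0\cC^\infty(L)=\cC^-(L)$; hence, as Maslov-graded complexes, $\widehat{CFL}(L)=j^0\cC^\infty(L)/j^{-1}\cC^\infty(L)$. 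Consequently the restriction $\bar z$ of the given cycle $z\in\cC^-(L)$ is a cycle in this associated graded, in Maslov grading $0$, and $[\bar z]$ lives in $H_*(\widehat{CFL}(L))=\widehat{\mathcal{HFL}}(L)$.

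Next I would invoke the elementary fact about increasing filtrations: if a homology class $\xi$ has minimal filtration level $t$, then its image in the homology of the level-$t$ associated graded piece is non-zero --- since a cycle representing $\xi$ at level $t$, whose induced class in $\mathrm{gr}_t$ were a boundary, could be corrected by a boundary to a cycle at level $t-1$, contradicting minimality. Applying this to the algebraic filtration with $t=0$, the hypothesis that $[z]$ has minimal $j$-level equal to zero yields $[\bar z]\neq 0$. Since $H_*(\widehat{CFL}(L))\cong_\F\widehat{HF}(S^3,n)$ depends only on the $\textbf{w}$-basepoints and, by the computation in the proof of Theorem~\ref{teo:dim}, its Maslov-grading-$0$ part is one-dimensional over $\F$, the class $[\bar z]$ generates $\widehat{\mathcal{HFL}}_0(L)$. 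Running the same principle once more, this time with the Alexander filtration on $\widehat{CFL}(L)$, shows in addition that the image of $[\bar z]$ in the bigraded group $\widehat{HFL}(L)$, read off at the minimal Alexander level attained by its class, is again non-zero.

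For the grid version I would repeat the argument with $\cC^-(D)$ in place of $\cC^-(L)$. Passing to $\widehat{CFL}(L)$ now means setting all the variables $V_1,\dots,V_m,U$ to zero, i.e. quotienting by the ideal they generate; by the description of the grid algebraic filtration (level $t$ spanned by $V_1^{i_1}\cdots V_m^{i_m}U^i\cdot\cC^-(D)$ with $i_1+\dots+i_m+i=-t$) and the freeness of $\cC^-(D)$ over $\F[V_1,\dots,V_m,U]$, this ideal equals $j^{-1}\cC^\infty(D)$, so once again $\widehat{CFL}(L)=j^0\cC^\infty(D)/j^{-1}\cC^\infty(D)$. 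Theorem~\ref{teo:quasi_iso_grids} identifies $\widehat{\mathcal{HFL}}(L)$, the Alexander filtration and the Maslov grading computed from $D$ with the Heegaard ones, so the previous two paragraphs apply verbatim.

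The part that genuinely needs checking --- and hence where the (mild) difficulty lies --- is precisely the two identifications ``killing the $O$-variables $=$ quotienting by the next algebraic filtration level'': the Heegaard one is immediate from $j^t\cC^\infty=U^{-t}\cC^-$, while the grid one requires the explicit form of $j$ together with freeness of $\cC^-(D)$ over $\F[V_1,\dots,V_m,U]$; one should also be slightly careful that the $\widehat{CFL}$ of a grid, as defined in \cite{MOST,Book}, is the quotient by all of $V_1,\dots,V_m$ and $U$ rather than by a proper subfamily. Everything else is the elementary filtered-complex principle above, applied twice.
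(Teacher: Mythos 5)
Your Heegaard argument is sound and is essentially the paper's own proof, packaged into a general filtered-complex principle: the paper establishes the same two facts (the restriction is a cycle; if it bounded, the class $[x]$ could be pushed to level $-1$) via the explicit decomposition $x=\overline x+x'$ with $x'\in j^{-1}$, and your general principle is exactly what that concrete argument proves.

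The grid case, however, contains a real error at the very point you flagged as needing care. The identification $\widehat{CFL}(L)=j^0\cC^\infty(D)/j^{-1}\cC^\infty(D)$ fails once there is more than one variable: by the paper's description, a monomial $V_1^{a_1}\cdots V_m^{a_m}U^a\cdot x$ lies in $j^t\cC^\infty(D)$ precisely when the total degree $a_1+\cdots+a_m+a$ is $\geq -t$, with no sign restriction on the individual exponents. Hence $j^0\cC^\infty(D)$ strictly contains $\cC^-(D)$ (it contains $V_1V_2^{-1}x$, say), and $j^{-1}\cC^\infty(D)$ strictly contains the ideal $(V_1,\dots,V_m,U)\cC^-(D)$ (it contains $V_1^2V_2^{-1}x$); the quotient $j^0/j^{-1}$ is an infinite-rank $\F$-module rather than the finite complex $\cC^-(D)/(V_1,\dots,V_m,U)\cC^-(D)$, so your sentence ``this ideal equals $j^{-1}\cC^\infty(D)$'' is false. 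The repair is short: what is true is $\cC^-(D)\cap j^{-1}\cC^\infty(D)=(V_1,\dots,V_m,U)\cC^-(D)$ (an element of $\cC^-(D)$ has all exponents nonnegative, so total degree $\geq 1$ forces some exponent $\geq 1$), whence there is an injective chain map $\cC^-(D)/(V_1,\dots,V_m,U)\cC^-(D)\hookrightarrow j^0/j^{-1}$ taking $\bar z$ to $z\bmod j^{-1}$; your general principle applied to $j^0/j^{-1}$ then shows the image of $[\bar z]$ is non-zero, and therefore so is $[\bar z]$. Alternatively, the paper's element-wise argument goes through verbatim for grids, since it only uses that $x'=x-\overline x$ lies in $j^{-1}$ (every monomial of $x'$ has nonnegative individual exponents and total degree at least one) and that $\partial^-$ respects $j$.
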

\begin{proof}
 By \cite[Lemma 4.5]{Rasmussen} we know that, up to changing basis, the complex $\cC^\infty(L)$ is such that the differential of the bigraded object associated to $\widehat{CFL}(L)$ is zero. Therefore, if we pick a generator with minimal $j$-level zero then its projection cannot be zero in $\widehat{\mathcal{HFL}}(L)$, because clearly it would be homologous to an element of $U\cdot\mathcal F^{\{j\leq0\}}\cC^\infty(L)=\mathcal F^{\{j\leq-1\}}\cC^\infty(L)$.
\end{proof}
We use the mirror image symmetry to prove the following proposition. We assume the reader to be familiar with the definition of the concordance invariants $\tau(L)$ and $\tau^*(L)$, given by the author in \cite{Cavallo}.  
\begin{prop}
 For a link $L$ we have that \[\tau(L)=-\Upsilon_L'(0)\quad\text{ and }\quad\tau^*(L)=-(\Upsilon_L^*)'(0)\:.\] Furthermore, each slope of $\Upsilon_L(t)$ and $\Upsilon_L^*(t)$ is an integer $s$ such that the Alexander grading subgroup $\widehat{HFL}_{*,s}(L)$ is non-zero and if $t_0\in(0,1)$ is a point where the slope changes from $s_1$ to $s_2$ then $t_0\in\frac{\mathbb Z}{|s_2-s_1|}$ and $|s_2-s_1|>2$.
\end{prop}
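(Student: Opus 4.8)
The plan is to reduce everything to a min--max computation on a finite model for $\cC^\infty_*(L)$ and then read the slopes and corners off directly. First I would fix a filtered basis $\{e_1,\dots,e_r\}$ of $\cC^-(L)$ over $\F[U]$, so that $\{U^m e_i\}_{m\in\Z}$ is an $\F$-basis of $\cC^\infty(L)$ with $U^m e_i$ located at algebraic level $j=-m$ and Alexander level $A=A(e_i)-m$. Setting $g_t(U^m e_i):=A\cdot\tfrac t2+j\cdot(1-\tfrac t2)=A(e_i)\tfrac t2-m$, an unwinding of the definitions of $S_k$, $A_t$ and $\Upsilon_S$ from Subsection \ref{subsection:Alexander} — together with Proposition \ref{prop:bounded}, which confines the relevant basis elements to a finite set — yields
\[\Upsilon_L(t)=-2\,\min_{z}\ \max_{U^m e_i\in\operatorname{supp}(z)}g_t(U^m e_i),\]
the minimum being over cycles $z$ representing the generator $\xi$ of $\mathcal F^{\{j\le0\}}\cH^\infty_0(L)\cong_\F\F$; the same formula holds for $\Upsilon^*_L$ with $\xi$ replaced by a cycle whose class generates $\mathcal F^{\{j\le0\}}\cH^\infty_{1-n}(L)/\mathcal F^{\{j\le-1\}}\cH^\infty_{1-n}(L)$. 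Since for a fixed support $g_t$ is linear in $t$, both $\Upsilon_L$ and $\Upsilon^*_L$ are piecewise linear, and on a linear piece whose maximum is attained at $U^m e_i$ the slope equals $j-A=-A(e_i)\in\Z$.

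Next, for the first assertion I would use $\Upsilon_L(0)=0$: the winning basis element on the piece adjacent to $0$ must then have $j=0$, hence equals $e_i$ itself, and an optimal cycle lies in $\mathcal F^{\{j\le0\}}\cC^\infty(L)=\cC^-(L)$. By Lemma \ref{lemma:generator} its image $\bar z$ in $\widehat{CFL}(L)$ generates $\widehat{\mathcal{HFL}}(L)$, and the min--max above is precisely minimizing the top Alexander level of such a $\bar z$; comparing with the definition of $\tau(L)$ in \cite{Cavallo}, this number is $\tau(L)$, so $\Upsilon_L'(0)=-A(e_i)=-\tau(L)$. The same argument in Maslov grading $1-n$, using $\tau^*(L)$ of \cite{Cavallo}, gives $(\Upsilon^*_L)'(0)=-\tau^*(L)$.

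To identify the slopes, I would fix $t_0$ in the interior of a linear piece of slope $s$, chosen so that the finitely many values $A(e_i)\tfrac{t_0}2-m$ are pairwise distinct, and consider the filtration $\mathcal G_{t_0}$ of $\cC^\infty(L)$ by $g_{t_0}$. Let $z^*$ be an optimal representative of the relevant class, so its top $\mathcal G_{t_0}$-level is $c=-\tfrac12\Upsilon_L(t_0)$. The leading term $\operatorname{lead}(z^*)\in\operatorname{gr}_c$ is a nonzero cycle which is not a boundary — otherwise correcting $z^*$ by a lift of the bounding chain would strictly lower its top level, contradicting optimality — so it represents a nonzero class in $H_*(\operatorname{gr}_{\mathcal G_{t_0}})$ in degree $c$. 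By the genericity of $t_0$, the only components of $\partial^-$ preserving the $\mathcal G_{t_0}$-level are those with $n_{\textbf w}=n_{\textbf z}=0$, and $\operatorname{gr}_c\cong U^k\widehat{CFL}_a(L)$ with differential $U^k\widehat\partial_0$ for a single Alexander grading $a$ and a single power $U^k$; hence $H_*(\operatorname{gr}_c)\cong U^k\widehat{HFL}_{*,a}(L)$ and $\widehat{HFL}_{*,a}(L)\neq0$. Since $s=j-A=-a$, the $A\mapsto-A$ symmetry of link Floer homology gives $\widehat{HFL}_{*,s}(L)\neq0$; the argument for $\Upsilon^*_L$ is identical.

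Finally, at a corner $t_0\in(0,1)$ let $p$ at $(j_p,A_p)$ and $q$ at $(j_q,A_q)$ be the winning basis elements just below and just above $t_0$, so $s_1=j_p-A_p$ and $s_2=j_q-A_q$. Continuity of $\Upsilon_L$ at $t_0$ forces $(j_p-A_p)t_0-2j_p=(j_q-A_q)t_0-2j_q$, i.e. $t_0=\frac{2(j_p-j_q)}{s_1-s_2}\in\frac{\Z}{|s_2-s_1|}$; substituting this back, the constraint $0<t_0<1$ forces $|j_p-j_q|\ge1$ and $|A_p-A_q|\ge2$ with no cancellation in $s_2-s_1=(j_q-j_p)+(A_p-A_q)$, whence $|s_2-s_1|=|j_p-j_q|+|A_p-A_q|\ge3>2$. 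The delicate point in this program is the third paragraph — confirming that the associated graded of $\mathcal G_{t_0}$ at the critical level is a genuine single Alexander-graded summand of $\widehat{HFL}(L)$ — together with reconciling the present $\tau(L)$, $\tau^*(L)$ with those of \cite{Cavallo} via Lemma \ref{lemma:generator}; the integrality of the slopes and the corner estimate are then formal consequences of the min--max description.
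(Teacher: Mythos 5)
Your min--max description $\Upsilon_L(t)=-2\min_z\max_{\operatorname{supp}(z)}g_t$ is correct and your treatment of the second half of the proposition (integrality of slopes, their realization in $\widehat{HFL}$ via the associated graded of $\mathcal G_{t_0}$, and the corner estimate) is a genuine, self-contained argument; the paper simply outsources this to \cite{OSSz} and \cite{FellerAru}, so here you give more than the paper does, and the computation $t_0=\frac{2(j_p-j_q)}{s_1-s_2}$ together with $|s_2-s_1|=|\Delta j|+|\Delta A|\geq 3$ is a clean way to see the corner constraints.

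For the first half, however, there is a one-directional gap. Lemma \ref{lemma:generator} only says that the restriction $\bar z$ to $j=0$ of \emph{any} suitable $\cC^\infty$-cycle $z$ generates $\widehat{\mathcal{HFL}}_0(L)$; this yields $\max A(\bar z)\geq\tau(L)$ for every competitor $z$, hence only $\Upsilon_L(t)\leq -t\,\tau(L)$ for small $t$. Your sentence ``the min--max above is precisely minimizing the top Alexander level of such a $\bar z$; comparing with the definition of $\tau(L)$ this number is $\tau(L)$'' silently asserts the reverse inequality, which requires knowing that a $\widehat{\mathcal{HFL}}_0$-generator realizing $\tau(L)$ is in fact a restriction of some $\cC^\infty$-cycle with $\max j=0$. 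This lifting statement is true but needs an argument: given one such $z$ with restriction $\bar z$ and any other $\widehat\partial$-cycle $\bar y$ with $[\bar y]=[\bar z]$, write $\bar y=\bar z+\widehat\partial\bar w$ and replace $z$ by $z+\partial^-\bar w$, whose $j=0$ part is exactly $\bar y$. The paper instead obtains the reverse inequality by applying the $\leq$-direction to $\Upsilon^*$ of the mirror and invoking Proposition \ref{prop:mirror} together with $\tau^*(L^*)=-\tau(L)$. Either route closes the gap; as written, you only prove the inequality in one direction, and you flag it (``reconciling the present $\tau(L)$...'') without resolving it.
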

\begin{proof}
 We prove the first part of the statement. We take $t\in[0,\epsilon)$ with $\epsilon$ very small and we show that for such $t$'s one has $\Upsilon_L(t)\leq-t\cdot\tau(L)$. Suppose that the homology class of $x$ is a generator of $\mathcal F^{\{j\leq0\}}\cH^\infty_0(L)$. By Lemma \ref{lemma:generator} we have that $\overline x$, the projection of $x$ to $\widehat{CFL}_{0,*}(L)$, is a generator of $\widehat{\mathcal{HFL}}_{0}(L)$. Hence, assuming $\Upsilon_L(t)>-t\cdot\tau(L)$ contradicts the fact that $\tau(L)$ is the minimum $\mathcal A$-level $s$ such that $\mathcal A^s\widehat{\mathcal{HFL}}_0(L)$ has dimension one, see \cite{Cavallo}.
 
 We now show that $\Upsilon_L(t)\geq-t\cdot\tau(L)$. In fact, the same argument we used before also shows that $\Upsilon_{L^*}^*(t)\leq-t\cdot\tau^*(L^*)$ for $t\in[0,\epsilon)$ and then $-\Upsilon_L(t)\leq-t(-\tau(L))$ from Proposition \ref{prop:mirror} and the symmetry properties of $\tau^*$, see \cite{Cavallo}. This proves the claim; in fact, the version for the $\Upsilon^*$-function can be proved applying Proposition \ref{prop:mirror}.
 
 The second part of the proposition follows from the same proof of \cite[Proposition 1.4]{OSSz} and \cite[Observation 2.2]{FellerAru}.
\end{proof}
Using this result we immediately compute the $\Upsilon$-functions for the Hopf links $H_\pm$. In fact $H_\pm$ is a non-split $2$-component link that bounds an annulus in $S^3$. Since $\widehat{HFL}$ detects the Thurston norm \cite[Theorem 1.1]{Ni}, this implies that $\widehat{HFL}_{*,s}(H_\pm)$ is non-zero only when $s=-1,0,1$ and then $\Upsilon_{H_\pm}$ and $\Upsilon^*_{H_\pm}$ are determined by the $\tau$-invariants, which are computed in \cite[Corollary 3.7]{Cavallo}. 
\begin{figure}[t]
 \centering
 \begin{tikzpicture}[scale =.65]
        \coordinate (Origin)   at (0,0);
        \coordinate (XAxisMin) at (-2,0);
        \coordinate (XAxisMax) at (4,0);
        \coordinate (YAxisMin) at (0,-2);
        \coordinate (YAxisMax) at (0,4);
        \draw [thick, black,-latex] (XAxisMin) -- (XAxisMax);
        \draw [thick, black,-latex] (YAxisMin) -- (YAxisMax);
        \draw [thick, black,-latex] (-2,1) -- (4,1);
        \draw [thick, black,-latex] (1,-2) -- (1,4);
        \clip (-1.9,-1.9) rectangle (4.99,4.99);
        \draw[style=help lines] (-2,-2) grid[step=1cm] (3.95,3.95);
        
        \node[] at (-1.5,2.5) {$s$}; \node[] at (0.5,-1.5) {$0$};
        \node[] at (2.5,-1.5) {$s$}; \node[] at (-1.5,0.5) {$0$};
        \node[] at (2.5,-1.5) {$s$};
        \node[] at (-0.5,4.5) {$A$};
        \node[] at (4.5,-0.5) {$j$};
        
        \node[draw,circle,fill,scale=0.4]  (A1) at (1.5,2.5) {};
        \node[draw,circle,fill,scale=0.4]  (A2) at (2.5,1.5) {};
        \node[draw,circle,fill=white,scale=0.4] (B) at (0.5,2.5) {};
        \node[draw,circle,fill=white,scale=0.4] (C) at (1.5,1.5) {};
        \node[draw,circle,fill=white,scale=0.4] (D) at (2.5,0.5) {};
        
        \path [thick,-latex] (A1) edge node[left] {} (B);
        \path [thick,-latex] (A1) edge node[left] {} (C);
        \path [thick,-latex] (A2) edge node[left] {} (C);
        \path [thick,-latex] (A2) edge node[left] {} (D);
 \end{tikzpicture}
 \hspace{.5cm}
 \begin{tikzpicture}[scale =.65]
        \coordinate (Origin)   at (0,0);
        \coordinate (XAxisMin) at (-4,0);
        \coordinate (XAxisMax) at (2,0);
        \coordinate (YAxisMin) at (0,-4);
        \coordinate (YAxisMax) at (0,2);
        \draw [thick, black,-latex] (XAxisMin) -- (XAxisMax);
        \draw [thick, black,-latex] (YAxisMin) -- (YAxisMax);
        \draw [thick, black,-latex] (-4,1) -- (2,1);
        \draw [thick, black,-latex] (1,-4) -- (1,2);
        \clip (-3.9,-3.9) rectangle (2.99,2.99);
        \draw[style=help lines] (-4,-4) grid[step=1cm] (1.95,1.95);
        
        \node[] at (-3.5,0.5) {$0$}; \node[] at (-1.5,-3.5) {$s$};
        \node[] at (0.5,-3.5) {$0$}; \node[] at (-3.5,-1.5) {$s$};
        \node[] at (-0.5,2.5) {$A$};
        \node[] at (2.5,-0.5) {$j$};
        
        \node[draw,circle,fill=white,scale=0.4] (A) at (-1.5,0.5) {};
        \node[draw,circle,fill=white,scale=0.4] (B) at (-0.5,-0.5) {};
        \node[draw,circle,fill=white,scale=0.4] (C) at (0.5,-1.5) {};
        \node[draw,circle,fill=gray,scale=0.4] (D1) at (-1.5,-0.5) {};
        \node[draw,circle,fill=gray,scale=0.4] (D2) at (-0.5,-1.5) {};
        
        \path [thick,-latex] (A) edge node[left] {} (D1);
        \path [thick,-latex] (B) edge node[left] {} (D1);
        \path [thick,-latex] (B) edge node[left] {} (D2);
        \path [thick,-latex] (C) edge node[left] {} (D2);
 \end{tikzpicture}
 \hspace{.5cm}
 \begin{tikzpicture}[scale =.65]
        \coordinate (Origin)   at (0,0);
        \coordinate (XAxisMin) at (-3,0);
        \coordinate (XAxisMax) at (3,0);
        \coordinate (YAxisMin) at (0,-1);
        \coordinate (YAxisMax) at (0,5);
        \draw [thick, black,-latex] (XAxisMin) -- (XAxisMax);
        \draw [thick, black,-latex] (YAxisMin) -- (YAxisMax);
        \draw [thick, black,-latex] (-3,1) -- (3,1);
        \draw [thick, black,-latex] (1,-1) -- (1,5);
        \clip (-2.9,-0.9) rectangle (3.99,5.99);
        \draw[style=help lines] (-3,-1) grid[step=1cm] (2.95,4.95);
        
        \node[] at (-2.5,0.5) {$0$};
        \node[] at (0.5,-0.5) {$0$};
        \node[] at (-0.5,5.5) {$A$};
        \node[] at (3.5,-0.5) {$j$};
        
        \node[draw,circle,fill,scale=0.4] (A) at (0.5,3.5) {};
        \node[draw,circle,fill,scale=0.4] (B) at (1.5,2.5) {};
        \node[draw,circle,fill,scale=0.4] (C) at (0.5,2.5) {};
        \node[draw,circle,fill,scale=0.4] (D) at (1.5,3.5) {};
        
        \path [thick,-latex] (A) edge node[left] {} (C);
        \path [thick,-latex] (B) edge node[left] {} (C);
        \path [thick,-latex] (D) edge node[left] {} (A);
        \path [thick,-latex] (D) edge node[left] {} (B);
 \end{tikzpicture}
 \caption{A positive staircase (left), a negative staircase (middle) and an acyclic square (right). The acyclic subcomplex of $\cC^\infty(L)$, when $L$ is as in Theorem \ref{teo:alternating}, is the direct sum of acyclic squares.}
 \label{Alternating}
\end{figure}
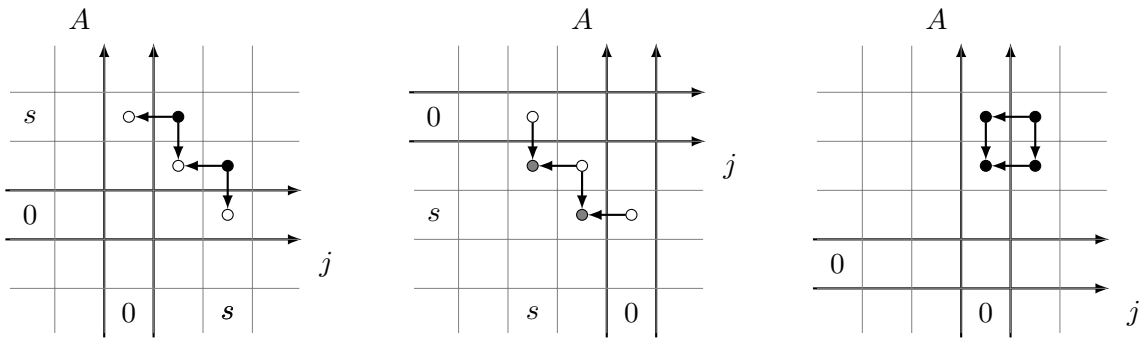
Therefore, we obtain
\[\Upsilon_{H_+}(t)=-t\:,\quad\quad\Upsilon_{H_-}^*(t)=t\quad\text{ and }\quad\Upsilon^*_{H_+}(t)=\Upsilon_{H_-}(t)=0\] for every $t\in[0,1]$. 
 
We conclude this subsection by stating a result of Petkova \cite{Petkova} that allows us to determine $\cC^\infty(L)$ for every non-split alternating link. 
We recall that an $n$-component link $L$ is \emph{$\widehat{HFL}$-thin} if its homology group $\widehat{HFL}_{d,s}(L)$ is supported on the line $s=d+\frac{n-1-\sigma(L)}{2}$, where $\sigma(L)$ is the signature of $L$.
\begin{teo}[Petkova]
 \label{teo:alternating}
 Suppose that the link $L$ has $n$ components and it is $\widehat{HFL}$-thin. Then the chain complex $\cC^\infty(L)$ is given as the direct sum of some $\F[U,U^{-1}]$-subcomplexes as in Figure \ref{Alternating}.
 More specifically, for every \[s\in\left\{\frac{n-1-\sigma(L)}{2}-k\right\}\quad\text{ with }\quad k=0,...,n-1\:,\] we have $\binom{n-1}{k}$ positive (resp. negative)
 staircases when $s$ is positive (resp. negative). Moreover, the acyclic subcomplex is determined by \[\chi\left(\widehat{HFL}(L)\right)(t,t^{-1})=\sum_{d\in\mathbb Z}(-1)^d\dim_\F\widehat{HFL}_{d,s}(L)\cdot t^{s}=\left(t^{\frac{1}{2}}-t^{-\frac{1}{2}}\right)^{n-1}\cdot\nabla_L\left(t^{\frac{1}{2}}-t^{-\frac{1}{2}}\right)\:,\] where $\nabla_L(z)$ is the Conway normalization of the Alexander polynomial of $L$.
\end{teo}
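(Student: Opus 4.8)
The plan is to run the standard structural analysis of thin complexes in the bi-filtered, collapsed setting of $\cC^\infty(L)$, and then to identify the resulting summands using the homological data of Theorem \ref{teo:dim} together with the Euler characteristic of link Floer homology. First I would replace $\cC^\infty(L)$ by a \emph{reduced} representative of its $\mathcal F$-filtered chain homotopy type (legitimate by Theorem \ref{teo:quasi_iso}): repeatedly cancelling the components of $\partial^-$ that preserve $\mathcal F$ (the usual cancellation lemma over $\F$) produces a filtered basis in which every component of $\partial^-$ strictly decreases $\mathcal F$, i.e.\ strictly decreases at least one of the coordinates $j$ and $A$, while of course still dropping the Maslov grading by exactly one.

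Next I would feed in the thinness hypothesis. Since $\widehat{HFL}_{d,s}(L)$ is concentrated on the line $s=d+\frac{n-1-\sigma(L)}{2}$, one checks that in the reduced model all basis elements lie on a single diagonal: there is a constant $c$ with $A(x)+j(x)-M(x)=c$ for every basis element $x$ (the statement for $\widehat{CFL}$ is immediate from thinness, and $U$-equivariance lifts it to $\cC^\infty$). Imposing this relation on the two endpoints of a differential arrow and using that $M$ drops by one forces the total decrease in $(j,A)$ along the arrow to be exactly one; hence every arrow is \emph{vertical} (drops $A$ by one, preserves $j$) or \emph{horizontal} (drops $j$ by one, preserves $A$).

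The heart of the argument is then a purely algebraic classification: a finitely generated free $\F[U,U^{-1}]$-complex whose reduced model carries only length-one vertical and horizontal arrows is an $\mathcal F$-filtered direct sum of \emph{staircase} complexes and \emph{square} (box) complexes, i.e.\ exactly the pieces depicted in Figure \ref{Alternating}. I would prove this by analysing the graph on the basis whose edges are the arrows: using $(\partial^-)^2=0$ together with further cancellations, one shows that every connected component is either an alternating zig-zag --- a staircase, which carries a single generator of homology --- or a commuting square, which is acyclic; the $\mathcal F$-filtration is respected since the split-off subcomplexes are spanned by subsets of the filtered basis. This is the step I expect to be the main obstacle: over $\F$ one must rule out a priori configurations where a vertex meets several arrows of the same type, and organise the cancellations so that the decomposition is genuinely canonical and filtered. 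It is the link analogue of Petkova's thin-knot lemma, and the $\mathcal F$-bookkeeping has to be carried through the whole argument.

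Finally I would count and place the pieces. Each box is acyclic, hence invisible to $HF^\infty$, whereas each staircase contributes exactly one $\F[U,U^{-1}]$-tower; by Theorem \ref{teo:dim} and the Ozsv\'ath--Szab\'o description of $HF^\infty(S^3,n)$ used in its proof, there are $2^{n-1}$ towers with $\binom{n-1}{k}$ of them in Maslov grading $-k$. Thus there are $\binom{n-1}{k}$ staircases whose homology generator sits in Maslov grading $-k$, and the diagonal relation puts that generator at Alexander level $s=\frac{n-1-\sigma(L)}{2}-k$; the directions of the two arrows leaving it, again forced by the diagonal once $s\neq0$, make the staircase positive when $s>0$ and negative when $s<0$, consistently with the normalisation $\tau(L)=\frac{n-1-\sigma(L)}{2}$ of Proposition \ref{prop:properties}. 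The acyclic part is then pinned down by Euler characteristics: $\widehat{HFL}(L)$ lies on a single diagonal, so it is determined by $\chi\bigl(\widehat{HFL}(L)\bigr)(t,t^{-1})$, which Ozsv\'ath and Szab\'o identify with $(t^{\frac{1}{2}}-t^{-\frac{1}{2}})^{n-1}\,\nabla_L(t^{\frac{1}{2}}-t^{-\frac{1}{2}})$; subtracting the contributions of the already-determined staircases isolates the contribution of the square summands, which recovers their number and the Alexander levels at which they appear.
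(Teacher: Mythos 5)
The paper does not prove this theorem itself: it is stated as a citation to Petkova \cite{Petkova}, adapted to the collapsed link setting, with no argument given. So there is no proof in the paper against which to match your proposal. That said, your outline is exactly the Petkova-style structure theorem one would expect: reduce to a filtered basis with strictly filtration-decreasing differentials, use $\delta$-constancy (the relation $A(x)+j(x)-M(x)=\tfrac{n-1-\sigma(L)}{2}$, which you correctly derive from the thinness of $\widehat{HFL}$ on the $j=0$ slice together with $U$-equivariance) to force every arrow to be a length-one horizontal or vertical, classify such complexes over $\F$ into staircases and boxes, and then fix the number and position of the staircases using $HF^\infty(S^3,n)$ via Theorem \ref{teo:dim} and the acyclic part via $\chi(\widehat{HFL})$.

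Two places deserve flagging. First, the classification step — ``only length-one horizontal and vertical arrows $\Rightarrow$ staircases $\oplus$ boxes'' — which you yourself single out as the main obstacle, is genuinely the heart of the matter and is where Petkova's argument does real work (ruling out degenerate zig-zags, handling multiple arrows of the same type at a vertex by further change of basis, and checking that the splitting is $\mathcal{F}$-filtered, not merely graded). As written your proposal asserts this happens but does not supply the combinatorial/cancellation argument, so this is a cited lemma rather than a proof. Second, your claim that the direction (positive vs.\ negative) of each staircase ``is forced by the diagonal once $s\neq0$'' is too quick: the diagonal relation alone does not distinguish a positive from a negative staircase through a given $(j,A)=(0,s)$; what pins it down is that the $j=0$ homology generator of a staircase sits at one extreme end, combined with the boundedness of the Alexander filtration and the $j\leftrightarrow A$ symmetry of $\cC^\infty(L)$ (Theorem \ref{teo:reflection}), which pairs a staircase at $(0,s)$ in Maslov grading $-k$ with one at $(0,-s)$ in grading $1-n+k$. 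That extra argument should be made explicit, but it does not change the conclusion.
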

Note that quasi-alternating links (and then non-split alternating links) are $\widehat{HFL}$-thin, see \cite{Cavallo,Book}. In Figure \ref{Whitehead} we show a Whitehead link and its corresponding complex. 
 
\subsection{Connected sums and disjoint unions} 
\label{subsection:connected}
It follows from the work of Ozsv\'ath and Szab\'o that the chain complex for a connected sum of the links $L_1$ and $L_2$ is given by the tensor product between the ones of $L_1$ and $L_2$.
\begin{teo}[Ozsv\'ath and Szab\'o]
 \label{teo:connected_sum}
 Given two links $L_1$ and $L_2$, 
 denote with $L_1\#_{i,j}L_2$ the connected sum performed on the $i$-th and the $j$-component of $L_1$ and $L_2$ respectively. 
 Then we have that \[\cC^\infty(L_1\#_{i,j}L_2)\cong\cC^\infty(L_1)\otimes_{\F[U,U^{-1}]}\cC^\infty(L_2)\:.\] In particular, 
 the complex $\cC^\infty(L_1\#L_2)$ does not depend on $i$ and $j$.  
\end{teo}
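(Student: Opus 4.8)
The plan is to realize the connected sum at the level of Heegaard diagrams and then invoke the Künneth-type splitting for link Floer homology of Ozsv\'ath and Szab\'o. Concretely, choose Heegaard diagrams $\mathcal D_1=(\Sigma_1,\alpha_1,\beta_1,\textbf w_1,\textbf z_1)$ and $\mathcal D_2=(\Sigma_2,\alpha_2,\beta_2,\textbf w_2,\textbf z_2)$ for $L_1$ and $L_2$ in which the $i$-th component of $L_1$ and the $j$-th component of $L_2$ carry distinguished basepoints; forming the connected sum of the two surfaces in small disks around these basepoints (in the usual convention, joining a $z$-basepoint of $\mathcal D_1$ to a $w$-basepoint of $\mathcal D_2$) produces a multi-pointed Heegaard diagram $\mathcal D$ for $L_1\#_{i,j}L_2$, whose components are exactly those of $L_1$ and $L_2$ with the $i$-th and $j$-th identified, so $\mathcal D$ has $n_1+n_2-1$ of them. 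Since $\T_{\alpha}\cap\T_{\beta}=(\T_{\alpha_1}\cap\T_{\beta_1})\times(\T_{\alpha_2}\cap\T_{\beta_2})$ for the connect-sum diagram, there is a canonical identification of underlying free $\F[U,U^{-1}]$-modules $\cC^\infty(\mathcal D)\cong\cC^\infty(\mathcal D_1)\otimes_{\F[U,U^{-1}]}\cC^\infty(\mathcal D_2)$.

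Next I would stretch the neck of $\mathcal D$ along the connect-sum circle. Standard Gromov compactness and gluing arguments — exactly those used in \cite{OSknots,OSlinks} to establish the Künneth formula for the knot and link filtrations — show that for a sufficiently stretched almost-complex structure every rigid holomorphic disk counted by $\partial^-_{\mathcal D}$ splits as a pair of rigid disks, one on each side, and that all $\textbf w$- and $\textbf z$-multiplicities are additive under this splitting. Because in $\cC^\infty$ all the basepoint variables have been collapsed to a single $U$, this says precisely that under the identification above $\partial^-_{\mathcal D}=\partial^-_{\mathcal D_1}\otimes\Id+\Id\otimes\,\partial^-_{\mathcal D_2}$; as $\cC^\infty(\mathcal D_1)$ and $\cC^\infty(\mathcal D_2)$ are free over the PID $\F[U,U^{-1}]$ there are no higher Tor terms, so this is a literal equality of chain complexes $\cC^\infty(\mathcal D)=\cC^\infty(\mathcal D_1)\otimes_{\F[U,U^{-1}]}\cC^\infty(\mathcal D_2)$.

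It then remains to match gradings and filtrations. Additivity of the Maslov index and of the $\textbf w$- and $\textbf z$-multiplicities under the splitting of disks gives that the relative Maslov, algebraic and Alexander gradings of $\cC^\infty(\mathcal D)$ agree with those of the tensor product, so that the $\mathcal F$-filtration of $\cC^\infty(\mathcal D)$ is the one induced on the tensor product (the level $\mathcal F^{t,s}$ being spanned by the images of $\mathcal F^{t_1,s_1}\otimes\mathcal F^{t_2,s_2}$ with $t_1+t_2\le t$ and $s_1+s_2\le s$). For the absolute Maslov normalization I would use, as in the proof of Theorem \ref{teo:mirror}, that the top grading supporting $HF^\infty$ is $0$ for every link and $0+0=0$, so no shift appears; equivalently one checks directly that $HF^\infty(S^3,n_1)\otimes_{\F[U,U^{-1}]}HF^\infty(S^3,n_2)\cong HF^\infty(S^3,n_1+n_2-1)$ with matching gradings and algebraic filtrations. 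By Theorem \ref{teo:quasi_iso} the $\mathcal F$-filtered chain homotopy type of $\cC^\infty(\mathcal D)$ is the invariant $\cC^\infty(L_1\#_{i,j}L_2)$, which yields the stated formula; since its right-hand side makes no reference to $i$ or $j$, the last assertion follows at once — here the only point requiring a word is that, after collapsing every $U_\ell$ to the same $U$, the fiber-product identification of variables appearing in the multivariable Künneth formula becomes an honest tensor product over $\F[U,U^{-1}]$, so the dependence on the choice of connect-sum components genuinely disappears.

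The step I expect to be the sole nonformal ingredient is the neck-stretching and gluing argument producing the tensor-product differential; everything else is bookkeeping with the additive gradings and with the collapse of the basepoint variables. Since this Künneth statement is already available in the literature, in the write-up I would quote it from \cite{OSknots,OSlinks} and concentrate on the routine verification that it persists after the substitution $U_1,\dots,U_n\mapsto U$ and the passage to $\F[U,U^{-1}]$-coefficients.
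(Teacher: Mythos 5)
The paper does not give its own proof of this theorem: it is stated as a result of Ozsv\'ath and Szab\'o and quoted from \cite{OSknots,OSlinks}. Your proposal correctly sketches the standard argument from those sources (connect-sum the Heegaard diagrams, note that generators factor as products, stretch the neck so that the rigid disks — and hence the differential, the filtrations, and the gradings — factor, then observe that collapsing all basepoint variables to a single $U$ turns the multivariable identification into a genuine tensor product over $\F[U,U^{-1}]$, from which independence of $(i,j)$ is immediate), so it recovers precisely the route the paper is implicitly relying on. One small remark on the write-up: invoking freeness over the PID to ``kill higher Tor terms'' is a point about the homology-level K\"unneth formula, not about the chain-level identification you are actually proving; what the gluing argument directly gives is that $\partial^-_{\mathcal D}$ is (filtered chain homotopic to) $\partial^-_{\mathcal D_1}\otimes\Id+\Id\otimes\partial^-_{\mathcal D_2}$, and since $\cC^\infty(L)$ denotes a filtered chain homotopy type this already is the asserted equality, with no Tor considerations needed.
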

Since $\F[U,U^{-1}]$ is a principal ideal domain, 
using the K\"unneth formula and Theorem \ref{teo:dim} on the identification in Theorem \ref{teo:connected_sum} gives
\[\mathcal F^{\{j\leq0\}}\cH^\infty_0(L_1\#L_2)\cong_\F\mathcal F^{\{j\leq0\}}\cH^\infty_0(L_1)\otimes_\F\mathcal F^{\{j\leq0\}}\cH^\infty_0(L_2)\] and \[\dfrac{\mathcal F^{\{j\leq0\}}\cH^\infty_{2-n_1-n_2}(L_1\#L_2)}{\mathcal F^{\{j\leq-1\}}\cH^\infty_{2-n_1-n_2}(L_1\#L_2)}\cong_\F\dfrac{\mathcal F^{\{j\leq0\}}\cH^\infty_{1-n_1}(L_1)}{\mathcal F^{\{j\leq-1\}}\cH^\infty_{1-n_1}(L_1)}\otimes_\F\dfrac{\mathcal F^{\{j\leq0\}}\cH^\infty_{1-n_2}(L_2)}{\mathcal F^{\{j\leq-1\}}\cH^\infty_{1-n_2}(L_2)}\:,\] where $n_i$ is the number of components of $L_i$ and we recall that $n_1+n_2-1$ is the one of $L_1\#L_2$.
\begin{figure}
    \centering
    \includegraphics[width=7.5cm]{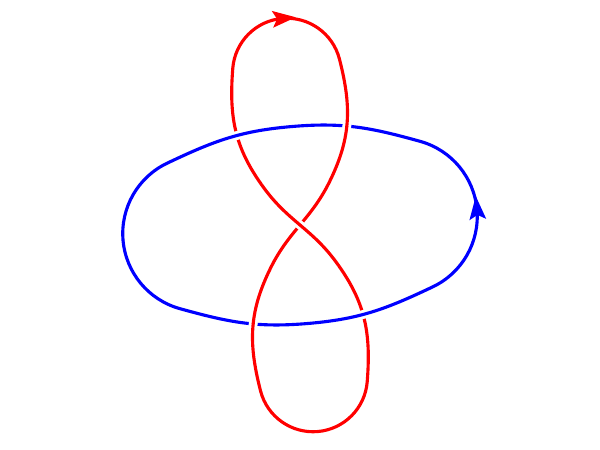}
    \hspace{1cm}
 \begin{tikzpicture}[scale =.75]
        \coordinate (Origin)   at (0,0);
        \coordinate (XAxisMin) at (-3,0);
        \coordinate (XAxisMax) at (4,0);
        \coordinate (YAxisMin) at (0,-3);
        \coordinate (YAxisMax) at (0,4);
        \draw [thick, black,-latex] (XAxisMin) -- (XAxisMax);
        \draw [thick, black,-latex] (YAxisMin) -- (YAxisMax);
        \draw [thick, black,-latex] (-3,1) -- (4,1);
        \draw [thick, black,-latex] (1,-3) -- (1,4);
        \clip (-2.9,-2.9) rectangle (4.99,4.99);
        \draw[style=help lines] (-3,-3) grid[step=1cm] (3.95,3.95);
        
        \foreach \x in {-3,...,3}
                    \node[draw=none,fill=none] at (\x+0.5,-2.5) {\x};
    	\foreach \y in {-3,...,3}     		
        			\node[draw=none,fill=none] at (-2.5,\y+0.5) {\y};
        
        \node[] at (-0.5,4.5) {$A$};
        \node[] at (4.5,-0.5) {$j$};
        
        \node[draw,circle,fill=gray,scale=0.4] at (1/3,0.5+1/6) {};
        
        \node[draw,circle,fill=white,scale=0.4] (A1) at (0.5+1/6,1.5-1/6) {};
        \node[draw,circle,fill=white,scale=0.4] (A2) at (1.5,0.5+1/6) {};
        \node[draw,circle,fill,scale=0.4] (B) at (1.5,1.5-1/6) {};
        
        \path [thick,-latex] (B) edge node[left] {} (A1);
        \path [thick,-latex] (B) edge node[left] {} (A2);
        
        \node[draw,circle,fill,scale=0.4] (C1) at (1.5,2.5) {};
        \node[draw,circle,fill,scale=0.4] (C2) at (0.5+1/6,2.5) {};
        \node[draw,circle,fill,scale=0.4] (C3) at (1.5,1.5+1/6) {};
        \node[draw,circle,fill=white,scale=0.4] (C4) at (0.5+1/6,1.5+1/6) {};
        
        \path [thick,-latex] (C1) edge node[left] {} (C2);
        \path [thick,-latex] (C1) edge node[left] {} (C3);
        \path [thick,-latex] (C2) edge node[left] {} (C4);
        \path [thick,-latex] (C3) edge node[left] {} (C4);
        
        \node[draw,circle,fill=gray,scale=0.4] (D1) at (0.5-1/6,0.5-1/6) {};
        \node[draw,circle,fill=brown,scale=0.4] (D2) at (-0.5,0.5-1/6) {};
        \node[draw,circle,fill=brown,scale=0.4] (D3) at (0.5-1/6,-0.5) {};
        \node[draw,circle,fill,scale=0.4] (D4) at (-0.5,-0.5) {};
        
        \path [thick,-latex] (D1) edge node[left] {} (D2);
        \path [thick,-latex] (D1) edge node[left] {} (D3);
        \path [thick,-latex] (D2) edge node[left] {} (D4);
        \path [thick,-latex] (D3) edge node[left] {} (D4);
        
        \node[draw,circle,fill=white,scale=0.4] (E1) at (1.5,0.5-1/6) {};
        \node[draw,circle,fill=gray,scale=0.4] (E2) at (0.5+1/6,0.5-1/6) {};
        \node[draw,circle,fill=gray,scale=0.4] (E3) at (1.5,-0.5) {};
        \node[draw,circle,fill=brown,scale=0.4] (E4) at (0.5+1/6,-0.5) {};
        
        \path [thick,-latex] (E1) edge node[left] {} (E2);
        \path [thick,-latex] (E1) edge node[left] {} (E3);
        \path [thick,-latex] (E2) edge node[left] {} (E4);
        \path [thick,-latex] (E3) edge node[left] {} (E4);
 \end{tikzpicture}
 \caption{The complex $\cC^\infty(W)$ (right) of the Whitehead link $W$ (left).}
 \label{Whitehead}
\end{figure}
Furthermore, if the homology classes of $x_i$ are generators for $\mathcal F^{\{j\leq0\}}\cH^\infty_0(L_i)$ then $[x_1\otimes x_2]$ is a generator of the homology group $\mathcal F^{\{j\leq0\}}\cH^\infty_0(L_1\#L_2)$. In the same way, if $y_i$ is such that $[y_i]$ is a generator of $\cH^\infty_{1-n_i}(L_i)$, with minimal $j$-level zero, then $[y_1\otimes y_2]$ is a generator of $\cH^\infty_{2-n_1-n_2}(L_1\#L_2)$ and its minimal algebraic level is again zero.

We can now study how the $\Upsilon$-invariants behave under connected sums. For every centered south-west region $S$ of $\mathbb R^2$ we define \[\text{env}(S)=\left\{(j,A)\in\mathbb R^2\:|\:j=a_1+a_2\text{ and }A=b_1+b_2\:,\text{ where }(a_i,b_i)\in S\text{ for }i=1,2\right\}\:.\]
Clearly, the region $\text{env}(S)$ is still a south-west region (unless it coincides with the whole $\mathbb R^2$) and $S\subset\text{env}(S)$.
Moreover, we take $h(S)\in\mathbb Z_{\geq0}\cup\{+\infty\}$ as \[\inf_{k\in\N}\{k\:|\:S_{-k}\supset\text{env}(S)\}\] and we state the following proposition.
\begin{prop}
 \label{prop:super-additive}
 Let us consider a link $L_i$ with $n_i$ components for $i=1,2$ and $S$ a centered south-west region of $\mathbb R^2$. We have that \[\Upsilon_S(L_1\#L_2)\geq\Upsilon_S(L_1)+\Upsilon_S(L_2)-h(S)\] and \[\Upsilon^*_S(L_1\#L_2)\geq\Upsilon^*_S(L_1)+\Upsilon^*_S(L_2)-h(S)\:.\] In particular, if $S=\emph{env}(S)$ then the $\Upsilon$'s and $\Upsilon^*$'s are super-additive under connected sums. 
\end{prop}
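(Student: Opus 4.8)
The plan is to exploit the K\"unneth identification $\cC^\infty(L_1\#L_2)=\cC^\infty(L_1)\otimes_{\F[U,U^{-1}]}\cC^\infty(L_2)$ of Theorem \ref{teo:connected_sum}, using -- as is done in the discussion preceding the statement -- that it is an identification of $\mathcal F$-filtered complexes, in the sense that $\mathcal F^{t_1,s_1}\cC^\infty(L_1)\otimes\mathcal F^{t_2,s_2}\cC^\infty(L_2)\subset\mathcal F^{t_1+t_2,\,s_1+s_2}\cC^\infty(L_1\#L_2)$. I would first record two elementary facts about south-west regions. (i) For any $k_1,k_2\in\R$ the Minkowski sum satisfies $S_{k_1}+S_{k_2}\subset\text{env}(S)_{k_1+k_2}$: if $(t_i,s_i)\in S_{k_i}$ then $(t_i+\tfrac{k_i}{2},s_i+\tfrac{k_i}{2})\in S$, so $(t_1+t_2+\tfrac{k_1+k_2}{2},\,s_1+s_2+\tfrac{k_1+k_2}{2})\in\text{env}(S)$ by definition, that is $(t_1+t_2,\,s_1+s_2)\in\text{env}(S)_{k_1+k_2}$. (ii) If $h(S)<\infty$ then $\text{env}(S)\subset S_{-h(S)}$ by the definition of $h(S)$, hence $\text{env}(S)_m\subset S_{m-h(S)}$ for every $m\in\R$; if $h(S)=+\infty$ the asserted inequalities are vacuous, so from now on I assume $h(S)<\infty$.

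For $\Upsilon_S$, put $k_i:=\Upsilon_S(L_i)$, so that $\mathcal F^{S_{k_i}}\cH^\infty_0(L_i)\supset\mathcal F^{\{j\leq0\}}\cH^\infty_0(L_i)$; since the latter is one-dimensional, I can choose a cycle $z_i\in\mathcal F^{S_{k_i}}\cC^\infty_0(L_i)$ whose homology class generates it. Writing $z_i=\sum_\ell w_{i,\ell}$ with each $w_{i,\ell}$ lying in some $\mathcal F^{t,s}\cC^\infty_0(L_i)$, $(t,s)\in S_{k_i}$ (possible as $\mathcal F^{S_{k_i}}\cC^\infty_0$ is the span of such pieces), every cross term $w_{1,\ell}\otimes w_{2,m}$ lands in $\mathcal F^{\text{env}(S)_{k_1+k_2}}\cC^\infty_0(L_1\#L_2)$ by (i) and the additivity of the filtration under $\otimes$; hence $z_1\otimes z_2\in\mathcal F^{\text{env}(S)_{k_1+k_2}}\cC^\infty_0(L_1\#L_2)$. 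By the K\"unneth computation recalled above the statement, $[z_1\otimes z_2]$ generates the one-dimensional group $\mathcal F^{\{j\leq0\}}\cH^\infty_0(L_1\#L_2)$, so
\[\mathcal F^{\{j\leq0\}}\cH^\infty_0(L_1\#L_2)\subset\mathcal F^{\text{env}(S)_{k_1+k_2}}\cH^\infty_0(L_1\#L_2)\subset\mathcal F^{S_{k_1+k_2-h(S)}}\cH^\infty_0(L_1\#L_2),\]
the last inclusion by (ii); therefore $\Upsilon_S(L_1\#L_2)\geq k_1+k_2-h(S)$, which is the first claim.

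The argument for $\Upsilon^*_S$ is identical, with Maslov grading $0$ replaced by $2-n_1-n_2$ and the one-dimensional group $\mathcal F^{\{j\leq0\}}\cH^\infty_0$ replaced by the one-dimensional quotient $\mathcal F^{\{j\leq0\}}\cH^\infty_{1-n}(L)/\mathcal F^{\{j\leq-1\}}\cH^\infty_{1-n}(L)$: one chooses cycles $y_i\in\mathcal F^{S_{k_i}}\cC^\infty_{1-n_i}(L_i)$, $k_i:=\Upsilon^*_S(L_i)$, whose classes have minimal $j$-level exactly zero (possible because $\mathcal F^{S_{k_i}}\cH^\infty_{1-n_i}(L_i)\not\subset\mathcal F^{\{j\leq-1\}}\cH^\infty_{1-n_i}(L_i)$), runs the same Minkowski-sum estimate to put $y_1\otimes y_2$ in $\mathcal F^{\text{env}(S)_{k_1+k_2}}\cC^\infty_{2-n_1-n_2}(L_1\#L_2)$, and invokes the companion K\"unneth statement from the text that $[y_1\otimes y_2]$ generates the corresponding quotient in grading $2-n_1-n_2$ with minimal algebraic level still zero. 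The ``in particular'' clause is immediate: if $S=\text{env}(S)$ then $S_0=S\supset\text{env}(S)$, so $h(S)=0$.

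The step I expect to be the main nuisance is the bookkeeping around representatives rather than anything conceptual: one must verify that the identification of Theorem \ref{teo:connected_sum} genuinely respects the double filtration $\mathcal F$, so that the additivity estimate $\mathcal F^{t_1,s_1}\otimes\mathcal F^{t_2,s_2}\subset\mathcal F^{t_1+t_2,\,s_1+s_2}$ is legitimate; and, in the $\Upsilon^*$ case, that the chosen representing class can be taken with minimal $j$-level exactly $0$ rather than merely $\geq 0$, which is precisely what makes $[y_1\otimes y_2]$ the correct generator of the relevant one-dimensional quotient instead of an element sitting too high in the algebraic filtration. Everything else is a direct unwinding of the definitions of $\Upsilon_S$, $\Upsilon^*_S$, $\text{env}(S)$ and $h(S)$.
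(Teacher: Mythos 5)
Your proof is correct and follows essentially the same route as the paper: choose cycles in the $S_{\gamma_i}$-levels representing the generators, observe that the tensor lies in $\mathrm{env}(S)_{\gamma_1+\gamma_2}$, and pull back into $S_{\gamma_1+\gamma_2-h(S)}$ via the definition of $h(S)$. You simply spell out the Minkowski-sum inclusion and the $\mathrm{env}(S)_m\subset S_{m-h(S)}$ step that the paper leaves implicit, and explicitly record the vacuous $h(S)=+\infty$ case, but the argument is the same.
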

\begin{proof}
 The proof of the two inequalities is exactly the same; hence, we only do the first one. From what we said at the beginning of the subsection we can take $x$ and $y$, such that their homology classes are generators of $\mathcal F^{\{j\leq0\}}\cH^\infty_0(L_i)$ for $i=1,2$, in the region $S_{\Upsilon_S(L_i)}=S_{\gamma_i}$ and we obtain that $[x\otimes y]$ is a generator of $\mathcal F^{\{j\leq0\}}\cH^\infty_0(L_1\#L_2)$ and $x\otimes y\in\mathcal F^{\text{env}(S)_{\gamma_1+\gamma_2}}\cC^\infty_0(L_1\#L_2)$. Therefore, from the definition of $h(S)$ it follows that \[\text{env}(S)_{\gamma_1+\gamma_2}\subset S_{\gamma_1+\gamma_2-h(S)}\] and $x\otimes y\in\mathcal F^{S_{\gamma_1+\gamma_2-h(S)}}cCFL^\infty_0(L_1\#L_2)$ proving the inequality. 
\end{proof}
There are examples of south-west regions $S$ with $h(S)\neq0$ and $\Upsilon$ is not super-additive. 
Take the region $V_1=\{(j,A)\in\mathbb R^2\:|\:j\leq0,A\leq1\}$, then
\[-4=\Upsilon_{V_1}(T_{2,3}\#T_{2,7})<\Upsilon_{V_1}(T_{2,3})+\Upsilon_{V_1}(T_{2,7})=0+(-2)=-2\:.\]
\begin{cor}
 \label{cor:additive}
 If a centered south-west region $S$ is such that $\overline{\iota S}=S$ and $h(S)=0$ then \[\Upsilon_S(L_1\#L_2)=\Upsilon_S(L_1)+\Upsilon_S(L_2)\quad\text{ and }\quad\Upsilon^*_S(L_1\#L_2)=\Upsilon^*_S(L_1)+\Upsilon^*_S(L_2)\] for every links $L_1$ and $L_2$. In particular, this holds for the classical $\Upsilon$'s functions.
\end{cor}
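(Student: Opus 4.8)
The plan is to derive both equalities by combining the super-additivity already established in Proposition \ref{prop:super-additive} with a mirror-image trick. Since $h(S)=0$, Proposition \ref{prop:super-additive} immediately gives the inequalities $\Upsilon_S(L_1\#L_2)\geq\Upsilon_S(L_1)+\Upsilon_S(L_2)$ and $\Upsilon^*_S(L_1\#L_2)\geq\Upsilon^*_S(L_1)+\Upsilon^*_S(L_2)$, so the entire task reduces to proving the two reverse (sub-additivity) inequalities.

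The first step is to record how $\Upsilon_S$ and $\Upsilon^*_S$ transform under mirroring for a region with $\overline{\iota S}=S$. Proposition \ref{prop:mirror} states $\Upsilon_S(L^*)=-\Upsilon^*_{\overline{\iota S}}(L)$, which under our hypothesis reads $\Upsilon_S(L^*)=-\Upsilon^*_S(L)$; applying the same proposition with $L$ replaced by $L^*$ and using $(L^*)^*=L$ then gives $\Upsilon^*_S(L^*)=-\Upsilon_S(L)$. Thus, for a region fixed by the operation $\overline{\iota\,\cdot\,}$, passing to the mirror simply interchanges $\Upsilon_S$ with $-\Upsilon^*_S$.

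The second step combines this with the identity $(L_1\#L_2)^*=L_1^*\#L_2^*$ and the super-additivity of $\Upsilon^*_S$:
\[\Upsilon_S(L_1\#L_2)=-\Upsilon^*_S\bigl(L_1^*\#L_2^*\bigr)\leq-\Upsilon^*_S(L_1^*)-\Upsilon^*_S(L_2^*)=\Upsilon_S(L_1)+\Upsilon_S(L_2)\:,\]
and the symmetric computation, starting from $\Upsilon^*_S(L_1\#L_2)=-\Upsilon_S(L_1^*\#L_2^*)$ and using super-additivity of $\Upsilon_S$, yields $\Upsilon^*_S(L_1\#L_2)\leq\Upsilon^*_S(L_1)+\Upsilon^*_S(L_2)$. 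Together with Proposition \ref{prop:super-additive} this proves both equalities. For the classical $\Upsilon$-functions I would then check the hypotheses for $S=A_t$ with $t\in[0,1]$: each such $A_t$ is a closed half-plane bounded by a line through the origin, so the sum of two of its points again lies in $A_t$, whence $\text{env}(A_t)=A_t$ and $h(A_t)=0$, while $\overline{\iota A_t}=A_t$ was already observed in the proof of Proposition \ref{prop:mirror}.

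The argument is short, and there is no serious obstacle; the only point requiring care is bookkeeping, namely to make sure the hypothesis $h(S)=0$ is invoked exactly where super-additivity is applied (to the pair $L_1^*,L_2^*$ with the same region $S$), and that using Proposition \ref{prop:mirror} twice introduces no spurious shift of the region — which is clear because $\overline{\iota S}=S$ is applied directly each time, so the region is genuinely fixed rather than merely fixed up to closure.
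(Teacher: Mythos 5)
Your argument is correct and is exactly the paper's proof, just unpacked: the paper compresses the same application of Propositions \ref{prop:mirror} and \ref{prop:super-additive} into the single displayed chain $\Upsilon_S(L_1)+\Upsilon_S(L_2)-h(S)\leq\Upsilon_S(L_1\#L_2)\leq\Upsilon_S(L_1)+\Upsilon_S(L_2)+h(\overline{\iota S})$ and then uses $h(S)=h(\overline{\iota S})=0$. Your explicit verification that $\overline{\iota A_t}=A_t$ and $h(A_t)=0$ is also a faithful expansion of what the paper invokes for the classical $\Upsilon$-functions.
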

\begin{proof}
 From Propositions \ref{prop:mirror} and \ref{prop:super-additive} we have \[\Upsilon_S(L_1)+\Upsilon_S(L_2)-h(S)\leq\Upsilon_S(L_1\#L_2)\leq\Upsilon_S(L_1)+\Upsilon_S(L_2)+h(\overline{\iota S})\:.\] 
 The claim follows by using the assumption that $h(S)=h(\overline{\iota S})=0$. The same proof works for $\Upsilon^*$.
\end{proof}
We observe that there are centered south-west regions, different from the $A_t$'s, for which $h(S)=0$ and then their $\Upsilon$-invariants are super-additive, see Figure \ref{Super-additive}. 
\begin{ex}
Corollary \ref{cor:additive} gives that for the positive and negative Hopf link one has $\Upsilon_{H_+\#H_-}(t)=-t$ and $\Upsilon^*_{H_+\#H_-}(t)=t$ for every $t\in[0,1]$.
In particular, we have an example when $\cC^\infty(L)\otimes\cC^\infty(L^*)$ is not locally equivalent to the chain complex of an unlink; in fact, it is $\Upsilon_{\bigcirc_m}=0$ for every $m\in\N$.
\end{ex} 
The disjoint union of two links can be seen as a special connected sum. In fact, the link $L_1\sqcup L_2$ is isotopic to $L_1\#\bigcirc_2\#L_2$, where the two connected sums are performed on different components of the unlink $\bigcirc_2$.
\begin{prop}
 \label{prop:disjoint}
 The chain complex of the link $L_1\sqcup L_2$ is given by \[\begin{aligned}\cC^\infty(L_1\sqcup L_2)\cong\:&\cC^\infty(L_1\#L_2)\otimes_{\F[U,U^{-1}]}\cC^\infty(\bigcirc_2)\cong\\
 \cong\:&\cC^\infty(L_1\#L_2)\oplus\cC^\infty(L_1\#L_2)\lkhov 1\rkhov\:,\end{aligned}\] where $\lkhov\cdot\rkhov$ denotes a shift in the Maslov grading.
\end{prop}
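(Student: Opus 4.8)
The plan is to deduce the statement from the connected-sum formula of Theorem~\ref{teo:connected_sum}. As observed just before the statement, $L_1\sqcup L_2$ is isotopic to $L_1\#\bigcirc_2\#L_2$, where the two connected sums are performed on the two distinct components of the unlink $\bigcirc_2$; since by Theorem~\ref{teo:quasi_iso} the $\mathcal F$-filtered, graded chain homotopy type of $\cC^\infty$ depends only on the link, it suffices to work with this model.

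First I would apply Theorem~\ref{teo:connected_sum} repeatedly. One application turns $\cC^\infty(L_1\#\bigcirc_2\#L_2)$ into $\cC^\infty(L_1)\otimes_{\F[U,U^{-1}]}\cC^\infty(\bigcirc_2)\otimes_{\F[U,U^{-1}]}\cC^\infty(L_2)$. Because the tensor product over the commutative ring $\F[U,U^{-1}]$ is associative and commutative, and because the natural $\mathcal F$-filtration of a tensor product — obtained by adding the algebraic levels and the Alexander levels of the two factors — is symmetric in the factors, I can rebracket and reorder the three factors; a second application of Theorem~\ref{teo:connected_sum} then identifies $\cC^\infty(L_1)\otimes_{\F[U,U^{-1}]}\cC^\infty(L_2)$ with $\cC^\infty(L_1\#L_2)$. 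This gives, as $\mathcal F$-filtered graded chain complexes,
\[\cC^\infty(L_1\sqcup L_2)=\cC^\infty(L_1\#L_2)\otimes_{\F[U,U^{-1}]}\cC^\infty(\bigcirc_2)\:.\]

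Next I would pin down $\cC^\infty(\bigcirc_2)$. It is a link invariant by Theorem~\ref{teo:quasi_iso}, and it is readily computed, for instance from the grid diagram in Figure~\ref{Gridunlink}: since $\dim_\F\widehat{HFL}(\bigcirc_2)=2$ equals the $\F[U,U^{-1}]$-rank of $\cH^\infty(\bigcirc_2)$, a model of the filtered complex has vanishing differential and consists of two free $\F[U,U^{-1}]$-generators, both of minimal algebraic level $0$ and minimal Alexander level $0$, whose Maslov gradings are those given by Theorem~\ref{teo:dim}; that is, $\cC^\infty(\bigcirc_2)=\F[U,U^{-1}]_{(0)}\oplus\F[U,U^{-1}]_{(0)}\lkhov 1\rkhov$ as an $\mathcal F$-filtered complex. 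Substituting this and noting that tensoring a complex over $\F[U,U^{-1}]$ with $\F[U,U^{-1}]_{(0)}$ returns it unchanged, while tensoring with $\F[U,U^{-1}]_{(0)}\lkhov 1\rkhov$ only shifts the Maslov grading — the $\mathcal F$-filtration being unaffected precisely because the second generator of $\cC^\infty(\bigcirc_2)$ sits at algebraic and Alexander level $0$ — yields
\[\cC^\infty(L_1\sqcup L_2)=\cC^\infty(L_1\#L_2)\oplus\cC^\infty(L_1\#L_2)\lkhov 1\rkhov\:.\]

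The step I expect to require the most care is the bookkeeping of the two filtrations through these identifications: one must check that the $\mathcal F$-filtration appearing on the right-hand side of Theorem~\ref{teo:connected_sum} is exactly the one carried by $\cC^\infty(L_1\sqcup L_2)$, and that the extra summand coming from $\cC^\infty(\bigcirc_2)$ contributes a Maslov shift and nothing more. Both points are routine, however, since they follow from the additivity of the algebraic and Alexander gradings under tensor products together with the explicit form of $\cC^\infty(\bigcirc_2)$; no input beyond Theorems~\ref{teo:connected_sum}, \ref{teo:quasi_iso} and~\ref{teo:dim} is needed.
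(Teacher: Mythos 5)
Your proof is correct and follows the same route the paper takes: identify $L_1\sqcup L_2$ with $L_1\#\bigcirc_2\#L_2$, invoke Theorem~\ref{teo:connected_sum}, and compute the model complex $\cC^\infty(\bigcirc_2)\cong\F[U,U^{-1}]_{(0)}\oplus\F[U,U^{-1}]_{(-1)}$ with both generators in $\mathcal F$-level $(0,0)$. The paper's own proof is just a compressed version of this; your extra care in recording that the degree $-1$ generator sits at algebraic and Alexander level $0$ (so the second summand is a pure Maslov shift) makes explicit a point the paper leaves implicit.
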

\begin{proof}
 It is easy to compute that $\cC^\infty_*(\bigcirc_2)\cong\F[U,U^{-1}]_{(0)}\oplus\F[U,U^{-1}]_{(-1)}$. 
 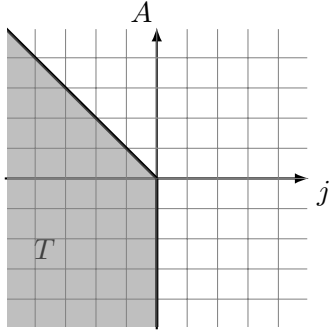
\begin{figure}[t]
 \centering
 \begin{tikzpicture}[scale =.4]
        \coordinate (Origin)   at (0,0);
        \coordinate (XAxisMin) at (-5,0);
        \coordinate (XAxisMax) at (5,0);
        \coordinate (YAxisMin) at (0,-5);
        \coordinate (YAxisMax) at (0,5);
        \draw [thick, black,-latex] (XAxisMin) -- (XAxisMax);
        \draw [thick, black,-latex] (YAxisMin) -- (YAxisMax);
        \clip (-4.9,-4.9) rectangle (5.99,5.99);
        \draw[style=help lines] (-5,-5) grid[step=1cm] (4.95,4.95);
        
        \node[] at (-0.5,5.5) {$A$};
        \node[] at (5.5,-0.5) {$j$};
        \node[above left] at (-3,-3) {$T$};
        
        \draw [very thick, black] (-5,5) -- (0,0);
        \draw [very thick, black] (0,0) -- (0,-5);
       
        \draw[fill, opacity=.5, gray] (0,0) rectangle (-5,-5);
        \fill [gray, opacity=.5, domain=-5:0, variable=\x]
      (-5,0)
      -- plot ({\x}, {-\x})
      -- (0,0)
      -- cycle;
 \end{tikzpicture}
 \caption{The centered south-west region $T$ is such that $h(T)=0$.}
 \label{Super-additive}
\end{figure}
 Hence, the claim follows from Theorem \ref{teo:connected_sum}.
\end{proof}
Note that, since the chain complex for the connected sum is independent of the choice of the components, we have that $\cC^\infty(L_1\sqcup L_2)=\cC^\infty((L_1\#L_2)\sqcup\bigcirc)$; in other words, there is an identification between the chain complexes for the disjoint union and the link gotten by adding an unknot to any connected sum of $L_1$ and $L_2$.
\begin{cor}
 Given two links $L_1$ and $L_2$ we have that \[\Upsilon_S(L_1\sqcup L_2)=\Upsilon_S(L_1\#L_2)\hspace{1cm}\text{ and }\hspace{1cm}\Upsilon^*_S(L_1\sqcup L_2)=\Upsilon^*_S(L_1\#L_2)\] for every south-west region $S$ of $\mathbb R^2$.
\end{cor}
\begin{proof}
 It follows immediately from Theorem \ref{teo:dim} and Proposition \ref{prop:disjoint}.
\end{proof}
 
\subsection{Slice genus}
\label{subsection:slice}
Suppose that a link $L$ has $n$ components and bounds a smooth, compact, oriented surface $\Sigma\hookrightarrow D^4$ with genus $g(\Sigma)$ and $k$ connected components.
Then, after removing $k$ open disks from it, we can see $\Sigma$ as a smooth cobordism between the $k$-component unlink $\bigcirc_k$ and $L$. If we look at the canonical form of link cobordisms described in Subsection \ref{subsection:canonical} then $\Sigma$ is such that, from left to right, there are no merge moves, the torus moves are $g(\Sigma)$ in total and there are exactly $n-k$ split moves. Other than these, the cobordism $\Sigma$ might have pieces representing concordances, which induce local equivalences as shown in Section \ref{section:concordance}. 

The goal of this subsection is to study how much the $\Upsilon$-invariants of $L$ differ from zero ($\Upsilon_S(\bigcirc_n)=0$ for every $S$) when $L$ bounds a surface $\Sigma$ as before. We use grid diagrams like in Section \ref{section:concordance}.

Let us start from the torus move, see Figure \ref{Torus}. We define a map $t$ as the Identity between the grid diagram representing the link before the move and the one obtained by applying Figure \ref{Bandmove} twice. Such a map is a chain map, induces a graded isomorphism in homology and preserves the $j$-filtration by the same argument in Subsection \ref{subsection:birth}: since the links before and after the moves have both $k$ components, the corresponding diagrams have the same $\OO$-markings (both normal and special).
Previously we used a result of Sarkar (\cite{Sarkar}) to show that $b_2$ is $\mathcal A$-filtered of degree zero. Since now we are composing the same map twice, but the first time the number of components is increasing, this is no longer true. In fact, the map $t$ is $\mathcal A$-filtered of degree $1$, see \cite[Subsection 3.4]{Sarkar}.

Now we study the split moves as in the left side of Figure \ref{Split}. We may want to define a map $s$ in a similar way as what we do for $t$: using the same procedure for the map $b_2$, but this is not possible. In fact, the link $L_2$ has one more component than $L_1$, so the number of special $\OO$-markings is different and $s$ would not be a chain map. 
\begin{figure}[t]
 \centering
 \def\svgwidth{13cm}
\begingroup%
  \makeatletter%
  \providecommand\color[2][]{%
    \errmessage{(Inkscape) Color is used for the text in Inkscape, but the package 'color.sty' is not loaded}%
    \renewcommand\color[2][]{}%
  }%
  \providecommand\transparent[1]{%
    \errmessage{(Inkscape) Transparency is used (non-zero) for the text in Inkscape, but the package 'transparent.sty' is not loaded}%
    \renewcommand\transparent[1]{}%
  }%
  \providecommand\rotatebox[2]{#2}%
  \newcommand*\fsize{\dimexpr\f@size pt\relax}%
  \newcommand*\lineheight[1]{\fontsize{\fsize}{#1\fsize}\selectfont}%
  \ifx\svgwidth\undefined%
    \setlength{\unitlength}{3343.93961543bp}%
    \ifx\svgscale\undefined%
      \relax%
    \else%
      \setlength{\unitlength}{\unitlength * \real{\svgscale}}%
    \fi%
  \else%
    \setlength{\unitlength}{\svgwidth}%
  \fi%
  \global\let\svgwidth\undefined%
  \global\let\svgscale\undefined%
  \makeatother%
  \begin{picture}(1,0.3982988)%
    \lineheight{1}%
    \setlength\tabcolsep{0pt}%
    \put(0,0){\includegraphics[width=\unitlength,page=1]{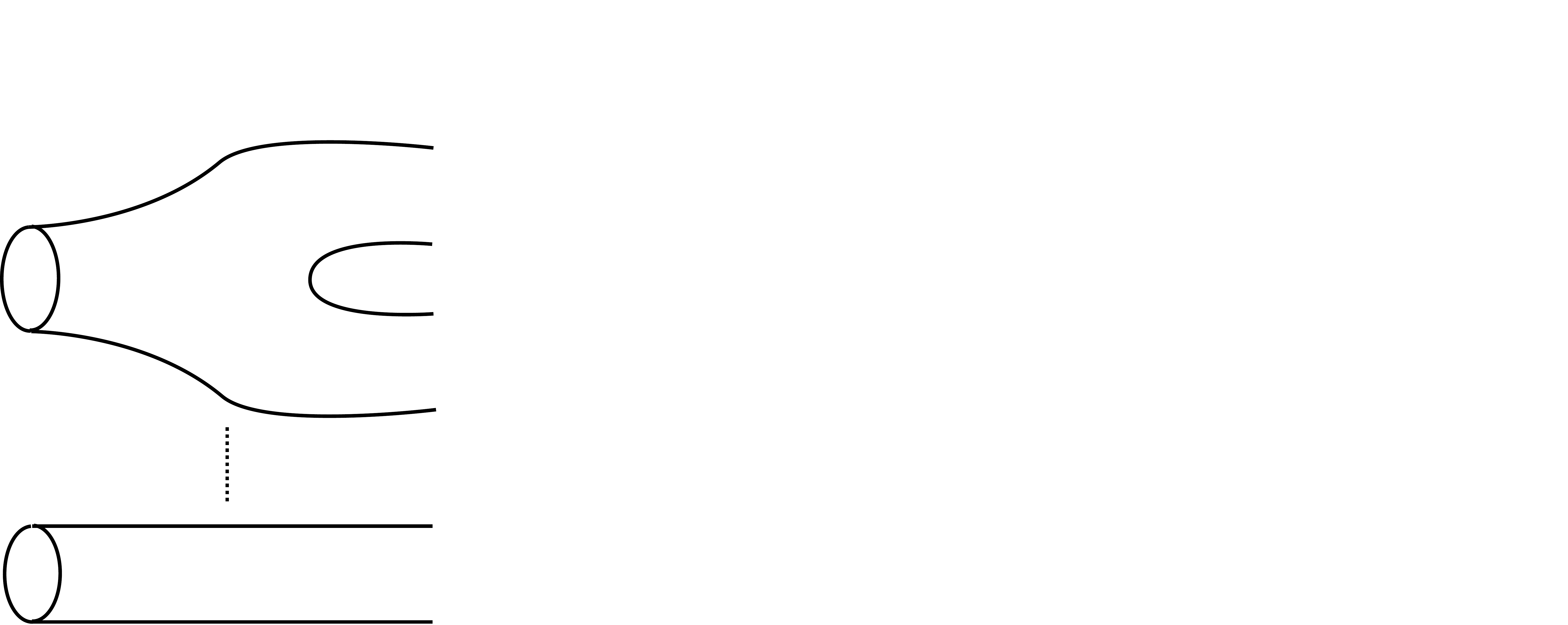}}%
    \put(0.47906995,0.07765609){\color[rgb]{0,0,0}\makebox(0,0)[lt]{\lineheight{0}\smash{\begin{tabular}[t]{l}$L_1$\end{tabular}}}}%
    \put(0.88214455,0.20453808){\color[rgb]{0,0,0}\makebox(0,0)[lt]{\lineheight{0}\smash{\begin{tabular}[t]{l}$L_2$\end{tabular}}}}%
    \put(0,0){\includegraphics[width=\unitlength,page=2]{Split.pdf}}%
    \put(0.52264559,0.26221172){\color[rgb]{0,0,0}\makebox(0,0)[lt]{\lineheight{1.25}\smash{\begin{tabular}[t]{l}$\text{Unknot}$\end{tabular}}}}%
    \put(0,0){\includegraphics[width=\unitlength,page=3]{Split.pdf}}%
    \put(0.69822975,0.38012226){\color[rgb]{0,0,0}\makebox(0,0)[lt]{\lineheight{1.25}\smash{\begin{tabular}[t]{l}$\text{Connected sum}$\end{tabular}}}}%
    \put(0.00107704,0.15572779){\color[rgb]{0,0,0}\makebox(0,0)[lt]{\lineheight{0}\smash{\begin{tabular}[t]{l}$L_1$\end{tabular}}}}%
    \put(0.27278393,0.21724634){\color[rgb]{0,0,0}\makebox(0,0)[lt]{\lineheight{0}\smash{\begin{tabular}[t]{l}$L_2$\end{tabular}}}}%
  \end{picture}%
\endgroup%
   
 \caption{A split move. The two cobordisms in the picture are isotopic in $S^3\times I$ after capping the unknot.}
 \label{Split}
\end{figure}
To avoid this problem, before applying the split move we add a disjoint unknot to $L_1$ and after the split move we connect sum the unknot to the component without special $\OO$-markings. This is pictured on the right side of Figure \ref{Split}. In this way, we can define a map \[s_2:\cC^\infty_0(D_1\sqcup\bigcirc)\longrightarrow\cC^\infty_0(D_2)\:,\] where $D_i$ is a grid diagram for $L_i$, exactly in the same way as $t$. 
Now from Proposition \ref{prop:disjoint} we have that the map \[s_1:=\cC^\infty_0(D'_1)\longrightarrow\cC^\infty_0(D_1'\sqcup\bigcirc)=\cC^\infty_0(D_1')\oplus\cC^\infty_{1}(D_1')\] is the inclusion of $\cC^\infty_0(D_1')$ as the first summand of $\cC^\infty_0(D_1'\sqcup\bigcirc)$; and we recall that $D_i'$ is the grid diagram obtained from $D_i$ by applying the algorithm in Subsection \ref{subsection:overview}. Hence, the map $s_1$ preserves the Maslov grading and the filtration $\mathcal F$. We conclude that the composition $s:=s_2\circ s_1:\cC^\infty_0(D_1)\longrightarrow\cC^\infty_0(D_2)$ induces a graded injective homomorphism in homology, preserves $j$ and it is $\mathcal A$-filtered of degree $1$.

Given a centered south-west region $S$ of $\mathbb R^2$, we say that \[S+m:=\left\{(j,A)\in\mathbb R^2\:|\:(j,A-m)\in S\right\}\] for every $m\in\N$, an example is given in Figure \ref{SWsum}. 
\begin{figure}[t]
 \centering
 \begin{tikzpicture}[scale =.5]
        \coordinate (Origin)   at (0,0);
        \coordinate (XAxisMin) at (-5,0);
        \coordinate (XAxisMax) at (5,0);
        \coordinate (YAxisMin) at (0,-5);
        \coordinate (YAxisMax) at (0,5);
        \draw [thick, black,-latex] (XAxisMin) -- (XAxisMax);
        \draw [thick, black,-latex] (YAxisMin) -- (YAxisMax);
        \clip (-4.9,-4.9) rectangle (5.99,5.99);
        \draw[style=help lines] (-5,-5) grid[step=1cm] (4.95,4.95);
        
        \node[] at (-0.5,5.5) {$A$}; 
        \node[] at (5.5,-0.5) {$j$};
        \node[above left] at (-2,-2) {$S$};
        
        \node[draw,circle,fill,scale=0.4] at (-2,2) {}; 
        \draw [very thick, black] (-5,2) -- (-2,2);
        \draw [very thick, black] (-2,2) -- (5,-5);
        \draw [thick,dotted,black] (-2,2) -- (-2,0);
        \draw [thick,dotted,black] (-2,2) -- (0,2);
        
        \draw[fill, opacity=.5, gray] (-2,2) rectangle (-5,-5);
        \fill [gray, opacity=.5, domain=-2:5, variable=\x]
      (-2,-5)
      -- plot ({\x}, {-\x})
      -- (5,-5)
      -- cycle;
 \end{tikzpicture}
 \hspace{0.5cm}
 \begin{tikzpicture}[scale =.5]
        \coordinate (Origin)   at (0,0);
        \coordinate (XAxisMin) at (-5,0);
        \coordinate (XAxisMax) at (5,0);
        \coordinate (YAxisMin) at (0,-5);
        \coordinate (YAxisMax) at (0,5);
        \draw [thick, black,-latex] (XAxisMin) -- (XAxisMax);
        \draw [thick, black,-latex] (YAxisMin) -- (YAxisMax);
        \clip (-4.9,-4.9) rectangle (5.99,5.99);
        \draw[style=help lines] (-5,-5) grid[step=1cm] (4.95,4.95);
        
        \node[] at (-0.5,5.5) {$A$}; 
        \node[] at (5.5,-0.5) {$j$};
        \node[above left] at (-2,-2) {$S+1$};
        
        \node[draw,circle,fill,scale=0.4] at (-2,3) {}; 
        \draw [very thick, black] (-5,3) -- (-2,3);
        \draw [very thick, black] (-2,3) -- (5,-4);
        \draw [thick,dotted,black] (-2,3) -- (-2,0);
        \draw [thick,dotted,black] (-2,3) -- (0,3);
        
        \draw[fill, opacity=.5, gray] (-2,3) rectangle (-5,-5);
        \fill [gray, opacity=.5, domain=-2:5, variable=\x]
      (-2,-5)
      -- plot ({\x}, {-\x+1})
      -- (5,-5)
      -- cycle;
 \end{tikzpicture}
 \caption{A centered south-west region $S$ on the left and the south-west region $S+1$ on the right.}
 \label{SWsum}
\end{figure}
We define the non-negative integer $h_S(m)$ as\[\min_{k\in\N}\left\{k\:|\:(0,m)\in S_{-k}\right\}\] and we recall that the reversed region $-S$ is defined in Subsection \ref{subsection:symmetries} by applying to $S$ the reflection of $\mathbb R^2$ with respect to the line $\{A-j = 0\}$.
Then we can prove that each $\Upsilon$ gives a lower bound for the genus of $\Sigma$.
\begin{prop}
 \label{prop:lower_bound}
 If $L$ is an $n$-component link in $S^3$, which bounds a surface $\Sigma$ as before, then \[-\Upsilon_S(L)\leq h_{\pm S}(g(\Sigma)+n-k)\] for every centered south-west region $S$ of $\mathbb R^2$.
\end{prop}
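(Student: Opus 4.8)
The plan is to assemble, out of the elementary pieces of $\Sigma$, a single chain map from the Floer complex of an unlink to that of $L$, and then push a distinguished generating cycle through it. First I would delete $k$ open disks from $\Sigma$ to view it as a cobordism from the $k$-component unlink $\bigcirc_k$ to $L$; by the canonical-form decomposition recalled above this cobordism is built, reading from $\bigcirc_k$ to $L$, out of births, isotopies, deaths, exactly $g(\Sigma)$ torus moves, and exactly $n-k$ split moves, with no merge moves. To each birth I attach the map of Theorem \ref{teo:birth}, to each death the map of Proposition \ref{prop:death}, to each isotopy the local equivalence of Corollary \ref{cor:quasi_iso}, to each torus move the map $t$ and to each split move the map $s=s_2\circ s_1$ constructed above. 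Composing all of these yields a map
\[\Phi:\cC^\infty(\bigcirc_k)\longrightarrow\cC^\infty(L)\]
which preserves the Maslov grading, preserves the algebraic filtration $j$, is $\mathcal A$-filtered of degree $m:=g(\Sigma)+(n-k)$, and induces an injective homomorphism $\cH^\infty_0(\bigcirc_k)\to\cH^\infty_0(L)$: the birth, death and isotopy maps are $\mathcal A$-filtered of degree $0$ and induce isomorphisms; each of the $g(\Sigma)$ torus maps and each of the $n-k$ split maps preserves $j$ and is $\mathcal A$-filtered of degree $1$; and a torus map induces an isomorphism while a split map induces an injection. Hence the composition has $\mathcal A$-degree $g(\Sigma)+(n-k)$, preserves $j$, and is injective on $\cH^\infty_0$.

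Next I would track the distinguished class. The space $\mathcal F^{\{j\leq0\}}\cH^\infty_0(\bigcirc_k)$ is one-dimensional, and for the unlink it is generated by a cycle $z$ of Maslov grading $0$ at $j$-level $0$ and $\mathcal A$-level $0$ (immediate, e.g., from Proposition \ref{prop:disjoint} and $\cC^\infty(\bigcirc)=\F[U,U^{-1}]_{(0)}$). Then $\Phi(z)$ is a cycle of Maslov grading $0$ lying in $j^0\cC^\infty(L)$, since $\Phi$ preserves $j$, and in $\mathcal A^{m}\cC^\infty(L)$, since $\Phi$ raises the $\mathcal A$-filtration by $m$; thus $\Phi(z)\in\mathcal F^{V_{0,m}}\cC^\infty(L)$. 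Because $\Phi$ is injective on $\cH^\infty_0$ and $[z]\neq0$, the class $[\Phi(z)]$ is a nonzero element of the one-dimensional space $\mathcal F^{\{j\leq0\}}\cH^\infty_0(L)$, hence a generator of it, represented by a cycle lying in $\mathcal F^{V_{0,m}}\cC^\infty(L)$.

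Then I would unwind the definitions. Fix a centered south-west region $S$. Whenever $(0,m)\in S_{k_0}$ for some $k_0\in\R$, the region $V_{0,m}$ is contained in $S_{k_0}$, so $\Phi(z)\in\mathcal F^{S_{k_0}}\cC^\infty(L)$ and therefore $\mathcal F^{S_{k_0}}\cH^\infty_0(L)\supset\mathcal F^{\{j\leq0\}}\cH^\infty_0(L)$; taking $k_0=-h_S(m)$, which is admissible by the very definition of $h_S(m)$, gives $\Upsilon_S(L)\geq-h_S(m)$, i.e. $-\Upsilon_S(L)\leq h_S(m)$. Running the same argument with the reflected region $-S$, which is again a centered south-west region, and invoking $\Upsilon_{-S}(L)=\Upsilon_S(L)$ from Theorem \ref{teo:reflection}, produces also $-\Upsilon_S(L)\leq h_{-S}(m)$, so that the inequality holds with $h_{\pm S}$, that is $-\Upsilon_S(L)\leq h_{\pm S}(g(\Sigma)+n-k)$.

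The main obstacle is the bookkeeping in the first step: one has to be sure that composing the elementary maps genuinely adds their $\mathcal A$-degrees, so that the total degree is exactly $g(\Sigma)+n-k$ and not larger, and that injectivity on $\cH^\infty_0$ survives the whole composition. This is precisely where the fine properties of the torus map $t$ and the split map $s$ — that they preserve $j$, raise $\mathcal A$ by exactly one, and induce respectively an isomorphism and an injection on $\cH^\infty_0$, resting on Sarkar's filtration estimates in \cite{Sarkar} and on the splitting of Proposition \ref{prop:disjoint} — are used essentially; once these are granted, the remainder of the argument is formal.
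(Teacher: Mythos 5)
Your proof is correct and follows the same route as the paper's: compose the elementary cobordism maps from Section \ref{section:concordance} and this subsection to build $f_\Sigma:\cC^\infty(\bigcirc_k)\to\cC^\infty(L)$, observe that it preserves $j$, raises $\mathcal A$ by $g(\Sigma)+n-k$, and sends the distinguished class in $\mathcal F^{\{j\le0\}}\cH^\infty_0(\bigcirc_k)$ to a generator of $\mathcal F^{\{j\le0\}}\cH^\infty_0(L)$ supported in $V_{0,m}$, then unwind the definition of $h_S$ and invoke $\Upsilon_S=\Upsilon_{-S}$. You have merely spelled out the filtration bookkeeping that the paper compresses into the single displayed inclusion $f_\Sigma(\mathcal F^S\cC^\infty_0(\bigcirc_k))\subset\mathcal F^{S+g(\Sigma)+n-k}\cC^\infty_0(L)$.
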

\begin{proof}
 We construct a map $f_{\Sigma}$ by composing the maps $t$ and $s$ defined in this subsection, together with the concordance maps in Section \ref{section:concordance}. We obtain that \[f_{\Sigma}(\mathcal F^S\cC^\infty_0(\bigcirc_k))\subset\mathcal F^{S+g(\Sigma)+n-k}\cC^\infty_0(L)\] for every $S$. In particular, if the homology class of $x$ is a generator of $\mathcal F^{\{j\leq0\}}\cH^\infty_0(\bigcirc_k)$ then $f^*_{\Sigma}[x]$ is a generator of $\mathcal F^{\{j\leq0\}}\cH^\infty_0(L)$. This immediately implies that $\Upsilon_S(L)\geq-h_S(g(\Sigma)+n-k)$ and we complete the proof by observing that $\Upsilon_{-S}(L)=\Upsilon_S(L)$ from Theorem \ref{teo:reflection}.
\end{proof}
A similar lower bound holds with $\Upsilon^*$ in place of $\Upsilon$, but it is clear that the proof cannot work as the one of Proposition \ref{prop:lower_bound}. 
In fact, we used the map $s$ that preserves the Maslov grading, while the $\Upsilon^*$-invariants of $L_i$ are computed by finding generators in $\cH^\infty_{1-m}(L_1)$ and $\cH^\infty_{-m}(L_2)$ respectively, where $m$ is the number of components of $L_1$. To jump this hurdle, in the following lemma we introduce another map $s'$ induced by the split move.
\begin{lemma}
 \label{lemma:split_special}
 Suppose that $L_1$ and $L_2$ are as in the left side of Figure \ref{Split} and $D_1$ and $D_2$ are corresponding grid diagrams. Then we can find a chain map  \[s':\cC^\infty_d(D_1)\longrightarrow\cC^\infty_{d-1}(D_2)\] for every $d\in\mathbb Z$, which preserves the $\mathcal F$-filtration and induces an isomorphism \[\dfrac{\mathcal F^{\{j\leq0\}}\cH^\infty_{1-m}(L_1)}{\mathcal F^{\{j\leq-1\}}\cH^\infty_{1-m}(L_1)}\cong_\F\dfrac{\mathcal F^{\{j\leq0\}}\cH^\infty_{-m}(L_2)}{\mathcal F^{\{j\leq-1\}}\cH^\infty_{-m}(L_2)}\] when $m$ is the number of component of $L_1$.
\end{lemma}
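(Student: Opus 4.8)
The plan is to realize the split move directly on grid diagrams, following Sarkar \cite{Sarkar}, in the form depicted in Figure~\ref{Split_special}. The key difference with the map $s$ of Subsection~\ref{subsection:slice} is that for $s$ the newly born component first received an ordinary $\OO$-marking and was then corrected by a connected sum with an externally added unknot, whereas here the band move itself creates the new special $\OO$-marking on the new component. Concretely I would write $s'$ as a composition of an ``almost'' $\mathcal F$-filtered chain homotopy equivalence coming from stabilizations and grid isotopies (Theorem~\ref{teo:quasi_iso_grids}), the saddle/band map of the split move of Figure~\ref{Split_special}, and a final ``almost'' $\mathcal F$-filtered chain homotopy equivalence repositioning the $\X$-markings.

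Next I would verify the formal properties exactly as in the proofs of Propositions~\ref{prop:b1} and \ref{prop:b2}. Each piece is a chain map; the $j$-filtration is preserved throughout, since it ignores the $\X$-markings and is insensitive to which $\OO$-markings are special; and, by Sarkar's analysis of the Alexander grading \cite[Subsection 3.4]{Sarkar}, in this realization none of the pieces raises the $\mathcal A$-filtration level, so $s'$ preserves $\mathcal F$. For the Maslov degree recall (Theorem~\ref{teo:dim}, cf. \cite{OSlinks}) that among the generators of $HF^-(S^3,n)$ the smallest Maslov degree is $1-n$; hence $s'$ must carry the one-dimensional bottom piece of $HF^\infty$ for $L_1$, living in Maslov degree $1-m$, to the bottom piece for $L_2$, living in Maslov degree $-m$, so $s'$ lowers the Maslov grading by exactly one, i.e. $s':\cC^\infty_d(L_1)\to\cC^\infty_{d-1}(L_2)$ for every $d\in\Z$. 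Concretely this $-1$ comes from the new special $\OO$-marking of Figure~\ref{Split_special} together with the Maslov normalization for multi-pointed diagrams (the same mechanism used in Theorem~\ref{teo:mirror}).

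Then I would pin down the induced map on the quotients. By Theorem~\ref{teo:dim} both $\mathcal F^{\{j\leq0\}}\cH^\infty_{1-m}(L_1)/\mathcal F^{\{j\leq-1\}}\cH^\infty_{1-m}(L_1)$ and $\mathcal F^{\{j\leq0\}}\cH^\infty_{-m}(L_2)/\mathcal F^{\{j\leq-1\}}\cH^\infty_{-m}(L_2)$ are one-dimensional over $\F$, so it is enough to show that $s'$ induces a nonzero map between them. Take a cycle $y\in\mathcal F^{\{j\leq0\}}\cC^\infty(L_1)$ whose class generates $\cH^\infty_{1-m}(L_1)$ with minimal $j$-level zero; by Lemma~\ref{lemma:generator} its restriction to $\widehat{CFL}(L_1)$ generates $\widehat{\mathcal{HFL}}_{1-m}(L_1)$. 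On the $U=0$ level each piece of $s'$ becomes the corresponding hat version of Sarkar's map — the stabilizations being hat-level quasi-isomorphisms, and $\widehat{CFL}$ not distinguishing special $\OO$-markings — and the hat version of the split band map sends this generator to a generator of $\widehat{\mathcal{HFL}}_{-m}(L_2)$; consequently $s'(y)$ is a cycle of minimal $j$-level zero whose class generates $\mathcal F^{\{j\leq0\}}\cH^\infty_{-m}(L_2)/\mathcal F^{\{j\leq-1\}}\cH^\infty_{-m}(L_2)$. Equivalently, the reverse (merge) move $L_2\to L_1$ provides a map in the other direction, and a nonzero homomorphism between one-dimensional $\F$-spaces is automatically an isomorphism.

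I expect the main obstacle to be precisely the verification that the grid realization of the split move in Figure~\ref{Split_special}, with its new special $\OO$-marking, yields a well-defined chain map that is $\mathcal A$-filtered of degree at most zero and of Maslov degree exactly $-1$ — in contrast to the realization behind $s$, which raises $\mathcal A$ by one and preserves the Maslov grading. This distinction and these two grading claims rest on a careful application of Sarkar's grading formulas for band and stabilization maps \cite{Sarkar} together with the Maslov normalization for multi-pointed diagrams, and the same input underlies the hat-level statement used above. Once these points are in place, the remainder reduces to the one-dimensionality bookkeeping of Theorem~\ref{teo:dim} and the generator-tracking of Lemma~\ref{lemma:generator}.
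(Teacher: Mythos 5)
Your proposal follows essentially the same route as the paper's proof: realize the split move on grid diagrams as in Figure~\ref{Split_special}, invoke Sarkar's grading analysis from \cite[Subsection~3.4]{Sarkar}, and conclude via Lemma~\ref{lemma:generator} that the minimal $j$-level of the generator is preserved. The paper's proof is shorter because it writes down the explicit formula for $s'$ --- namely $s'(x)=x$ if $c\in x$, $s'(x)=Ux$ otherwise, and $s'(V_1p)=U\cdot s'(p)$ --- and then cites Sarkar directly for the facts that this is a chain map, drops the Maslov grading by~$1$, is $\mathcal A$-filtered of degree zero, and is injective on homology. One point in your argument deserves correction: it is not true that $j$ is preserved simply because ``it ignores the $\X$-markings and is insensitive to which $\OO$-markings are special.'' When the normal marking $O_1$ becomes special, the associated variable $V_1$ on the source turns into $U$ on the target, and the identity on grid states would \emph{not} even be a chain map; the $U$-powers in the explicit definition of $s'$ are precisely what makes the map a $j$-filtered chain map. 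Likewise, deriving the Maslov shift $-1$ from the normalization of $HF^-$ is a valid sanity check but is not itself a proof; the paper (via Sarkar) gets this from the grid-level grading formulas. These are expository rather than genuine gaps, and your overall strategy matches the paper's.
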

\begin{proof}
 We represent the split move using the fragments of $D_1$ and $D_2$ as in Figure \ref{Split_special}, where this time the number of special $\OO$-markings on each component is the same both before and after the move. 
 We define $s'$ as follows:
 \[s'(x)=\left\{\begin{aligned}&x\:\:\:\:\quad\text{ if }\quad c\in x\\ &Ux\quad\text{ otherwise}\end{aligned}\right.\quad\text{ and }\quad s'(V_1p)=U\cdot s'(p)\] for every grid state $x\in S(D_1)$, every $p\in\cC^\infty(D_1)$ and $V_i$-equivariant for $i>1$, where $V_1$ is the variable associated to the normal $\OO$-marking $O_1$, see Figure \ref{Split_special}. 
 
 Consider the diagrams $D_1'$ and $D_2'$, obtained by applying the algorithm in Subsection \ref{subsection:overview} to $D_1$ and $D_2$; hence, the diagrams $D_i$ and $D_i'$ have same $\OO$-markings.
 Denote by $D_3'$ the diagram obtained by removing the row $\alpha$ and the column $\beta$ from $D_2'$; we have that $\cC^\infty(D_2')$ is isomorphic to 
 $\cC^\infty(D_3')\oplus\cC^\infty(D_3')\lkhov 1\rkhov$ because of Proposition \ref{prop:disjoint}. Finally, let us call $\pi:\cC^\infty(D_2')\rightarrow\cC^\infty(D_3')$ the map given by adapting to our setting the homotopy inverse of the map $i\circ H$ used in \cite[Proposition 3.1]{Cavallo}. This map coincide with the special destabilization in \cite[Subsection 3.3]{Sarkar}.
 
 The map $s'$ was also studied by Sarkar, see \cite[Subsection 3.3]{Sarkar}, and he proves that $\pi\circ s'=d^{NO}$ is one of the destabilization maps in \cite{MOST,Book}. Such a map is induced by the link isotopy relating $D_1'$ and $D_3'$ which means it is an almost filtered chain homotopy equivalence, and a local equivalence by Remark \ref{remark:almost}. 
 \begin{figure}[t]      
  \begin{center}
   \begin{tikzpicture}[scale=0.6]
    \draw[help lines] (0,0) grid (2,2);
    \draw[help lines] (-2,0)--(0,0); \draw[help lines] (2,0)--(4,0);
    \draw[help lines] (-2,1)--(0,1); \draw[help lines] (0,1)--(4,1);
    \draw[help lines] (-2,2)--(0,2); \draw[help lines] (0,2)--(4,2);
    \draw[help lines] (0,-2)--(0,0); \draw[help lines] (0,0)--(0,4);
    \draw[help lines] (1,-2)--(1,0); \draw[help lines] (1,0)--(1,4);
    \draw[help lines] (2,-2)--(2,0); \draw[help lines] (2,0)--(2,4);
    \draw (1.5,0.5) circle [radius=0.3]; \draw[red] (0.5,1.5) circle [radius=0.3];
    \draw (-1.5,1.5) node [cross] {}; \draw (3.5,0.5) node [cross] {};
    \draw (1.5,3.5) node [cross] {};  \draw (0.5,-1.5) node [cross] {};
    \node[draw,circle,fill,scale=0.4] at (1,1) {};
    \node[above right] at (0.8,1) {$c$};
    \node[below] at (1.6,0) {$O_1$};
    
    \draw [thick][->] (0.5,-1.25) -- (0.5,1.25); \draw [thick][->] (0.25,1.5) -- (-1.25,1.5);
    \draw [thick][->] (1.5,3.25) -- (1.5,0.75); \draw [thick][->] (1.75,0.5) -- (3.25,0.5);  
    \draw [thin][->] (5,1) -- (6,1);
    \draw[help lines] (7,0)--(9,0);     \draw[help lines] (9,0)--(13,0);
    \draw[help lines] (7,1)--(9,1);     \draw[help lines] (9,1)--(13,1);
    \draw[help lines] (7,2)--(9,2);     \draw[help lines] (9,2)--(13,2);
    \draw[help lines] (9,-2)--(9,0);    \draw[help lines] (9,0)--(9,4);
    \draw[help lines] (10,-2)--(10,0);  \draw[help lines] (10,0)--(10,4);
    \draw[help lines] (11,-2)--(11,0);  \draw[help lines] (11,0)--(11,4);
    \draw[red] (9.5,0.5) circle [radius=0.3]; \draw[red] (10.5,1.5) circle [radius=0.3];
    \draw (7.5,1.5) node [cross] {}; \draw (12.5,0.5) node [cross] {};
    \draw (10.5,3.5) node [cross] {};  \draw (9.5,-1.5) node [cross] {}; \node[right] at (13,1.5) {$\alpha$}; \node[below] at (10.5,-2) {$\beta$};
    
    \draw [thick][->] (9.5,-1.25) -- (9.5,0.25); \draw [thick][->] (10.25,1.5) -- (7.75,1.5);
    \draw [thick][->] (10.5,3.25) -- (10.5,1.75); \draw [thick][->] (9.75,0.5) -- (12.25,0.5);
   \end{tikzpicture}
  \end{center}
  \caption{Another split move in a grid diagram. We recall that special $\OO$-markings are colored in red.}
  \label{Split_special}
\end{figure}
 Therefore, the map $s':\cC^\infty(D_1)\longrightarrow\cC^\infty(D_2)$ induces an injective homomorphism in homology and drops the Maslov grading by one, while the fact that $s'$ preserves the Alexander filtration $\mathcal A$ is shown in \cite[Subsection 3.4]{Sarkar}. 
 
 In order to conclude the proof we just need to observe that $s'$ does not change the minimal $j$-level of a generator of the homology in $\mathcal F^{\{j\leq0\}}\cC^\infty(D_1)$; and note that this only depends on the $\OO$-markings. Since the identification in Proposition \ref{prop:disjoint} is an isomorphism of chain complexes, we would have that if $s'$ would  drop the minimal $j$-level then the same should be true for $\pi\circ s'=d^{NO}$, but this is impossible because the latter is a local equivalence.
\end{proof}
Now we can prove the main result of this subsection.
\begin{prop}
 \label{prop:slice_genus}
 Suppose that $L$ is an $n$-component link in $S^3$ which bounds a smooth, compact, oriented surface $\Sigma\hookrightarrow D^4$, with $k$ connected components. Then we have that \[\begin{aligned}-h_{\pm\overline{\iota S}}(g(\Sigma))\leq&-\Upsilon_S(L)\leq h_{\pm S}(g(\Sigma)+n-k)\quad\text{ and}\\ -h_{\pm\overline{\iota S}}(g(\Sigma)+n-k)\leq&-\Upsilon^*_S(L)\leq h_{\pm S}(g(\Sigma))\end{aligned}\] for every centered south-west region $S$ of $\R^2$.
\end{prop}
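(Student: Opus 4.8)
The rightmost inequality $-\Upsilon_S(L)\leq h_{\pm S}(g(\Sigma)+n-k)$ is exactly Proposition \ref{prop:lower_bound}, so only the three other inequalities require work. The plan for the rightmost inequality with $\Upsilon^*$ is to run the proof of Proposition \ref{prop:lower_bound} again, replacing the split-move map $s$ — which preserves the Maslov grading and is $\mathcal A$-filtered of degree $1$ — by the map $s'$ of Lemma \ref{lemma:split_special}, which drops the Maslov grading by $1$ and is $\mathcal A$-filtered of degree $0$. Explicitly, viewing $\Sigma$ as a cobordism from $\bigcirc_k$ to $L$, decompose it into $n-k$ split moves, $g(\Sigma)$ torus moves and some concordance pieces, and build $f'_{\Sigma}$ out of the torus map $t$, the birth, death and isotopy maps of Section \ref{section:concordance}, and one copy of $s'$ for each split move. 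Every birth/death/isotopy piece preserves both the Maslov grading and $\mathcal F$; the torus map $t$ preserves the Maslov grading and $j$ and shifts $\mathcal A$ by $1$; and each $s'$ preserves $\mathcal A$, preserves $j$ (it leaves the minimal $j$-level of the relevant homology class unchanged, by Lemmas \ref{lemma:generator} and \ref{lemma:split_special}), and drops the Maslov grading by $1$.

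Now track a cycle whose class generates $\mathcal F^{\{j\leq 0\}}\cH^\infty_{1-k}(\bigcirc_k)/\mathcal F^{\{j\leq -1\}}\cH^\infty_{1-k}(\bigcirc_k)$; since $\bigcirc_k$ is an unlink such a cycle can be chosen in Alexander level $0$. Passing through $f'_{\Sigma}$, the component count grows from $k$ to $n$ across the $n-k$ split moves, so the Maslov grading drops by $n-k$, landing in grading $1-n$; meanwhile the total $\mathcal A$-shift equals $g(\Sigma)$, contributed solely by the torus moves. Hence $f'_{\Sigma}\bigl(\mathcal F^{S}\cC^\infty_{1-k}(\bigcirc_k)\bigr)\subset\mathcal F^{S+g(\Sigma)}\cC^\infty_{1-n}(L)$ and the image of the tracked class generates $\mathcal F^{\{j\leq 0\}}\cH^\infty_{1-n}(L)/\mathcal F^{\{j\leq -1\}}\cH^\infty_{1-n}(L)$. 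As in Proposition \ref{prop:lower_bound} this gives $\Upsilon^*_S(L)\geq -h_S(g(\Sigma))$, and combining with $\Upsilon^*_{-S}(L)=\Upsilon^*_S(L)$ from Theorem \ref{teo:reflection} yields $-\Upsilon^*_S(L)\leq h_{\pm S}(g(\Sigma))$.

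For the two leftmost inequalities I would pass to the mirror. The mirror link $L^*$ has $n$ components and bounds the mirror surface $\Sigma^*\subset D^4$, which has genus $g(\Sigma^*)=g(\Sigma)$ and $k$ connected components; hence the two rightmost inequalities, already established, apply to $L^*$: $-\Upsilon_T(L^*)\leq h_{\pm T}(g(\Sigma)+n-k)$ and $-\Upsilon^*_T(L^*)\leq h_{\pm T}(g(\Sigma))$ for every centered south-west region $T$. Apply Proposition \ref{prop:mirror} to $L^*$ to get $\Upsilon_T(L)=-\Upsilon^*_{\overline{\iota T}}(L^*)$; substituting this into the second inequality for $L^*$ with region $\overline{\iota T}$ and then setting $T=S$ gives $\Upsilon_S(L)\leq h_{\pm\overline{\iota S}}(g(\Sigma))$, i.e. the lower bound for $-\Upsilon_S(L)$. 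Apply Proposition \ref{prop:mirror} to $L$ to get $\Upsilon_T(L^*)=-\Upsilon^*_{\overline{\iota T}}(L)$; substituting this into the first inequality for $L^*$ gives $\Upsilon^*_{\overline{\iota T}}(L)\leq h_{\pm T}(g(\Sigma)+n-k)$, and setting $T=\overline{\iota S}$ — using that $S\mapsto\overline{\iota S}$ is an involution on centered south-west regions — gives $\Upsilon^*_S(L)\leq h_{\pm\overline{\iota S}}(g(\Sigma)+n-k)$, the lower bound for $-\Upsilon^*_S(L)$. This proves all four inequalities; Theorem \ref{teo:slice_genus} is the case $k=1$, $g(\Sigma)=g_4(L)$.

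\textbf{Main obstacle.} The delicate step is the modified proof of the $\Upsilon^*$ upper bound: one must check that feeding in $s'$ in place of $s$ preserves the control on the algebraic filtration, which hinges on $s'$ not altering the minimal $j$-level of the distinguished homology class — and this is exactly where Lemma \ref{lemma:generator} enters (through Lemma \ref{lemma:split_special}) — and that the Maslov bookkeeping across the $n-k$ split moves delivers the tracked class precisely in grading $1-n$, where the $\Upsilon^*$-generator lives. The remaining ingredients (the $\pm S$ symmetry of $\Upsilon^*$ from Theorem \ref{teo:reflection}, the involutivity of $S\mapsto\overline{\iota S}$, and the fact that $\Sigma^*$ shares $g$ and $k$ with $\Sigma$) are routine.
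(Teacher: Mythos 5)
Your proposal is correct and follows exactly the route the paper takes: the rightmost $\Upsilon$ bound is Proposition \ref{prop:lower_bound} itself, the rightmost $\Upsilon^*$ bound is obtained by rerunning the proof of Proposition \ref{prop:lower_bound} with the split map $s$ replaced by the $s'$ of Lemma \ref{lemma:split_special} (which is precisely why $s'$ drops the Maslov grading instead of shifting $\mathcal A$), and the two left-hand bounds come from applying the right-hand ones to $L^*$ together with Proposition \ref{prop:mirror}. Your elaboration of the grading and filtration bookkeeping and the involutivity of $S\mapsto\overline{\iota S}$ is accurate and fills in the details the paper compresses into one sentence.
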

\begin{proof}
 The fact that \[-\Upsilon^*_S(L)\leq h_{\pm S}(g(\Sigma))\] follows in the same way as in
 Proposition \ref{prop:lower_bound} by using Lemma \ref{lemma:split_special}. Then we apply Propositions \ref{prop:mirror} and \ref{prop:lower_bound}. 
\end{proof}
This theorem immediately gives the lower bound in Theorem \ref{teo:slice_genus} for the \emph{smooth slice genus} $g_4(L)$ of a link, which is defined as the minimum genus of a smooth, oriented, compact surface properly embedded in $D^4$ and that bounds $L$.
For knots such lower bounds agree with the ones of Alfieri in \cite{Antonio} and Ozsv\'ath, Stipsicz and Szab\'o in \cite{OSSz}. 
\begin{ex}
 We observe that, when $L$ bounds a planar (genus zero) surface in $D^4$, we have $\Upsilon_S(L)\leq0$ and $\Upsilon_S^*(L)\geq0$ for every $S$ centered.
\end{ex}

\subsection{Other concordance invariants from the link Floer complex}
\label{subsection:nu}
\subsubsection{The invariant \texorpdfstring{$\nu^+$}{nu+}}
Let us consider the centered south-west regions \[V_k:=\left\{(j,A)\in\mathbb R^2\:|\:j\leq0,A\leq k\right\}\] with $k\in\N$, see Figure \ref{VW}. We denote the $\Upsilon$-invariants associated to these regions with $-2\cdot V_L(k)=\Upsilon_{V_k}(L)$. 
\begin{figure}[t]
 \centering
 \begin{tikzpicture}[scale =.5]
        \coordinate (Origin)   at (0,0);
        \coordinate (XAxisMin) at (-5,0);
        \coordinate (XAxisMax) at (5,0);
        \coordinate (YAxisMin) at (0,-5);
        \coordinate (YAxisMax) at (0,5);
        \draw [thick, black,-latex] (XAxisMin) -- (XAxisMax);
        \draw [thick, black,-latex] (YAxisMin) -- (YAxisMax);
        \clip (-4.9,-4.9) rectangle (5.99,5.99);
        \draw[style=help lines] (-5,-5) grid[step=1cm] (4.95,4.95);
        
        \node[] at (-0.5,5.5) {$A$};
        \node[] at (5.5,-0.5) {$j$};
        \node[above right] at (0,1.8) {$(0,k)$};
        \node[draw,circle,fill,scale=0.4] at (0,2) {};
        
        \node[above left] at (-2,-2) {$V_k$};
        \draw [very thick, black] (-5,2) -- (0,2);
        \draw [very thick, black] (0,2) -- (0,-5);
        
        \draw[fill, opacity=.5, gray] (0,2) rectangle (-5,-5);
 \end{tikzpicture}
 \hspace{0.5cm}
 \begin{tikzpicture}[scale =.5]
        \coordinate (Origin)   at (0,0);
        \coordinate (XAxisMin) at (-5,0);
        \coordinate (XAxisMax) at (5,0);
        \coordinate (YAxisMin) at (0,-5);
        \coordinate (YAxisMax) at (0,5);
        \draw [thick, black,-latex] (XAxisMin) -- (XAxisMax);
        \draw [thick, black,-latex] (YAxisMin) -- (YAxisMax);
        \clip (-4.9,-4.9) rectangle (5.99,5.99);
        \draw[style=help lines] (-5,-5) grid[step=1cm] (4.95,4.95);
        
        \node[] at (-0.5,5.5) {$A$};
        \node[] at (5.5,-0.5) {$j$}; \node[below right] at (0,-2) {$(0,-k)$};
        \node[draw,circle,fill,scale=0.4] at (0,-2) {};
        
        \node[above left] at (-2,-2) {$W_k$};
        \draw [very thick,black] (0,5) -- (0,-2);
        \draw [very thick,black] (0,-2) -- (5,-2);
        
        \draw[fill, opacity=.5, gray] (0,5) rectangle (-5,-5);
        \draw[fill, opacity=.5, gray] (0,-5) rectangle (5,-2);
 \end{tikzpicture}
 \caption{The centered south-west regions $V_k$ (left) and $W_k$ (right)  of $\mathbb R^2$ for any integer $k\geq0$.}
 \label{VW}
\end{figure}
It follows from \cite{Antonio} that the invariants $V_K(k)$ determine some of the invariants $h_k$ of $K$, which were introduced by Rasmussen in \cite{Rasmussen}.
\begin{prop}[Alfieri]
 Suppose that $K$ is a knot in $S^3$. Then $V_K(k)=h_k(K)$ for every $k\in\N$.
\end{prop}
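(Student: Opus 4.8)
The plan is to identify $\Upsilon_{V_k}(K)$ with the $U$-exponent appearing in the large surgery formula, which is exactly how $h_k(K)$ (equivalently, the Ni--Wu invariant) is defined. Write $C=CFK^\infty(K)$ in Ozsv\'ath--Szab\'o coordinates $(i,j)$, where $i$ is the $U$-power and $j-i$ the Alexander grading; under the dictionary \emph{algebraic filtration level $=i$, Alexander filtration level $=j$}, the region $V_k=\{j\le0,\ A\le k\}$ selects the subcomplex $A_k^-:=C\{\max(i,j-k)\le0\}$, the region $\{j\le0\}$ selects $B^-:=C\{i\le0\}$, and, because $V_k\subset\{j\le0\}$, the inclusion of regions induces the inclusion of complexes $\iota_k\colon A_k^-\hookrightarrow B^-$. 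This $\iota_k$ is (equivalent to) the vertical map of the integer / large surgery formula, and $h_k(K)=V_k(K)$ is by definition the non-negative integer such that $(\iota_k)_\ast$ carries the free $\F[U]$-summand of $H_\ast(A_k^-)$ onto $U^{h_k(K)}\cdot\F[U]\subseteq\F[U]=H_\ast(B^-)$; the identification of this quantity with Rasmussen's $h_k$ is standard, so it suffices to prove $\Upsilon_{V_k}(K)=-2h_k(K)$.

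First I would unwind $\Upsilon_{V_k}(K)$. Since $\cH^\infty_0(K)\cong_\F\F$, the subspace $\mathcal F^{\{j\le0\}}\cH^\infty_0(K)$ is all of it, generated by the class $g$ which is the image of the free generator of $H_0(B^-)$. Hence $\Upsilon_{V_k}(K)=\max\{k'\ :\ g\in\mathcal F^{(V_k)_{k'}}\cH^\infty_0(K)\}$. For $k'=2m$ the shifted region $(V_k)_{2m}$ is the translate of $V_k$ by $(-m,-m)$, i.e.\ the set of lattice points of $U^m\cdot A_k^-$; consequently $\mathcal F^{(V_k)_{2m}}\cH^\infty_0(K)$ is the image in $\cH^\infty_0(K)$ of $U^m\cdot H_{2m}(A_k^-)$ under the map induced by $A_k^-\hookrightarrow C$.

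Next I would exploit that $\cH^\infty(K)\cong HF^\infty(S^3)$ is a free $\F[U,U^{-1}]$-module, hence $U$-torsion-free; therefore the image of $H_\ast(A_k^-)$ in $\cH^\infty(K)$ comes only from the free summand $\F[U]\langle\gamma_k\rangle$, where $\gamma_k$ sits in Maslov grading $-2h_k(K)$ and $(\iota_k)_\ast(\gamma_k)=U^{h_k(K)}g$. Tracking gradings: $H_{2m}(A_k^-)$ has an element with nonzero image in $\cH^\infty_{2m}(K)$ precisely when $2m$ lies among the gradings $-2h_k(K),-2h_k(K)-2,\dots$ occupied by $\F[U]\langle\gamma_k\rangle$, i.e.\ when $m\le-h_k(K)$, and in that case $U^m\cdot(U^{-h_k(K)-m}\gamma_k)$ is a cycle supported in the $(V_k)_{2m}$-region representing $g$. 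Thus $g\in\mathcal F^{(V_k)_{2m}}\cH^\infty_0(K)$ if and only if $m\le-h_k(K)$. A direct check shows the non-integral values of $k'$ add nothing (for $k'\in(2m,2m+2)$ one has $(V_k)_{k'}\cap\Z^2=(V_k)_{2m+2}\cap\Z^2$), so the maximum is attained at $k'=-2h_k(K)$. This yields $\Upsilon_{V_k}(K)=-2h_k(K)$, i.e.\ $V_K(k)=h_k(K)$.

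The one genuinely delicate point — the main obstacle — is the bookkeeping of the sign and grading conventions: one must pin down exactly how the paper's $(j,A)$-filtration matches Ozsv\'ath--Szab\'o's $(i,j)$-plane, including the normalization putting the top of $\cH^\infty_0(K)$ in Maslov grading zero, and must check that the chosen characterization of $h_k(K)$ via the $U$-power of $(\iota_k)_\ast$ on free summands agrees with the other standard formulations (the $A_k^+/B^+$ picture, or $d$-invariants of large surgeries). Once the dictionary is fixed the rest is the routine homological algebra sketched above; alternatively, the identity may simply be quoted from Alfieri \cite{Antonio}, who computes $V_K(k)$ directly in terms of the $h_k(K)$.
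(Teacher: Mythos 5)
The paper gives no proof of this proposition; it is stated as ``Proposition (Alfieri)'' and simply cited from \cite{Antonio}, where the identity is established. Your argument is a correct re-derivation of that result. The dictionary you set up is right: with the paper's conventions, the algebraic level of $U^{b}x$ is $-b$ and its Alexander level is $A(x)-b$, which matches Ozsv\'ath--Szab\'o's $(i,j)$ via $i=-b$, $j_{\mathrm{OS}}=A(x)-b$; so $V_k$ corresponds to $A_k^-=C\{\max(i,j-k)\le 0\}$ and $\{j\le 0\}$ to $B^-=C\{i\le 0\}$. The computation that $\mathcal F^{(V_k)_{2m}}\cH^\infty_0(K)$ is $U^m$ times the image of $H_{2m}(A_k^-)$, the use of $U$-torsion-freeness of $\cH^\infty(K)\cong\F[U,U^{-1}]$ to reduce to the free summand $\F[U]\langle\gamma_k\rangle$ sitting in Maslov grading $-2h_k(K)$, the conclusion that the generator lies in $\mathcal F^{(V_k)_{2m}}\cH^\infty_0(K)$ iff $m\le -h_k(K)$, and the observation that non-integral $k'$ add nothing because $(V_k)_{k'}\cap\Z^2=(V_k)_{2\lceil k'/2\rceil}\cap\Z^2$ are all correct, giving $\Upsilon_{V_k}(K)=-2h_k(K)$ and hence $V_K(k)=h_k(K)$ by the paper's normalization $-2V_L(k)=\Upsilon_{V_k}(L)$. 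The one step you quote as standard --- the agreement between the map-theoretic description of $V_k$ (Ni--Wu) and Rasmussen's $h_k$ --- is indeed standard and reasonable to cite. In short, where the paper refers to Alfieri, you have supplied the proof, and it is sound.
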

Applying Proposition \ref{prop:slice_genus} we obtain that $h_k(K)=V_K(k)\leq g_4(K)-k$ for a knot $K$ and $0\leq k\leq g_4(K)$, which coincides with \cite[Corollary 7.4]{Rasmussen}; furthermore, one has \[\begin{aligned}0\leq V_L(k)\leq g_4(L)+n-k-1\quad&\text{ if }\quad k<g_4(L)+n-1\\ V_L(k)=0\hspace{2cm}\quad&\text{ if }\quad k\geq g_4(L)+n-1\end{aligned}\] and \[V_L(k)\geq V_L(k+1)\] for every link $L$. Finally, Theorem \ref{teo:upsilon_concordance} tells us that $V_L(k)$ is a concordance invariant for every $k\in\N$.

In \cite{HomWu} Hom and Wu define the knot concordance invariant $\nu^+$ and they prove that such invariant gives a lower bound for the slice genus $g_4$. Using our results we can easily extend $\nu^+$ to links: we say that \[\nu^+(L):=\min_{k\in\N}\left\{k\:|\:V_L(k)=0\right\}\:.\] It is easy to check (\cite{Hom}) that for knots such a definition coincides with the one in \cite{HomWu} and it generalizes its well-known properties.
\begin{prop}
 \label{prop:nu}
 The non-negative integer $\nu^+(L)$ is a concordance invariant of links.
\end{prop}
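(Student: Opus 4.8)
The plan is to deduce this directly from the concordance invariance of the family $\Upsilon_{V_k}$ already established in Theorem \ref{teo:upsilon_concordance}. First I would observe that each region $V_k=\left\{(j,A)\in\R^2\:|\:j\leq 0,\ A\leq k\right\}$ is a centered south-west region: it is clearly a south-west region, and since $k\geq 0$ the point $(0,0)$ lies on its boundary $\{j=0,\ A\leq k\}$. Hence Theorem \ref{teo:upsilon_concordance} applies and tells us that $\Upsilon_{V_k}(L)$, equivalently $V_L(k)=-\tfrac12\Upsilon_{V_k}(L)$, is a concordance invariant for every $k\in\N$ (this is exactly the remark made right before the statement).

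Next I would check that $\nu^+(L)$ is a well-defined non-negative integer, i.e. that the set $\left\{k\in\N\:|\:V_L(k)=0\right\}$ is non-empty. This follows from the slice-genus bound: by Theorem \ref{teo:slice_genus}, equivalently the inequalities for $V_L(k)$ recorded just above, we have $V_L(k)=0$ whenever $k\geq g_4(L)+n-1$, so the set contains all sufficiently large integers and its minimum exists.

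Finally, suppose $L_1$ is concordant to $L_2$. Since a concordance is by definition a disjoint union of $n$ annuli, both links have the same number of components $n$, and the two families $\{V_k\}$ used to define $\nu^+(L_1)$ and $\nu^+(L_2)$ are the same. By the previous paragraphs, $V_{L_1}(k)=V_{L_2}(k)$ for every $k\in\N$, so $\left\{k\:|\:V_{L_1}(k)=0\right\}=\left\{k\:|\:V_{L_2}(k)=0\right\}$, and taking minima yields $\nu^+(L_1)=\nu^+(L_2)$. There is essentially no obstacle here: all the content is packaged inside Theorem \ref{teo:upsilon_concordance}, and the only points that need a word of justification are that $V_k$ is a centered south-west region and that the defining set is non-empty so that the minimum makes sense.
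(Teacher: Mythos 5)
Your proof is correct and follows the same route as the paper: invoke the concordance invariance of $\Upsilon_{V_k}(L)$ (equivalently $V_L(k)$) for each $k$, conclude that the zero sets coincide, and take minima. The extra checks you include — that each $V_k$ is a centered south-west region and that the defining set is non-empty by the slice-genus bound — are sound and only make explicit what the paper leaves implicit.
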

\begin{proof}
 If $L_1$ is concordant to $L_2$ then $V_{L_1}(k)=V_{L_2}(k)$ for every $k\in\N$ as we saw before. Hence, one has $V_{L_1}(k)=0$ if and only if $V_{L_2}(k)=0$.
\end{proof}
Consider the south-west regions $W_k$ in Figure \ref{VW}; we see immediately that one has $W_k=\overline{\iota V_k}$ and then $\Upsilon^*_{W_k}(L)=2\cdot V_{L^*}(k)$ for every $k\in\N$ because of Proposition \ref{prop:mirror}. We say that  \[\widehat\nu(L)=\max\{\nu^+(L),\nu^+(L^*)\}\:,\] where \[\nu^+(L^*)=\min_{k\in\N}\left\{k\:|\:V_{L^*}(k)=0\right\}=\min_{k\in\N}\left\{k\:|\:\Upsilon^*_{W_k}(L)=0\right\}\] which is also a concordance invariant. 
\begin{teo}
 \label{teo:nu}
 Suppose that $L$ is an $n$-component link in $S^3$. Then we have that \[0\leq\nu^+(L)\leq\widehat\nu(L)\leq g_4(L)+n-1\quad\text{ and }\quad \tau(L)\leq\nu^+(L)\:.\] Furthermore, the invariants $\nu^+(L)$ and $\widehat\nu(L)$ are sub-additive: \[\nu^+(L_1\# L_2)\leq\nu^+(L_1)+\nu^+(L_2)\hspace{1cm}\text{ and }\hspace{1cm}\widehat\nu(L_1\# L_2)\leq\widehat\nu(L_1)+\widehat\nu(L_2)\] for every pair of links $L_1$ and $L_2$.
\end{teo}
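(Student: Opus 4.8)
The plan is to establish the reformulation
\[\nu^+(L)=\min_{k\in\N}\left\{k\:|\:\mathcal F^{V_k}\cH^\infty_0(L)\supseteq\mathcal F^{\{j\leq0\}}\cH^\infty_0(L)\right\}\]
and then read off the three assertions from it. For the reformulation, note that $-2\cdot V_L(k)=\Upsilon_{V_k}(L)$, and since $V_L(k)\geq0$ for all $k\in\N$ (established above) we have $\Upsilon_{V_k}(L)\leq0$; consequently the maximum defining $\Upsilon_{V_k}(L)$ equals $0$ exactly when the containment already holds at shift $0$, i.e.\ when $V_L(k)=0$. Granting this, non-negativity of $\nu^+(L)$ and $\nu^+(L)\leq\widehat\nu(L)$ are immediate (a minimum over $\N$, and $\widehat\nu(L)=\max\{\nu^+(L),\nu^+(L^*)\}$). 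For $\widehat\nu(L)\leq g_4(L)+n-1$ I would invoke the bound $V_L(k)=0$ for $k\geq g_4(L)+n-1$, already deduced from Theorem \ref{teo:slice_genus}, which gives $\nu^+(L)\leq g_4(L)+n-1$; applying the same bound to the mirror $L^*$, which is still an $n$-component link with $g_4(L^*)=g_4(L)$ (mirror a slice surface for $L$), gives $\nu^+(L^*)\leq g_4(L)+n-1$ as well.

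For $\tau(L)\leq\nu^+(L)$, set $k=\nu^+(L)$. By the reformulation there is a cycle $x\in\mathcal F^{V_k}\cC^\infty_0(L)$ whose homology class generates $\mathcal F^{\{j\leq0\}}\cH^\infty_0(L)$. Theorem \ref{teo:dim}, in the case $d=0$ and $k=0$, gives $\mathcal F^{\{j\leq-1\}}\cH^\infty_0(L)=0$, so $[x]$ has minimal $j$-level exactly zero and Lemma \ref{lemma:generator} applies: the restriction $\overline x$ of $x$ to $\widehat{CFL}(L)$ is a generator of $\widehat{\mathcal{HFL}}_0(L)$. Passing to $\widehat{CFL}(L)$ only deletes monomials, so $A(\overline x)\leq A(x)\leq k$, whence $\mathcal A^k\widehat{\mathcal{HFL}}_0(L)\neq0$; comparing with the description of $\tau(L)$ from \cite{Cavallo} as the least Alexander level on which $\widehat{\mathcal{HFL}}_0(L)$ is supported, we get $\tau(L)\leq k=\nu^+(L)$.

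For sub-additivity, put $k_i=\nu^+(L_i)$ and choose cycles $x_i\in\mathcal F^{V_{k_i}}\cC^\infty_0(L_i)$ representing generators of $\mathcal F^{\{j\leq0\}}\cH^\infty_0(L_i)$; in particular $x_i\in\mathcal F^{\{j\leq0\}}\cC^\infty(L_i)$ with every monomial in Alexander grading at most $k_i$. By Theorem \ref{teo:connected_sum}, $\cC^\infty(L_1\#L_2)=\cC^\infty(L_1)\otimes_{\F[U,U^{-1}]}\cC^\infty(L_2)$, and, as recalled in Subsection \ref{subsection:connected}, the class $[x_1\otimes x_2]$ generates $\mathcal F^{\{j\leq0\}}\cH^\infty_0(L_1\#L_2)$. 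Since the algebraic filtration level and the Alexander grading are additive on the tensor product, $x_1\otimes x_2$ lies in $\mathcal F^{\{j\leq0\}}\cC^\infty(L_1\#L_2)$ with every monomial in Alexander grading at most $k_1+k_2$, i.e.\ $x_1\otimes x_2\in\mathcal F^{V_{k_1+k_2}}\cC^\infty_0(L_1\#L_2)$; hence $\mathcal F^{V_{k_1+k_2}}\cH^\infty_0(L_1\#L_2)\supseteq\mathcal F^{\{j\leq0\}}\cH^\infty_0(L_1\#L_2)$ and $\nu^+(L_1\#L_2)\leq k_1+k_2$ by the reformulation.

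I expect the main obstacle to be $\tau(L)\leq\nu^+(L)$: one has to be careful that Lemma \ref{lemma:generator} really sends a generating cycle of the minus theory to a generating cycle of $\widehat{\mathcal{HFL}}_0(L)$, that its minimal-$j$-level hypothesis is genuinely verified here (this is exactly where Theorem \ref{teo:dim} enters), and that the Alexander bound so produced matches the quantity computing $\tau(L)$ in \cite{Cavallo}. The chain of inequalities and the sub-additivity are then routine bookkeeping once the reformulation of $\nu^+$ is in place.
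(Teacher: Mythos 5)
Your proof is correct and follows essentially the same route as the paper: the $g_4$ bound is read off from the previously established inequality $V_L(k)=0$ for $k\geq g_4(L)+n-1$ together with $g_4(L)=g_4(L^*)$, the inequality $\tau(L)\leq\nu^+(L)$ is obtained by producing a generating cycle in $\mathcal F^{V_{\nu^+(L)}}\cC^\infty_0(L)$ with minimal $j$-level zero and applying Lemma \ref{lemma:generator}, and sub-additivity comes from tensoring such cycles via Theorem \ref{teo:connected_sum}. Your explicit reformulation of $\nu^+$ and your verification via Theorem \ref{teo:dim} that $\mathcal F^{\{j\leq-1\}}\cH^\infty_0(L)=0$ just spell out steps the paper leaves implicit.
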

\begin{proof}
 We saw before that if $V_L(k)\neq0$ then $k< g_4(L)+n-1$. Since $\nu^+(L)$ is the minimal $k$ such that $V_L(k)=0$ and $g_4(L)=g_4(L^*)$ we conclude that $\widehat\nu(L)\leq g_4(L)+n-1$. We now show that $\tau(L)\leq\nu^+(L)$. Suppose that $s$ is the minimal integer such that $V_L(s)=0$; then there is an element $x$ in $\mathcal F^{V_s}\cC^\infty_0(L)$ whose homology class is a generator of the homology with minimal $j$-level zero. The claim follows from Lemma \ref{lemma:generator}.
 
 For the last part of the theorem, take elements $x_1$ and $x_2$ as before for $L_1$ and $L_2$ respectively. From Subsection \ref{subsection:connected} we know that \[x_1\otimes x_2\in\mathcal F^{V_{\nu^+(L_1)+\nu^+(L_2)}}\cC^\infty_0(L_1\#L_2)\] has the same properties. Since $\Upsilon_{V_k}(L_1\#L_2)\leq0$ for every $k$, this implies $V_{L_1\#L_2}(\nu^+(L_1)+\nu^+(L_2))=0$. Now, denote with $J_1$ and $J_2$ either the links $L_1$ and $L_2$ or the links $L_1^*$ and $L_2^*$, depending on which ones give the maximal $\nu^+(J_1\#J_2)$; then we have \[\widehat\nu(L_1\#L_2)=\nu^+(J_1\#J_2)\leq\nu^+(J_1)+\nu^+(J_2)\leq\widehat\nu(L_1)+\widehat\nu(L_2)\:.\]
\end{proof}
Theorem \ref{teo:nu} tells us that $\nu^+$ gives a lower bound to the slice genus at least as good as the one given by $\tau$. 
\begin{figure}[t]
 \centering
 \begin{tikzpicture}[scale =.75]
        \coordinate (Origin)   at (0,0);
        \coordinate (XAxisMin) at (-3,0);
        \coordinate (XAxisMax) at (5,0);
        \coordinate (YAxisMin) at (0,-3);
        \coordinate (YAxisMax) at (0,5);
        \draw [thick, black,-latex] (XAxisMin) -- (XAxisMax);
        \draw [thick, black,-latex] (YAxisMin) -- (YAxisMax);
        \draw [thick, black,-latex] (-3,1) -- (5,1);
        \draw [thick, black,-latex] (1,-3) -- (1,5);
        \clip (-2.9,-2.9) rectangle (5.99,5.99);
        \draw[style=help lines] (-3,-3) grid[step=1cm] (4.95,4.95);
        
        \foreach \x in {-3,...,4}
                    \node[draw=none,fill=none] at (\x+0.5,-2.5) {\x};
    	\foreach \y in {-3,...,4}     		
        			\node[draw=none,fill=none] at (-2.5,\y+0.5) {\y};
        
        \node[] at (-0.5,5.5) {$A$};
        \node[] at (5.5,-0.5) {$j$};
        
        \node[draw,circle,fill,scale=0.4]  (A1) at (0.5,3.5) {};
        \node[draw,circle,fill,scale=0.4]  (A2) at (1.5,2.5) {};
        \node[draw,circle,fill,scale=0.4]  (A3) at (2.5,1.5) {};
        \node[draw,circle,fill,scale=0.4]  (A4) at (3.5,0.5) {};
        \node[draw,circle,fill=white,scale=0.4]  (B1) at (-0.5,3.5) {};
        \node[draw,circle,fill=white,scale=0.4]  (B2) at (0.5,2.5) {};
        \node[draw,circle,fill=white,scale=0.4]  (B3) at (1.5,1.5) {};
        \node[draw,circle,fill=white,scale=0.4]  (B4) at (2.5,0.5) {};
        \node[draw,circle,fill=white,scale=0.4]  (B5) at (3.5,-0.5) {};
        \node[draw,circle,fill=white,scale=0.4]  (D1) at (0.5-1/6,0.5+1/6) {};
        \node[draw,circle,fill=white,scale=0.4]  (D2) at (0.5+1/6,0.5-1/6) {};
        \node[draw,circle,fill=gray,scale=0.4]  (C1) at (-0.5,0.5+1/6) {};
        \node[draw,circle,fill=gray,scale=0.4]  (C2) at (0.5+1/6,-0.5) {};
        
        \path [thick,-latex] (A1) edge node[left] {} (B1);
        \path [thick,-latex] (A1) edge node[left] {} (B2);
        \path [thick,-latex] (A2) edge node[left] {} (B2);
        \path [thick,-latex] (A2) edge node[left] {} (B3);
        \path [thick,-latex] (A2) edge node[left] {} (D1);
        \path [thick,-latex] (A3) edge node[left] {} (B3);
        \path [thick,-latex] (A3) edge node[left] {} (B4);
        \path [thick,-latex] (A3) edge node[left] {} (D2);
        \path [thick,-latex] (A4) edge node[left] {} (B4);
        \path [thick,-latex] (A4) edge node[left] {} (B5);
        \path [thick,-latex] (B1) edge node[left] {} (C1);
        \path [thick,-latex] (B2) edge node[left] {} (C1);
        \path [thick,-latex] (B4) edge node[left] {} (C2);
        \path [thick,-latex] (B5) edge node[left] {} (C2);
        \path [thick,-latex] (D1) edge node[left] {} (C1);
        \path [thick,-latex] (D2) edge node[left] {} (C2);
 \end{tikzpicture}
 \caption{The relevant non-acyclic summand of $\cC^\infty(L)$, where $L=T_{2,9}\#T_{2,3;2,5}^*\#H_-^{\#n-1}$. Here, by relevant we mean the summand which contains the generators of $\mathcal F^{\{j\leq0\}}\cH^\infty_0(L)$, in the decomposition induced by the connected sum, according to Theorem \ref{teo:connected_sum}. We have that $\tau(L)=0$ and $\nu^+(L)=2$.}
 \label{Nu}
\end{figure}
An example where this happens is shown in Figure \ref{Nu}.

Let us write $2\cdot W_L(k)=\Upsilon_{W_k}(L)$. 
In the same way as before, we have \[\begin{aligned}0\leq W_L(k)\leq g_4(L)-k\quad&\text{ if }\quad k<g_4(L)\\ W_L(k)=0\hspace{2cm}\quad&\text{ if }\quad k\geq g_4(L)\end{aligned}\] and \[W_L(k)\geq W_L(k+1)\] for every link $L$.

We call $\widecheck\nu(L)$ the non-negative integer \[\max\left\{\min_{k\in\N}\left\{k\:|\:W_{L}(k)=0\right\},\min_{k\in\N}\left\{k\:|\:W_{L^*}(k)=0\right\}\right\}\:,\] which shares similar properties with $\widehat\nu(L)$. In particular, we have $\widecheck\nu(L)=\widehat\nu(L)$ for knots.
\begin{teo}
 Suppose that $L$ is an $n$-component link in $S^3$. Then we have that $\widecheck\nu$ is a concordance invariant; moreover, one has \[0\leq\widecheck\nu(L)\leq g_4(L)\quad\text{ and }\quad \tau^*(L)\leq\widecheck\nu(L)\:.\] Furthermore, the invariant $\widecheck\nu(L)$ is also sub-additive: \[\widecheck\nu(L_1\# L_2)\leq\widecheck\nu(L_1)+\widecheck\nu(L_2)\] for every pair of links $L_1$ and $L_2$.
\end{teo}
\begin{proof}
 It follows in the same way as in the proof of Proposition \ref{prop:nu} and Theorem \ref{teo:nu}, by applying Lemma \ref{lemma:generator} and Proposition \ref{prop:slice_genus}. We just need to observe that \[\min_{k\in\N}\left\{k\:|\:W_{L^*}(k)=0\right\}=\min_{k\in\N}\left\{k\:|\:\Upsilon^*_{V_k}(L)=0\right\}\:.\]
\end{proof}
This result implies that if $L$ bounds a compact planar surface properly embedded in $D^4$ then $\widecheck\nu(L)=0$.

\subsubsection{Secondary upsilon invariants}
In a paper of Allen (\cite{Allen}) we find an example of two non-concordant knots with the same $\Upsilon$-invariants. These knots are the torus knot $T_{5,7}$ and the connected sum $T_{2,5}\#T_{5,6}$. Their chain complexes are pictured in \cite[Figures 4 and 6]{Allen}. 

Starting from this example, we build the links $J_1=T_{5,7}\#H_+^{\#n-1}$ and $J_2=T_{2,5}\#T_{5,6}\#H_+^{\#n-1}$.  Since we can compute the complex of the positive Hopf link:  \[\cC^\infty(H_+)= CFK^\infty(T_{2,3})\oplus\F[U,U^{-1}]_{(-1)}\:,\] we easily obtain that the homology groups of $J_1$ and $J_2$ are $\mathcal F$-filtered isomorphic.   
On the other hand, it is still possible to show that $\cC^\infty(J_1)$ is not locally equivalent to $\cC^\infty(J_2)$, which means that the filtered isomorphism (or its inverse) is not induced by a chain map that preserves the filtration $\mathcal F$.

In order to find an obstruction for the existence of such a map, we need to use another family of invariants, which was introduced by Kim and Livingston in \cite{Livingston} and by Alfieri in \cite{Antonio} for knots.
We define the \emph{secondary $\Upsilon$-invariants} $\Upsilon^{(2)}_{S^+,S^-,S}(L)$ of an $n$-component link $L$ as $-\Upsilon_S(L)$ plus the supremum of $k\in\mathbb Z$ such that \[\mathcal F^{S_k\cup\left(S^+_{\gamma^+}\right)\cup\left(S^-_{\gamma^-}\right)}\cC^\infty_1(L)\] contains a $1$-chain $a$ with $\partial^-a=x_1+x_2$; the cycles \[x_1\in\mathcal F^{\left(S^+_{\gamma^+}\right)}\cC^\infty_0(L)\quad\text{ and }\quad x_2\in\mathcal F^{\left(S^-_{\gamma^-}\right)}\cC^\infty_0(L)\] have the property that their homology classes are generators of $\mathcal F^{\{j\leq0\}}\cH^\infty_0(L)$, where $\gamma^\pm=\Upsilon_{S^\pm}(L)$ and $S^+,S^-$ and $S$ are three centered south-west regions of $\mathbb R^2$. Note that $\Upsilon^{(2)}_{S^+,S^-,S}(L)$ can be $+\infty$, as it happens for the unknot.

We can define a \emph{secondary $\Upsilon^*$-invariant} exactly in the same way, only this time we consider elements in Maslov gradings $1-n$ and $2-n$. For the sake of simplicity, in this subsection we only write proofs for $\Upsilon^{(2)}_{S^+,S^-,S}(L)$, but all the results also hold for this version of the invariant. 
\begin{prop}
 \label{prop:secondary}
 Let us consider a link $L$. Then the invariant $\Upsilon^{(2)}_{S^+,S^-,S}(L)$ is a concordance invariant for every triple of centered south-west regions $S^+,S^-$ and $S$ of $\mathbb R^2$.
\end{prop}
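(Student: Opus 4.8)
The plan is to run exactly the argument of Theorem \ref{teo:upsilon_concordance}. A concordance $\Sigma$ from $L_1$ to $L_2$ produces, by Theorem \ref{teo:concordance}, a chain map $F:\cC^\infty(L_1)\to\cC^\infty(L_2)$ which preserves the Maslov grading, preserves the $\mathcal F$-filtration, and induces an $\mathcal F$-filtered isomorphism $F_*$ in homology; reversing $\Sigma$ supplies a map $G$ in the opposite direction with the same properties. Since $\Upsilon_S$ and each $\Upsilon_{S^\pm}$ are already concordance invariants (Theorem \ref{teo:upsilon_concordance}), the summand $-\Upsilon_S(L)$ and the shifts $\gamma^\pm=\Upsilon_{S^\pm}(L)$ occurring in the definition of $\Upsilon^{(2)}_{S^+,S^-,S}$ are literally the same for $L_1$ and $L_2$, so it is enough to show that the supremum of the admissible values of $k$ is unchanged.

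First I would transport an admissible configuration along $F$. Suppose $a\in\mathcal F^{S_k\cup(S^+_{\gamma^+})\cup(S^-_{\gamma^-})}\cC^\infty_1(L_1)$ satisfies $\partial^-a=x_1+x_2$, where $x_1\in\mathcal F^{(S^+_{\gamma^+})}\cC^\infty_0(L_1)$ and $x_2\in\mathcal F^{(S^-_{\gamma^-})}\cC^\infty_0(L_1)$ are cycles whose classes generate the one-dimensional space $\mathcal F^{\{j\leq0\}}\cH^\infty_0(L_1)$. Applying $F$: since $F$ preserves the Maslov grading, $F(a)\in\cC^\infty_1(L_2)$ and $F(x_i)\in\cC^\infty_0(L_2)$; since $F$ is a chain map, $\partial^-F(a)=F(x_1)+F(x_2)$ and each $F(x_i)$ is a cycle; since $F$ preserves $\mathcal F^T$ for every south-west region $T$ (hence, by linearity, carries the span over a finite union of such regions into the corresponding span), $F(a)$ and the $F(x_i)$ land in the required filtration levels; and since $F_*$ restricts to an isomorphism $\mathcal F^{\{j\leq0\}}\cH^\infty_0(L_1)\cong_\F\mathcal F^{\{j\leq0\}}\cH^\infty_0(L_2)$ of one-dimensional $\F$-vector spaces, $[F(x_i)]=F_*[x_i]$ is again a generator. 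Thus every $k$ admissible for $L_1$ is admissible for $L_2$, whence the supremum for $L_2$ is at least that for $L_1$ (with the usual convention in the case $+\infty$). Running the same argument with $G$ gives the opposite inequality, so the two suprema coincide and $\Upsilon^{(2)}_{S^+,S^-,S}(L_1)=\Upsilon^{(2)}_{S^+,S^-,S}(L_2)$.

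The same reasoning settles the $\Upsilon^*$-version, with $\cH^\infty_0,\cH^\infty_1$ replaced by $\cH^\infty_{1-n},\cH^\infty_{2-n}$ and the generator condition replaced by "not contained in $\mathcal F^{\{j\leq-1\}}$", which $F_*$ preserves by Theorem \ref{teo:concordance}. The only genuinely delicate point is the bookkeeping: one must check that "$F$ preserves $\mathcal F$" in the sense of Theorem \ref{teo:concordance} really does imply $F\bigl(\mathcal F^T\cC^\infty_*(L_1)\bigr)\subseteq\mathcal F^T\cC^\infty_*(L_2)$ for an arbitrary south-west region $T$, and that the union $S_k\cup(S^+_{\gamma^+})\cup(S^-_{\gamma^-})$ is itself a south-west region so that the span appearing in the definition is of the type controlled by $F$. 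Once these are verified the proof is purely formal, parallel to Theorem \ref{teo:upsilon_concordance}.
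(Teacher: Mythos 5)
Your argument is correct and is essentially the same as the paper's: both rely on the local equivalence maps furnished by Theorem \ref{teo:concordance}, apply such a map to the $1$-chain $a$ and the cycles $x_1,x_2$ witnessing an admissible $k$, and use that the map preserves $\mathcal F$ (in the finer sense you spell out — on each $\mathcal F^{t,s}$-level, hence by linearity on spans over south-west regions) and sends generators of $\mathcal F^{\{j\leq 0\}}\cH^\infty_0$ to generators. The only cosmetic difference is that the paper argues by contradiction using the map $g:\cC^\infty(L_2)\to\cC^\infty(L_1)$, while you prove the two suprema are equal directly by pushing configurations both ways; these are the same computation.
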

\begin{proof}
 Suppose that $L_1$ is concordant to $L_2$ and $\Upsilon^{(2)}_{S^+,S^-,S}(L_1)<\Upsilon^{(2)}_{S^+,S^-,S}(L_2)$. Then there is an integer $k>\Upsilon^{(2)}_{S^+,S^-,S}(L_1)+\Upsilon_S(L_1)$ such that \[z^\pm\in\mathcal F^{S^\pm_{\gamma^\pm}}\cC^\infty_0(L_2)\:,\] the homology class $[z^+]=[z^-]$ is the generator of $\mathcal F^{\{j\leq0\}}\cH^\infty_0(L_2)$ and there exists \[\beta\in\mathcal F^{S_k\cup\left(S^+_{\gamma^+}\right)\cup\left(S^-_{\gamma^-}\right)}\cC^\infty_1(L_2)\] with $\partial^-\beta=z^++z^-$. We recall that $\gamma^\pm=\Upsilon_{S^\pm}(L_1)=\Upsilon_{S^\pm}(L_2)$, since $\Upsilon$ is a concordance invariant.
 
 From Theorem \ref{teo:concordance} we know that the corresponding chain complexes of two links are locally equivalent. Then we find a chain map $g:\cC^\infty_0(L_2)\rightarrow\cC^\infty_0(L_1)$, which preserves $\mathcal F$ and induces an $\mathcal F$-filtered isomorphism between $\cH^\infty_0(L_2)$ and $\cH^\infty_0(L_1)$. Therefore, we can take \[g(z^+)+g(z^-)=g(\partial^-\beta)=\partial^-g(\beta)\] and we have \[g(z^\pm)\in\mathcal F^{S^\pm_{\gamma^\pm}}\cC^\infty_0(L_1)\:,\] the homology class $[g(z^+)]=[g(z^-)]$ is the generator of $\mathcal F^{\{j\leq0\}}\cH^\infty_0(L_1)$ and \[g(\beta)\in\mathcal F^{S_k\cup\left(S^+_{\gamma^+}\right)\cup\left(S^-_{\gamma^-}\right)}\cC^\infty_1(L_1)\:.\] This is a contradiction, because it implies $k\leq\Upsilon^{(2)}_{S^+,S^-,S}(L_1)+\Upsilon_S(L_1)$.
\end{proof}
Now we can show that the links $J_1$ and $J_2$ are not concordant.  
We have that \[\cC^\infty_{1-n}(J_1)=\cC^\infty_0(T_{5,7})\quad\text{ and }\quad\cC^\infty_{1-n}(J_2)=\cC^\infty_0(T_{2,5}\#T_{5,6})\] up to acyclics; hence, if $J_1$ and $J_2$ were concordant then Proposition \ref{prop:secondary} should imply \[\Upsilon^{(2)}_{S^+,S^-,S}(T_{5,7})=(\Upsilon^*)^{(2)}_{S^+,S^-,S}(J_1)=(\Upsilon^*)^{(2)}_{S^+,S^-,S}(J_2)=\Upsilon^{(2)}_{S^+,S^-,S}(T_{2,5}\#T_{5,6})\] for every south-west regions $S^\pm$ and $S$. This is not true, as shown by Allen in \cite{Allen}.

We showed that the secondary $\Upsilon$-invariants can give more information than the $\mathcal F$-filtered isomorphism type of $\cH^\infty(L)$; nonetheless, the following proposition holds. Here we recall that the invariants $V_L(0)$ and $W_L(0)$, corresponding to the south-west regions $V_0$ and $W_0$, are defined before in Subsection \ref{subsection:nu}.
\begin{prop}
 \label{prop:zero}
 If $V_L(0)=W_L(0)=0$ then all of the $\Upsilon$'s of $L$ are zero and all of the $\Upsilon^{(2)}$'s of $L$ are $+\infty$. In the same way, if $V_{L^*}(0)=W_{L^*}(0)=0$ then all of the $\Upsilon^*$'s of $L$ are zero and all of the $(\Upsilon^*)^{(2)}$'s of $L$ are $+\infty$.  
\end{prop}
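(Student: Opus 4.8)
The plan is to reduce everything to the monotonicity of $\Upsilon_S(L)$ in the region $S$, together with the observation that among all centered south-west regions $V_0$ is the smallest and $W_0$ the largest. First I would record that every centered south-west region $S$ satisfies $V_0\subseteq S\subseteq W_0$: since $(0,0)\in\partial S$ and $S$ is south-west, $S$ contains the open third quadrant $\{(j,A):j<0,\ A<0\}$ and hence its closure $V_0$; and if $S$ contained a point with both coordinates strictly positive then $(0,0)$ would be interior to $S$, contradicting $(0,0)\in\partial S$, so $S\subseteq W_0=\{(j,A):j\le 0\text{ or }A\le 0\}$. Because $S\mapsto\mathcal F^{S}\cH^\infty_*(L)$ is inclusion-preserving and translating a region by a fixed vector preserves inclusions, $\Upsilon_S(L)$ is monotone in $S$, so $\Upsilon_{V_0}(L)\le\Upsilon_S(L)\le\Upsilon_{W_0}(L)$. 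Since $\Upsilon_{V_0}(L)=-2V_L(0)=0$ and $\Upsilon_{W_0}(L)=-2W_L(0)=0$, this forces $\Upsilon_S(L)=0$ for every centered south-west region $S$.

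For the secondary invariants, fix centered south-west regions $S^+,S^-,S$ and set $\gamma^\pm=\Upsilon_{S^\pm}(L)$; by the previous paragraph $\gamma^+=\gamma^-=\Upsilon_S(L)=0$. The intersection $S^+\cap S^-$ is again a centered south-west region (the intersection of south-west regions is south-west, it contains the open third quadrant, and $(0,0)$ is not interior to it since it is not interior to $S^+$), so $\Upsilon_{S^+\cap S^-}(L)=0$ as well. By the very definition of $\Upsilon$ this says that the generator of the one-dimensional space $\mathcal F^{\{j\le 0\}}\cH^\infty_0(L)$ is represented by a cycle $x\in\mathcal F^{(S^+\cap S^-)_0}\cC^\infty_0(L)=\mathcal F^{S^+\cap S^-}\cC^\infty_0(L)$. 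Now take $x_1=x_2=x$ -- both lie in $\mathcal F^{S^\pm_{\gamma^\pm}}\cC^\infty_0(L)$ because $\gamma^\pm=0$ and $S^+\cap S^-\subseteq S^\pm$ -- and take $a=0$: over $\F$ one has $\partial^-a=0=x_1+x_2$, while $a=0$ lies in $\mathcal F^{S_k\cup(S^+_{\gamma^+})\cup(S^-_{\gamma^-})}\cC^\infty_1(L)$ for \emph{every} $k\in\Z$. Hence the supremum entering the definition of $\Upsilon^{(2)}_{S^+,S^-,S}(L)$ is $+\infty$, and since $-\Upsilon_S(L)=0$ we conclude $\Upsilon^{(2)}_{S^+,S^-,S}(L)=+\infty$.

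The second assertion is then obtained by applying the first to the mirror image $L^*$ and translating through the duality of Subsection \ref{subsection:duality}: the hypothesis $V_{L^*}(0)=W_{L^*}(0)=0$ is exactly the hypothesis of the first assertion for $L^*$, so all $\Upsilon$-invariants of $L^*$ vanish and all $\Upsilon^{(2)}$-invariants of $L^*$ equal $+\infty$. By Proposition \ref{prop:mirror} (which rests on Theorem \ref{teo:mirror}) one has $\Upsilon^*_{\overline{\iota S}}(L)=-\Upsilon_S(L^*)$, and $\overline{\iota(\cdot)}$ is an involution on centered south-west regions, so $\overline{\iota S}$ runs over all of them as $S$ does; hence all $\Upsilon^*$'s of $L$ vanish. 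The same identification, carried out with Maslov gradings $1-n$ and $2-n$ in place of $0$, turns the $\Upsilon^{(2)}(L^*)$ statement into the $(\Upsilon^*)^{(2)}(L)$ statement. (Alternatively, once all $\Upsilon^*$'s of $L$ are known to vanish, the starred claims follow verbatim by repeating the argument of the second paragraph.)

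I expect the argument to be essentially this short. The only points requiring a little care are the book-keeping with the boundaries of the regions in the first paragraph -- that $V_0$ and $W_0$ genuinely sandwich every centered south-west region -- and, in the second paragraph, checking that $S^+\cap S^-$ is again centered, so that the first part applies to it; no deeper obstacle is anticipated.
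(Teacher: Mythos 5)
Your proposal is correct and follows essentially the same route as the paper: sandwich every centered south-west region between $V_0$ and $W_0$ and use the monotonicity of $S\mapsto\Upsilon_S(L)$ to get vanishing of all $\Upsilon_S(L)$, then observe that a common representative $x_1=x_2$ in $\mathcal F^{S^+\cap S^-}$ (the paper uses the smaller region $V_0$, which amounts to the same thing) lets you take $a=0$ and forces $\Upsilon^{(2)}=+\infty$, and finally transport the whole argument through Proposition \ref{prop:mirror} for the starred invariants. The only cosmetic differences are that you make the monotonicity and the $a=0$ step fully explicit, which the paper leaves implicit.
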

\begin{proof}
 Suppose that $S$ is a centered south-west region of $\mathbb R^2$. Then we have that $V_0\subset S$ and $0=\Upsilon_{V_0}(L)\leq\Upsilon_S(L)$. In the same way, we have that $S\subset W_0$ and $\Upsilon_S(L)\leq\Upsilon_{W_0}(L)=0$. This implies $\Upsilon_S(L)=0$.
 
 Consider two centered south-west regions $S^\pm$ of $\mathbb R^2$. We have that $V_0\subset S^+\cap S^-$ and then there is a cycle, which represents the generator of the algebraic level zero of $\cH^\infty_0(L)$, in \[\mathcal F^{V_0}\cC^\infty_0(L)\subset\mathcal F^{S^+}\cC^\infty_0(L)\cap\mathcal F^{S^-}\cC^\infty_0(L)\:.\] Since $\Upsilon_S(L)=0$ for every $S$ for what we said before, we obtain $\Upsilon^{(2)}_{S^+,S^-,S}(L)=+\infty$.
 The proof for $\Upsilon^*$ is exactly the same because of Proposition \ref{prop:mirror}.
\end{proof}
In particular, for knots we have the following corollary.
\begin{cor}
 For a knot $K$ if $V_K(0)=V_{K^*}(0)=0$ then $\Upsilon_S(K)=0$ and $\Upsilon^{(2)}_{S^+,S^-,S}(L)=+\infty$ for every $S^\pm$ and $S$ south-west regions of $\mathbb R^2$.
\end{cor}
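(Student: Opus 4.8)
The plan is to deduce this corollary directly from Proposition~\ref{prop:zero}, using the fact that for a knot $K$ the starred invariants coincide with the unstarred ones. First I would recall that, as observed right after the definition of $\Upsilon^*_S$, for every knot $K$ one has $\Upsilon_S(K)=\Upsilon^*_S(K)$ for every centered south-west region $S$; moreover, since for $n=1$ the Maslov gradings $0$ and $1-n$ (resp. $1$ and $2-n$) coincide, the same identification holds at the level of the secondary invariants, i.e.\ $\Upsilon^{(2)}_{S^+,S^-,S}(K)=(\Upsilon^*)^{(2)}_{S^+,S^-,S}(K)$ for every triple of centered south-west regions. Hence it suffices to produce the two vanishing conditions $V_K(0)=W_K(0)=0$ needed to apply Proposition~\ref{prop:zero} to the link $L=K$.

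The condition $V_K(0)=0$ is one of the hypotheses. For the condition $W_K(0)=0$, I would use that $W_k=\overline{\iota V_k}$, so that Proposition~\ref{prop:mirror} gives $\Upsilon^*_{W_k}(L)=2\cdot V_{L^*}(k)$ for every link $L$ and every $k\in\N$. Taking $L=K$ and $k=0$, and combining with $\Upsilon_{W_0}(K)=\Upsilon^*_{W_0}(K)$ together with the normalization $-2\cdot W_K(0)=\Upsilon_{W_0}(K)$, we obtain $W_K(0)=-V_{K^*}(0)$, which equals $0$ by the remaining hypothesis $V_{K^*}(0)=0$. (The bookkeeping to be checked here is the sign matching against the conventions $-2\,V_L(k)=\Upsilon_{V_k}(L)$ and $-2\,W_L(k)=\Upsilon_{W_k}(L)$: since $V_{K^*}(0)=h_0(K^*)\geq0$ one a priori only gets $W_K(0)\leq0$, and the hypothesis $V_{K^*}(0)=0$ is precisely what forces equality.)

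Finally, with $V_K(0)=W_K(0)=0$ in hand, Proposition~\ref{prop:zero} applied to $L=K$ yields $\Upsilon_S(K)=0$ for every centered south-west region $S$ and $\Upsilon^{(2)}_{S^+,S^-,S}(K)=+\infty$ for every triple $S^+,S^-,S$ of centered south-west regions; the corresponding statements for $\Upsilon^*$ and $(\Upsilon^*)^{(2)}$ then follow for free from $\Upsilon=\Upsilon^*$ on knots. I do not expect any genuine obstacle here: the argument is a short specialization of Propositions~\ref{prop:mirror} and~\ref{prop:zero}, and the only points requiring a little care are the sign/normalization matching in the step identifying $W_K(0)$ with $-V_{K^*}(0)$ and spelling out why the secondary invariants collapse onto their starred versions when $n=1$. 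No new geometric input is needed.
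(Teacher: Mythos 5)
Your proof is correct and takes essentially the same route as the paper: the paper's own argument is the one-line ``It follows immediately from Propositions~\ref{prop:mirror} and~\ref{prop:zero},'' and you have simply filled in the details, namely that $W_K(0)=-V_{K^*}(0)=0$ by Proposition~\ref{prop:mirror} applied to $V_0$ together with the identity $\Upsilon_S(K)=\Upsilon^*_S(K)$ for knots, so Proposition~\ref{prop:zero} applies with $L=K$.
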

\begin{proof}
 It follows immediately from Propositions \ref{prop:mirror} and \ref{prop:zero}.
\end{proof}
In fact, it is possible to prove that $V_K(0)=V_{K^*}(0)=0$ forces $CFK^\infty(K)$ to be stably equivalent to $\F[U,U^{-1}]_{(0)}$, the filtered chain homotopy type of the unknot, see \cite{Hom}.

\section{Unoriented Heegaard Floer homology}
\label{section:five}
\subsection{The homology group \texorpdfstring{$HFL'(L)$}{HFL'(L)}}
Let us take a Heegaard diagram $\mathcal D$ for a link $L$ in $S^3$.
The chain complex $CFL'(L)$ is the filtered chain homotopy type of $CFL'(\mathcal D)$, the free $\F[U,U^{-1}]$-module over $\T=\T_{\alpha}\cap\T_{\beta}$ with differential given by
\[\partial'x=\sum_{y\in\T}\sum_{\substack{\phi\in\pi_2(x,y) \\ \mu(\phi)=1}}m(\phi)\cdot U^{n_{\textbf w}(\phi)+n_{\textbf z}(\phi)}y\:,\] where $\phi,n_*(\phi)$ and $m(\phi)$ are as in Subsection \ref{subsection:complex}, and
\[\partial'(U^{\pm 1}p)=U^{\pm 1}\cdot\partial'p\] for any $x\in\T$ and $p\in CFL'(\mathcal D)$.

Foe every $x\in\T$ we define the \emph{$\delta$-grading} as \[\delta(x)=M(x)-A(x)\:.\] It is easy to check that, with this definition, the variable $U^{\pm1}$ drops the $\delta$-grading by $\pm1$. Moreover, we have that there is a map \[\partial'_d:CFL'_d(\mathcal D)\longrightarrow CFL'_{d-1}(\mathcal D)\] for any $d\in\mathbb Z$.

The chain complex $CFL'(L)$ also has the algebraic filtration $j$, defined as in Subsection \ref{subsection:complex}:
\[j^tCFL'(L)=U^{-t}\cdot CFL''(L),\]  where $CFL''(L)$ is the free $\F[U]$-module over $\T$ and $t\in\mathbb Z$. Note that the latter group was the original unoriented chain complex defined by Ozsv\'ath, Stipsicz and Szab\'o in \cite{Unoriented}. It is easy to check that the differential $\partial'$ preserves $j$.  

We define the homology group as usual: \[HFL'(L)=\bigoplus_{d\in\mathbb Z}HFL'_d(L)\] and \[\mathcal F^tHFL'_d(L)=\pi_d(\Ker\partial'_{d,t}):=\pi_d(\Ker\partial'_d\cap\mathcal F^tCFL'(L))\:,\] where $\pi_d:\Ker\partial'_d\rightarrow HFL'(L)$ is the quotient map.
\begin{prop}
 \label{prop:prime}
 For every $n$-component link $L$ one has \[HFL'(L)\cong_{\F[U,U^{-1}]}\F[U,U^{-1}]^{2^{n-1}}\:,\] with $\delta$-homogeneous generators, and \[\dfrac{\mathcal F^0HFL'(L)}{\mathcal F^{-1}HFL'(L)}\cong_{\F}\F^{2^{n-1}}\:.\]
\end{prop}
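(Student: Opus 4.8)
The plan is to reduce everything to the already-established structure of $\cC^\infty(L)$ by comparing the two differentials $\partial^-$ and $\partial'$. The only difference between them is that $\partial'$ weights each domain $\phi$ by $U^{n_{\mathbf w}(\phi)+n_{\mathbf z}(\phi)}$ instead of $U^{n_{\mathbf w}(\phi)}$. Since $A(x)-A(y)=n_{\mathbf z}(\phi)-n_{\mathbf w}(\phi)$ for $\phi\in\pi_2(x,y)$, the extra factor is exactly $U^{n_{\mathbf z}(\phi)}$, which shifts the Alexander grading. Concretely, on the associated graded of the Alexander filtration the map $\partial'$ agrees (up to the $\mathcal A$-shift) with the differential on $\widehat{CFL}(L)$ tensored up appropriately. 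I would first observe that $HFL'(L)$, as an $\F[U,U^{-1}]$-module with its $\delta$-grading, can be computed from a spectral sequence (or direct filtered argument) whose $E_1$-page is governed by $\widehat{\mathcal{HFL}}(L)$, or — more directly — use the fact from \cite{Unoriented} and \cite{OSlinks} that $HFL'(L)$ is isomorphic as a graded $\F[U,U^{-1}]$-module to the analogue of $HF^\infty(S^3,n)\cong\F[U,U^{-1}]^{2^{n-1}}$ because, as with $\partial^-$, the total homology only "sees" the $\mathbf w$- and $\mathbf z$-basepoint count in a way that collapses to the Heegaard Floer homology of $(S^3,n)$ with a regraded $U$-action.

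The cleanest route is probably this: set $U=0$ to pass to $\widehat{CFL}'(L)$, note that $\widehat\partial{}'$ is conjugate to $\widehat\partial$ via the identity on generators (the $n_{\mathbf z}$-exponent becomes irrelevant once $U=0$ on the positive powers, matching the usual hat flavour), so $\widehat{HFL}'(L)\cong\widehat{HFL}(L)$ as $\F$-vector spaces, which has total dimension $\ge 2^{n-1}$ with the correct $\delta$-graded pieces by \cite{OSlinks}. Then run the standard argument that $HFL'(L)\otimes_{\F[U]}\F[U,U^{-1}]$ is free of rank equal to $\dim_\F\widehat{HFL}'$ divided by the appropriate factor — in fact one shows $HF^\infty$-style that multiplication by $U$ is an isomorphism on $HFL'(L)$, forcing it to be free over $\F[U,U^{-1}]$; combined with the computation of $\chi$ or of the $E_\infty$-page this pins the rank at $2^{n-1}$. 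The $\delta$-homogeneity of the generators follows since $\partial'$ is $\delta$-graded of degree $-1$ and $U$ is $\delta$-graded, so $HFL'(L)$ splits as a direct sum of $\delta$-graded pieces and a homogeneous basis exists.

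For the second statement, $\mathcal F^0 HFL'(L)/\mathcal F^{-1}HFL'(L)$, I would argue exactly as in the proof of Theorem \ref{teo:dim}: the $j$-filtration on $HFL'(L)$ is the one coming from $HF^-(S^3,n)$-type data, all $2^{n-1}$ generators of the "minus" version $HFL''(L)$ sit at minimal $j$-level $0$, and $U^{\pm1}$ drops the minimal $j$-level by $\pm1$. Hence the only generators contributing to $\mathcal F^0/\mathcal F^{-1}$ are precisely these $2^{n-1}$ classes, giving $\F^{2^{n-1}}$. The main obstacle I anticipate is justifying that $HFL''(L)$ (the $\F[U]$-version) is free of rank $2^{n-1}$ with all generators at $j$-level $0$ — i.e. that no $U$-torsion appears and the filtration behaves as in the oriented case; this requires either citing the relevant computation in \cite{Unoriented} or reproving the invariance of $CFL''$ and identifying its homology with a regraded $HF^-(S^3,n)$, and then the freeness over $\F[U,U^{-1}]$ follows by inverting $U$. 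Everything else is bookkeeping with the $\delta$- and $j$-gradings that parallels Section \ref{section:two}.
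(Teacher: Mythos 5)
Your proposal lands on the same conclusions as the paper, but the route for the second claim is genuinely different and, as you yourself flag, harder. The paper's proof is two lines: the first isomorphism is cited directly to Ozsv\'ath--Stipsicz--Szab\'o \cite{Unoriented}, and the second follows by observing that the $U$-action on $HFL'(L)$ drops the $\delta$-grading (hence also the minimal $j$-filtration level) by one, so each of the $2^{n-1}$ free $\F[U,U^{-1}]$-summands from the first claim contributes exactly one $\F$-dimension to $\mathcal F^0HFL'(L)/\mathcal F^{-1}HFL'(L)$: one simply normalizes each $\delta$-homogeneous generator by a suitable power of $U$ to bring it to $j$-level zero. Your argument instead passes through $HFL''(L)$ (the $\F[U]$-module) and tries to imitate the proof of Theorem \ref{teo:dim} by showing that its free part has all $2^{n-1}$ generators at minimal $j$-level $0$ --- and you identify precisely this as ``the main obstacle.'' That obstacle is a red herring in the paper's approach: nothing about the $U$-torsion of $HFL''(L)$ is needed once one has the first claim with homogeneous generators, since the $\delta$-grading does all the bookkeeping. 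For the first claim, you essentially re-sketch what \cite{Unoriented} proves (the $\partial'$ vs.\ $\partial^-$ comparison, the $U=0$ specialization, inverting $U$); that is not incorrect, but the paper treats it as a citation, and a couple of your intermediate assertions are slightly off as stated (e.g.\ $U$ always acts invertibly on an $\F[U,U^{-1}]$-module, so ``multiplication by $U$ is an isomorphism'' is automatic and does not by itself force freeness; freeness comes from the structure theorem plus the absence of torsion established in \cite{Unoriented}). So: correct target, defensible but more circuitous path, and the obstacle you anticipate disappears if you deduce the second claim directly from the first via the $\delta$-grading rather than through $HFL''(L)$.
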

\begin{proof}
 The first claim follows from \cite{Unoriented}, while the second one from the fact that the $U$-action drops the $\delta$-grading by one: each homology class in $\mathcal F^0HFL'(L)\setminus\mathcal F^{-1}HFL'(L)$ corresponds exactly to an $\F[U,U^{-1}]$-summand of $HFL'(L)$.
\end{proof}
In \cite{Unoriented} is proved that $HFL'(L)$ is an isotopy link invariant. This is also implied by the following theorem. 
\begin{teo}
 \label{teo:prime}
 There exists a chain map \[i:\cC^\infty(L)\oplus\cC^\infty(L)\lkhov-1\rkhov\longrightarrow CFL'(L)\:,\] which is an isomorphism of $\F$-vector spaces that identifies the Maslov grading with the $\delta$-grading. 
\end{teo}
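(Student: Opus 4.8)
The plan is to construct $i$ explicitly on a fixed Heegaard diagram $\mathcal D$ for $L$, exploiting the fact that $\cC^\infty(\mathcal D)$ and $CFL'(\mathcal D)$ count exactly the same pseudo-holomorphic disks and differ only in the power of the formal variable weighting each contribution: $U^{n_{\textbf w}(\phi)}$ for $\partial^-$ against $U^{n_{\textbf w}(\phi)+n_{\textbf z}(\phi)}$ for $\partial'$. The essential algebraic input will be the Alexander-grading identity $A(x)-A(y)=n_{\textbf z}(\phi)-n_{\textbf w}(\phi)$ for $\phi\in\pi_2(x,y)$, together with $\delta=M-A$. Write $V$ for the variable called $U$ in $CFL'(\mathcal D)$, so that $\partial'$ counts $V^{n_{\textbf w}(\phi)+n_{\textbf z}(\phi)}$ and $V$ lowers $\delta$ by one; the first move is to regard $CFL'(\mathcal D)$ as an $\F[U,U^{-1}]$-module through the substitution $U=V^2$. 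Because $\{1,V\}$ is an $\F[U,U^{-1}]$-basis of $\F[V,V^{-1}]$, this splits $CFL'(\mathcal D)$, viewed as an $\F[U,U^{-1}]$-complex, into two free summands of rank $|\T|$, and these will receive the two copies of $\cC^\infty(\mathcal D)$.

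Then I would define $i$ on generators $x\in\T$ by $i(x)=V^{-A(x)}x$ on the first summand $\cC^\infty(\mathcal D)$ and $i(x)=V^{1-A(x)}x$ on the second summand $\cC^\infty(\mathcal D)\lkhov-1\rkhov$, extended $U$-equivariantly (equivalently $V^2$-equivariantly). The main verification is that $i$ is a chain map. On the first summand one has $i(\partial^-x)=\sum_\phi m(\phi)\,V^{2n_{\textbf w}(\phi)-A(y)}y$, whereas $\partial'(i(x))=V^{-A(x)}\partial'x=\sum_\phi m(\phi)\,V^{n_{\textbf w}(\phi)+n_{\textbf z}(\phi)-A(x)}y$; these agree term by term precisely because $2n_{\textbf w}(\phi)-A(y)=n_{\textbf w}(\phi)+n_{\textbf z}(\phi)-A(x)$ is a restatement of the Alexander-grading identity, and the second summand is handled identically after multiplying through by $V$. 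For the gradings, $\delta\bigl(V^{-A(x)}x\bigr)=\delta(x)+A(x)=M(x)$, so $i$ carries the Maslov grading of the first summand to the $\delta$-grading, while $\delta\bigl(V^{1-A(x)}x\bigr)=M(x)-1$ agrees with the Maslov grading of $x$ regarded inside $\cC^\infty(\mathcal D)\lkhov-1\rkhov$; hence $i$ identifies the Maslov grading of the source with the $\delta$-grading of $CFL'(\mathcal D)$ throughout.

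Finally, $i$ is an isomorphism, since it sends the $\F[U,U^{-1}]$-basis of the source given by the two copies of $\T$ to $\{V^{-A(x)}x\}_{x\in\T}\cup\{V^{1-A(x)}x\}_{x\in\T}$, which is again an $\F[U,U^{-1}]$-basis of $CFL'(\mathcal D)$; I would then pass from $\mathcal D$ to the filtered chain homotopy types, using the invariance statements for $\cC^\infty$ and for $HFL'$ recalled in this section. I expect the real work to be bookkeeping rather than conceptual: keeping the Maslov, Alexander and $\delta$ conventions and the $\lkhov-1\rkhov$ shift mutually consistent, and — because the following subsections use this theorem to transport the $\mathcal F$ and $j$ filtrations onto $HFL'(L)$ — checking how $i$ interacts with the algebraic filtration $j$, which is the one genuinely delicate point, since the substitution $U=V^2$ rescales the $j$-indices and one must confirm the filtered statements still line up as claimed.
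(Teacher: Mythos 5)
Your proposal is correct and is essentially the paper's proof: the map you write down, $i(U^kx)=V^{2k-A(x)}x$ on the first summand and $V^{2k+1-A(x)}x$ on the second, is exactly the $i^0_d$ and $i^1_{d+1}$ of the paper, the chain-map check rests on the same identity $n_{\textbf w}(\phi)-A(y)=n_{\textbf z}(\phi)-A(x)$, and the grading computation is the same. The only difference is presentational — you deduce bijectivity from the $\F[U,U^{-1}]$-basis $\{1,V\}$ of $\F[V,V^{-1}]$ under $U=V^2$, whereas the paper checks injectivity and surjectivity of $i^0\oplus i^1$ directly by a parity argument; your closing worry about the $j$-filtration is sensible but falls outside what this theorem asserts and is handled separately by the paper in the lemma that follows.
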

\begin{proof}
 Let us consider all the intersection points $x_1,...,x_l$ whose Maslov grading has the same parity of $d$. We define  \[\begin{aligned}i^0_d:\cC^\infty_d(L)&\longrightarrow CFL'_d(L)\\
 U^{k_1}x_1+...+U^{k_l}x_l&\longrightarrow U^{2k_1-A(x_1)}x_1+...+U^{2k_l-A(x_l)}x_l\end{aligned}\] and \[\begin{aligned}i^1_d:\cC^\infty_d(L)&\longrightarrow CFL'_{d-1}(L)\\
 U^{k_1}x_1+...+U^{k_l}x_l&\longrightarrow U^{1+2k_1-A(x_1)}x_1+...+U^{1+2k_l-A(x_l)}x_l\end{aligned}\:.\]
 These maps are linear by definition; let us prove that they are also injective. We observe that $i_d^\epsilon(U^{k_1}x_1+...+U^{k_l}x_l)\neq0$, where $\epsilon$ is 0 or 1, because the monomials $U^{\epsilon+2k_i-A(x_i)}x_i$ for $i=1,...,l$ are linearly independent in $CFL'_{d-\epsilon}(L)$; hence, the kernel of $i_d^\epsilon$ is trivial.
 
 We now show that $i_d=i_d^0+i^1_{d+1}$ is surjective. Suppose that $q=U^{h_1}x_1+...+U^{h_l}x_l\in CFL'_d(L)$. If $h_i\equiv A(x_i)$ mod 2 then there exists a $k_i$ such that $2k_i-A(x_i)=h_i$; otherwise, if $h_j\equiv A(x_j)+1$ mod 2 then there exists a $k_j$ such that $1+2k_j-A(x_j)=h_j$. Therefore, say $q=q_1+q_2$ and $q_i$ consists of monomials of these two kinds respectively, we find $p_1$ and $p_2$ such that \[i_d(p_1,p_2)=i_d^0(p_1)+i_{d+1}^1(p_2)=q_1+q_2=q\] and the claim follows.
 
 Since $i^0_d$ and $i^1_{d+1}$ are both injective and their images have trivial intersection, and then give a direct sum of $CFL'_d(L)$, we obtain that each $i_d$ is a linear isomorphism between $\cC^\infty(L)\oplus\cC^\infty(L)\lkhov-1\rkhov$ in $\delta$-grading $d$ and $CFL'_d(L)$. 
 
 In order to complete the proof we now have to show that $i$ is a chain map, which means $i\circ(\partial^-,\partial^-)=\partial'\circ i$. Since $i$ is linear we can just check monomials. We have \[\begin{aligned}(i_{d-1}\circ(\partial^-,\partial^-))&(U^kx,0)=i_{d-1}(\partial^-(U^kx),0))=i^0_{d-1}(U^k\partial^-x)=\\
 &=U^{2k}\cdot i^0_{d-1}\left(\sum_{y\in\T}\sum_{\substack{\phi\in\pi_2(x,y) \\ \mu(\phi)=1}}m(\phi)\cdot U^{n_{\textbf w}(\phi)}y\right)=\sum_{y\in\T}\sum_{\substack{\phi\in\pi_2(x,y) \\ \mu(\phi)=1}}m(\phi)\cdot U^{2k+2n_{\textbf w}(\phi)-A(y)}y\end{aligned}\] and
 \[(\partial'\circ i_d)(U^kx,0)=\partial'(i^0_d(U^kx))=\partial'(U^{2k-A(x)}x)=\sum_{y\in\T}\sum_{\substack{\phi\in\pi_2(x,y) \\ \mu(\phi)=1}}m(\phi)\cdot U^{2k-A(x)+n_{\textbf w}(\phi)+n_{\textbf z}(\phi)}y\:.\] To conclude we need to see that $n_{\textbf w}(\phi)-A(y)=n_{\textbf z}(\phi)-A(x)$ and this holds for every $\phi\in\pi_2(x,y)$, see \cite{OSlinks}. The proof for the monomials $(0,U^hy)$ is the same.
\end{proof} 
The graded object associated to $CFL'(L)$ is $\widehat{CFL}\:'(L)$, which is the version of $\widehat{CFL}$ obtained by collapsing the bigrading accordingly. Hence, if $L_1$ and $L_2$ are isotopic links then \[\widehat{HFL}_d\:'(L_1)\cong_{\F}\widehat{HFL}_d\:'(L_2)\] for every $d\in\mathbb Z$. This means that both $HFL'(L)$ and $\widehat{HFL}\:'$ are link invariants.

\subsection{The \texorpdfstring{$\upsilon$}{upsilon}-set and unoriented concordance}
We start this subsection with some properties of $HFL'(L)$.
\begin{lemma}
 \label{lemma:prime}
 For every link $L$ we have that
 \begin{enumerate}
  \item if there is a chain map $F:\cC^\infty(L_1)\rightarrow\cC^\infty(L_2)$ which preserves the $\mathcal F$-filtration then the map $F':CFL'(L_1)\rightarrow CFL'(L_2)$, defined as $i_2\circ\left(F\oplus F\lkhov-1\rkhov\right)\circ i_1^{-1}$, preserves $j$;
  \item  if $\cC^\infty(L_1)$ is locally equivalent to $\cC^\infty(L_2)$ then there is a $j$-filtered and $\delta$-graded isomorphism between $HFL'(L_1)$ and $HFL'(L_2)$.
 \end{enumerate}
\end{lemma}
\begin{proof}
 Let us prove Point 1). 
 We have to show that $F'$ is $j$-filtered of degree zero. We do this by proving that if $U^kx\in\mathcal F^tCFL'(L_1)$ then $F'(U^kx)\in\mathcal F^tCFL'(L_2)$ for every monomial.
 
 We assume that $k\geq-t$. Then one has \[i_1^{-1}(U^kx)=\left\{\begin{aligned}&\left(U^{\frac{k+A(x)}{2}},\:0\right)\hspace{0.5cm}\quad\text{ if }\quad k+A(x)\quad\text{ is even}\\ &\left(0,\:U^{\frac{-1+k+A(x)}{2}}\right)\quad\text{ if }\quad k+A(x)\quad\text{ is odd}\end{aligned}\right.\:.\] Now, when $k+A(x)$ is even, we can write
 \[(F\oplus F\lkhov-1\rkhov)(i_1^{-1}(U^kx))=\left(\sum_{y\in\T}a(x,y)\cdot U^{\frac{k+A(x)}{2}+\Delta(x,y)}y,\:0\right)\:,\] with $a(x,y)\in\F$. This yields \[F'(U^kx)=\sum_{y\in\T}a(x,y)\cdot U^{k+A(x)-A(y)+2\Delta(x,y)}y\] and it is easy to check that we get the same result when $k+A(x)$ is odd. To conclude we need to argue that $A(y)\leq A(x)+2\Delta(x,y)$.
 Since $F$ preserves $\mathcal F$, we have that it is both $j$ and $\mathcal A$-filtered of degree zero. Therefore, it is $\Delta(x,y)\geq0$ and $A(y)\leq A(x)+\Delta(x,y)$ whenever $a(x,y)=1$ and the claim follows.
 
 Finally, to prove Point 2) take the maps $f:\cC^\infty(L_1)\rightarrow\cC^\infty(L_2)$ and $g:\cC^\infty(L_2)\rightarrow\cC^\infty(L_1)$, which both preserve the $\mathcal F$-filtration. Now Theorem \ref{teo:prime} implies that $f'$, defined as $i_2\circ\left(f\oplus f\lkhov-1\rkhov\right)\circ i_1^{-1}$, and $g'$, defined in the same way from $g$, induce $\delta$-graded isomorphisms in homology; moreover, Lemma \ref{lemma:prime} Point 1) also gives that they preserve $j$. Hence, we proved that $HFL'(L_1)$ is $j$-filtered isomorphic to $HFL'(L_2)$.
\end{proof}
The first consequence of this lemma is that the group $HFL'$ is also a concordance invariant.
\begin{cor}
 \label{cor:concordance_prime}
 If the link $L_1$ is concordant to the link $L_2$ then the unoriented link Floer homology group $HFL'(L_1)$ is $j$-filtered isomorphic to $HFL'(L_2)$, which means that \[\mathcal F^tHFL'_d(L_1)\cong_{\F}\mathcal F^tHFL'_d(L_2)\] for every $t,d\in\mathbb Z$.
\end{cor}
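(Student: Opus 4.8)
The plan is to obtain this statement by directly combining the two results that immediately precede it, namely the concordance invariance of the local equivalence class of $\cC^\infty$ (Theorem \ref{teo:concordance}) and the transfer principle recorded in Lemma \ref{lemma:prime}, Point 4). There is essentially no new content to produce; the work is in checking that the hypotheses line up.

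First I would invoke Theorem \ref{teo:concordance}: since $L_1$ is concordant to $L_2$, there are chain maps $f\colon\cC^\infty(L_1)\to\cC^\infty(L_2)$ and $g\colon\cC^\infty(L_2)\to\cC^\infty(L_1)$ that preserve the filtration $\mathcal F$ and induce $\mathcal F$-filtered isomorphisms in homology; in the terminology fixed earlier in the paper this says precisely that $\cC^\infty(L_1)$ is locally equivalent to $\cC^\infty(L_2)$. Then I would feed this local equivalence into Lemma \ref{lemma:prime}, Point 4), which yields a $j$-filtered isomorphism between $HFL'(L_1)$ and $HFL'(L_2)$. Concretely this isomorphism is built by conjugating $f\oplus f\lkhov-1\rkhov$ and $g\oplus g\lkhov-1\rkhov$ with the chain isomorphism $i$ of Theorem \ref{teo:prime}, producing maps $f',g'$ between $CFL'(L_1)$ and $CFL'(L_2)$; Point 3) of the lemma guarantees these preserve the algebraic filtration $j$, and the fact that $i$ identifies the Maslov grading with the $\delta$-grading guarantees they induce $\delta$-graded isomorphisms on homology.

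Finally I would unravel the phrase ``$j$-filtered isomorphism'': it means exactly that the induced map restricts to $\F$-linear isomorphisms $\mathcal F^tHFL'_d(L_1)\cong_\F\mathcal F^tHFL'_d(L_2)$ for every $t,d\in\Z$, which is the displayed assertion of the corollary. Since every nontrivial ingredient is already in place, I do not anticipate a real obstacle; the only points deserving a moment's care are (i) confirming that the notion of local equivalence delivered by Theorem \ref{teo:concordance} is verbatim the hypothesis consumed by Lemma \ref{lemma:prime}, Point 4), and (ii) noting, as elsewhere in the paper, that a filtered chain map together with a filtered chain map in the other direction upgrades to a filtered \emph{isomorphism} of homology groups, by the usual dimension count using finite generation of $\cC^\infty_d$ in each algebraic degree and the rigidity of $HF^\infty(S^3,n)$.
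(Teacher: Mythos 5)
Your proposal matches the paper's proof exactly: invoke Theorem \ref{teo:concordance} to obtain the local equivalence between $\cC^\infty(L_1)$ and $\cC^\infty(L_2)$, then apply Lemma \ref{lemma:prime}, Point 4), to transfer this to a $j$-filtered isomorphism of $HFL'$. The extra detail you supply about how $f'$, $g'$ are constructed via $i$ is simply unpacking what Point 4) already asserts, so there is no substantive difference.
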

\begin{proof}
 From Theorem \ref{teo:concordance} we know that $\cC^\infty(L_1)$ is locally equivalent to $\cC^\infty(L_2)$. Then the claim follows from Lemma \ref{lemma:prime} Point 2).
\end{proof}
From Theorem \ref{teo:dim} we know that for an $n$-component link $L$ one has \[\dfrac{\mathcal F^{\{j\leq0\}}\cH^\infty_d(L)}{\mathcal F^{\{j\leq-1\}}\cH^\infty_d(L)}\cong_{\F}\F^{\binom{n-1}{-d}}\] for $d=0,...,1-n$. Let us denote with $\{h_1,...,h_{2^{n-1}}\}$ a basis for the direct sum of such groups, where the homology classes $h_i$'s are taken so that they satisfy the following property: for each $i$, there are an integer $k$ and a Maslov grading $d\in[0,1-n]$ such that \\ $h_i\in\faktor{\mathcal F^{(A_1)_k}\cH^\infty_d(L)}{\mathcal F^{(A_1)_{k+1}}\cH^\infty_d(L)}$, where $A_1$ is the centered south-west region \[\{(j,A)\in\mathbb R^2\:|\:j+A\leq0\}\] that we used in Subsection \ref{subsection:Alexander} to define $\Upsilon_L(1)$, and, for any fixed $k$ and $d$, the number of $h_i$’s with those $k$ and $d$ is exactly \[\dim_{\F}\dfrac{\mathcal F^{(A_1)_k}\cH^\infty_d(L)}{\mathcal F^{(A_1)_{k+1}}\cH^\infty_d(L)}\:.\] We also take $h_1$ to be the only homology class as above in Maslov grading $0$ and $h_{2^{n-1}}$ the same, but in Maslov grading $1-n$.

We define $u_i(L)$ for $i=1,...,2^{n-1}$ as the maximum $k\in\mathbb R$ such that $\mathcal F^{(A_1)_k}\cH^\infty_d(L)$ contains the homology class $h_i$. Note that the unordered set $\{u_1(L),...,u_{2^{n-1}}(L)\}$ does not depend on the choice of the $h_i$'s, but only on the $\mathcal F$-filtered isomorphism type of $\cH^\infty(L)$. Moreover, we have that $u_1(L)=\Upsilon_L(1)$ and $u_{2^{n-1}}(L)=\Upsilon^*_L(1)$.

Now let $\upsilon(L)=\{\upsilon_1(L),...,\upsilon_{2^{n-1}}(L)\}$ be the set of $\delta$-gradings of a homogeneous $\F$-basis of \[\dfrac{\mathcal F^0HFL'(L)}{\mathcal F^{-1}HFL'(L)}\:.\] Such set exists because of Proposition \ref{prop:prime} and it does not depend on the choice of the basis, but only on the $j$-filtered isomorphism type of $HFL'(L)$. We have the following lemma.
\begin{lemma}
 \label{lemma:pain}
 A homogeneous $\F$-basis as before is obtained by taking the homology classes of elements $\{U^{k_1}q_1,...,U^{k_{2^{n-1}}}q_{2^{n-1}}\}$, where $q_i=i_{d_i}^0(p_i)$ and $p_i$ represents the homology class $h_i$ in Maslov grading $d_i$ for every $i=1,...,2^{n-1}$.
\end{lemma}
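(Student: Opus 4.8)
The plan is to transport the chosen basis $\{h_1,\dots,h_{2^{n-1}}\}$ across the isomorphism $i$ of Theorem \ref{teo:prime}, keeping careful track of how the two filtrations match. First I would record the elementary but crucial \emph{filtration dictionary}: since $i^0_d$ sends a monomial $U^mx$ to $U^{2m-A(x)}x$, one checks on monomials that $U^mx$ lies in the $A_1$-diagonal level $\mathcal F^{(A_1)_k}\cC^\infty(L)=\text{Span}\{\mathcal F^{t,s}\cC^\infty(L)\mid t+s\leq -k\}$ exactly when $2m-A(x)\geq k$, i.e.\ exactly when $i^0_d(U^mx)\in j^{-k}CFL'(L)$; for $i^1_d$ the same holds with $k$ replaced by $k-1$. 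Hence $i$ identifies $CFL'(L)$, as a $j$-filtered complex, with the direct sum $i^0(\cC^\infty(L))\oplus i^1(\cC^\infty(L)\lkhov-1\rkhov)$ of two $j$-compatible subcomplexes, the $j$-filtration on each being a shift of the $A_1$-diagonal filtration of $\cC^\infty(L)$; passing to homology, $i^0$ then carries $\mathcal F^{(A_1)_k}\cH^\infty_d(L)$ onto the part of $\mathcal F^{-k}HFL'_d(L)$ sitting in the even-parity summand. Throughout this bookkeeping Lemma \ref{lemma:prime}, points 1) and 2), lets me pass cleanly between cycles, boundaries and homology classes on the two sides.

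Next I would produce the elements. Each $p_i$ is a cycle of Maslov grading $d_i$, minimal $j$-level zero, and $A_1$-level exactly $u_i(L)$ (this is how the $h_i$ were selected), so $q_i=i^0_{d_i}(p_i)$ is a cycle in $CFL'_{d_i}(L)$, and by the dictionary its class $[q_i]\in HFL'_{d_i}(L)$ has minimal $j$-level exactly $-u_i(L)$: indeed $p_i$ contains a monomial of $A_1$-value $u_i(L)$, sent by $i^0_{d_i}$ to a monomial of $j$-level $-u_i(L)$, while $[p_i]\notin\mathcal F^{(A_1)_{u_i(L)+1}}\cH^\infty_{d_i}(L)$ forbids any representative of $[q_i]$ in $j^{-u_i(L)-1}CFL'(L)$. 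Setting $k_i:=-u_i(L)$, the cycle $U^{k_i}q_i$ has minimal $j$-level zero, so its class lies in $\mathcal F^0HFL'_{\,d_i+u_i(L)}(L)\setminus\mathcal F^{-1}HFL'_{\,d_i+u_i(L)}(L)$, and it is $\delta$-homogeneous of grading $d_i+u_i(L)$ because $i$ identifies the Maslov grading with the $\delta$-grading. By Proposition \ref{prop:prime} the quotient $\mathcal F^0HFL'(L)/\mathcal F^{-1}HFL'(L)$ has $\F$-dimension $2^{n-1}$, which is exactly the number of classes $[U^{k_i}q_i]$, so it remains only to prove that these $2^{n-1}$ classes are $\F$-linearly independent modulo $\mathcal F^{-1}HFL'(L)$.

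The linear independence is the main obstacle, and the point where the refined choice of the $h_i$ really enters. A relation involves only indices of a common $\delta$-grading, so I would fix such a grading and expand each $[U^{k_i}q_i]$ in a $\delta$-homogeneous, minimal-$j$-level basis $\{b_j\}$ of the free $\F[U,U^{-1}]$-module $HFL'(L)$ (Proposition \ref{prop:prime}); $\delta$-homogeneity forces the coefficients to be monomials, $[U^{k_i}q_i]=\sum_j\epsilon_{ij}U^{\delta(b_j)-\delta_i}b_j$ with $\epsilon_{ij}\in\F$, and the reduction modulo $\mathcal F^{-1}HFL'(L)$ keeps only the $\bar b_j$ of top $\delta$-grading among those occurring. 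Independence thus reduces to invertibility of the ``leading-term'' blocks of the matrix $(\epsilon_{ij})$, which I would deduce from the fact that $\{[q_i]\}$ is itself an $\F[U,U^{-1}]$-basis of $HFL'(L)$: by Theorem \ref{teo:prime}, $\{[q_i]\}\cup\{i^1_{d_i}(p_i)\}=\{[q_i]\}\cup\{U[q_i]\}$ is the image under the isomorphism $i$ of a module basis of $\cH^\infty(L)\oplus\cH^\infty(L)\lkhov-1\rkhov$, provided $\{[p_i]\}$ is a module basis of $\cH^\infty(L)$ — and this last point is exactly where Theorem \ref{teo:dim} (which pins down, Maslov grading by Maslov grading, where the $j$-filtration of $\cH^\infty(L)$ lives) and Lemma \ref{lemma:generator} (so that each $p_i$ restricts to a genuine generator of $\widehat{\mathcal{HFL}}(L)$, ruling out spurious cancellation among leading terms) are needed. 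Granting that, $(\epsilon_{ij})$ is $\F$-invertible, a short determinant/triangularity argument shows its leading blocks are invertible, and the classes $[U^{k_i}q_i]$ form the desired homogeneous $\F$-basis.
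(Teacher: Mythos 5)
Your argument is correct and follows the same route as the paper: transport classes through $i^0$ using the dictionary between the $A_1$-diagonal filtration on $\cC^\infty(L)$ and the $j$-filtration on $CFL'(L)$, shift by $U^{k_i}$ to land in minimal $j$-level zero, and deduce independence in $\mathcal F^0HFL'(L)/\mathcal F^{-1}HFL'(L)$ from the fact that $\{[q_i]\}$ is an $\F[U,U^{-1}]$-basis of $HFL'(L)$. The paper leaves this last step (independence via block-triangularity of the change-of-basis matrix) essentially as an assertion that ``the condition we put on the choice of the $h_i$'s'' guarantees it, whereas you spell out the mechanism — that is a welcome clarification, not a different method. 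Two small caveats: the terms surviving reduction modulo $\mathcal F^{-1}$ are those with $\delta(b_j)=\delta_i$, i.e.\ the \emph{smallest} $\delta$-grading among the $b_j$ appearing, so ``top $\delta$-grading'' is misleading; and the invocation of Lemma \ref{lemma:generator} is not what is actually needed — it concerns $\widehat{\mathcal{HFL}}(L)$, while the relevant fact (that $\{h_i\}$ is an $\F[U,U^{-1}]$-module basis of $\cH^\infty(L)$) follows directly from Theorem \ref{teo:dim} and the fact that the $h_i$ project to a basis of $\bigoplus_d j^0\cH^\infty_d(L)/j^{-1}\cH^\infty_d(L)$.
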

\begin{proof}
 Since $i$ is an isomorphism for Theorem \ref{teo:prime}, we have that there is an injective map $\cH^\infty(L)\rightarrow HFL'(L)$ identifying the Maslov grading with the $\delta$-grading. This means that if $p$ is a representative for $h$, with Maslov grading $d$, then $i^0_d(p)$ represents a non-zero homology class in $HFL'(L)$; moreover, representatives of distinct homology classes are sent into representatives of distinct homology classes, because of Theorem \ref{teo:prime}.
 
 The element $q=i^0_d(p)$ is in $\delta$-grading $d$, but the minimal $j$-level of $[q]$ is not necessarily zero; although, since the $\delta$-grading is an absolute $\mathbb Z$-grading and the $U$-action drops it by one, we have that there is an integer $k$ such that $U^k[q]$ has indeed minimal $j$-level equal to $0$. 
 
 The fact that the set of all the $U^kq$'s obtained in this way gives a basis as wanted is assured by the condition we put on the choice of the $h_i$'s.
\end{proof}
We use this lemma to show that the $\upsilon$-set of $L$ is closely related to the set $\{u_1(L),...,u_{2^{n-1}}(L)\}$.
\begin{prop}
 \label{prop:upsilon}
 Let $\upsilon(L)$ and $u_i(L)$ for $i=1,...,2^{n-1}$ be as before. Then we have that $\upsilon_i(L)=u_i(L)+d_i$, where $u_i(L)$ is associated to the homology class $h_i$ with Maslov grading $d_i$. In particular, one has $\upsilon_1(L)=\Upsilon_L(1)$ and $\upsilon_{2^{n-1}}(L)=\Upsilon^*_L(1)+1-n$.
\end{prop}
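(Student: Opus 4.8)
The plan is to pull the definition of $u_i(L)$ across the chain isomorphism $i$ of Theorem \ref{teo:prime}, using the explicit basis of $\mathcal F^0HFL'(L)/\mathcal F^{-1}HFL'(L)$ produced in Lemma \ref{lemma:pain}.

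First I would record the elementary dictionary between $\mathcal F$-levels and $U$-powers on both sides. For $x\in\T$ the monomial $U^a x$ has minimal algebraic and Alexander levels $-a$ and $A(x)-a$ in $\cC^\infty(L)$, and since $(A_1)_m=\{(t,s)\in\R^2\:|\:t+s\le-m\}$, it lies in $\mathcal F^{(A_1)_m}\cC^\infty(L)$ exactly when $m\le 2a-A(x)$; in $CFL'(L)$ that same monomial has minimal $j$-level $-a$. Consequently, if $p_i=U^{a_1}x_1+\dots+U^{a_l}x_l$ is any cycle representing $h_i$ in $\cC^\infty_{d_i}(L)$, then $p_i\in\mathcal F^{(A_1)_m}\cC^\infty_{d_i}(L)$ iff $m\le\min_r(2a_r-A(x_r))$, so that \[u_i(L)=\max_{p_i}\ \min_{r}\bigl(2a_r-A(x_r)\bigr),\] the maximum over all cycle representatives of $h_i$, while $q_i:=i^0_{d_i}(p_i)=U^{2a_1-A(x_1)}x_1+\dots+U^{2a_l-A(x_l)}x_l$ has minimal $j$-level $-\min_r(2a_r-A(x_r))$ in $CFL'(L)$.

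Next I would identify the minimal $j$-level of the homology class $[q_i]\in HFL'(L)$ with $-u_i(L)$. The point is that $i$ splits $CFL'(L)$ as the direct sum of subcomplexes $i^0\bigl(\cC^\infty(L)\bigr)$ and $i^1\bigl(\cC^\infty(L)\lkhov-1\rkhov\bigr)$, and within a fixed $\delta$-grading these two subcomplexes are supported on disjoint sets of generators of $\T$, because once $\delta$ is fixed a generator occurs with a single prescribed $U$-power. Hence a boundary contribution coming from the second summand cannot lower the $j$-level of a cycle lying in the first, and by Lemma \ref{lemma:prime} Points 1) and 2) the cycles of $\cC^\infty_{d_i}(L)$ homologous to $p_i$ correspond under $i^0_{d_i}$ exactly to the cycles in $i^0\bigl(\cC^\infty(L)\bigr)$ of degree $d_i$ homologous to $q_i$. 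Taking the minimum of $j$-levels over representatives of $[q_i]$ therefore reduces to the maximum of minima appearing in the displayed formula for $u_i(L)$, giving minimal $j$-level $-u_i(L)$.

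Finally I would assemble the two computations. By Lemma \ref{lemma:pain} the class $U^{m_i}[q_i]$, where $m_i$ is chosen so that this class has minimal $j$-level $0$, is the $i$-th member of a homogeneous basis of $\mathcal F^0HFL'(L)/\mathcal F^{-1}HFL'(L)$; since the $U$-action drops the minimal $j$-level by one and $[q_i]$ has minimal $j$-level $-u_i(L)$, we get $m_i=-u_i(L)$. As $i$ identifies the Maslov grading with the $\delta$-grading we have $\delta(q_i)=d_i$, and since $U$ drops $\delta$ by one, $\upsilon_i(L)=\delta(U^{m_i}q_i)=d_i-m_i=u_i(L)+d_i$. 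The two special cases follow from $d_1=0$ and $d_{2^{n-1}}=1-n$ together with the already noted identities $u_1(L)=\Upsilon_L(1)$ and $u_{2^{n-1}}(L)=\Upsilon^*_L(1)$. I expect the subtle step to be the third paragraph: one must be careful that the minimal $j$-level of $[q_i]$ in $HFL'(L)$ is genuinely realized by representatives living in the summand $i^0(\cC^\infty(L))$, so that it coincides with the maximum over cycle representatives of $h_i$ in $\cC^\infty_{d_i}(L)$ rather than with something strictly smaller.
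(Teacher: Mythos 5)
Your proof is correct and follows the same strategy as the paper: pull the data across the isomorphism $i$ of Theorem \ref{teo:prime}, invoke Lemma \ref{lemma:pain} to identify the basis $\mathcal B$, and match the $\delta$-grading computation. Your third paragraph spells out a point the paper treats implicitly — namely that the minimal $j$-level of $[q_i]$ in $HFL'(L)$ is genuinely $-u_i(L)$ and not something smaller, which follows because $CFL'(L)$ splits as a direct sum of the subcomplexes $i^0(\cC^\infty(L))$ and $i^1(\cC^\infty(L)\lkhov-1\rkhov)$, so the $j$-level of $[q_i]$ is computed entirely within the first summand — and this extra care is welcome, but the underlying argument is the same.
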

\begin{proof}
 Suppose that $p_i=U^{k_1}x_1+...+U^{k_\ell}x_\ell\in\cC^\infty_{d_i}(L)$ represents the homology class $h_i$; moreover, we assume that \[k_j-A(U^{k_j}x_j)=2k_j-A(x_j)\geq u_i(L)\hspace{1cm}\text{ for any }\hspace{1cm}j=1,...,\ell\] and $2k_1-A(x_1)=u_i(L)$.
 
 Using Lemma \ref{lemma:pain} we obtain that $q_i=i(p_i)=U^{2k_1-A(x_1)}x_1+...+U^{2k_\ell-A(x_\ell)}x_\ell\in CFL'_{d_i}(L)$ represents a non-zero homology class in $HFL'_{d_i}(L)$ and $U^{-2k_1+A(x_1)}\cdot q_i$ is in minimal algebraic level zero. Moreover, we saw that we get a homogeneous basis by considering all the $h_i$'s and then, by definition of $\upsilon(L)$, we have
 \[\upsilon_i(L)=\delta\left(U^{-2k_1+A(x_1)}\cdot q_i\right)=\delta(q_i)+2k_1-A(x_1)=d_i+u_i(L)\] for every $i=1,...,2^{n-1}$.
\end{proof}
We can shift $HFL'(L)$ in order to turn it into an unoriented link invariant. 
\begin{teo}
 \label{teo:signature}
 The complex $CFL'(L_1)\left\llbracket\frac{\sigma(L_1)}{2}\right\rrbracket$ is $j$-filtered chain homotopy equivalent to \\ $CFL'(L_2)\left\lkhov\frac{\sigma(L_2)}{2}\right\rkhov$ whenever $L_1$ is isotopic to $L_2$ as unoriented links, where $\sigma$ is the signature of a link as in \emph{\cite{GL}}. 
 In particular, the set \[\upsilon(L)-\dfrac{\sigma(L)}{2}=\left\{\Upsilon_L(1)-\frac{\sigma(L)}{2},...,\Upsilon_L^*(1)+1-n-\frac{\sigma(L)}{2}\right\}\] is an unoriented link invariant for every link $L$.
\end{teo}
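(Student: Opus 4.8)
The plan is to isolate what is genuinely new here, namely the behaviour of $CFL'$ under a change of orientation, and to reduce everything else to results already established. The first observation is that, \emph{as a $j$-filtered $\F[U,U^{-1}]$-chain complex} (that is, after forgetting the $\delta$-grading), $CFL'(L)$ does not depend on the orientation of $L$: a Heegaard diagram for the link obtained from $L$ by reversing the orientation on a sublink is the \emph{same} diagram $(\Sigma,\alpha,\beta,\textbf w,\textbf z)$ with some basepoints of $\textbf w$ and $\textbf z$ interchanged, and both the differential $\partial'$ (which involves only $n_{\textbf w}(\phi)+n_{\textbf z}(\phi)$, unchanged under such an interchange) and the filtration $j$ (defined via the $\F[U]$-submodule $CFL''$) are insensitive to this relabelling. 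What does change is the $\delta=M-A$ grading. First I would check, using the formulas of \cite{OSlinks} for the change of the Maslov and Alexander gradings under the reversal of a component, that $\delta$ changes by a \emph{global} constant, independent of the generator, and that this constant is exactly $\tfrac12$ times the change of the signature $\sigma$ under the same reversal, with the sign fixed by the conventions of \cite{GL} and \cite{OSlinks}. Granting this, the $\delta$-graded, $j$-filtered complex $CFL'(L)\left\lkhov\frac{\sigma(L)+h}{2}\right\rkhov$ depends only on the underlying unoriented link of $L$ and on $h$.

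The second ingredient is oriented-isotopy invariance, which is already available. Theorem \ref{teo:prime} gives a chain isomorphism $\cC^\infty(L)\oplus\cC^\infty(L)\lkhov-1\rkhov\cong CFL'(L)$ carrying the Maslov grading to $\delta$; by Theorem \ref{teo:quasi_iso} the complexes $\cC^\infty(L_1)$ and $\cC^\infty(L_2)$ of isotopic links are $\mathcal F$-filtered chain homotopy equivalent, and by Lemma \ref{lemma:prime} (Points 3) and 4)) this equivalence transports through the isomorphism $i$ to a $j$-filtered, $\delta$-graded chain homotopy equivalence $CFL'(L_1)\simeq CFL'(L_2)$; the signature $\sigma$ is also an oriented-isotopy invariant. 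Combining the two ingredients proves the first claim: if $L_1$ and $L_2$ are unoriented isotopic, transport the orientation of $L_1$ along an ambient isotopy identifying the two underlying unoriented links to obtain an orientation $\mathfrak{o}$ on $|L_2|$ with $L_1\cong(|L_2|,\mathfrak{o})$ as oriented links; then
\[
CFL'(L_1)\left\lkhov\frac{\sigma(L_1)+h}{2}\right\rkhov\ \simeq\ CFL'\bigl(|L_2|,\mathfrak{o}\bigr)\left\lkhov\frac{\sigma(|L_2|,\mathfrak{o})+h}{2}\right\rkhov\ =\ CFL'(L_2)\left\lkhov\frac{\sigma(L_2)+h}{2}\right\rkhov ,
\]
the equivalence by oriented-isotopy invariance together with $\sigma(|L_2|,\mathfrak{o})=\sigma(L_1)$, and the equality by the orientation-independence of the normalized complex established in the first paragraph.

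For the statement about the $\upsilon$-set, recall that $\upsilon(L)$ is the set of $\delta$-gradings of a homogeneous $\F$-basis of $\mathcal F^0HFL'(L)/\mathcal F^{-1}HFL'(L)$, so it depends only on the $j$-filtered, $\delta$-graded chain homotopy type of $CFL'(L)$, and it is translated by $c$ when $CFL'(L)$ is replaced by $CFL'(L)\lkhov c\rkhov$. Hence the $\upsilon$-set of $CFL'(L)\left\lkhov\frac{\sigma(L)+h}{2}\right\rkhov$ is an unoriented link invariant by the first part, and by Proposition \ref{prop:upsilon} it is the appropriate translate of $\{\upsilon_1(L),\dots,\upsilon_{2^{n-1}}(L)\}$, whose extreme terms are $\upsilon_1(L)=\Upsilon_L(1)$ and $\upsilon_{2^{n-1}}(L)=\Upsilon^*_L(1)+1-n$; with the sign conventions in force this is precisely the displayed set.

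I expect the main obstacle to be the bookkeeping in the first paragraph: verifying that the $\delta$-shift produced by reversing a sublink is a single constant \emph{and} matching it, with the correct sign, to half the change of $\sigma$ (in the convention of \cite{GL}), so that the normalization $\lkhov\frac{\sigma(L)+h}{2}\rkhov$ is exactly the one that kills the orientation dependence. Everything else is either formal or a direct appeal to Theorems \ref{teo:prime} and \ref{teo:quasi_iso}, Lemma \ref{lemma:prime} and Proposition \ref{prop:upsilon}.
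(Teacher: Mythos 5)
Your proposal matches the paper's proof in both strategy and substance: the paper likewise observes that swapping $\textbf w$- and $\textbf z$-basepoints (there phrased as swapping $\OO_i$- and $\X_i$-markings in a grid) leaves the complex and the $j$-filtration unchanged while shifting the $\delta$-grading by a global constant, and then identifies that constant with $\tfrac12(\sigma(\vec L_1)-\sigma(\vec L_2))$ by citing Proposition~7.1 of \cite{Unoriented} rather than rederiving it from the \cite{OSlinks} grading formulas. The bookkeeping you flag as the main obstacle is exactly what that cited proposition supplies, so your argument is correct and essentially coincides with the paper's.
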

\begin{proof}
 Changing the orientation of a link $L$ from $\vec L_1$ to $\vec L_2$, by reversing the orientation on the $i$-th component, results in a grid diagram $G$ where the $\OO_i$-markings and the $\X_i$-markings are swapped. Then everything stays the same except for the $\delta$-grading, which is renormalized. Using \cite[Proposition 7.1]{Unoriented} we conclude that \[\delta_1(x)-\delta_2(x)=\dfrac{\sigma(\vec L_1)}{2}-\dfrac{\sigma(\vec L_2)}{2}\] for every grid state $x$ of $G$.
\end{proof}
It is important to note that, if we only compute the group $HFL'(L)$, we do not know how to identify $\Upsilon_L(1)$ and $\Upsilon_L^*(1)+1-n$ in the $\upsilon$-set of $L$. This means that the latter is an unoriented link invariant only if considered as an unordered set of $2^{n-1}$ integers, up to an overall shift that can be determined from a diagram representing $L$.
Furthermore, an analogue of the last result holds for unoriented concordant links.  
\begin{proof}[Proof of Theorem \ref{teo:unoriented_concordance}]
 It follows in the same way as the last theorem, using Corollary \ref{cor:concordance_prime}.
\end{proof}

\subsection{Unoriented cobordisms} 
\label{subsection:unoriented}
\subsubsection{Normal form and Euler number}
Let us denote with $\upsilon_{\max}$ (resp. $\upsilon_{\min}$) the maximal (resp. minimal) value in the $\upsilon$-set of a link. From \cite[Theorem 5.2]{Unoriented} if there is an oriented saddle between $L$ and $L'$, where $L'$ has one more component with respect to $L$, then \begin{equation}\upsilon_{\max}(L')\leq\upsilon_{\max}(L)\leq\upsilon_{\max}(L')+1
\label{equation:oriented_max}\end{equation} and \begin{equation}\upsilon_{\min}(L')\leq\upsilon_{\min}(L)\leq\upsilon_{\min}(L')+1\:.
\label{equation:oriented_min}\end{equation}
The following inequalities agree with Proposition \ref{prop:slice_genus}.
\begin{prop}
 \label{prop:new}
 Suppose that a link $L$ bounds a compact oriented surface $\Sigma$, properly embedded in $D^4$, with genus $g(\Sigma)$ and $k$ connected components. Then we have that
 \[
-g(\Sigma)+k-n\leq\upsilon_{\max}(L)\leq g(\Sigma)\hspace{1cm}\text{ and }\hspace{1cm}
-g(\Sigma)+1-n\leq\upsilon_{\min}(L)\leq g(\Sigma)+1-k\:.\]
\end{prop}
\begin{proof}
 From Corollary \ref{cor:concordance_prime} we know that $\upsilon_{\max}$ and $\upsilon_{\min}$ are concordance invariants. Hence, since every oriented cobordism $\Sigma$ between $\bigcirc_k$ and an $n$-component link $L$ can be decomposed as explained at the beginning of Subsection \ref{subsection:slice}, and the values of $\upsilon_{\max}(\bigcirc_k)$ and $\upsilon_{\min}(\bigcirc_k)$ are $0$ and $1-k$ respectively, the claim follows from Equations \eqref{equation:oriented_max} and \eqref{equation:oriented_min}.
\end{proof}
We now want to study how these invariants behave when we consider unoriented cobordisms. First, we note that there still exists a normal form; in fact, comparing the oriented case with the results of Kamada in \cite{Kamada} applied to cobordisms, we obtain that every unoriented cobordism $F$ between $L_1$ and $L_2$ can be written as in Figure \ref{Unoriented}.
\begin{figure}[t]
 \centering
 \def\svgwidth{15cm}
\begingroup%
  \makeatletter%
  \providecommand\color[2][]{%
    \errmessage{(Inkscape) Color is used for the text in Inkscape, but the package 'color.sty' is not loaded}%
    \renewcommand\color[2][]{}%
  }%
  \providecommand\transparent[1]{%
    \errmessage{(Inkscape) Transparency is used (non-zero) for the text in Inkscape, but the package 'transparent.sty' is not loaded}%
    \renewcommand\transparent[1]{}%
  }%
  \providecommand\rotatebox[2]{#2}%
  \newcommand*\fsize{\dimexpr\f@size pt\relax}%
  \newcommand*\lineheight[1]{\fontsize{\fsize}{#1\fsize}\selectfont}%
  \ifx\svgwidth\undefined%
    \setlength{\unitlength}{3412.88602418bp}%
    \ifx\svgscale\undefined%
      \relax%
    \else%
      \setlength{\unitlength}{\unitlength * \real{\svgscale}}%
    \fi%
  \else%
    \setlength{\unitlength}{\svgwidth}%
  \fi%
  \global\let\svgwidth\undefined%
  \global\let\svgscale\undefined%
  \makeatother%
  \begin{picture}(1,0.29903445)%
    \lineheight{1}%
    \setlength\tabcolsep{0pt}%
    \put(0,0){\includegraphics[width=\unitlength,page=1]{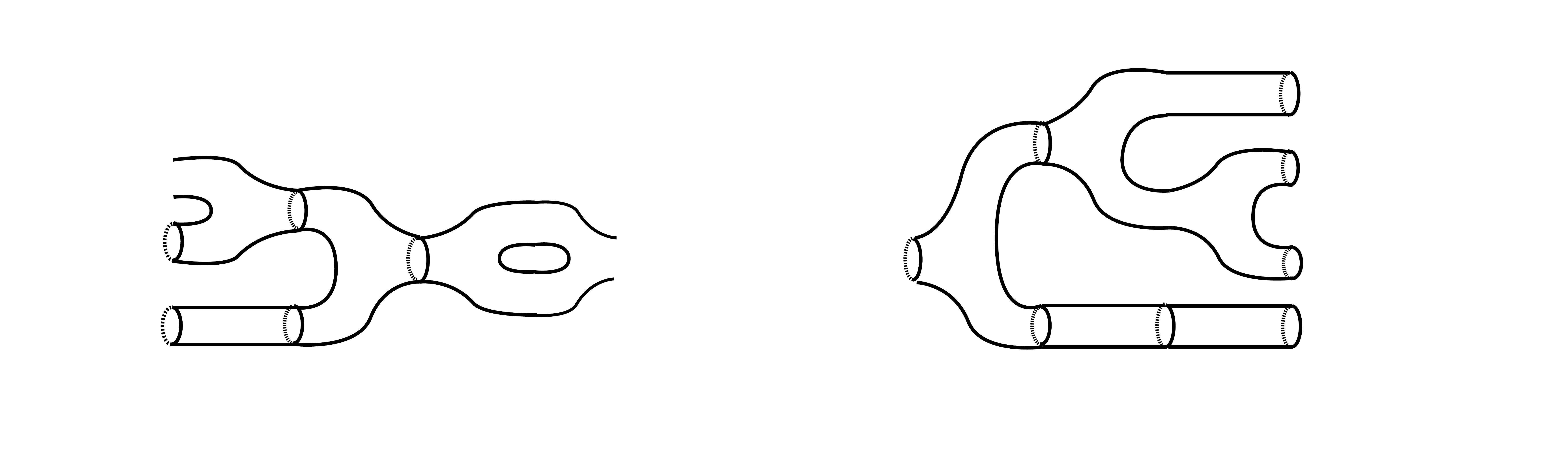}}%
    \put(-0.00243219,0.25529187){\color[rgb]{0,0,0}\makebox(0,0)[lt]{\lineheight{0}\smash{\begin{tabular}[t]{l}$L_1$\end{tabular}}}}%
    \put(0.91551723,0.26408209){\color[rgb]{0,0,0}\makebox(0,0)[lt]{\lineheight{0}\smash{\begin{tabular}[t]{l}$L_2$\end{tabular}}}}%
    \put(0,0){\includegraphics[width=\unitlength,page=2]{Unoriented.pdf}}%
  \end{picture}%
\endgroup%
   
 \caption{Canonical form of unoriented cobordisms between two links: only one connected component of $F$ is shown. The non-orientable saddles are M\"obius strips with a small open disk removed.}
 \label{Unoriented}
\end{figure}
\begin{figure}[t]
 \centering
 \def\svgwidth{4cm}
\begingroup%
  \makeatletter%
  \providecommand\color[2][]{%
    \errmessage{(Inkscape) Color is used for the text in Inkscape, but the package 'color.sty' is not loaded}%
    \renewcommand\color[2][]{}%
  }%
  \providecommand\transparent[1]{%
    \errmessage{(Inkscape) Transparency is used (non-zero) for the text in Inkscape, but the package 'transparent.sty' is not loaded}%
    \renewcommand\transparent[1]{}%
  }%
  \providecommand\rotatebox[2]{#2}%
  \newcommand*\fsize{\dimexpr\f@size pt\relax}%
  \newcommand*\lineheight[1]{\fontsize{\fsize}{#1\fsize}\selectfont}%
  \ifx\svgwidth\undefined%
    \setlength{\unitlength}{1282.01258573bp}%
    \ifx\svgscale\undefined%
      \relax%
    \else%
      \setlength{\unitlength}{\unitlength * \real{\svgscale}}%
    \fi%
  \else%
    \setlength{\unitlength}{\svgwidth}%
  \fi%
  \global\let\svgwidth\undefined%
  \global\let\svgscale\undefined%
  \makeatother%
  \begin{picture}(1,1.1538049)%
    \lineheight{1}%
    \setlength\tabcolsep{0pt}%
    \put(0,0){\includegraphics[width=\unitlength,page=1]{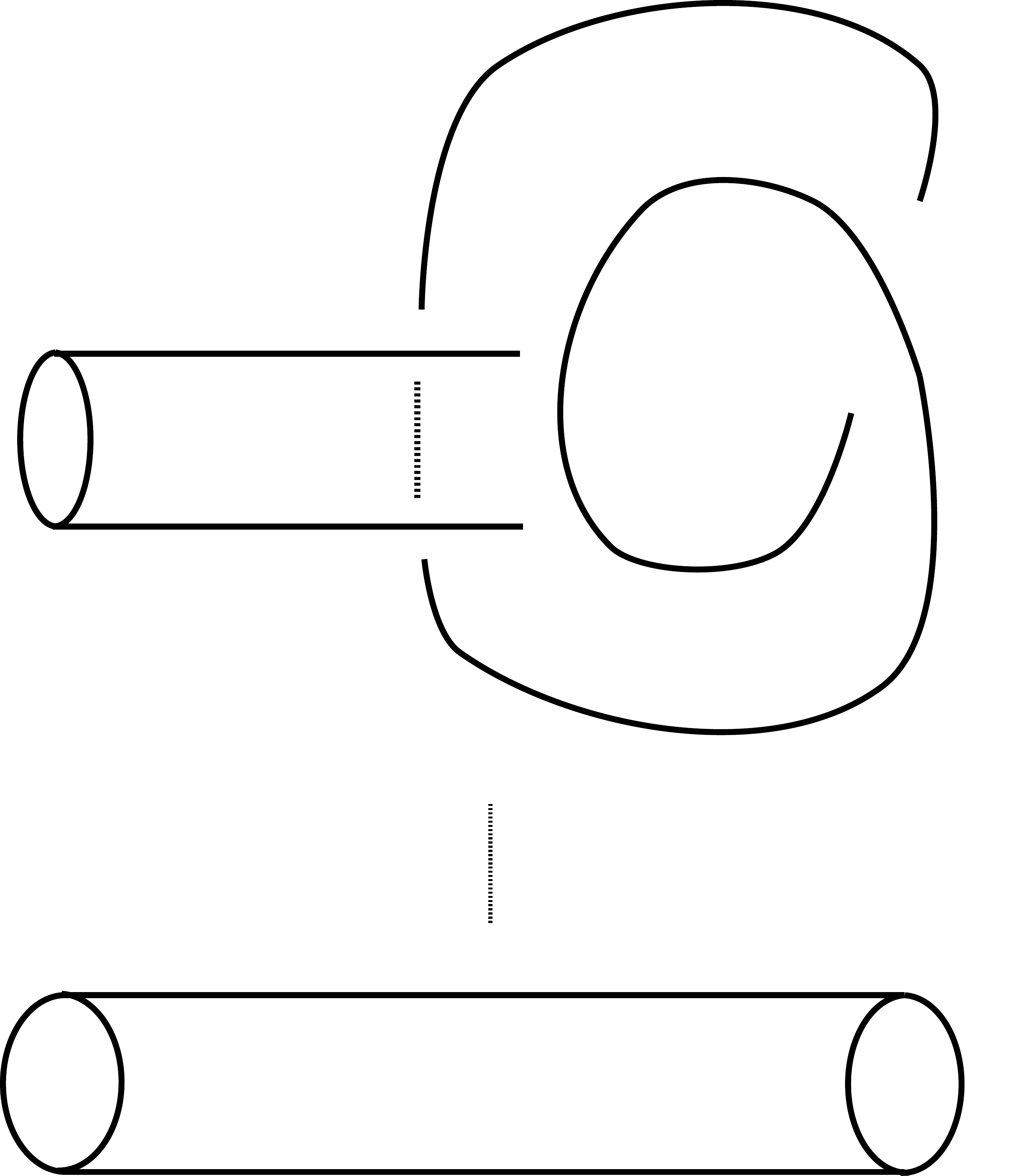}}%
    \put(0.00835806,0.53375622){\color[rgb]{0,0,0}\makebox(0,0)[lt]{\lineheight{1.25}\smash{\begin{tabular}[t]{l}$L_1$\end{tabular}}}}%
    \put(0.82007661,0.22824332){\color[rgb]{0,0,0}\makebox(0,0)[lt]{\lineheight{1.25}\smash{\begin{tabular}[t]{l}$L_2$\end{tabular}}}}%
  \end{picture}%
\endgroup%
   
 \caption{A non-orientable saddle corresponds to a non-oriented band move on a single component.}
 \label{Specific_nonorientable_saddle}
\end{figure}
Hence, we just need to check what happens to the $\upsilon$-set when two links are related by many non-orientable saddles. Of course, we can just study the case where there is only one of such moves, since the general case is obtained by composing the cobordism in Figure \ref{Specific_nonorientable_saddle}.

We recall that, if $F$ is an unoriented cobordism, there is a well-defined integer $e(F)$, called the \emph{Euler number}, defined as \[e(F):=\sum_{p\in F\cap F'}\epsilon_p\] where $\epsilon_p$ is the sign of a oriented basis of $T_pF\oplus T_pF'$, induced by a local orientation system of $F$, compared with the one given by the orientation of $T_p(S^3\times I)$; and where $F'$ denotes a push-off of $F$ along the trivialization of $\nu(L_1)$ (resp. $\nu(L_2)$) in $S^3\times\{0\}$ (resp. $S^3\times\{1\}$) given by the Seifert framing, see \cite{GL,Unoriented}. Clearly, we have that $e(F)=0$ if $F$ is an orientable knot cobordism.

The integer $e(F)$ can also be interpreted in the following way. Suppose that $L_1$ has $n$ components, while $L_2$ has $m$; since $F$ is homotopy equivalent to a 1-dimensional CW-complex, its normal $1$-sphere bundle admits a section $F'$. The boundary of $F'$ consists of the links $L_1'$ and $L_2'$, which can be oriented accordingly to $L_1$ and $L_2$.
Then one has \[e(F)=\sum_{i=1}^n\lk(L_1^i,(L_1^i)')-\sum_{j=1}^m\lk(L_2^j,(L_2^j)')\:.\] The reader can check that this definition is independent of the choice of the section, see \cite{GL}.

From the previous statement we obtain that if $F$ is the union of disjoint surfaces $F_1,...,F_k$ then $e(F)=e(F_1)+...+e(F_k)$. In particular, a non-orientable saddle as in Figure \ref{Specific_nonorientable_saddle} has Euler number equal to that of the unique non-orientable component. 
\begin{lemma}
 \label{lemma:writhe}
 Suppose that $L_1$ and $L_2$ are related by a non-orientable saddle $F$. 
 Say $D_1$ and $D_2$ are planar diagrams for them such that the saddle is represented as in Figure \ref{Local_saddle}. 
 Denote with $D_i'$ the corresponding diagram obtained from $D_i$ by deleting all the components that do not appear in the saddle. 
 
 We have that \[e(F)=\writhe(D_1')-\writhe(D_2')+\epsilon\:,\] where $\epsilon$ is equal to $1$ if the crossing is positive and $-1$ if is negative.
\end{lemma}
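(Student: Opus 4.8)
The plan is to evaluate $e(F)$ by means of the push-off/linking-number description recalled just above, and then to compare the result term by term with $\writhe(D_1')-\writhe(D_2')$; the discrepancy will be a purely local quantity extracted from the tangle in Figure~\ref{Local_saddle}.

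First I would reduce to the case that $F$ is connected. Since the Euler number is additive over the connected components of $F$ and a product cobordism $K\times I$ has vanishing Euler number (its two boundary push-offs have equal self-linking with respect to the product framing), $e(F)$ equals the Euler number of the unique non-orientable component $F_0$, whose boundary is exactly the link represented by $D_1'\sqcup D_2'$ (the components of $L_1,L_2$ not meeting the saddle bound the product pieces). Thus it suffices to prove $e(F_0)=\writhe(D_1')-\writhe(D_2')+\epsilon$, and I may rename $F:=F_0$ and assume every component of $L_1$ and $L_2$ meets the saddle.

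Next I would choose a section of the normal $S^1$-bundle of $F$: take the blackboard framings of $D_1'$ and $D_2'$ over collars of $L_1$ and $L_2$, and extend across the saddle band without adding twisting. Because the band underlying a non-orientable saddle carries an odd number of half-twists, this extension has exactly one half-twist, which by the choice of diagrams sits at the crossing of Figure~\ref{Local_saddle}. For this section each $\lk(L_a^k,(L_a^k)')$ is computed from self-crossings of the corresponding diagram, so $\sum_i\lk(L_1^i,(L_1^i)')$ and $\sum_j\lk(L_2^j,(L_2^j)')$ differ from $\writhe(D_1')$ and $\writhe(D_2')$ only by the mutual-linking terms of the components involved in the band move and by the half-twist correction.

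Finally I would evaluate the difference in the local model of Figure~\ref{Local_saddle}. Outside that tangle $D_1'$ and $D_2'$ coincide, and inside the only change is that the crossing of sign $\epsilon$ is replaced by its non-oriented smoothing; combining this with the rearrangement of the components across the saddle and the sign of the half-twist of the band --- equivalently, computing the normal Euler number of the M\"obius band glued in along that crossing, cf.\ \cite{GL} and \cite{Unoriented} --- yields $e(F)=\writhe(D_1')-\writhe(D_2')+\epsilon$. The hard part is precisely this last, sign-sensitive bookkeeping: one must fix orientation conventions so that the half-twist of the non-orientable band at a crossing of sign $\epsilon$ contributes exactly $+\epsilon$ (not $-\epsilon$, and not an even correction), and one must check that the mutual-linking terms produced by a merge or split of components cancel against the corresponding terms of $\writhe(D_1')-\writhe(D_2')$. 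Once the conventions are unwound from \cite{GL} in the standard local picture, both verifications are routine.
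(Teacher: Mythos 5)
Your reduction to the non-orientable component $F_0$ via additivity of the Euler number and the vanishing of $e$ on product annuli is the same first step the paper takes (phrased there as $e(F)=e(F')$ with $F'$ the non-orientable piece, disjoint from the remaining annuli). Where you diverge is in the second step: having reduced to a non-orientable saddle between two \emph{knots} $K_1$ and $K_2$, the paper simply invokes \cite[Lemma 4.3]{Unoriented}, which is precisely the identity $e=\writhe(D_1')-\writhe(D_2')+\epsilon$ for a non-orientable band move between knot diagrams, together with the observation (Figure \ref{Unoriented_resolution}) that a non-orientable saddle is the same thing as an unoriented resolution of a crossing. You instead attempt to re-derive this statement from scratch by choosing a push-off built out of blackboard framings and a half-twist over the band.

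The problem is that you never actually carry out the computation you set up. You explicitly isolate the hard part — checking that the half-twist at a crossing of sign $\epsilon$ contributes exactly $+\epsilon$ (and not $-\epsilon$, and not an even quantity) to the difference of push-off linking numbers — and then assert that it is ``routine'' after unwinding conventions from \cite{GL}. That sign is the entire content of the lemma; without verifying it, the argument stops exactly where the proof is supposed to begin. There is also a small confusion that suggests the local picture hasn't been fully worked out: you worry about ``mutual-linking terms produced by a merge or split of components'', but after your own reduction $D_1'$ and $D_2'$ are single-component knot diagrams, since a non-orientable band move attached to one component of $L_1$ yields one component of $L_2$ — it neither merges nor splits — so there are no such cross terms to cancel. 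The cleanest way to close the gap is exactly what the paper does: cite \cite[Lemma 4.3]{Unoriented} for the knot case, rather than re-deriving it.
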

\begin{proof}
 From what we said before $e(F)=e(F')$, 
 \begin{figure}[t]
 \centering
 \def\svgwidth{8cm}
\begingroup%
  \makeatletter%
  \providecommand\color[2][]{%
    \errmessage{(Inkscape) Color is used for the text in Inkscape, but the package 'color.sty' is not loaded}%
    \renewcommand\color[2][]{}%
  }%
  \providecommand\transparent[1]{%
    \errmessage{(Inkscape) Transparency is used (non-zero) for the text in Inkscape, but the package 'transparent.sty' is not loaded}%
    \renewcommand\transparent[1]{}%
  }%
  \providecommand\rotatebox[2]{#2}%
  \newcommand*\fsize{\dimexpr\f@size pt\relax}%
  \newcommand*\lineheight[1]{\fontsize{\fsize}{#1\fsize}\selectfont}%
  \ifx\svgwidth\undefined%
    \setlength{\unitlength}{5493.10240173bp}%
    \ifx\svgscale\undefined%
      \relax%
    \else%
      \setlength{\unitlength}{\unitlength * \real{\svgscale}}%
    \fi%
  \else%
    \setlength{\unitlength}{\svgwidth}%
  \fi%
  \global\let\svgwidth\undefined%
  \global\let\svgscale\undefined%
  \makeatother%
  \begin{picture}(1,0.2872822)%
    \lineheight{1}%
    \setlength\tabcolsep{0pt}%
    \put(0,0){\includegraphics[width=\unitlength,page=1]{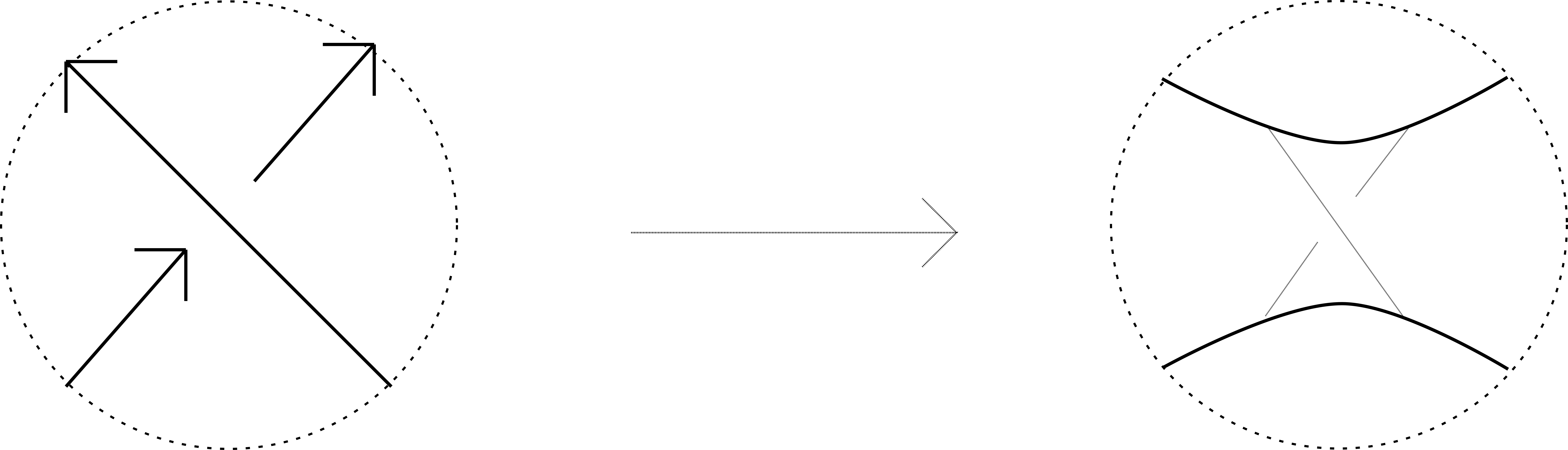}}%
    \put(0.42318835,0.05208053){\color[rgb]{0,0,0}\makebox(0,0)[lt]{\begin{minipage}{0.07157365\unitlength}\raggedright \end{minipage}}}%
    \put(0.26238005,0.01582895){\color[rgb]{0,0,0}\makebox(0,0)[lt]{\begin{minipage}{0.09481184\unitlength}\raggedright \end{minipage}}}%
    \put(-0.02298496,-0.0761943){\color[rgb]{0,0,0}\makebox(0,0)[lt]{\begin{minipage}{0.19334177\unitlength}\raggedright \end{minipage}}}%
    \put(0.27725251,0.01118131){\color[rgb]{0,0,0}\makebox(0,0)[lt]{\lineheight{1.25}\smash{\begin{tabular}[t]{l}$D_1$\end{tabular}}}}%
    \put(0.66740866,0.01372728){\color[rgb]{0,0,0}\makebox(0,0)[lt]{\lineheight{1.25}\smash{\begin{tabular}[t]{l}$D_2$\end{tabular}}}}%
  \end{picture}%
\endgroup%
   
 \caption{The non-orientable saddle is represented in the diagrams as an unoriented resolution of a crossing, where both arcs belong to the same component of $L_1$.}
 \label{Local_saddle}
\end{figure}
 where $F'$ is a non-orientable saddle between $K_1$ and $K_2$, the components of the links represented by $D_1'$ and $D_2'$. 
 \begin{figure}[t]
 \centering
 \def\svgwidth{12cm}
\begingroup%
  \makeatletter%
  \providecommand\color[2][]{%
    \errmessage{(Inkscape) Color is used for the text in Inkscape, but the package 'color.sty' is not loaded}%
    \renewcommand\color[2][]{}%
  }%
  \providecommand\transparent[1]{%
    \errmessage{(Inkscape) Transparency is used (non-zero) for the text in Inkscape, but the package 'transparent.sty' is not loaded}%
    \renewcommand\transparent[1]{}%
  }%
  \providecommand\rotatebox[2]{#2}%
  \newcommand*\fsize{\dimexpr\f@size pt\relax}%
  \newcommand*\lineheight[1]{\fontsize{\fsize}{#1\fsize}\selectfont}%
  \ifx\svgwidth\undefined%
    \setlength{\unitlength}{8396.03044148bp}%
    \ifx\svgscale\undefined%
      \relax%
    \else%
      \setlength{\unitlength}{\unitlength * \real{\svgscale}}%
    \fi%
  \else%
    \setlength{\unitlength}{\svgwidth}%
  \fi%
  \global\let\svgwidth\undefined%
  \global\let\svgscale\undefined%
  \makeatother%
  \begin{picture}(1,0.42447843)%
    \lineheight{1}%
    \setlength\tabcolsep{0pt}%
    \put(0,0){\includegraphics[width=\unitlength,page=1]{Unoriented_resolution.pdf}}%
    \put(0.15524584,0.17893457){\color[rgb]{0,0,0}\makebox(0,0)[lt]{\lineheight{1.25}\smash{\begin{tabular}[t]{l}$\text{Non-oriented band move}$\end{tabular}}}}%
    \put(0.53290292,0.40638022){\color[rgb]{0,0,0}\makebox(0,0)[lt]{\lineheight{1.25}\smash{\begin{tabular}[t]{l}$\text{Unoriented resolution}$\end{tabular}}}}%
    \put(0.28295596,0.09379448){\color[rgb]{0,0,0}\makebox(0,0)[lt]{\begin{minipage}{0.14413\unitlength}\raggedright \end{minipage}}}%
    \put(0,0){\includegraphics[width=\unitlength,page=2]{Unoriented_resolution.pdf}}%
  \end{picture}%
\endgroup%
   
 \caption{Each of two rows shows a direction of the equivalence of the two representations of a non-orientable saddle.}
 \label{Unoriented_resolution}
\end{figure}
 Since $e(F')$ is computed from a tubular neighborhood of $F'$ and $F'$ is disjoint from the other annuli of $F$, we have that $e(F')$ can be computed using \cite[Lemma 4.3]{Unoriented}: \[e(F')=\writhe(D_1')-\writhe(D_2')+\epsilon\:.\] The fact that every non-orientable saddle can be seen as an unoriented resolution of a crossing (and vice versa) follows easily from Figure \ref{Unoriented_resolution}. 
\end{proof}

\subsubsection{Unorientable saddle move}
We use the grid diagrams and maps defined in \cite[Section 5]{Unoriented}. 
Say $G_1$ and $G_2$ are grid diagrams for $L_1$ and $L_2$, which are related by a non-orientable saddle as in Figure \ref{Move}. 
\begin{figure}[t]
 \centering
 \def\svgwidth{10cm}
 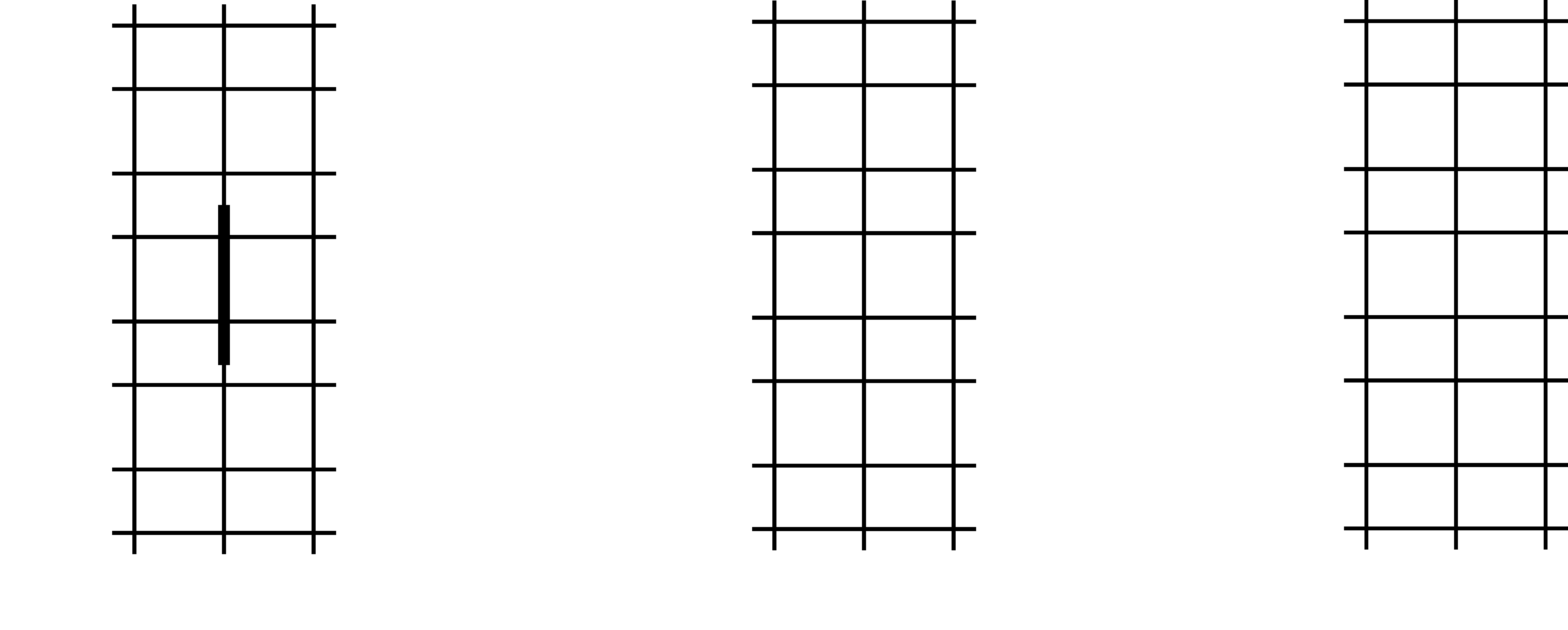   
 \caption{Non-orientable saddle in a grid diagram. Assume the markings in the first two columns of $G_1$ belong to the same component of $L_1$; we switch the $\X$-marking in the first column with the $\OO$-marking in the second one to get $G'$. Then starting from the $\X$ at the bottom, we reverse all the markings on this component of the link until we obtain the diagram $G_2$.}
 \label{Move}
\end{figure}
Then we have chain maps $\nu:CFL'(G_1)\rightarrow CFL'(G_2)$ and $\nu':CFL'(G_2)\rightarrow CFL'(G_1)$, such that $\nu'\circ\nu=\nu\circ\nu'=U$, defined as 
\[\nu(x)=\left\{\begin{aligned}&Ux\quad\text{ if }\hspace{0.4cm}x\cap A\neq\emptyset\\
&x\hspace{0.3cm}\quad\text{ if }\quad x\cap A=\emptyset\end{aligned}\right.\quad\text{ and }\quad\nu'(x)=\left\{\begin{aligned}&x\hspace{0.3cm}\quad\text{ if }\hspace{0.4cm}x\cap A\neq\emptyset\\
&Ux\quad\text{ if }\quad x\cap A=\emptyset\end{aligned}\right.\] for every grid state $x$.
\begin{lemma}
 \label{lemma:ill}
 The maps $\nu$ and $\nu'$ as before drop the $\delta$-grading by \[\dfrac{2-e(F)}{4}-\dfrac{\lk(K_1,L_1\setminus K_1)-\lk(K_2,L_2\setminus K_2)}{2}\quad\text{ and }\quad \dfrac{2+e(F)}{4}+\dfrac{\lk(K_1,L_1\setminus K_1)-\lk(K_2,L_2\setminus K_2)}{2}\] respectively. Here by $K_i$ we denote the component of $L_i$ where we perform the non-orientable saddle move.
\end{lemma}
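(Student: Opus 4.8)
The plan is to compute the $\delta$-grading shift of $\nu$ directly and then deduce the statement for $\nu'$ from the relation $\nu'\circ\nu=U$. Since the $U$-action drops the $\delta$-grading by one, if $\nu$ is $\delta$-homogeneous of degree $-c$ then $\nu'$ is forced to be $\delta$-homogeneous of degree $-(1-c)$; and one checks immediately that the two quantities displayed in the statement do sum to $1$, because $\tfrac{2-e(F)}{4}+\tfrac{2+e(F)}{4}=1$ while the two linking-number terms are opposite. So it suffices to treat $\nu$. Concretely, the claim is that for every grid state $x$ one has $\delta_{G_2}(\nu(x))=\delta_{G_1}(x)-c$ with $c=\tfrac{2-e(F)}{4}-\tfrac{\lk(K_1,L_1\setminus K_1)-\lk(K_2,L_2\setminus K_2)}{2}$; since $\nu$ is the identity on grid states disjoint from the distinguished region $A$ of \cite[Section 5]{Unoriented} and equals $U$ (hence lowers $\delta$ by $1$) on those meeting $A$, this amounts to showing that the purely combinatorial quantity $\delta_{G_2}(x)-\delta_{G_1}(x)$ equals $-c$ when $x\cap A=\emptyset$ and $-c+1$ when $x\cap A\neq\emptyset$.

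To prove that combinatorial statement I would write $\delta_{G_i}(x)=M_{G_i}(x)-A_{G_i}(x)$ and insert the combinatorial formulas for the Maslov and (collapsed) Alexander gradings on a grid diagram from \cite{MOST,Book}. Passing from $G_1$ to $G_2$ (through the intermediate diagram $G'$) only moves finitely many $\OO$- and $\X$-markings, all inside the saddle region, so the ``local'' contribution to $\delta_{G_2}(x)-\delta_{G_1}(x)$ is a short finite computation which produces the discrepancy of $1$ between the two cases $x\cap A=\emptyset$ and $x\cap A\neq\emptyset$. The remaining, global, contribution is the change in the normalization constants: the Maslov normalization on a grid is governed by the writhe of the underlying planar diagram together with the number of components, and here I would invoke Lemma \ref{lemma:writhe}, which says $\writhe(D_1')-\writhe(D_2')=e(F)-\epsilon$, so that $e(F)$ enters exactly as claimed; the Alexander normalization on a grid, recorded component by component, contributes the terms $\lk(K_1,L_1\setminus K_1)$ and $\lk(K_2,L_2\setminus K_2)$ coming from the splitting of the distinguished component $K_i$ off from the rest of $L_i$. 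Collecting terms yields the value of $c$ above, and then $1-c$ for $\nu'$ by the first paragraph.

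The main obstacle I expect is the bookkeeping of the absolute normalizations: neither $M_{G_i}$ nor $A_{G_i}$ is by itself well behaved under the non-orientable saddle, and both are given by sums over all markings, so the delicate point is to show that in the combination $\delta=M-A$ the non-local parts collapse to precisely the writhe/linking data, and in particular that the factor of $4$ in $\tfrac{2\pm e(F)}{4}$ comes out correctly — the Euler number naturally enters a Maslov-type expression through $e(F)/2$, and this is halved again because the $\delta$-grading in $CFL'$ scales like ``half a Maslov grading'' relative to $\cC^\infty$ (compare the doubled $U$-exponents in the identification $i$ of Theorem \ref{teo:prime}). As consistency checks I would specialize to a knot ($n=1$), where the linking terms vanish and $e(F)=\pm1$, so that $\nu$ should drop $\delta$ by $\tfrac14$ or $\tfrac34$ in agreement with \cite{Unoriented}; verify the symmetry under reversing the direction of the saddle (which swaps the roles of $\nu$ and $\nu'$, of $G_1$ and $G_2$, and sends $e(F)\mapsto -e(F)$, $\epsilon\mapsto-\epsilon$); and re-confirm that $\nu'\circ\nu=\nu\circ\nu'=U$ is compatible with the two computed degrees.
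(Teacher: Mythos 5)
Your proposal follows essentially the same route as the paper: both ultimately hinge on the formula from \cite{Unoriented} expressing the $\delta$-grading change under an unoriented skein move in terms of writhes, combined with Lemma~\ref{lemma:writhe} (which converts the writhe difference into the Euler number) and the decomposition of the writhe of $G_i$ into the self-writhe of the saddle component and the mixed writhe $\writhe(G_i^1,G_i\setminus G_i^1)=2\,\lk(K_i,L_i\setminus K_i)$. Two small points of divergence are worth flagging. First, your opening observation that the $\nu'$ case follows for free from the $\nu$ case via $\nu'\circ\nu=U$ (with $U$ dropping $\delta$ by $1$, and the two displayed shifts summing to $1$) is a genuine, if minor, efficiency: the paper just says the $\nu'$ case ``is done in the same way.'' Second, where you propose to rederive the local-versus-global contribution from scratch via the combinatorial Maslov and Alexander formulas on a grid, the paper instead cites the already-packaged identity $\delta_{G'}(x)-\delta_{G_2}(x)=-\tfrac14[\writhe(G_1)-\writhe(G_2)+1-2]$ from \cite{Unoriented}; your path is more work but is the underlying computation that identity encodes, and your worry about the factor of $4$ and the interaction of the two normalizations is exactly the content that \cite{Unoriented} has already absorbed. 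In short: correct, and the same argument modulo whether one cites or rederives the $\delta$-shift formula, plus your shortcut for $\nu'$.
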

\begin{proof}
 Say $G_1,G'$ and $G_2$ are as in Figure \ref{Move}, with orientations on $G_1$ and $G_2$ given as in Subsection \ref{subsection:overview}. We prove the claim for the map $\nu$: from \cite[Proposition 5.7]{Unoriented} and its proof we have that $\delta_{G_1}(x)=\delta_{G'}(\nu(x))$ and \[\begin{aligned}\delta_{G'}(\nu(x))-\delta_{G_2}(\nu(x)&)=-\dfrac{1}{4}\left[\writhe(G_1)-\writhe(G_2)+1-2\right]=\\&=-\dfrac{1}{4}\left[\writhe(G_1^1)-\writhe(G^1_2)+1-2\right]-\dfrac{1}{4}\left[2\cdot\lk(K_1,L_1\setminus K_1)-2\cdot\lk(K_2,L_2\setminus K_2)\right]\:,\end{aligned}\] where $G^1_i$ is the subdiagram representing $K_i$.
 
 Then we have that \[\delta_{G_2}(\nu(x))=\delta_{G_1}(x)-\dfrac{2-e(F)}{4}+\dfrac{\lk(K_1,L_1\setminus K_1)-\lk(K_2,L_2\setminus K_2)}{2}\] because of Lemma \ref{lemma:writhe}. The case of $\nu'$ is done in the same way.
\end{proof}
This lemma implies the following result.
\begin{prop}
 \label{prop:unoriented_saddle}
 Suppose that $L_i,K_i$ are as before and $F$ is the corresponding non-orientable saddle. Then the following inequality holds: \[\begin{aligned}\upsilon_{\max}&(L_1)-\dfrac{2-e(F)}{4}+\dfrac{1}{2}\left[\lk(K_1,L_1\setminus K_1)-\lk(K_2,L_2\setminus K_2)\right]\leq\\ &\leq\upsilon_{\max}(L_2)\leq\upsilon_{\max}(L_1)+\dfrac{2+e(F)}{4}+\dfrac{1}{2}\left[\lk(K_1,L_1\setminus K_1)-\lk(K_2,L_2\setminus K_2)\right]\:,\end{aligned}\] where $L_1\setminus K_1$ and $L_2\setminus K_2$ are oriented in the same way. The same is true for $\upsilon_{\min}$. 
\end{prop}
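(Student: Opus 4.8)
The plan is to deduce everything from Lemma~\ref{lemma:ill} together with the observation that $\nu$ and $\nu'$ are $U$-equivariant chain maps preserving the algebraic filtration $j$ whose composites are multiplication by $U$. First I would record the two numbers
\[a:=\frac{2-e(F)}{4}-\frac{\lk(K_1,L_1\setminus K_1)-\lk(K_2,L_2\setminus K_2)}{2}\,,\qquad b:=\frac{2+e(F)}{4}+\frac{\lk(K_1,L_1\setminus K_1)-\lk(K_2,L_2\setminus K_2)}{2}\]
by which $\nu\colon CFL'(G_1)\to CFL'(G_2)$ and $\nu'\colon CFL'(G_2)\to CFL'(G_1)$ drop the $\delta$-grading according to Lemma~\ref{lemma:ill}; note $a+b=1$, in accordance with $\nu'\circ\nu=\nu\circ\nu'=U$ and the fact that $U$ drops $\delta$ by one. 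Since $\nu(x)\in\{x,Ux\}$ on grid states and both the identity and multiplication by $U$ send $j^tCFL'$ into itself (indeed $U\cdot j^t=j^{t-1}\subseteq j^t$), the maps $\nu,\nu'$ preserve $j$; being $U$-equivariant they induce $\F[U,U^{-1}]$-linear maps on $HFL'$, and from $\nu'_*\circ\nu_*=\nu_*\circ\nu'_*=U_*$ and the invertibility of $U_*$ on $HFL'$ we get that $\nu_*$ and $\nu'_*$ are isomorphisms, with $\nu_*^{-1}=U_*^{-1}\circ\nu'_*$ and $(\nu'_*)^{-1}=U_*^{-1}\circ\nu_*$.

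Next I would record a filtration-theoretic description of the $\upsilon$-set. Each level $\mathcal F^tHFL'(L)$ is $\delta$-graded (the $\delta$-grading on $CFL'$ agrees with its Maslov grading), so by Proposition~\ref{prop:prime} the quotient $\mathcal F^0HFL'(L)/\mathcal F^{-1}HFL'(L)\cong_{\F}\F^{2^{n-1}}$ splits as $\bigoplus_d\bigl(\mathcal F^0HFL'_d(L)/\mathcal F^{-1}HFL'_d(L)\bigr)$. Setting $R(L):=\{d:\mathcal F^0HFL'_d(L)\neq\mathcal F^{-1}HFL'_d(L)\}$, which is a finite nonempty set, the $\upsilon$-set of $L$ is precisely $R(L)$ with multiplicities; hence $\upsilon_{\max}(L)=\max R(L)$ and $\upsilon_{\min}(L)=\min R(L)$. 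Also recall that $U_*$ restricts to an isomorphism $\mathcal F^sHFL'(L)\xrightarrow{\ \sim\ }\mathcal F^{s-1}HFL'(L)$ for every $s$.

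The heart of the argument is the following inclusion: if $\phi\colon CFL'(X)\to CFL'(Y)$ and $\psi\colon CFL'(Y)\to CFL'(X)$ are $U$-equivariant $j$-filtered chain maps with $\psi\circ\phi=\phi\circ\psi=U$, dropping $\delta$ by $\alpha$ and $\beta$ respectively (so $\alpha+\beta=1$), then $R(Y)\subseteq(R(X)+\beta)\cup(R(X)-\alpha)$. To prove it, take $w\in\mathcal F^0HFL'_d(Y)\setminus\mathcal F^{-1}HFL'_d(Y)$; then $\psi_*(w)\in\mathcal F^0HFL'_{d-\beta}(X)$. If $\psi_*(w)\notin\mathcal F^{-1}HFL'(X)$ we get $d-\beta\in R(X)$. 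Otherwise $\phi_*^{-1}(w)=U_*^{-1}\psi_*(w)\in U_*^{-1}\mathcal F^{-1}HFL'(X)=\mathcal F^0HFL'(X)$, in $\delta$-degree $d+\alpha$, and it lies outside $\mathcal F^{-1}HFL'(X)$ — for otherwise $w=\phi_*(\phi_*^{-1}(w))\in\phi_*(\mathcal F^{-1}HFL'(X))\subseteq\mathcal F^{-1}HFL'(Y)$, contradicting the choice of $w$ — so $d+\alpha\in R(X)$. Applying this with $(X,Y,\phi,\psi,\alpha,\beta)=(L_1,L_2,\nu,\nu',a,b)$ and then with $(L_2,L_1,\nu',\nu,b,a)$, and using $a+b=1\geq0$ (so $b\geq-a$ and $a\geq-b$), one obtains
\[\upsilon_{\max}(L_1)-a\leq\upsilon_{\max}(L_2)\leq\upsilon_{\max}(L_1)+b\qquad\text{and}\qquad\upsilon_{\min}(L_1)-a\leq\upsilon_{\min}(L_2)\leq\upsilon_{\min}(L_1)+b\,,\]
which, after substituting the values of $a$ and $b$, is exactly the assertion of the Proposition (both for $\upsilon_{\max}$ and $\upsilon_{\min}$).

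The step I expect to be the main obstacle is the casework in that inclusion: pushing the extremal class $w$ through the filtered map $\psi$ alone controls only the case in which its image avoids $\mathcal F^{-1}$, and the remaining case genuinely requires the inverse isomorphism $\phi_*^{-1}=U_*^{-1}\psi_*$, with the extra unit of $\delta$-shift produced by $U_*^{-1}$; keeping the two resulting $\delta$-degrees ($d-\beta$ versus $d+\alpha$) straight and checking that $\alpha+\beta=1$ reconciles them with the stated bounds is the delicate point, and it is precisely where the failure of $\psi\circ\phi$ to be the identity (rather than $U$) enters. The remaining ingredients — that $\nu,\nu'$ are $U$-equivariant $j$-filtered chain maps inducing isomorphisms on $HFL'$, and the reformulation of $\upsilon_{\max/\min}$ via $R(L)$ — are routine given Lemma~\ref{lemma:ill}, the discussion preceding it, and Proposition~\ref{prop:prime}.
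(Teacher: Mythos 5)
Your proof is correct, and it is a careful fleshing-out of what the paper leaves implicit: the paper's proof is the single sentence ``It follows from Lemma \ref{lemma:ill} and the definition of $\upsilon_{\max}$ and $\upsilon_{\min}$.'' Your treatment supplies exactly the content that this sentence glosses over. In particular, the two-case argument for the inclusion $R(Y)\subseteq(R(X)+\beta)\cup(R(X)-\alpha)$ — handling separately whether $\psi_*(w)$ lands in $\mathcal F^{-1}$ or not, and in the latter case invoking $\phi_*^{-1}=U_*^{-1}\psi_*$ together with the $\delta$-shift introduced by $U_*^{-1}$ — is the genuinely nonroutine step that the paper's ``it follows from the definition'' hides, and you identify it correctly as the crux. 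The bookkeeping $a+b=1$ (so that $\beta\geq-\alpha$ and $\alpha\geq-\beta$) is what reconciles the two cases with the asymmetric bounds in the statement, and you track it correctly. This is the same approach as the paper's, just made explicit, so there is nothing further to compare.
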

\begin{proof}
 Since $\nu'\circ\nu=\nu\circ\nu'=U$ we have that $\nu$ and $\nu'$ induce isomorphisms in homology. Therefore, the claim follows from Lemma \ref{lemma:ill} and the definition of $\upsilon_{\max}$ and $\upsilon_{\min}$.
\end{proof}
These inequalities do not depend on the orientation of the components of $L_1$ and $L_2$ where the saddle appears. The proof of this statement is given in Lemma \ref{lemma:new}.

\subsection{Bounds for the unoriented slice genus of a link}
Suppose that the $n$-component (unoriented) link $L$ bounds a compact, unoriented surface $F$, with $k$ connected components and Euler number $e(F)$, properly embedded in $D^4$. 
Define the number $\textbf v=\textbf v_1+...+\textbf v_k$ 
as in Figure \ref{Unoriented_surface}; moreover, using the notation in \cite{GL} we write \[\lambda(\vec L):=\sum_{1\leq i<j\leq n}\lk(\vec L_i,\vec L_j)\] for the \emph{total linking number} of $\vec L$ and we take $\overline e_{\vec L}(F):=e(F)-2\lambda(\vec L)$, where $\vec L$ means that we pick an orientation of $L$. We have that $\overline e_{\vec L}(F)=0$ when $F$ is oriented and $\vec L$ inherits its orientation from $F$, see \cite[Section 5]{GL}.
\begin{lemma}
\label{lemma:new}
Suppose that a link $L=\widehat L=\partial F$ as in Figure \ref{Unoriented_surface} is such that $n=k$,  
\begin{figure}[t]
 \centering
 \def\svgwidth{11cm}
\begingroup%
  \makeatletter%
  \providecommand\color[2][]{%
    \errmessage{(Inkscape) Color is used for the text in Inkscape, but the package 'color.sty' is not loaded}%
    \renewcommand\color[2][]{}%
  }%
  \providecommand\transparent[1]{%
    \errmessage{(Inkscape) Transparency is used (non-zero) for the text in Inkscape, but the package 'transparent.sty' is not loaded}%
    \renewcommand\transparent[1]{}%
  }%
  \providecommand\rotatebox[2]{#2}%
  \newcommand*\fsize{\dimexpr\f@size pt\relax}%
  \newcommand*\lineheight[1]{\fontsize{\fsize}{#1\fsize}\selectfont}%
  \ifx\svgwidth\undefined%
    \setlength{\unitlength}{2308.1531507bp}%
    \ifx\svgscale\undefined%
      \relax%
    \else%
      \setlength{\unitlength}{\unitlength * \real{\svgscale}}%
    \fi%
  \else%
    \setlength{\unitlength}{\svgwidth}%
  \fi%
  \global\let\svgwidth\undefined%
  \global\let\svgscale\undefined%
  \makeatother%
  \begin{picture}(1,0.62648069)%
    \lineheight{1}%
    \setlength\tabcolsep{0pt}%
    \put(0,0){\includegraphics[width=\unitlength,page=1]{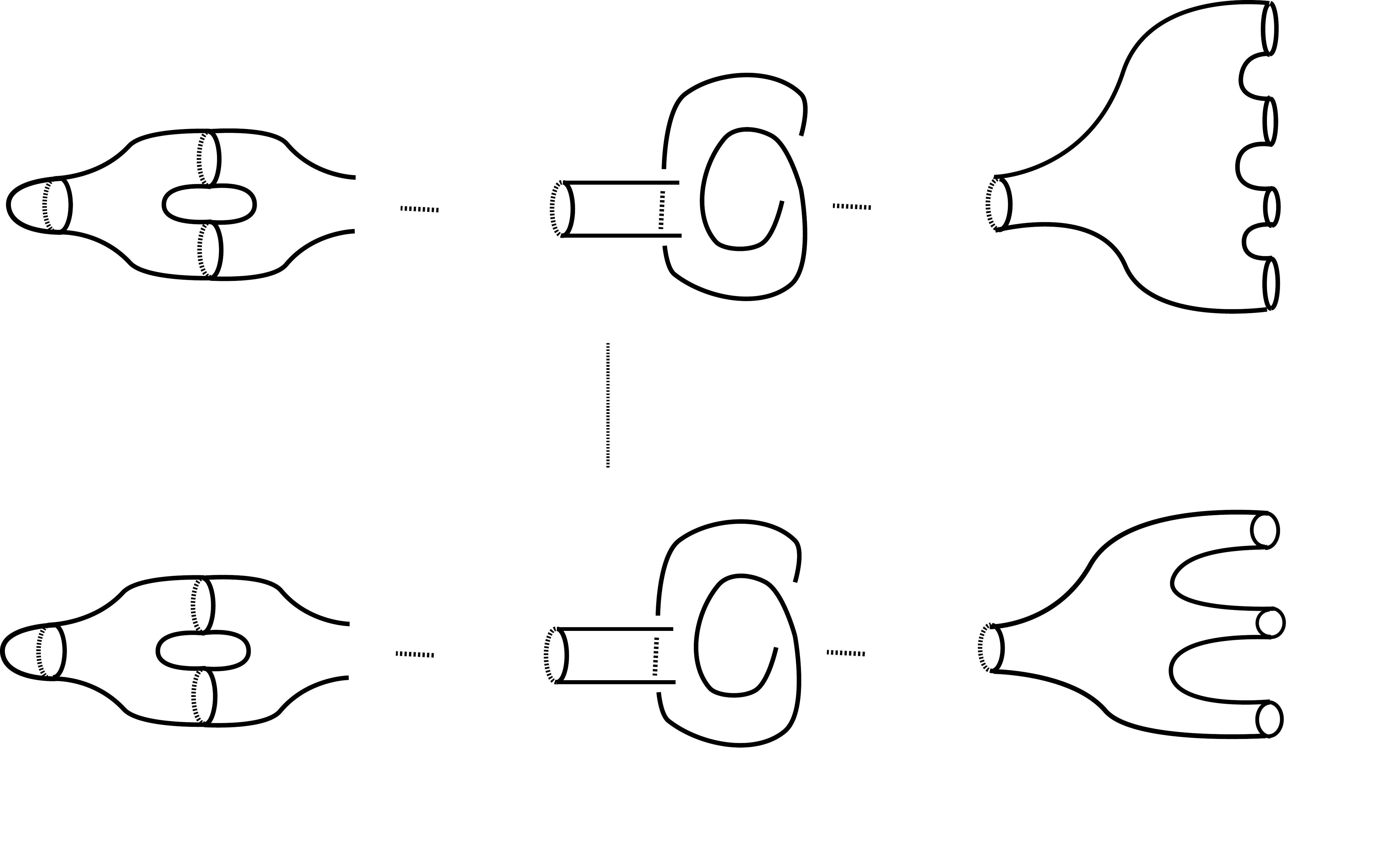}}%
    \put(0.89178743,0.28602507){\color[rgb]{0,0,0}\makebox(0,0)[lt]{\lineheight{1.25}\smash{\begin{tabular}[t]{l}$L$\end{tabular}}}}%
    \put(0.02931682,0.09663432){\color[rgb]{0,0,0}\makebox(0,0)[lt]{\lineheight{1.25}\smash{\begin{tabular}[t]{l}$\bigcirc_k$\end{tabular}}}}%
    \put(0.097089,0.05671373){\color[rgb]{0,0,0}\makebox(0,0)[lt]{\lineheight{1.25}\smash{\begin{tabular}[t]{l}$g$ tori\end{tabular}}}}%
    \put(0.42295249,0.0520718){\color[rgb]{0,0,0}\makebox(0,0)[lt]{\lineheight{1.25}\smash{\begin{tabular}[t]{l}$\textbf v\text{ non-orientable}$\end{tabular}}}}%
    \put(0.49443821,0.00565252){\color[rgb]{0,0,0}\makebox(0,0)[lt]{\lineheight{1.25}\smash{\begin{tabular}[t]{l}$\text{saddles}$\end{tabular}}}}%
    \put(0,0){\includegraphics[width=\unitlength,page=2]{Unoriented_surface.pdf}}%
    \put(0.6949696,0.52554867){\color[rgb]{0,0,0}\makebox(0,0)[lt]{\lineheight{1.25}\smash{\begin{tabular}[t]{l}$\widehat L$\end{tabular}}}}%
  \end{picture}%
\endgroup%
   
 \caption{The number $\textbf v_i$ denotes how many non-orientable saddles there are on each of the $k$ components of $F$. In the picture we omitted the attachment of the extended birth and death moves.}
 \label{Unoriented_surface}
\end{figure}
which means that $F$ is the union of $n$ disjoint unoriented surfaces $F_i$ each one bounding a knot.
Then we have that 
\begin{equation}
\label{eq:unoriented_max}
-g-\dfrac{\emph{\textbf v}}{2}+\dfrac{\overline e_{\vec L}(F)}{4}\leq\upsilon_{\max}(\vec L)\leq g+\dfrac{\emph{\textbf v}}{2}+\dfrac{\overline e_{\vec L}(F)}{4}
\end{equation}
and 
\begin{equation}
\label{eq:unoriented_min}
-g-\dfrac{\emph{\textbf v}}{2}+1-n+\dfrac{\overline e_{\vec L}(F)}{4}\leq\upsilon_{\min}(\vec L)\leq g+\dfrac{\emph{\textbf v}}{2}+1-n+\dfrac{\overline e_{\vec L}(F)}{4}
\end{equation}
for every possible orientation we put on $L$.
\end{lemma}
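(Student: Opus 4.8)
The plan is to remove from $F$ one open disk in each of its $k=n$ connected components, so as to view it as a cobordism from the unlink $\bigcirc_n$ to $L$, to put this cobordism in normal form and cut it into elementary moves, and then to follow $\upsilon_{\max}$ and $\upsilon_{\min}$ along them, starting from $\upsilon_{\max}(\bigcirc_n)=0$ and $\upsilon_{\min}(\bigcirc_n)=1-n$. Since $n=k$ we have $F=F_1\sqcup\dots\sqcup F_n$ with $F_i$ bounding the knot $\vec L_i$, so — comparing the oriented normal form with Kamada's results \cite{Kamada} — the cobordism uses no merge, no split and no $0$- or $2$-handle: each $F_i$ minus a disk is obtained from a product cobordism by $2g_i$ orientable saddle moves, $g_i$ of which increase and $g_i$ of which decrease the number of components, together with $\emph{\textbf v}_i$ non-orientable band moves and some concordance pieces; here $g=\sum_i g_i$ and $\emph{\textbf v}=\sum_i\emph{\textbf v}_i$. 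Throughout, I fix at every stage an orientation of the link compatible with the moves already performed.

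First the orientable moves. Concordance pieces leave $\upsilon_{\max}$ and $\upsilon_{\min}$ unchanged (Corollary \ref{cor:concordance_prime}). For an orientable saddle from $L$ to $L'$ with $L'$ having one more component, the inequalities of Ozsv\'ath--Stipsicz--Szab\'o recalled at the beginning of Subsection \ref{subsection:unoriented} force $\upsilon_{\max}(L')\in\{\upsilon_{\max}(L)-1,\upsilon_{\max}(L)\}$, and, run backwards, an orientable saddle removing a component changes $\upsilon_{\max}$ by $0$ or $1$; the same holds for $\upsilon_{\min}$. Hence the $g_i$ component-increasing and $g_i$ component-decreasing saddles of $F_i$ together move $\upsilon_{\max}$ and $\upsilon_{\min}$ by at most $g_i$ in absolute value, so over all of $F$ the orientable moves contribute a total change lying in $[-g,g]$.

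Next the $\emph{\textbf v}$ non-orientable bands. Let the $s$-th one go from a component $K^{\mathrm b}_s$ to a component $K^{\mathrm a}_s$, with the complementary link $R_s$ (unaffected by the band) given a fixed orientation; write $F_s$ for the elementary non-orientable cobordism. By Proposition \ref{prop:unoriented_saddle} and its analogue for $\upsilon_{\min}$, this band changes $\upsilon_{\max}$ (resp. $\upsilon_{\min}$) by an amount that differs by at most $\tfrac12$ from the midpoint $c_s=\tfrac14 e(F_s)+\tfrac12\bigl(\lk(K^{\mathrm b}_s,R_s)-\lk(K^{\mathrm a}_s,R_s)\bigr)$. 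Now the Euler number is additive under composition, and across an orientable cobordism from $\vec L_{\mathrm{in}}$ to $\vec L_{\mathrm{out}}$ it equals $2\lambda(\vec L_{\mathrm{out}})-2\lambda(\vec L_{\mathrm{in}})$, since such cobordisms have $\overline e=0$ \cite{GL}; also at the $s$-th band one has $\lambda(\text{link before})-\lambda(\text{link after})=\lk(K^{\mathrm b}_s,R_s)-\lk(K^{\mathrm a}_s,R_s)$. Summing $e$ over all elementary pieces, the orientable contributions telescope, and when combined with the linking corrections in the $c_s$ they leave $\sum_s c_s=\tfrac14\bigl(e(F)-2\lambda(\vec L)\bigr)=\tfrac14\,\overline e_{\vec L}(F)$ (using $\lambda(\bigcirc_n)=0$). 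Adding the half-widths contributes $\emph{\textbf v}/2$. Putting this together with the previous paragraph and with the values of $\upsilon_{\max},\upsilon_{\min}$ on $\bigcirc_n$ gives exactly \eqref{eq:unoriented_max} and \eqref{eq:unoriented_min}. The main obstacle is precisely this identification $\sum_s c_s=\tfrac14\,\overline e_{\vec L}(F)$: it requires matching, at every band, the links and orientations to which the linking numbers of Proposition \ref{prop:unoriented_saddle} (and of Lemmas \ref{lemma:writhe} and \ref{lemma:ill}) refer, and keeping the signs straight in the additivity of the Euler number and in the behaviour of $\lambda$ under the moves.

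Finally, orientation independence: it suffices that $\upsilon_{\max}(\vec L)-\tfrac14\,\overline e_{\vec L}(F)$ and $\upsilon_{\min}(\vec L)-\tfrac14\,\overline e_{\vec L}(F)$ be unchanged when the orientation of one component $K$ of $L$ is reversed. On one hand, reversing $K$ shifts every $\delta$-grading, hence $\upsilon_{\max}$ and $\upsilon_{\min}$, by one half of the corresponding change of the signature (the renormalisation of \cite[Proposition 7.1]{Unoriented}). On the other hand $e(F)$ is independent of the orientation (it is a sum of self-linkings of the components with a surface-framed push-off), while $\lambda(\vec L)$ decreases by $2\lk(K,L\setminus K)$, so $\tfrac14\,\overline e_{\vec L}(F)$ increases by $\lk(K,L\setminus K)$. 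By the formula for the variation of the signature under reversal of a component \cite{GL}, these two changes coincide, so both sides of \eqref{eq:unoriented_max} and \eqref{eq:unoriented_min} are translated by the same amount and the bounds are unaffected.
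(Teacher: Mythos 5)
Your proof is correct, and it reaches the same conclusion by a noticeably different bookkeeping. The paper proves the orientation independence first — by checking that the saddle inequality of Proposition \ref{prop:unoriented_saddle} is itself unchanged when one reverses the two saddled components, using $\upsilon(L_i')-\upsilon(L_i)=\lk(K_i,L_i\setminus K_i)$ — and then establishes \eqref{eq:unoriented_max} and \eqref{eq:unoriented_min} by an induction over the last component $F_m$ carrying a non-orientable band, verifying at each step the identity $\lambda(\vec L_1)-\lambda(\vec L_2)=\lk(K^m,L_1\setminus K^m)-\lk(H^m,L_2\setminus H^m)$. You instead run through the normal form once and account for all Euler-number and linking-number contributions in a single global telescope: the orientable pieces contribute $e=2\Delta\lambda$ which cancels against the $\tfrac12\Delta\lambda$ corrections in each midpoint $c_s$, leaving $\sum_s c_s=\tfrac14(e(F)-2\lambda(\vec L))$. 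This is arguably cleaner and avoids the paper's somewhat awkward double index. You also derive orientation independence afterwards and by a different mechanism, from the Gordon–Litherland identity that $\sigma(\vec L)-\tfrac12\overline e_{\vec L}(F)$ is orientation-independent together with the $\delta$-grading renormalisation of \cite[Proposition 7.1]{Unoriented}; the paper's argument stays entirely within the saddle inequality. Both are valid, and both rest on the same compatibility between the $\delta$-grading shift and the linking-number change that the paper records in the proof of Theorem \ref{teo:unoriented_concordance}. The only thing to watch in your version is that the Euler number of an orientable cobordism piece equals $2\lambda(\vec L_{\mathrm{out}})-2\lambda(\vec L_{\mathrm{in}})$: this follows from capping with Seifert surfaces (Seifert framing gives $\lk(L^a,(L^a)')=-\lk(L^a,L\setminus L^a)$), which you invoke tacitly; it is worth making explicit since it is where the sign in the telescope lives.
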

\begin{proof}
 If $\textbf v=0$ then the claims are true because in this case $\overline e_{\vec L}(F)=e(F)=\lambda(\vec L)=0$ (every orientation on $L$ is compatible with one on $F$) and Proposition \ref{prop:new}.
 Suppose that $\textbf v\geq1$, we prove the last statement first: we assume Equations \eqref{eq:unoriented_max} and \eqref{eq:unoriented_min} are satisfied for one orientation $\vec L$ and we prove them for another one, that we call $\vec L'$. Obviously, we can also suppose that $\vec L'$ is obtained from $\vec L$ by just reversing the orientation on one component of $L$, that we denote by $K$.
 
 From \cite[Corollary 2.7.10]{Book} and Theorem \ref{teo:signature} we have that \[\upsilon(\vec L')=\upsilon(\vec L)+\lk(\vec K,\vec L\setminus\vec K)\] where here $\upsilon$ denotes either $\upsilon_{\max}$ or $\upsilon_{\min}$. Hence, since \[\lambda(\vec L')=\lambda(\vec L\setminus\vec K)-\lk(\vec K,\vec L\setminus\vec K)=\lambda(\vec L)-2\lk(\vec K,\vec L\setminus \vec K)\:,\] we obtain
 \[\dfrac{\overline e_{\vec L'}(F)}{4}=\dfrac{\overline e_{\vec L}(F)}{4}+\lk(\vec K,\vec L\setminus\vec K)\:.\] This means that if we add $\lk(\vec K,\vec L\setminus\vec K)$ to each term in the inequalities in Equations \eqref{eq:unoriented_max} and \eqref{eq:unoriented_min}
 then we obtain precisely the corresponding equations for $\vec L'$; and this part of the proof is complete.
 
 We now prove that the inequalities hold for at least one orientation of $L$. We proceed by induction on $\textbf v$, where the initial step has been done at the beginning of the proof. Therefore, we assume that Equations \eqref{eq:unoriented_max} and \eqref{eq:unoriented_min} hold for $\vec L$ and we prove them for $\vec L'$, where this time $L'$ is
 related to $L$ by a non-orientable saddle move as in Figure \ref{Local_saddle}; denote with $K$ and $K'$ the components of $L$ and $L'$ where the move is performed; and we orient them as in the proof of Lemma \ref{lemma:ill} and Proposition \ref{prop:unoriented_saddle}. 
 
 We show the case of Equation \eqref{eq:unoriented_max}: the argument for Equation \eqref{eq:unoriented_min} is exactly the same. We start by writing \[-g-\dfrac{\textbf v}{2}+\dfrac{\overline e_{\vec L}(F)}{4}\leq\upsilon_{\max}(\vec L)\hspace{1cm}\text{ and }\hspace{1cm}\upsilon_{\max}(\vec L)\leq g+\dfrac{\textbf v}{2}+\dfrac{\overline e_{\vec L}(F)}{4}\]
 from the inductive step; we call $S$ the saddle move and $F'$ the surface obtained by gluing $S$ to $F$, which means $\partial F'=L'$. Then the first inequality becomes
 \[\begin{aligned}-g-\dfrac{\textbf v+1}{2}&+\dfrac{\overline e_{\vec L'}(F')}{4}=-g-\dfrac{\textbf v}{2}+\dfrac{\overline e_{\vec L}(F)}{4}+\left(-\dfrac{1}{2}+\dfrac{e(S)}{4}+\dfrac{1}{2}\lk(\vec K,\vec L\setminus\vec K)-\dfrac{1}{2}\lk(\vec K',\vec L'\setminus\vec K')\right)\leq \\ &\leq\upsilon_{\max}(\vec L)+\left(-\dfrac{1}{2}+\dfrac{e(S)}{4}+\dfrac{1}{2}\lk(\vec K,\vec L\setminus\vec K)-\dfrac{1}{2}\lk(\vec K',\vec L'\setminus\vec K')\right)\leq\upsilon_{\max}(\vec L')\:,\end{aligned}\] where the first equality can be easily computed from the definition of $\overline e$ and the last inequality follows from Proposition \ref{prop:unoriented_saddle}. In the same way, we have 
 \[\begin{aligned}\upsilon_{\max}(\vec L')&\leq\upsilon_{\max}(\vec L)+\left(\dfrac{1}{2}+\dfrac{e(S)}{4}+\dfrac{1}{2}\lk(\vec K,\vec L\setminus\vec K)-\dfrac{1}{2}\lk(\vec K',\vec L'\setminus\vec K')\right)\leq g+\dfrac{\textbf v}{2}+\dfrac{\overline e_{\vec L}(F)}{4}+\\ &+\left(\dfrac{1}{2}+\dfrac{e(S)}{4}+\dfrac{1}{2}\lk(\vec K,\vec L\setminus\vec K)-\dfrac{1}{2}\lk(\vec K',\vec L'\setminus\vec K')\right)\leq g+\dfrac{\textbf v+1}{2}+\dfrac{\overline e_{\vec L'}(F')}{4}\:.\end{aligned}\]
 This concludes the proof because all the terms in Equations \eqref{eq:unoriented_max} and \eqref{eq:unoriented_min} are preserved under concordance; hence, we can ignore extended births and deaths in $F$.
\end{proof}
This lemma allows us to prove Proposition \ref{prop:unoriented_bound}. Suppose that $L$ is a link which bounds an unoriented surface $F$ in $D^4$, with $F_1,...,F_k$ as connected components, as in Figure \ref{Unoriented_surface}. Fix an orientation on $L$, we need to define the integer $\lambda(\vec L,F):=\lambda(\textbf L_1)+...+\lambda(\textbf L_k)$, where $\textbf L_i$ is the oriented sublink of $\vec L$ such that $\textbf L_i=\partial F_i$; note that the orientation on $L_i$ has nothing to do with $F_i$ which may be non-orientable as well. We say that $\lambda(\textbf L_i)=0$ when $\textbf L_i$ is a knot. 

We also write $\widehat L$ for the $k$-component link which appears before the split moves in the decomposition of $F$ in Figure \ref{Unoriented_surface}. Hence, if we denote by $\widehat F\subset F$ the sub-surface such that $\widehat L=\partial \widehat F$ then $\widehat L$ and $\widehat F$ satisfy the hypothesis of Lemma \ref{lemma:new}.
\begin{prop}
 \label{prop:unoriented_bound}
 With the notation established above, the following inequalities are satisfied for the $2^{k}$ orientations of $L$ which are determined by the ones on $\widehat L$:
 \[-g-\dfrac{\vv}{2}+k-n+\dfrac{\overline e_{\vec L}(F)}{4}
 \leq\upsilon_{\max}(L)\leq g+\dfrac{\vv}{2}+\dfrac{\overline e_{\vec L}(F)}{4}
 \] and \[-g-\dfrac{\vv}{2}+1-n+\dfrac{\overline e_{\vec L}(F)}{4}
 \leq\upsilon_{\min}(L)\leq g+\dfrac{\vv}{2}+1-k+\dfrac{\overline e_{\vec L}(F)}{4}
 \:.\]
\end{prop}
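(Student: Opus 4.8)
The plan is to reduce the statement to Lemma~\ref{lemma:new} by splitting off the fission (split) moves in the normal form of $F$. Arranging $F$ as in Figure~\ref{Unoriented_surface}, there is an intermediate $k$-component link $\widehat L$ for which the sub-cobordism from $\bigcirc_k$ to $\widehat L$ is the disjoint union of $k$ surfaces $\widehat F_1,\dots,\widehat F_k$, each $\widehat F_i$ bounding a knot component of $\widehat L$ and carrying all $g$ tori and all $\vv$ non-orientable saddles, while $\widehat L$ is turned into $L$ by exactly $n-k$ oriented split saddles, interspersed with concordances. I would fix once and for all an orientation $\vec L$ of $L$ that is coherent with the oriented saddles of this normal form, and let $\vec{\widehat L}$ be the orientation it induces on $\widehat L$.

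First I would apply Lemma~\ref{lemma:new} to $\widehat L=\partial\widehat F$, which is legitimate since $\widehat F$ is a disjoint union of surfaces each bounding a knot; this gives
\[
-g-\frac{\vv}{2}+\frac{\overline e_{\vec{\widehat L}}(\widehat F)}{4}\ \le\ \upsilon_{\max}(\vec{\widehat L})\ \le\ g+\frac{\vv}{2}+\frac{\overline e_{\vec{\widehat L}}(\widehat F)}{4}
\]
together with the analogous two-sided estimate for $\upsilon_{\min}(\vec{\widehat L})$ carrying the extra summand $1-k$. Since each split saddle is an oriented saddle that raises the number of components by one, the inequalities from \cite{Unoriented} recalled just before Lemma~\ref{lemma:new}, combined with the concordance invariance of $\upsilon_{\max}$ and $\upsilon_{\min}$ (Corollary~\ref{cor:concordance_prime}), telescope over the $n-k$ split moves to
\[
\upsilon_{\max}(\vec L)\ \le\ \upsilon_{\max}(\vec{\widehat L})\ \le\ \upsilon_{\max}(\vec L)+(n-k)
\]
and likewise for $\upsilon_{\min}$. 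Substituting the first display into the second and rearranging produces precisely the asserted bounds for $\upsilon_{\max}(\vec L)$ and $\upsilon_{\min}(\vec L)$, provided one knows the identity $\overline e_{\vec{\widehat L}}(\widehat F)=\overline e_{\vec L}(F)$.

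The remaining point is therefore this Euler-number identity, and this is where I expect to spend the real effort. I would show that $e(F)=e(\widehat F)+2\,\widehat\lambda(\vec L,F)$: passing from $\widehat F$ to $F$ by attaching the oriented bands of the split saddles leaves the genuinely non-orientable contribution to the normal Euler number unchanged and only adds the linking created between the separated boundary arcs, which is recorded by the total relative linking number $\widehat\lambda(\vec L,F)$ (and $\widehat\lambda(\vec{\widehat L},\widehat F)=0$ because each $\widehat F_i$ bounds a knot). One then identifies $\widehat\lambda(\vec L,F)$ with $\lambda(\vec L)$, using that the linking number between two components of $L$ lying on distinct connected components of $F$ vanishes, so that $\overline e_{\vec L}(F)=e(F)-2\lambda(\vec L)=e(\widehat F)=\overline e_{\vec{\widehat L}}(\widehat F)$. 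The freedom expressed by ``for some orientations of $L$'' is exactly the freedom used to choose $\vec L$ coherent with the oriented saddles of the normal form.

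The main obstacle is this last step: keeping careful track of how the normal Euler number and the several linking numbers transform along the $n-k$ oriented split saddles and through the concordances, and fixing the orientation conventions so that the bookkeeping closes up. By contrast, the $\upsilon$-theoretic half of the argument — invoking Lemma~\ref{lemma:new}, the oriented-saddle inequalities of \cite{Unoriented}, and concordance invariance — is essentially formal once the normal form of Figure~\ref{Unoriented_surface} is in hand, and it is reassuring that specializing to $k=n$ recovers Lemma~\ref{lemma:new} and specializing to $\vv=0$ recovers \eqref{equation:oriented_max} and \eqref{equation:oriented_min}.
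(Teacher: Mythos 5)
Your overall strategy matches the paper's proof exactly: reduce to Lemma~\ref{lemma:new} by isolating the intermediate $k$-component link $\widehat L$ bounding $\widehat F$, telescope the oriented split-saddle inequalities from \cite{Unoriented} over the $n-k$ fission moves, and establish the Euler-number identity $\overline e_{\vec L}(F)=\overline e_{\vec{\widehat L}}(\widehat F)$. Your formula $e(F)=e(\widehat F)+2\widehat\lambda(\vec L,F)$ is also correct (it amounts to the paper's observation that $e(F')-2\widehat\lambda(\vec L,F)=\sum_i\overline e_{\textbf L_i}(F_i')=0$, since each $F_i'$ is orientable), though your one-line justification of it would need to be fleshed out to this level.

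There is one genuine error in your sketch: the claim that the linking number between two components of $L$ lying on distinct pieces of $F$ vanishes, i.e.\ that $\lambda(\widehat L)=0$, equivalently $\widehat\lambda(\vec L,F)=\lambda(\vec L)$. This is neither justified nor true in general. For \emph{orientable} disjoint surfaces the linking number is the algebraic intersection number of the surfaces and hence vanishes, but once non-orientable pieces are allowed the mod-2 argument only gives $\lk(\widehat L_i,\widehat L_j)\equiv 0\pmod 2$; an integer linking number of, say, $2$ between knots bounding disjoint pieces of $F$ is perfectly possible. Fortunately this error is inconsequential for your conclusion: you never actually need $\lambda(\widehat L)$ to vanish. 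Combining $e(F)=e(\widehat F)+2\widehat\lambda(\vec L,F)$ with the bookkeeping identity $\lambda(\vec L)=\lambda(\widehat L)+\widehat\lambda(\vec L,F)$ (which follows from $\lk(\widehat L_i,\widehat L_j)=\sum_{t\in I_i,\,l\in I_j}\lk(\vec L_t,\vec L_l)$) gives directly
\[
\overline e_{\vec L}(F)=e(F)-2\lambda(\vec L)=e(\widehat F)-2\lambda(\widehat L)=\overline e_{\vec{\widehat L}}(\widehat F)\:,
\]
with no vanishing assumptions. This is precisely the route the paper takes, and you should drop the false intermediate claim $\overline e_{\vec L}(F)=e(\widehat F)$ and replace it with this cancellation.
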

\begin{proof}
 We have that \[\lk(\widehat L_i,\widehat L_j)=\sum_{t\in I_i,\:l\in I_j}\lk(\vec L_t,\vec L_l)\] for every $i,j=1,...,k$, where $I_a$ is the set of the components of $L$ in $\textbf L_a$ for $a=1,...,k$. Therefore, one has $\lambda(\widehat L)+\lambda(\vec L,F)=\lambda(\vec L)$.
 We name $F'\subset F$ the cobordism between $\widehat L$ and $L$ and we obtain
 \[\overline e_{\vec L}(F)=e(F)-2\lambda(\vec L)=e(\widehat F)+e(F')-2(\lambda(\widehat L)+\lambda(\vec L,F))=\overline e_{\widehat L}(\widehat F)+(e(F')-2\lambda(\vec L,F))\] and from this, say $F_i'=F'\cap F_i$ is a connected component of $F'$, we argue that \[e(F')-2\lambda(\vec L,F)=\sum_{i=1}^k(e(F_i')-2\lambda(\textbf L_i))\] by definition of Euler number.
 Since each $F_i'$ is oriented and it is a cobordism from $\widehat L_i=\widehat L\cap F_i$ to $\textbf L_i$, we can cap $F_i'$ off in $D^4$ by gluing a compact oriented surface with boundary $\widehat L_i$. In this way, we obtain an oriented surface $G_i$ such that $\partial G_i=\textbf L_i$ and $e(G_i)=e(F'_i)$ for every $i=1,...,k$ and then \[\sum_{i=1}^k(e(F_i')-2\lambda(\textbf L_i))=\sum_{i=1}^k(e(G_i)-2\lambda(\textbf L_i))=\sum_{i=1}^k\overline e_{\textbf L_i}(G_i)=0\] because the orientation on $\textbf L_i$ is induced by the one on $G_i$ (which is the same induced by $F_i'$). 
 
 We have proved that $\overline e_{\vec L}(F)=\overline e_{\widehat L}(\widehat F)$ and now we can apply Lemma \ref{lemma:new} to show that \[-g-\dfrac{\emph{\textbf v}}{2}+\dfrac{\overline e_{\vec L}(F)}{4}\leq\upsilon_{\max}(\widehat L)\leq g+\dfrac{\emph{\textbf v}}{2}+\dfrac{\overline e_{\vec L}(F)}{4}\]
and 
\[-g-\dfrac{\emph{\textbf v}}{2}+1-k+\dfrac{\overline e_{\vec L}(F)}{4}\leq\upsilon_{\min}(\widehat L)\leq g+\dfrac{\emph{\textbf v}}{2}+1-k+\dfrac{\overline e_{\vec L}(F)}{4}\:.\] In order to conclude the proof, we apply Equations \eqref{equation:oriented_max} and \eqref{equation:oriented_min} which tell us that
 \[\upsilon_{\max}(\vec L)\leq\upsilon_{\max}(\widehat L)\leq\upsilon_{\max}(\vec L)+n-k\hspace{1cm}\text{ and }\hspace{1cm} \upsilon_{\min}(\vec L)\leq\upsilon_{\min}(\widehat L)\leq\upsilon_{\min}(\vec L)+n-k\:,\] provided that the orientation on $L$ belongs to the $2^k$ ones induced by an orientation of $F'$.  
\end{proof}
We can use this result to prove that the wideness of the $\upsilon$-set of $L$ gives a lower bound for the \textbf{unoriented slice genus} $\gamma^{(k)}_4(L)$, which is defined as the smallest first Betti number of a surface $F$ as in Figure \ref{Unoriented_surface} and $k$ connected components.
\begin{proof}[Proof of Theorem \ref{teo:wideness}]
 It follows from Proposition \ref{prop:unoriented_bound} because $2g+\textbf v+n-k$ is exactly the first Betti number of $F$.
\end{proof}
Note that Theorem \ref{teo:unoriented_concordance} tells us that $\upsilon_{\max}(L)-\upsilon_{\min}(L)$ is an unoriented concordance invariant of $L$. As a consequence of Theorem \ref{teo:wideness} we obtain Corollary \ref{cor:DO}; see also \cite[Section 5]{DO} for another proof of this result.
\begin{proof}[Proof of Corollary \ref{cor:DO}]
 Suppose that $F$ is the unoriented surface with maximal value of $\chi(F)$ and say it appears like in Figure \ref{Unoriented_surface}. As we saw in the proof of Theorem \ref{teo:wideness}, the first Betti number of $F$ is $2g+\textbf v+n-k$ and then the same theorem implies
 \[k-1\leq2g+\textbf v+n-k\:,\] because for a quasi-alternating link $L$ it is $\upsilon_{\max}(L)=\upsilon_{\min}(L)$ from Theorem \ref{teo:alternating}.
 
 The latter inequality can be rewritten as \[2k-n-2g-\textbf v\leq1\] and it is easy to check that the left-most side is precisely $\chi(F)$.
\end{proof}
In particular, suppose that the quasi-alternating link $L$ has $n$ components and $F$ is the disjoint union of $a$ disks and $n-a$ M\"obius strips. Then $a$ can be at most equal to one.
 
We saw in Theorem \ref{teo:unoriented_concordance} that we can shift $HFL'(\vec L)$ to obtain an unoriented concordance invariant of links. This suggests that we can modify the bounds in Proposition \ref{prop:unoriented_bound} in a way that only unoriented invariants appear. The main tool to achieve this goal is the Gordon-Litherland formula from \cite[Corollary $5^{\prime\prime}$]{GL}:
\begin{equation}
 \label{eq:GL}
 \left|\sigma(\vec L)-\dfrac{\overline e_{\vec L}(F)}{2}\right|\leq\gamma_4^{(k)}(L)
\end{equation}
where $L=\partial F$ and $F=F_1\sqcup...\sqcup F_k$.
\begin{proof}[Proof of Theorem \ref{teo:unoriented_bound}]
 We just need to apply Equation \eqref{eq:GL} to Proposition \ref{prop:unoriented_bound}.
\end{proof} 
Note that the quantities that appear in the left-most side of all the inequalities in Theorem \ref{teo:unoriented_bound}  are unoriented concordance invariants; in particular, they are independent of the choice of the orientation on $L$.

We conclude the paper with a couple of applications, which imply Corollary \ref{cor:app}. First, we compute $\gamma_4^{(2)}(L_n)$ when $L_n$ is the $2$-component link $T_{2,4}^*\#T_{3,4}^{\#n}$.
\begin{figure}
    \centering
    \includegraphics[width=6cm]{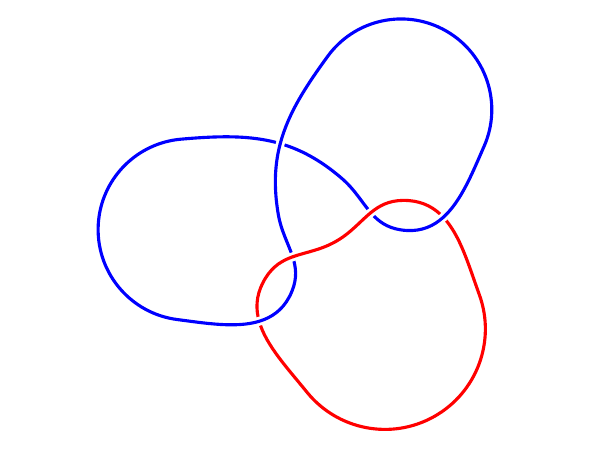}
    \caption{The link $T_{2,4}$: this link becomes the 2-component unlink after the unoriented resolution of the crossing on the blue component.}
    \label{T_2,4}
\end{figure} 
\begin{cor}
 We have that $\gamma_4^{(2)}(L_n)=n+1$ for every $n\geq0$.
\end{cor}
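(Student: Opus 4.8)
The plan is to establish the two inequalities $\gamma_4^{(2)}(L_n)\geq n+1$ and $\gamma_4^{(2)}(L_n)\leq n+1$ separately. First I would record the structure of $L_n$: it is a $2$-component link whose components are the knot $K_1=T_{3,4}^{\#n}$ and an unknot $K_2$, with $\lk(K_1,K_2)=\pm 2$ (the linking number of the two components of $T_{2,4}^*$). Thus $\gamma_4^{(2)}(L_n)$ is the minimal first Betti number of a surface $F=F_1\sqcup F_2\hookrightarrow D^4$ with $\partial F_i=K_i$, $i=1,2$.

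For the lower bound I would compute the $\upsilon$-set and the signature of $L_n$. Since $T_{2,4}$ is quasi-alternating (a $2$-bridge link) with $\sigma(T_{2,4})=-3$, the quasi-alternating formula in Proposition \ref{prop:properties} gives $\Upsilon_{T_{2,4}}(t)=-2t$ and $\Upsilon^*_{T_{2,4}}(t)=-t$ on $[0,1]$, and Proposition \ref{prop:mirror} then yields $\Upsilon_{T_{2,4}^*}(1)=1$ and $\Upsilon^*_{T_{2,4}^*}(1)=2$. Using $\Upsilon_{T_{3,4}}(1)=\Upsilon^*_{T_{3,4}}(1)=-2$ (read off from the staircase complex of the $L$-space knot $T_{3,4}$, cf. \cite{OSSz}) together with additivity of $\Upsilon$ and $\Upsilon^*$ under connected sum (Proposition \ref{prop:properties}), I obtain $\Upsilon_{L_n}(1)=1-2n$ and $\Upsilon^*_{L_n}(1)=2-2n$, so by Proposition \ref{prop:upsilon} the $\upsilon$-set of the $2$-component link $L_n$ is $\{\,1-2n,\ 1-2n\,\}$; in particular $\upsilon_{\max}(L_n)=\upsilon_{\min}(L_n)=1-2n$. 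Additivity of the signature under connected sum gives $\sigma(L_n)=\sigma(T_{2,4}^*)+n\,\sigma(T_{3,4})=3-6n$. Feeding these into the $k=n$ case of Theorem \ref{teo:unoriented_bound}, namely $\bigl|\upsilon_{\min}(L_n)-\tfrac{\sigma(L_n)}{2}+1\bigr|\leq\gamma_4^{(2)}(L_n)$, produces $\bigl|(1-2n)-\tfrac{3-6n}{2}+1\bigr|=n+\tfrac12$; since $\gamma_4^{(2)}(L_n)\in\Z$ this forces $\gamma_4^{(2)}(L_n)\geq n+1$. (The companion bound $\bigl|\upsilon_{\max}(L_n)-\tfrac{\sigma(L_n)}{2}\bigr|\leq\gamma_4^{(2)}(L_n)$ only gives $\geq n$, so it is the $\upsilon_{\min}$ inequality that is needed.)

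For the upper bound I would exhibit an explicit surface $F=F_1\sqcup F_2$ realizing $b_1(F)=n+1$. Take $F_2$ to be a once-punctured M\"obius band capping the unknot $K_2$, pushed into $D^4$ with a suitably chosen normal Euler number; because $F_2$ is non-orientable the relevant meridian has order dividing $2$ in $H_1$ of its exterior, so the even linking $\lk(K_1,K_2)=\pm 2$ is no obstruction to isotoping $K_1$ off $F_2$. Take $F_1$ to be the boundary connected sum of $n$ copies of a M\"obius band in $D^4$ bounding $T_{3,4}$ (so $F_1$ is non-orientable of genus $n$ bounding $K_1=T_{3,4}^{\#n}$), embedded disjointly from $F_2$. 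Then $b_1(F)=b_1(F_1)+b_1(F_2)=n+1$; for $n=0$ this degenerates to a M\"obius band on $K_2$ together with a disk on $K_1$, giving $b_1=1$, as required. The normal Euler numbers of the pieces, and hence $\overline e_{\vec L_n}(F)$, can be chosen compatibly with $\sigma(L_n)$ via the Gordon--Litherland inequality \eqref{eq:GL} (this must work out, since the lower bound above is already $n+1$). Combining the two bounds gives $\gamma_4^{(2)}(L_n)=n+1$ for all $n\geq 0$.

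The main obstacle is the geometric input for the upper bound: verifying that $T_{3,4}$ bounds a M\"obius band in $D^4$ with an appropriate normal Euler number (equivalently that its non-orientable $4$-genus equals $1$), and then tracking the Euler numbers through the boundary connected sums and push-offs so that $F_1$ and $F_2$ embed disjointly in $D^4$ with the correct linking behaviour. I would handle the first point by writing down an explicit non-orientable unknotting band for $T_{3,4}$ (or by quoting the known value), and the second by the bookkeeping of $\lambda(\vec L_n)$ and $\overline e$ used in Lemma \ref{lemma:new} and Proposition \ref{prop:unoriented_bound}. The only other, purely computational, point is the evaluation $\Upsilon_{T_{3,4}}(1)=-2$; since $T_{3,4}$ is not $\widehat{HFL}$-thin, Petkova's Theorem \ref{teo:alternating} does not apply directly, so I would instead compute $\Upsilon$ from the staircase complex of $T_{3,4}$.
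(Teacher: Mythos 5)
Your argument is essentially the same as the paper's. The lower bound is identical: you evaluate $\upsilon_{\min}(L_n)=1-2n$ (the paper reads $\upsilon_{\min}(T_{2,4}^*)=1$ off Theorem \ref{teo:alternating} and quotes $\upsilon(T_{3,4}^{\#n})=-2n$ from \cite{Unoriented}, where you route through the quasi-alternating $\Upsilon$-formula together with Propositions \ref{prop:mirror} and \ref{prop:upsilon}, but the numbers agree) and $\sigma(L_n)=3-6n$, then feed them into the $k=n=2$ case of Theorem \ref{teo:unoriented_bound} to get $n+\tfrac{1}{2}\leq\gamma_4^{(2)}(L_n)$ and hence $n+1\leq\gamma_4^{(2)}(L_n)$ by integrality. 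For the upper bound the paper simply exhibits $n+1$ non-orientable saddles from $L_n$ to $\bigcirc_2$ and caps off with two disks; that band-move description produces precisely the disjoint union of M\"obius bands you construct and makes $b_1(F)=n+1$ immediate from the normal form of Figure \ref{Unoriented_surface}, thereby sidestepping the Euler-number and disjoint-embedding bookkeeping you correctly flag as needing care (also, your ``once-punctured M\"obius band'' $F_2$ should just be a M\"obius band, since a once-punctured M\"obius band has first Betti number $2$, not $1$).
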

\begin{proof}
Since $T_{2,4}^*$ is non-split alternating we can easily compute $\upsilon_{\min}(T_{2,4}^*)=1$ using Theorem \ref{teo:alternating}, while the fact that $\upsilon(T_{3,4}^{\#n})=-2n$ is known from \cite[Corollary 1.4]{Unoriented}. Moreover, applying Corollary \ref{cor:additive} we obtain that \[\upsilon_{\min}(L_n)=\upsilon_{\min}(T_{2,4}^*)+\upsilon(T_{3,4}^{\#n})=1-2n\:.\] Now we just use Theorem \ref{teo:unoriented_bound} and remember that $\sigma(T_{2,4}^*)=3$ and $\sigma(T_{3,4})=-6$:
\[\left|1-2n-\dfrac{3-6n}{2}+1\right|=\left|n+\dfrac{1}{2}\right|\leq n+1\leq\gamma_4^{(2)}(L_n)\:.\] In order to complete the proof we observe that there is a sequence of $n+1$ non-orientable saddles that change $L_n$ into the unlink $\bigcirc_2$: there is one from $T_{3,4}$ to the unknot and we perform one on each summand, while we can go from $T_{2,4}$ to $\bigcirc_2$ by an unoriented resolution of a crossing, see Figure \ref{T_2,4}.   
\end{proof}
Finally, we show that $\gamma_4^{(1)}(L_n)$ can be arbitrarily large.
\begin{cor}
We have that $\gamma_4^{(1)}(L_n)\geq n$ for every $n\geq0$.
\end{cor}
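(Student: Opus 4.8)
The plan is to apply Theorem~\ref{teo:unoriented_bound} in the case $k=1$ to the link $L_n$, which has $2$ components, so only two small numerical inputs are needed. First I would record the signature: since the signature is additive under connected sum, $\sigma(L_n)=\sigma(T_{2,4}^*)+n\cdot\sigma(T_{3,4})=3-6n$, using $\sigma(T_{2,4}^*)=3$ and $\sigma(T_{3,4})=-6$. Next I would compute one element of the $\upsilon$-set: exactly as in the proof of the preceding corollary, $\upsilon_{\min}(L_n)=\upsilon_{\min}(T_{2,4}^*)+\upsilon(T_{3,4}^{\#n})=1-2n$, where $\upsilon_{\min}(T_{2,4}^*)=1$ follows from Theorem~\ref{teo:alternating} (the link $T_{2,4}^*$ being non-split alternating, hence $\widehat{HFL}$-thin), $\upsilon(T_{3,4}^{\#n})=-2n$ is the value from \cite{Unoriented}, and the additivity is Corollary~\ref{cor:additive}.

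Then I would simply substitute into the $k=1$ inequality of Theorem~\ref{teo:unoriented_bound}, choosing $\upsilon_i(L_n)=\upsilon_{\min}(L_n)$ and taking $n=2$ for the number of components of $L_n$ in the shift $\frac{\sigma(L_n)+1-n}{2}$:
\[\left|\upsilon_{\min}(L_n)-\dfrac{\sigma(L_n)+1-2}{2}\right|=\left|(1-2n)-\dfrac{(3-6n)-1}{2}\right|=\bigl|(1-2n)-(1-3n)\bigr|=|n|=n\:.\]
Hence $\gamma_4^{(1)}(L_n)\geq n$ for every $n\geq0$, which is the claim.

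Since both ingredients are already available from results proved earlier (Theorems~\ref{teo:alternating} and~\ref{teo:unoriented_bound}, Corollary~\ref{cor:additive}, and \cite{Unoriented}), there is essentially no genuine obstacle; the only point requiring care is keeping apart the two roles played by the symbol $n$ — the number of components of $L_n$, which is always $2$ and enters the $+1-n$ normalization of Theorem~\ref{teo:unoriented_bound}, versus the exponent $n$ in $L_n=T_{2,4}^*\#T_{3,4}^{\#n}$, which controls $\sigma(L_n)$ and $\upsilon_{\min}(L_n)$ — together with taking the signs in the signature values and in the Gordon--Litherland--type normalization consistently with the conventions fixed earlier in the paper.
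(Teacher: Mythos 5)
Your proof is correct and takes the same route as the paper: you apply the $k=1$ case of Theorem~\ref{teo:unoriented_bound} with $\upsilon_i=\upsilon_{\min}(L_n)=1-2n$, $\sigma(L_n)=3-6n$, and $n=2$ components, yielding $\bigl|(1-2n)-(1-3n)\bigr|=n\leq\gamma_4^{(1)}(L_n)$. (The paper's own proof cites ``Theorem~\ref{teo:unoriented_concordance}'' by label, which is evidently a typo for Theorem~\ref{teo:unoriented_bound}, the result you correctly invoke; the arithmetic is otherwise identical.)
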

\begin{proof}
 We use the last inequality in Theorem \ref{teo:unoriented_bound} with $\upsilon_{\min}(L_n)$ and we immediately obtain
 \[\left|1-2n-\dfrac{3-6n-1}{2}\right|=n\leq\gamma_4^{(1)}(L_n)\:.\]
\end{proof}
We point out that these two results were unobtainable if we only used Theorem \ref{teo:wideness}.

\end{document}